\newcommand{\exend}{\unskip\nobreak\hfill$\blacktriangleleft$}
\DeclareMathSymbol{\widehatsym}{\mathord}{largesymbols}{"62}
\newcommand{\act}{\curvearrowright}
\newcommand{\andspace}{\quad \text{and} \quad}
\newcommand{\If}{\textup{if }}
\newcommand{\ini}{\mathfrak{i}}
\newcommand{\ter}{\mathfrak{t}}
\newcommand{\niceblue}{rgb:red,1;green,2;blue,3}
\newcommand{\nicegreen}{rgb:red,1;green,4;blue,2}
\newcommand{\nicered}{rgb:red,4;green,1;blue,2}
\DeclareMathOperator{\id}{id} 
\theoremstyle{plain}
\newtheorem{theorem}{Theorem}[section]
\newtheorem{lemma}[theorem]{Lemma}
\newtheorem{proposition}[theorem]{Proposition}
\newtheorem{corollary}[theorem]{Corollary}
\theoremstyle{definition}
\newtheorem{definition}[theorem]{Definition}
\newtheorem{example}[theorem]{Example}
\newtheorem{remark}[theorem]{Remark}
\newtheorem*{remark*}{Remark}
\newtheorem*{assumption*}{Assumption}
\newtheorem{notation}[theorem]{Notation}
\begin{document}

\null\vskip-1cm

\title[Exchange rings and real rank zero $C^*$-algebras]{Exchange rings and real rank zero $C^*$-algebras associated with finitely separated graphs}%
\author{Matias Lolk}
\address{Department of Mathematical Sciences, University of Copenhagen, 2100 Copenhagen, Denmark.}\email{lolk@math.ku.dk}
%\dedicatory{}
\thanks{Supported by the Danish National Research Foundation through the Centre for Symmetry and Deformation (DNRF92)} \keywords{Separated graph, graph algebra, simplicity, exchange ring, real rank zero, essentially free, Condition (K), separativity}
\date{\today}

\begin{abstract}
We introduce a generalisation of Condition (K) to finitely separated graphs and show that it is equivalent to essential freeness of the associated partial action as well as the exchange property of any of the associated tame algebras. As a consequence, we can show that any tame separated graph algebra with the exchange property is separative.
\end{abstract}

\maketitle

%\tableofcontents

\section*{Introduction}
A \textit{finitely separated graph} is a directed graph with a partition of the edges into finite subsets, which might be thought of as an edge-colouring, so that edges with distinct ranges have different colours. Ara and Goodearl first introduced \textit{Leavitt path algebras} $L_K(E,C)$ and \textit{graph $C^*$-algebras} $C^*(E,C)$ associated with a separated graph $(E,C)$ in \cite{AG2} and \cite{AG}, respectively, and showed that any conical abelian monoid may be realised as the non-stable $K$-theory $\mathcal{V}(L_K(E,C))$ of a Leavitt path algebra of a finitely separated graph. They also conjectured that the inclusion $L_\mathbb{C}(E,C) \hookrightarrow C^*(E,C)$ induces an isomorphism on non-stable $K$-theory, but this important problem remains open. While the edges and their adjoints define partial isometries in these algebras, their products need not be partial isometries -- we say that $E^1$ is not a \textit{tame} set of partial isometries in $L_K(E,C)$ and $C^*(E,C)$. This led Ara and Exel to define quotients $L_K(E,C) \twoheadrightarrow L_K^{\textup{ab}}(E,C)$ and $C^*(E,C) \twoheadrightarrow \mathcal{O}(E,C)$ in which $E^1$ is exactly forced to be tame, as well as a further \textit{reduced quotient} $\mathcal{O}^r(E,C)$ in the $C^*$-setting. Amazingly, passing to these much more well behaved quotients only enriches the non-stable $K$-theory (at least on the level of Leavitt path algebras) in the sense that the induced monoid homomorphism $\mathcal{V}(L_K(E,C)) \to \mathcal{V}(L_K^{\textup{ab}}(E,C))$ is a refinement. Moreover, if the above conjecture holds, then the canonical embedding $L_\mathbb{C}^{\textup{ab}}(E,C) \hookrightarrow \mathcal{O}(E,C)$ will induce an isomorphism on non-stable $K$-theory as well. \medskip \\
Following \cite{Ara}, a ring $R$ (possibly non-unital) is called an \textit{exchange ring} if for any $x \in R$, there exists an idempotent $e \in R$ and elements $r,s \in R$ such that $e=xr$ and $e = s+x-xs$. For the class of $C^*$-algebras, this property coincides with the (to $C^*$-algebraists more familiar) notion of \textit{real rank zero} \cite[Theorem 3.8]{Ara}. The Fundamental Separativity Problem for exchange rings (see \cite{Goodearl}, \cite{AGMP})) asks whether every exchange ring $R$ is \textit{separative}, that is whether the cancellation property
$$2a = a+b = 2b \Rightarrow a=b$$
holds in the non-stable $K$-theory $\mathcal{V}(R)$. A positive answer to this problem would provide positive answers to a number of open problems in both ring-theory and operator algebras \cite[Sections 6 and 7]{AGMP}. In light of the highly general non-stable $K$-theory of separated graph algebras, it is therefore natural to ask when the tame algebras $L_K^{\textup{ab}}(E,C)$, $\mathcal{O}(E,C)$ and $\mathcal{O}^r(E,C)$ are exchange rings. The main result of the present paper provides a somewhat discouraging, but not unexpected, answer to this question: If any one of these is an exchange ring, then it will be a classical graph algebra in case $(E,C)$ is finite and approximately a classical graph algebra in case $(E,C)$ is only finitely separated. In particular, it will be separative. On our way to proving this result, we also take a minor detour (Section~\ref{sect:Simplicity}) to obtain a characterisation of simplicity for these algebras, and the conclusion is similar to the one above: Any of the algebras $L_K(E,C)$, $L_K^{\textup{ab}}(E,C)$, $C^*(E,C)$ and $\mathcal{O}(E,C)$ can only be simple if it is in fact a classical graph algebra, while $\mathcal{O}^r(E,C)$ may also be Morita equivalent to the reduced group $C^*$-algebra $C_r^*(\mathbb{F}_n)$ for $n \ge 2$. This result (although only in the setting of finite separated graphs) is also obtained by Ara and the author in \cite{AL}, but with quite different arguments. \medskip \\
Our proofs of the above mentioned results rest heavily on a dynamical description of the tame algebras $L_K^{\textup{ab}}(E,C)$, $\mathcal{O}(E,C)$ and $\mathcal{O}^r(E,C)$. This was first obtained for finite bipartite graphs in \cite{AE} by Ara and Exel, and we generalise this description to finitely separated graphs in Section 2. In Section 3, we investigate when $L_K(E,C)$ and $C^*(E,C)$ degenerate to graph algebras of non-separated graphs, and we combine the results of Section 2 and 3 to characterise simplicity of the various algebras in Section 4. We then study degeneracy of the tame algebras $L_K^{\textup{ab}}(E,C)$, $\mathcal{O}(E,C)$ and $\mathcal{O}^r(E,C)$ in Section 5, before characterising the exchange property of the various algebras as well as essential freeness of the associated partial action in terms of a graph-theoretic Condition (K) in Section 6.

\section{Preliminary definitions}
In this section, we recall the necessary definitions and results from the existing theory on algebras associated with separated graphs.
\begin{definition}
A \textit{separated graph} $(E,C)$ is a graph $E=(E^0,E^1,r,s)$ together with a \textit{separation} $C=\bigsqcup_{v \in E^0} C_v$, where each $C_v$ is a partition of $r^{-1}(v)$ into non-empty subsets. If $v$ is a source, i.e.~if $r^{-1}(v) = \emptyset$, then we simply take $C_v$ to be the empty partition, and for any $e \in E^1$, the set in $C_{r(e)}$ containing $e$ will be denoted $[e]$. As soon as we start building various objects out of separated graphs, we will only consider \textit{finitely separated} ones, meaning that every $X \in C$ is finite. Finally, any directed graph $E$ may be regarded as a separated graph by giving it the \textit{trivial separation} $\mathcal{T}_v=\{r^{-1}(v)\}$ for all $v \in E^0 \setminus E_{\textup{source}}^0$. Note that in this case, finitely separated simply means column-finite. \medskip \\
A \textit{complete} subgraph $(F,D)$ of $(E,C)$ is a subgraph such that $D_v=\{X \in C_v \mid X \cap F^1 \ne \emptyset \}$ for every $v \in F^0$. The inclusion of a complete subgraph defines a morphism in the category of finitely separated graphs (see \cite[Definition 8.4]{AG2} for details on the appropriate notion of morphism), and every finitely separated graph is the direct limit of its finite complete subgraphs \cite[Proposition 3.5 and Definition 8.4]{AG2}. Moreover, as all the constructions from separated graphs are functorial and preserve direct limits, it is often sufficient to consider only finite graphs.
\end{definition}
\begin{definition}
Let $K$ denote any field. The \textit{Leavitt path algebra} $L_K(E,C)$ associated with a finitely separated graph $(E,C)$ is the $*$-algebra (over $K$) generated by $E^0 \sqcup E^1$ with relations
\begin{enumerate}[leftmargin=2cm,rightmargin=2cm]
\item[(V)] $uv = \delta_{u,v}v$ and $u=u^*$ for $u,v \in E^0$,
\item[(E)] $es(e)=r(e)e=e$ for $e \in E^1$,
\item[(SCK1)] $e^*f = \delta_{e,f}s(e)$ if $[e]=[f]$,
\item[(SCK2)] $v = \sum_{e \in X}ee^*$ for all $v \in E^0$ and $X \in C_v$,
\end{enumerate}
and the graph $C^*$-algebra $C^*(E,C)$ is the universal $C^*$-algebra with respect to these generators and relations. In other words, $C^*(E,C)$ is the enveloping $C^*$-algebra of $L_\mathbb{C}(E,C)$. The reader should be aware that we use the convention of \cite{AE}, \cite{AE2}, \cite{AL} and \cite{Lolk2}, often referred to as the \textit{Raeburn-convention}, opposite to that of \cite{AG} and \cite{AG2} -- consequently, paths will have to be read from the right. \exend
\end{definition}

Note that if $C=\mathcal{T}$, then (SCK1) and (SCK2) are simply the ordinary (CK1) and (CK2) axioms for classical graph algebras, so $L_K(E,C)=L_K(E)$ and $C^*(E,\mathcal{T})=C^*(E)$. It was observed in \cite[Proposition 3.6]{AG2} and \cite[Proposition 1.6]{AG} that the assignments $(E,C) \mapsto L_K(E,C)$ and $(E,C) \mapsto C^*(E,C)$ extend to continuous functors from the category of finitely separated graphs. \medskip \\
Recall that a set of partial isometries $S$ is set to be \textit{tame} if all products of elements from $S \cup S^*$ are also partial isometries.

\begin{definition}[{\cite[Definition 2.4]{AE2}}]
Let $(E,C)$ denote a finitely separated graph. Then $L_K^{\textup{ab}}(E,C)$ is the $*$-algebra (over $K$) and $\mathcal{O}(E,C)$ is the universal $C^*$-algebra generated by $E^0 \sqcup E^1$ with relations (V), (E), (SCK1), (SCK2), and $E^1$ being tame. We refer to $L_K^{\textup{ab}}(E,C)$ as the \textit{abelianised Leavitt path algebra} of $(E,C)$ and to $\mathcal{O}(E,C)$ as \textit{universal tame graph $C^*$-algebra} of $(E,C)$. Since we invoke more relations than above, there are canonical quotient maps
$$L_K(E,C) \to L_K^{\textup{ab}}(E,C) \andspace C^*(E,C) \to \mathcal{O}(E,C).$$
It was proven in \cite[Proposition 7.2]{AE2} that the assignment $(E,C) \mapsto \mathcal{O}(E,C)$ extends to a continuous functor, and the same proof applies to $(E,C) \mapsto L_K^{\textup{ab}}(E,C)$.
\end{definition}

\begin{definition}
A separated graph $(E,C)$ is called \textit{bipartite} if there exists a partition of the vertex set $E^0 = E^{0,0} \sqcup E^{0,1}$ with $s(E^1) = E^{0,1}$ and $r(E^1) = E^{0,0}$. \exend
\end{definition}

Whenever $(E,C)$ is a separated graph, there is a canonical bipartite replacement $\mathbf{B}(E,C)$ as defined in \cite[Proposition 9.1]{AE} and \cite[Definition 7.4]{AE2}. By \cite[Proposition 7.5]{AE2}, the assignment $(E,C) \mapsto \mathbf{B}(E,C)$ is functorial and there are natural isomorphisms of functors 
$$\textbf{M}_2 \circ C^* \cong C^* \circ \mathbf{B} \andspace \textbf{M}_2 \circ \mathcal{O} \cong \mathcal{O} \circ \mathbf{B},$$
where $\textbf{M}_2$ is the functor taking $A$ to $M_2(A)$, and the same proof applies to $L_K$ and $L_K^{\textup{ab}}$. Therefore, one can often restrict to finite bipartite graphs, which was exactly the setup of \cite{AE}. Moreover, if $(E,C)$ is finite and bipartite, then there is a sequence of finite, bipartite separated graphs $(E_n,C^n)$  as defined in \cite[Construction 4.4]{AE}, such that $L_K^{\textup{ab}}(E,C) \cong \varinjlim_n L_K(E_n,C^n)$ and $\mathcal{O}(E,C) \cong \varinjlim_n C^*(E_n,C^n)$ for appropriate connecting $*$-homomorphisms. We have $(E,C)=(E_0,C^0)$, and every $(E_{n+1},C^{n+1})$ is constructed in the same way from $(E_n,C^n)$, so all the graphs $(E_n,C^n)$ give rise to the same tame algebras. \medskip \\
While only finite bipartite separated graphs were considered in \cite{AE}, one can associate a partial action $\theta^{(E,C)} \colon \mathbb{F} \act \Omega(E,C)$ to any finitely separated graph $(E,C)$. Here $\mathbb{F}$ is the free group generated by the edge set $E^1$, and $\Omega(E,C)$ is a zero-dimensional, locally compact and metrisable space (see Definition~\ref{def:DynamicalPicture}). We will verify in Theorem~\ref{thm:DynDescr} that \cite[Corollary 6.12]{AE} generalises, i.e.~that
$$L_K^{\textup{ab}}(E,C) \cong C_K(\Omega(E,C)) \rtimes \mathbb{F} \andspace \mathcal{O}(E,C) \cong C_0(\Omega(E,C)) \rtimes \mathbb{F}$$
for all finitely separated graphs $(E,C)$, where $C_K(\Omega(E,C))$ is the $*$-algebra of locally constant, compactly supported functions $\Omega(E,C) \to K$. This allows for the definition of a \textit{reduced} tame $C^*$-algebra as defined in \cite[Definition 6.8]{AE2}.

\begin{definition}
If $(E,C)$ is a finitely separated graph, then \textit{the reduced tame $C^*$-algebra} associated with $(E,C)$ is the reduced crossed product $\mathcal{O}^r(E,C) := C_0(\Omega(E,C)) \rtimes_{r} \mathbb{F}$.
\end{definition}

\begin{definition}
Let $E$ denote a graph. A non-trivial path in $E$ is a finite, non-empty sequence of edges $\alpha=e_ne_{n-1} \cdots e_2e_1$ satisfying $r(e_i)=s(e_{i+1})$ for all $i=1,\ldots,n-1$. A \textit{subpath} of $\alpha$ is simply a subsequence $e_me_{m-1} \cdots e_{j+1}e_j$, and the range and source of $\alpha$ is defined by $r(\alpha):=r(e_n)$ and $s(\alpha):=s(e_1)$. We will sometimes use the notation $\alpha \colon u \to v$ for a path with source $u$ and range $v$, and we shall regard the vertices $E^0$ as the set of \textit{trivial paths} with $r(v):=v=:s(v)$. \medskip \\
The \textit{double} $\hat{E}$ of $E$ is the graph obtained from $E$ by adding an edge $e^{-1}$ going in the reverse direction for any $e \in E^1$. Namely, $\hat{E}$ is the graph with vertices $\hat{E}^0:=E^0$ and edges 
$$\hat{E}^1:=E^1 \sqcup \{e^{-1} \mid e \in E^1\},$$
where $r$ and $s$ are extended from $E^1$ by $r(e^{-1}):=s(e)$ and $s(e^{-1}):=r(e)$. The map 
$$E^0 \sqcup E^1 \to E^0 \sqcup \hat{E}^1 \quad \text{given by} \quad v \mapsto v \quad\text{and}\quad e \mapsto e^{-1}$$ then extends canonically to an order-reversing, order two bijection of the paths of $\widehat{E}$, denoted by $\alpha \mapsto \alpha^{-1}$. \medskip \\
Now if $(E,C)$ is a separated graph, an \textit{admissible path} $\alpha$ in $(E,C)$ is a path in the double $\hat{E}$, such that
\begin{enumerate}
\item any subpath $e^{-1}f$ with $e,f \in E^1$ satisfies $[e] \ne [f]$,
\item any subpath $ef^{-1}$ with $e,f \in E^1$ satisfies $e \ne f$,
\end{enumerate}
and the set of admissible paths of $(E,C)$ is denoted $\mathcal{P}(E,C)$. If $\alpha$ is a non-trivial admissible path, we shall use the notation $\ini_\textup{d}(\alpha)$ and $\ter_{\textup{d}}(\alpha)$ for the initial (i.e.~rightmost) and terminal (i.e.~leftmost) symbol of $\alpha$, respectively; for instance 
$$\ini_{\textup{d}}(ef^{-1})=f^{-1} \andspace \ter_{\textup{d}}(ef^{-1})=e.$$
Letting $\pi \colon \hat{E}^1 \to E^1$ denote the projection given by $\pi(e):=e=:\pi(e^{-1})$, we then set 
$$\ini(\alpha):=\pi(\ini_{\textup{d}}(\alpha)) \quad \text{as well as} \quad \ter(\alpha):=\pi(\ter_{\textup{d}}(\alpha)).$$
If $\alpha$ is an admissible path and $X,Y \in C$, we shall say that $X^{-1}\alpha$, respectively, $X^{-1}\alpha Y$ is an \textit{admissible composition} if $x^{-1}\alpha$, respectively, $x^{-1}\alpha y$ is admissible for some (hence any) $x \in X$ and $y \in Y$. We finally introduce a partial order $\le$ on $\mathcal{P}(E,C)$ given by
$$\beta \le \alpha \Leftrightarrow \text{$\beta$ is an initial subpath of $\alpha$}.$$
In particular, whenever $s(\alpha)=s(\beta)$, there is a maximal initial subpath $\alpha \wedge \beta \le \alpha, \beta$. \exend
\end{definition}
We shall use the exact same terminology as above when inverse edges $e^{-1}$ are replaced by adjoint edges $e^*$.

\begin{notation}
Given admissible paths $\alpha$ and $\beta$ in $(E,C)$, we will write $\beta \alpha$ for the concatenated product, which may or may not be an admissible path. However, one may also view $\alpha$ and $\beta$ as elements of the free group $\mathbb{F}$ on $E^1$, in which a product can also be formed, allowing for cancellation of edges and their inverses. To distinguish notationally between these two products, we will always write $\beta \cdot \alpha$ when the product is formed in $\mathbb{F}$.
\end{notation}

\begin{remark}\label{rem:StandardForm}
Note that in light of the defining relations of $C^*(E,C)$, any non-zero product $\alpha \in C^*(E,C)$ of elements from $E^1 \sqcup (E^1)^*$ can be written as
$$\alpha=\alpha_n\alpha_{n-1} \cdots \alpha_2\alpha_1,$$
where each $\alpha_i$ is a non-trivial admissible path, and
\begin{enumerate}
\item $\ini_{\textup{d}}(\alpha_{i+1}) \in E^1$ and $\ter_{\textup{d}}(\alpha_i)=\ini_{\textup{d}}(\alpha_{i+1})^*$,
\item $\vert [\ini_{\textup{d}}(\alpha_{i+1})] \vert \ge 2$
\end{enumerate}
for all $i=1,\ldots,n-1$.
\end{remark}

\begin{definition}
A non trivial admissible path $\alpha$ in a separated graph is called a \textit{closed path} if $r(\alpha)=s(\alpha)$, and it is called a \textit{cycle} if $\alpha\alpha$ is an admissible path as well. In either case, we shall say that $\alpha$ is \textit{based} at $r(\alpha)=s(\alpha)$. An admissible path $\alpha$ will be referred to as \textit{simple} if it does not meet the same vertex twice, i.e.~if $r(\alpha_1)=r(\alpha_2)$ for $\alpha_1,\alpha_2 \le \alpha$ implies $\alpha_1=\alpha_2$, while a cycle is called a \textit{simple cycle} if the only vertex repetition occurs at the end, that is if $\alpha_1 < \alpha_2 \le \alpha$ and $r(\alpha_1)=r(\alpha_2)$ implies $\alpha_1=s(\alpha)$ and $\alpha_2=\alpha$. Any closed path $\alpha$ is called \textit{base-simple} if $s(\alpha)$ is only repeated at the end. \exend
\end{definition}

As for non-separated graphs, there is a notion of \textit{hereditary} and \textit{saturated} sets, giving rise to ideals in our algebras.

\begin{definition}[{\cite[Definition 6.3 and Definition 6.5]{AG2}}]
Let $(E,C)$ denote a finitely separated graph. A set of vertices $H \subset E^0$ is called \textit{hereditary} if $r(e) \in H$ implies $s(e) \in H$ for all $e \in E^1$, and it is called $C$\textit{-saturated} if for all $v \in E^0$ and $X \in C_v$, $s(X) \subset H$ implies $v \in H$. We will write $\mathcal{H}(E,C)$ for the lattice of hereditary and $C$-saturated sets. Finally, for any such $H \in \mathcal{H}(E,C)$, we define a quotient graph $(E/H,C/H)$ by 
\begin{itemize}
\item $E/H)^0:= E^0 \setminus H$ and $(E/H)^1:=r^{-1}(H)$,
\item $(C/H)_v:=\{X/H \mid X \in C_v\}$ where $X/H:=X \cap r^{-1}(H)$,
\end{itemize}
with the range and source restricted from $E$.
\end{definition}

\section{Dynamical systems associated with finitely separated graphs}

In this section, we generalise Ara and Exel's dynamical description of the tame algebras $L_K^{\textup{ab}}(E,C)$ and $\mathcal{O}(E,C)$ in \cite{AE} to finitely separated graphs, allowing us to define $\mathcal{O}^r(E,C)$ for such graphs. We also address the relationship between the partial actions $\theta^{(E,C)}$ and $\theta^{\mathbf{B}(E,C)}$ associated to a finitely separated graph and its bipartite sibling, respectively. First, however, we will recall the basics of partial actions.

\begin{definition}
A \textit{partial action} $\theta \colon G \act \Omega$ of a discrete group $G$ on a topological space $\Omega$ is a family of homeomorphisms of open subspaces $\{\theta_g \colon \Omega_{g^{-1}} \to \Omega_g\}_{g \in G}$, such that
\begin{itemize}
\item $\theta_g(\Omega_{g^{-1}} \cap \Omega_h) \subset \Omega_{gh}$ for all $g,h \in G$,
\item $\theta_g(\theta_h(x))=\theta_{gh}(x)$ for all $g,h \in G$ and $x \in \Omega_{h^{-1}} \cap \Omega_{h^{-1}g^{-1}}$,
\end{itemize}
and we will always assume $\Omega$ to be locally compact Hausdorff. Completely similarly, one can define the concept of a partial action on a $(C)^*$-algebra, demanding that the domains should be (closed) ideals. Hence, $\theta$ as above translates into a partial $C^*$-action $\theta^* \colon G \act C_0(\Omega)$ given by $C_0(\Omega)_g := C_0(\Omega_g)$ and $\theta_g^*(f):=f \circ \theta_g^{-1}$
for all $g \in G$. As for global actions, one can associated both a \textit{full} and a \textit{reduced} crossed product, and there is a canonical surjective $*$-homomorphism
$$C_0(\Omega) \rtimes G \to C_0(\Omega) \rtimes_r G,$$
called the \textit{regular representation}. We will often write $\rtimes_{(r)}$ to indicate that a given statement concerns both crossed products. If the space $\Omega$ is totally disconnected and $K$ is any field with involution, there is also a meaningful, purely algebraic partial action $\theta^* \colon G \act C_K(\Omega)$ on the $*$-algebra $C_K(\Omega)$ of compactly supported, locally constant functions, and this gives rise to a single algebraic crossed product $C_K(\Omega) \rtimes G$. We refer the reader to \cite{Exel} for a comprehensive treatment of crossed products associated with partial actions. \medskip \\
Returning to the topological setting, a subspace $U \subset \Omega$ is called \textit{invariant} if $\theta_g(x) \in U$ for all $g \in G$ and $x \in U \cap \Omega_{g^{-1}}$. Observe that whenever $U$ is open and invariant, then $Z:=\Omega \setminus U$ is closed and invariant, so $\theta$ naturally restricts to partial actions of both $U$ and $Z$, giving rise to sequences
$$0 \to C_K(U) \rtimes G \to C_K(\Omega) \rtimes G \to C_K(Z) \rtimes G \to 0$$
and
$$0 \to C_0(U) \rtimes_{(r)} G \to C_0(\Omega) \rtimes_{(r)} G \to C_0(Z) \rtimes_{(r)} G \to 0.$$
On the level of full crossed products and in the purely algebraic context, this sequence is always exact, but for reduced crossed products, one must also require the group to be exact \cite[Theorem 22.9]{Exel}; as we are really only interested in free groups, this is not a problem. The \textit{orbit} through any $x \in \Omega$ is the set
$$\theta_G(x) := \{\theta_g(x) \mid g \in G \text{ such that } x \in \Omega_{g^{-1}}\},$$
and the action is called \textit{minimal} if every orbit is dense in $\Omega$, or equivalently if the only open invariant subspaces are the trivial ones. The action is called \textit{topologically free} if for every $1 \ne g \in G$, the set of fixed points
$$\Omega^g:=\{x \in \Omega_{g^{-1}} \mid \theta_g(x)=x\} \subset \Omega$$
has empty interior. Minimality is of course a necessary condition for simplicity of any of the above crossed products, and if the action is topologically free, then it is also sufficient in both the algebraic and reduced context (see \cite[Remark 3.9]{AL}, \cite[Lemma 3.1 and Theorem 4.1]{BCFS} and \cite[Corollary 29.8]{Exel}). A partial action is called \textit{essentially free} if the restriction to every closed invariant subset is topologically free. Essential freeness (together with exactness of the group in the reduced setting) guarantees that all ideals of the algebraic and reduced crossed product are induced from open invariant subspaces (see \cite[Corollary 3.7]{CEHS} and \cite[Theorem 29.9]{Exel}). \medskip \\
Now if $U \subset \Omega$ is any open subspace (not necessarily invariant), we may still define a restricted partial action $\theta\vert_U \colon G \act U$ with domains $U_g:=\theta_g(U \cap \Omega_{g^{-1}}) \cap U$. Following \cite{CRS} and \cite{Li2}, such a restriction will be called \textit{full} (or \textit{$G$-full}) if $\Omega=\{\theta_g(x) \mid g \in G, x \in U \cap \Omega_{g^{-1}}\}$. \exend \end{definition}

We now recall a number of different types of equivalences between partial actions.

\begin{definition}
Suppose that $\theta \colon G \act \Omega$ and $\theta' \colon H \act \Omega'$ are partial actions and $\Phi \colon G \to H$ is a group homomorphism. A continuous map $\varphi \colon \Omega \to \Omega'$ is called $\Phi$\textit{-equivariant} if
\begin{itemize}
\item $\varphi(\Omega_g) \subset \Omega'_{\Phi(g)}$ for all $g \in G$,
\item $\theta'_{\Phi(g)}(\varphi(x))=\varphi(\theta_g(x))$ for all $g \in G$ and $x \in \Omega_{g^{-1}}$.
\end{itemize}
If $G=H$ and $\Phi=\id_G$, then $\varphi$ is simply called equivariant (or possibly $G$-equivariant). The pair $(\varphi,\Phi)$ is called a \textit{conjugacy} if $\varphi$ is a homeomorphism, $\Phi$ is an isomorphism, and $\varphi ^{-1}$ is $\Phi^{-1}$-equivariant. However, conjugacy is often too rigid a notion and we therefore consider a few other types of equivalences: Following \cite{AL}, the pair $(\varphi,\Phi)$ is called a \textit{direct dynamical equivalence} if
\begin{enumerate}[(a)]
\item $\varphi$ is a homeomorphism,
\item $\Omega_g \cap \Omega_{g'} = \emptyset$ for all $g \ne g'$ with $\Phi(g)=\Phi(g')$,
\item $\Omega'_h=\bigcup_{\Phi(g)=h} \varphi(\Omega_g)$ for all $h \in H$.
\end{enumerate} 
If, moreover, $\Phi$ is injective, the pair $(\varphi,\Phi)$ will be called a \textit{direct quasi-conjugacy}. \textit{Dynamical equivalence} and \textit{quasi-conjugacy} are then simply the equivalence relations on partial actions generated by these two non-symmetric relations. It should be obvious that any dynamical property is preserved by dynamical equivalence. In fact, by \cite[Proposition 3.11 and Proposition 3.13]{AL}, dynamical equivalence is exactly the same as isomorphism of the transformation groupoids $\mathcal{G}_\theta$ and $\mathcal{G}_{\theta'}$. Finally, borrowing from \cite{Li2} and \cite{CRS}, $\theta$ and ${\theta'}$ are called \textit{Kakutani equivalent} if there exist full, clopen subspaces $K \subset \Omega$ and $K' \subset \Omega'$ such that the restrictions $\theta\vert_K$ and ${\theta'}\vert_{K'}$ are dynamically equivalent. As was noted just below \cite[Definition 3.24]{AL}, Kakutani equivalence implies Morita equivalence of the associated crossed products.  \exend

\end{definition}

The following results about full subspaces and Kakutani equivalence will be useful later.

\begin{lemma}\label{lem:FullSubset}
Suppose that $\theta \colon G \act \Omega$ is a partial action on a locally compact Hausdorff space, and let $U \subset \Omega$ denote an open, full subspace. Then there is a bijective correspondence
$$\{\text{open $\theta$-invariant subsets of } \Omega\} \to \{\text{open $\theta\vert_U$-invariant subsets of } U\}  \text{ given by } V \mapsto U \cap V.$$
Moreover, for any open and $\theta$-invariant $V \subset \Omega$, the following hold:
\begin{enumerate}
\item[\textup{(1)}] $U \cap V$ is full in $V$.
\item[\textup{(2)}] $U \cap (\Omega \setminus V)$ is full in $\Omega \setminus V$.
\end{enumerate}
\end{lemma}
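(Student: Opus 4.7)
My plan is to produce the inverse of $V \mapsto U \cap V$ explicitly as the ``$\theta$-saturation'' of an open $\theta|_U$-invariant set, and then read off statements (1) and (2) from the same kind of argument. More precisely, for $W \subset U$ open and $\theta|_U$-invariant, I would set
$$\widehat{W} := \bigcup_{g \in G} \theta_g\bigl(W \cap \Omega_{g^{-1}}\bigr) \subset \Omega,$$
which is open because each $\theta_g$ is a homeomorphism of open sets, and which is $\theta$-invariant by the cocycle identity. The two things to check are then $\widehat{W} \cap U = W$ and $\widehat{U \cap V} = V$.

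For $\widehat{W} \cap U = W$: the inclusion $W \subset \widehat{W} \cap U$ is clear by taking $g = 1$. Conversely, a point in $\widehat{W} \cap U$ has the form $y = \theta_g(x)$ with $x \in W \cap \Omega_{g^{-1}} \subset U \cap \Omega_{g^{-1}}$ and $y \in U$, hence $y \in U_g = \theta_g(U \cap \Omega_{g^{-1}}) \cap U$ with $(\theta|_U)_{g^{-1}}(y) = x \in W$, so $y \in W$ by $\theta|_U$-invariance. For $\widehat{U \cap V} = V$: the inclusion $\subset$ is immediate from $\theta$-invariance of $V$. For $\supset$, take $y \in V$; fullness of $U$ in $\Omega$ gives $g \in G$ and $x \in U \cap \Omega_{g^{-1}}$ with $y = \theta_g(x)$, and since $V$ is $\theta$-invariant and $y \in V \cap \Omega_g$, applying $\theta_{g^{-1}}$ yields $x \in V$; thus $x \in (U \cap V) \cap \Omega_{g^{-1}}$ and $y \in \widehat{U \cap V}$. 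The bijectivity claim follows, and along the way we have also shown that every $y \in V$ is of the form $\theta_g(x)$ with $x \in (U \cap V) \cap \Omega_{g^{-1}}$, which is exactly assertion (1) since $V$ being $\theta$-invariant means the restricted action $\theta|_V$ has domains $V \cap \Omega_{g^{-1}}$.

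For (2), let $Z := \Omega \setminus V$, which is closed and $\theta$-invariant, so $\theta|_Z$ has domains $Z \cap \Omega_{g^{-1}}$. Given $y \in Z$, fullness of $U$ in $\Omega$ produces $g \in G$ and $x \in U \cap \Omega_{g^{-1}}$ with $y = \theta_g(x)$; if $x \in V$ then $y = \theta_g(x) \in V$ by $\theta$-invariance, contradicting $y \in Z$, so in fact $x \in U \cap Z \cap \Omega_{g^{-1}}$, proving fullness of $U \cap Z$ in $Z$.

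The only mildly subtle point is keeping the various ``domain conventions'' straight: $\theta|_U$ has domains $U \cap \theta_g(U \cap \Omega_{g^{-1}})$ rather than $U \cap \Omega_{g^{-1}}$, whereas $\theta|_V$ and $\theta|_Z$ (for invariant $V$ and $Z$) have domains $V \cap \Omega_{g^{-1}}$ and $Z \cap \Omega_{g^{-1}}$. I do not foresee a real obstacle beyond bookkeeping; fullness of $U$ is used exactly once, to produce the preimage point in the key step of $\widehat{U \cap V} \supset V$ and of (2).
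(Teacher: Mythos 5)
Your proposal is correct and follows essentially the same route as the paper: both construct the inverse as the $\theta$-saturation $\bigcup_{g}\theta_g(W\cap\Omega_{g^{-1}})$, verify the two compositions, and extract (1) from the fullness step. The only cosmetic difference is that you prove (2) directly, while the paper deduces it from (1) and fullness of $U$.
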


\begin{proof}
Any intersection $U \cap V$ clearly defines an open $\theta\vert_U$-invariant subset of $U$, so we simply have to build an inverse. Suppose that $W \subset U$ is open and $\theta\vert_U$-invariant, and define an open and $\theta$-invariant subset of $\Omega$ by 
$$V:=\bigcup_{g \in G}\theta_g(W \cap \Omega_{g^{-1}}).$$
Observe that if $x \in W \cap \Omega_{g^{-1}}$, then either $\theta_g(x) \notin U$ or $\theta_g(x) \in W$, hence
$$U \cap V= \bigcup_{g \in G} U \cap \theta_g(W \cap \Omega_{g^{-1}}) = W.$$
Now whenever $y \in V$, then by fullness of $U$, we have $y = \theta_h(x)$ for some $h \in G$ and $x \in U \cap \Omega_{h^{-1}}$. From invariance, we see that $x \in V$, so $y \in \theta_h(U \cap V \cap \Omega_{h^{-1}})$. Consequently
$$\bigcup_{g \in G} \theta_g(U \cap V \cap \Omega_{g^{-1}}) = V,$$
so the above map is indeed a bijective correspondence, and (1) holds. (2) then follows immediately from (1) and fullness of $U$.
\end{proof}

\begin{lemma}\label{lem:KETopFree}
Topological freeness is preserved under Kakutani equivalence.
\end{lemma}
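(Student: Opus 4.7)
The plan is to decompose Kakutani equivalence into its two constituent operations---restriction to a full clopen subspace, and dynamical equivalence---and show that topological freeness passes through each. Writing $\theta \colon G \act \Omega$ and $\theta' \colon H \act \Omega'$ with full clopen subspaces $K \subset \Omega$, $K' \subset \Omega'$ such that $\theta\vert_K$ and $\theta'\vert_{K'}$ are dynamically equivalent, it suffices to verify: (a) $\theta$ is topologically free iff $\theta\vert_K$ is; and (b) dynamical equivalence preserves topological freeness. Applying (a) again to $\theta'$ and $K'$ then closes the argument.

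For (a), the forward direction is immediate: any nonempty open subset $U \subset K^g$ is open in $\Omega$ and contained in $\Omega^g$. For the converse, suppose $\theta\vert_K$ is topologically free and let $U \subset \Omega^g$ be open nonempty with $g \ne 1$. Fullness of $K$ gives $\Omega = \bigcup_h \theta_h(K \cap \Omega_{h^{-1}})$, so $U$ meets some $\theta_h(K \cap \Omega_{h^{-1}})$ in a nonempty open set $V$. A routine verification using the partial action axioms shows that $V' := \theta_{h^{-1}}(V)$ is a nonempty open subset of $K$ fixed pointwise by $\theta_{h^{-1}gh}$; since $g \ne 1$ forces $h^{-1}gh \ne 1$, this contradicts topological freeness of $\theta\vert_K$.

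For (b), since dynamical equivalence is generated by direct dynamical equivalences, it suffices to treat a direct dynamical equivalence $(\varphi,\Phi) \colon (G \act \Omega) \to (H \act \Omega')$ in both directions. The $\Phi$-equivariance of $\varphi$ together with the domain-disjointness axiom yields the decomposition
$$(\Omega')^h = \bigsqcup_{\Phi(k)=h} \varphi(\Omega^k)$$
sitting inside the disjoint open cover $\Omega'_{h^{-1}} = \bigsqcup_{\Phi(k)=h} \varphi(\Omega_{k^{-1}})$. If $\theta$ is topologically free and $(\Omega')^h$ had nonempty interior for some $h \ne 1_H$, intersecting with one of the open sets $\varphi(\Omega_{k^{-1}})$ (necessarily with $k \ne 1$) would yield, via $\varphi^{-1}$, a nonempty open subset of $\Omega^k$, contradicting topological freeness of $\theta$. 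Conversely, if $\theta'$ is topologically free and $g \ne 1$, then either $\Phi(g) \ne 1$ and the inclusion $\varphi(\Omega^g) \subset (\Omega')^{\Phi(g)}$ from equivariance precludes nonempty interior of $\Omega^g$, or $g \in \ker\Phi \setminus \{1\}$, in which case the disjointness axiom forces $\Omega_g = \Omega_g \cap \Omega = \emptyset$, so that $\Omega^g$ is empty.

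The main obstacle is the converse direction of (b) when $\ker\Phi$ is nontrivial, but this is resolved cleanly by the domain-disjointness axiom of direct dynamical equivalence, which renders $\Omega_g$ empty for every nontrivial $g \in \ker\Phi$.
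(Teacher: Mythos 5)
Your proof is correct and follows essentially the same route as the paper's: decompose Kakutani equivalence into (i) passage to a full clopen subspace, handled by conjugating the group element by the $h$ with $x=\theta_h(y)$, $y \in K$, and (ii) invariance under direct dynamical equivalence. The only differences are cosmetic --- you argue (i) contrapositively where the paper perturbs a fixed point directly, and you spell out (ii) (including the $\ker\Phi$ case, where the domain-disjointness axiom forces $\Omega_g=\emptyset$) where the paper simply declares it obvious.
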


\begin{proof}
Obviously, topological freeness is preserved under direct dynamical equivalence, so we simply have to show that a partial action $\theta$ is topologically free if and only if a $G$-full restriction $\theta\vert_K$ to a clopen subspace $K \subset \Omega$ is topologically free. Assume the latter and take some $x \in \Omega^g$ along with an open neighbourhood $U \subset \Omega_{g^{-1}}$ of $x$. By fullness, there exists some $h \in G$ and $y \in K \cap \Omega_{h^{-1}}$ such that $x=\theta_h(y)$. We then define an open neighbourhood of $y$ in $K$ by
$$V:= \theta_{h^{-1}}(U \cap \theta_h(K \cap \Omega_{h^{-1}}))$$
and observe that $y \in K^{h^{-1}gh}$. Now since $\theta\vert_K$ is topologically free, we can find $y' \in V \cap K_{h^{-1}g^{-1}h}$ such that $\theta_{h^{-1}gh}(y') \ne y'$. Setting $x':=\theta_h(y') \in U$, we then have
$$x'=\theta_h(y') \ne \theta_h(\theta_{h^{-1}gh}(y'))=\theta_{gh}(y')=\theta_g(x')$$
as desired. The other implication is trivial. \end{proof}

\begin{corollary}\label{cor:KEEssFree}
Essential freeness is preserved under Kakutani equivalence.
\end{corollary}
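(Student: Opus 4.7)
The plan is to reduce to the defining moves of Kakutani equivalence: a direct dynamical equivalence between two partial actions, followed (or preceded) by restriction to a full clopen subspace. Direct dynamical equivalence is an isomorphism of transformation groupoids and so trivially transfers essential freeness, hence it suffices to show that for a partial action $\theta \colon G \act \Omega$ and a $G$-full clopen subspace $K \subset \Omega$, $\theta$ is essentially free if and only if $\theta\vert_K$ is essentially free.

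The key observation is that restriction to $K$ interacts well with passing to closed invariant subsets. Given a closed $\theta$-invariant $Z \subset \Omega$, the intersection $K \cap Z$ is clopen in $Z$, is $\theta\vert_Z$-invariant (straight from the fact that $K$ is $\theta\vert_K$-invariant in $K$ and that $Z$ is $\theta$-invariant), and is $G$-full in $Z$ by part (2) of Lemma~\ref{lem:FullSubset}. Moreover, the bijective correspondence $V \mapsto K \cap V$ of Lemma~\ref{lem:FullSubset} dualises to a bijection between closed $\theta$-invariant subsets of $\Omega$ and closed $\theta\vert_K$-invariant subsets of $K$, and under this bijection one has $(\theta\vert_K)\vert_{K \cap Z} = (\theta\vert_Z)\vert_{K \cap Z}$.

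Assume first that $\theta$ is essentially free, and let $Z' \subset K$ be closed and $\theta\vert_K$-invariant. Write $Z' = K \cap Z$ for the unique closed $\theta$-invariant $Z \subset \Omega$. Then $\theta\vert_Z$ is topologically free by hypothesis, and $Z' \subset Z$ is $G$-full and clopen, so Lemma~\ref{lem:KETopFree} yields that $(\theta\vert_Z)\vert_{Z'} = (\theta\vert_K)\vert_{Z'}$ is topologically free. Thus $\theta\vert_K$ is essentially free. Conversely, assume $\theta\vert_K$ is essentially free, and let $Z \subset \Omega$ be closed and $\theta$-invariant. Then $(\theta\vert_K)\vert_{K \cap Z} = (\theta\vert_Z)\vert_{K \cap Z}$ is topologically free, and since $K \cap Z$ is $G$-full and clopen in $Z$, a second application of Lemma~\ref{lem:KETopFree} gives that $\theta\vert_Z$ is topologically free. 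Hence $\theta$ is essentially free.

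I do not anticipate any real obstacle: the statement is essentially bookkeeping once the open/closed invariant bijection and the stability of topological freeness under Kakutani equivalence are in hand. The only point that requires a moment of care is verifying that the restriction of $\theta\vert_K$ to $K \cap Z$ genuinely coincides with the restriction of $\theta\vert_Z$ to $K \cap Z$ (so that the two lemmas can be chained), which comes down to unpacking the definitions of the domains $(\Omega_g \cap K)$ and $(\Omega_g \cap Z)$.
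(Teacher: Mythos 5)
Your proof is correct and follows the same route as the paper: reduce to the case of a clopen $G$-full subspace $K \subset \Omega$, then combine the invariant-subset correspondence and fullness statements of Lemma~\ref{lem:FullSubset} with Lemma~\ref{lem:KETopFree}. The paper leaves the bookkeeping implicit where you spell it out (including the check that $(\theta\vert_K)\vert_{K\cap Z}=(\theta\vert_Z)\vert_{K\cap Z}$), but the argument is the same.
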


\begin{proof}
Once again, it suffices to verify the claim for a partial action $\theta \colon G \act \Omega$ and the restriction $\theta\vert_K$ to a clopen, full subset $K \subset \Omega$. But then the claim follows immediately from Lemma~\ref{lem:FullSubset} and Lemma~\ref{lem:KETopFree}.
\end{proof}

We now describe the partial action giving rise to a crossed product description of $\mathcal{O}(E,C)$.

\begin{definition}\label{def:DynamicalPicture}
Suppose that $(E,C)$ is a finitely separated graph, and let $\mathbb{F}$ denote the free group on $E^1$. Also, denote by $E_{\textup{iso}}^0$ the set of isolated vertices with the discrete topology. Given $\xi \subset \mathbb{F}$ and $\alpha \in \xi$, the \textit{local configuration} $\xi_\alpha$ of $\xi$ at $\alpha$ is the set
$$\xi_\alpha:=\{\sigma \in E^1 \sqcup (E^1)^{-1} \mid \sigma \in \xi \cdot \alpha^{-1}\}.$$
Then $\Omega(E,C)$ is the disjoint union of $E_{\textup{iso}}^0$ and the set of $\xi \subset \mathbb{F}$ satisfying the following:
\begin{enumerate}
\item[(a)] $1 \in \xi$.
\item[(b)] $\xi$ is \textit{right-convex}. In view of (a), this exactly means that if $e_n^{\varepsilon_n} \cdots e_1^{\varepsilon_1} \in \xi$ for $e_i \in E^1$ and $\varepsilon_i \in \{\pm 1\}$, then $e_m^{\varepsilon_m} \cdots e_1^{\varepsilon_1} \in \xi$ as well for any $1 \le m < n$.
\item[(c)] For every $\alpha \in \xi$, there is some $v \in E^0$ and distinguished $e_X \in X$ for each $X \in C_v$, such that 
$$\xi_\alpha = s^{-1}(v) \sqcup \{e_X^{-1} \mid X \in C_v \}.$$
\end{enumerate}
$\Omega(E,C)$ is made into a topological space by regarding it as a subspace of $\{0,1\}^\mathbb{F} \sqcup E_{\textup{iso}}^0$. Thus it becomes a zero-dimensional, locally compact Hausdorff space, which is compact if and only if $E^0$ is a finite set. A topological partial action $\theta=\theta^{(E,C)} \colon \mathbb{F} \act \Omega(E,C)$ with compact-open domains is then defined by setting
\begin{itemize}
\item $\Omega(E,C)_\alpha := \{\xi \in \Omega(E,C) \setminus E_{\textup{iso}}^0 \mid \alpha^{-1} \in \xi\}$ for $\alpha \ne 1$,
\item $\theta_\alpha(\xi) := \xi \cdot \alpha^{-1}$ for $\xi \in \Omega(E,C)_{\alpha^{-1}}$.
\end{itemize}
In case $(E,C)$ is a finite bipartite graph, this partial action is conjugate to the one defined in \cite{AE} under the map $\xi \mapsto \xi^{-1}$. We choose to invert the configurations so that the terminologies related to the algebras and the dynamical systems agree. We set
$\Omega(E,C)_{s(e)}:= \Omega(E,C)_{e^{-1}}$ for every $e \in E^1$ and
$\Omega(E,C)_u := \bigsqcup_{e \in X}\Omega(E,C)_e$ for every  $X \in C_u$. Note that this is well-defined due to the above condition (c). If $u$ is an isolated vertex, we simply set $\Omega(E,C)_u:=\{u\}$. Finally, in the case of a trivial separation, we will write $\Omega(E):=\Omega(E,\mathcal{T})$ and $\theta^E := \theta^{(E,\mathcal{T})}$.
\end{definition}

\begin{remark}
The partial action $\theta^E$ is easily seen to be conjugate to the canonical partial action of $\mathbb{F}$ on the boundary path space $\partial E$ (see \cite{CL} and adjust the definition to the Raeburn-convention). However, there are also graphs with non-trivial separations that give rise to boundary path space actions. Indeed, Proposition~\ref{prop:NonSepGraphIso} and Proposition~\ref{prop:GraphCondForOr} together identifies a class of such graphs, where the identification is made by an actual conjugacy. Relaxing conjugacy to dynamical equivalence and considering only finite bipartite graphs, we further strengthen this result with Theorem~\ref{thm:ODege}.
\end{remark}

\begin{remark}\label{rem:CompleteSubgraphMap}
If $(F,D)$ is a complete subgraph of $(E,C)$, then there is a natural $\mathbb{F}(F^1)$-equivariant surjection $p \colon \bigsqcup_{v \in F^0} \Omega(E,C)_v \to \Omega(F,D)$ given by
$$p(\xi) = \left\{\begin{array}{cl}
\xi \cap \mathbb{F}(F^1) &  \If \xi \in \Omega(E,C)_v \text{ for } v \notin F_{\textup{iso}}^0 \\
v & \If \xi \in \Omega(E,C)_v \text{ for } v \in F_{\textup{iso}}^0
\end{array}\right. .$$
Consequently, it induces $\mathbb{F}(F^1)$-equivariant embeddings
$$C_K(\Omega(F,D)) \xrightarrow{p^*} C_K\big(\bigsqcup_{v \in F^0} \Omega(E,C)_v \big) \hookrightarrow C_K(\Omega(E,C))$$
and
$$C_0(\Omega(F,D)) \xrightarrow{p^*} C_0\big(\bigsqcup_{v \in F^0} \Omega(E,C)_v \big) \hookrightarrow C_0(\Omega(E,C))$$
from which we obtain $*$-homomorphisms
$$C_K(\Omega(F,D)) \rtimes \mathbb{F}(F^1) \to C_K(\Omega(E,C)) \rtimes \mathbb{F}(F^1) \to C_K(\Omega(E,C)) \rtimes \mathbb{F}(E^1)$$
and
$$C_0(\Omega(F,D)) \rtimes_{(r)} \mathbb{F}(F^1) \to C_0(\Omega(E,C)) \rtimes_{(r)} \mathbb{F}(F^1) \to C_0(\Omega(E,C)) \rtimes_{(r)} \mathbb{F}(E^1).$$
Finally, observe that taking the limits over the finite complete subgraphs with inclusions, we have 
$$C_K(\Omega(E,C)) \cong \varinjlim_{(F,D)} C_K(\Omega(F,D)) \andspace C_0(\Omega(E,C)) \cong \varinjlim_{(F,D)} C(\Omega(F,D))$$
for any finitely separated graph $(E,C)$, and if $E_{\textup{iso}}^0 = \emptyset$, we have the same approximations when considering only finite complete subgraphs $(F,D)$ with $F_{\textup{iso}}^0=\emptyset$. \exend
\end{remark}

We introduce a bit of terminology related to the closed subspaces of $\Omega(E,C)$, while the definition of $\Omega(E,C)$ is still fresh in mind.

\begin{definition}\label{def:Animal}
An $(E,C)$\textit{-animal} is a right-convex subset $\omega \subset \xi$ of a configuration $\xi \in \Omega(E,C) \setminus E_{\textup{iso}}^0$ such that $\{1\} \subsetneq \omega$. It is called finite if it has finite cardinality, and for any animal $\omega$, we can define a compact subset of $\Omega(E,C)$ by
$$\Omega(E,C)_\omega := \{\xi \in \Omega(E,C) \mid \omega \subset \xi\},$$
which is open if $\omega$ is finite. Given any non-empty subset $\{1\} \ne S \subset \mathbb{F}$ such that $\alpha \cdot \beta^{-1}$ is an admissible path for any pair of distinct $\alpha,\beta \in S \cup \{1\}$, observe that the right-convex closure $\langle S \rangle:=\text{conv}(S \cup \{1\})$ of $S \cup \{1\}$ inside $\mathbb{F}$ defines an $(E,C)$-animal. In order to avoid confusion, the reader should also note that we have the slightly annoying identity $\Omega(E,C)_\alpha=\Omega(E,C)_{\{\alpha^{-1}\}}$. \medskip \\
The \textit{balls} are a particularly important type of animals: An $n$-ball is simply a set of the form $\xi^n:=\{ \alpha \in \xi \colon \vert \alpha \vert \le n\}$ together with the radius $n$ (we sometimes want to distinguish balls with the same underlying set and different radii). If $(E,C)$ is finite, then any finite animal is contained in a ball, and the compact-open subsets $\Omega(E,C)_B$ corresponding to the balls $B$ form a basis for the topology. We will denote the set of $n$-balls by $\mathcal{B}_n(\Omega(E,C))$. \exend
\end{definition}

We now prove that $\theta^{(E,C)}$ does in fact provide a dynamical description of $L_K^{\textup{ab}}(E,C)$ and $\mathcal{O}(E,C)$. While the original proof from \cite{AE} proceeds in a constructive manner, applying the machinery of \cite{ELQ} to translate the defining relations into restrictions on the local configurations $\xi_\alpha$ for $\xi \in \mathcal{P}(\mathbb{F})$ with $1 \in \xi$, we will aim for a more direct and conceptually easier, but also somewhat unmotivated proof.

\begin{theorem}\label{thm:DynDescr}
For any finitely separated graph $(E,C)$, there are canonical isomorphisms
$$L_K^{\textup{ab}}(E,C) \cong C_K(\Omega(E,C)) \rtimes \mathbb{F} \andspace \mathcal{O}(E,C) \cong C_0(\Omega(E,C)) \rtimes \mathbb{F}.$$
\end{theorem}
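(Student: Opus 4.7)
The plan is to construct mutually inverse $*$-homomorphisms between the two sides by invoking the universal properties of $L_K^{\textup{ab}}(E,C)$ (respectively $\mathcal{O}(E,C)$) and of the partial crossed product.

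\medskip

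First I would define the forward map $\Phi$ on generators by $\Phi(v) := 1_{\Omega(E,C)_v}$ for $v \in E^0$ and $\Phi(e) := 1_{\Omega(E,C)_e}\delta_e$ for $e \in E^1$, regarding these as elements of $C_K(\Omega(E,C)) \rtimes \mathbb{F}$ (with an identical recipe for $\mathcal{O}(E,C)$). The defining relations of $L_K^{\textup{ab}}(E,C)$ essentially read off the structure of $\Omega(E,C)$: relations (V) and (E) follow immediately from the definitions of $\Omega(E,C)_v$ and $\Omega(E,C)_e$; (SCK1) reduces to the disjointness $\Omega(E,C)_e \cap \Omega(E,C)_f = \emptyset$ for distinct $e,f$ in a common class $X$, encoded by the uniqueness of the selected edge $e_X \in X$ in condition~(c) of Definition~\ref{def:DynamicalPicture}; (SCK2) is precisely the decomposition $\Omega(E,C)_v = \bigsqcup_{e \in X} \Omega(E,C)_e$; and tameness of $E^1$ is automatic, since every word in the $\Phi(e)$'s and their adjoints collapses to a partial isometry of the form $1_B\delta_g$ with $B \subset \Omega(E,C)_{g^{-1}}$ compact-open. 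The universal property of $L_K^{\textup{ab}}(E,C)$ then produces $\Phi$.

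\medskip

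For the inverse $\Psi$ I would appeal to the universal property of the (algebraic or full $C^*$) partial crossed product, which calls for a covariant pair $(\pi, u)$ in $L_K^{\textup{ab}}(E,C)$. The partial representation $u \colon \mathbb{F} \to L_K^{\textup{ab}}(E,C)$ is determined by sending each generator $e \in E^1$ to $e$, and tameness guarantees that this extends uniquely to a well-defined partial representation of the free group. For $\pi \colon C_K(\Omega(E,C)) \to L_K^{\textup{ab}}(E,C)$, send the indicator $1_{\Omega(E,C)_\alpha}$ of each basic compact-open set to the idempotent $p_\alpha := \alpha\alpha^*$, and send $1_{\{v\}}$ for $v \in E^0_{\textup{iso}}$ to $v$ itself. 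Once $\pi$ is established as well-defined, covariance $u(g)\pi(f)u(g)^* = \pi(\theta_g(f))$ reduces by linearity and multiplicativity to the case $g = e \in E^1$ and $f = 1_{\Omega(E,C)_\alpha}$, where both sides collapse to $p_{e\alpha}$ after a short computation using the standard form of Remark~\ref{rem:StandardForm}. Checking that $\Psi$ and $\Phi$ are mutually inverse on generators is then routine.

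\medskip

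The main obstacle is the well-definedness of $\pi$, i.e., the identification of the $*$-subalgebra $D \subset L_K^{\textup{ab}}(E,C)$ generated by $\{p_\alpha\}$ and the vertex idempotents with $C_K(\Omega(E,C))$. Tameness makes $D$ commutative, and the task is to match its semilattice of idempotents with the clopen subsemilattice of $\Omega(E,C)$ generated by the basic $\Omega(E,C)_\alpha$'s. I would accomplish this by setting up a bijection between the $K$-linear characters of $D$ and the points of $\Omega(E,C)$: a character $\chi$ produces a configuration $\xi_\chi := \{\alpha \in \mathbb{F} : \chi(p_{\alpha^{-1}}) = 1\}$, and the three conditions defining $\Omega(E,C)$ correspond exactly to the algebraic relations in $D$. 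Condition~(a) follows from the existence of a unique vertex $v$ with $\chi(v) = 1$; condition~(b), right-convexity, from the identity $p_\beta p_\gamma = p_\beta$ whenever $\gamma$ is an initial subpath of $\beta$; and condition~(c) encodes precisely (SCK1) (orthogonality of the $ee^*$ within a class $X \in C_v$) together with (SCK2) (their summation to $v$). The converse, that every $\xi \in \Omega(E,C)$ yields a character of $D$, is then straightforward. The $C^*$-case runs in parallel after replacing $C_K$ by $C_0$ and extending by continuity.
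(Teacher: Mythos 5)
Your overall architecture matches the paper's: a forward map out of the tame algebra via its universal property (your $\Phi$ is literally the paper's $\varphi$, with the same verification of (V), (E), (SCK1), (SCK2) and tameness via the collapse $u_{\underline{\alpha}}=u_\alpha$), and an inverse built from a covariant pair. Where you genuinely diverge is the key technical step, the well-definedness of $\pi$ on $C_K(\Omega(E,C))$. The paper presents $C_K(\Omega(E,C))$ as a direct limit of finite-dimensional algebras spanned by indicators of balls and checks two explicit refinement/orthogonality identities for the products $\prod_{\alpha\in\omega}\underline{\alpha}^*\underline{\alpha}$; you instead propose a Stone-duality argument identifying the diagonal subalgebra $D=\langle p_\alpha\rangle$ with $C_K(\Omega(E,C))$ by matching characters with configurations. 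Your route is viable and arguably more conceptual, but be aware that it does not avoid the combinatorial core: verifying that $\xi_\chi$ satisfies condition (c) of Definition~\ref{def:DynamicalPicture} (existence \emph{and} uniqueness of $e_X$, plus $s^{-1}(v)\subset(\xi_\chi)_\alpha$) requires exactly the two identities the paper records, so you should write them out. Three further points need care. First, the ``straightforward'' converse (every $\xi$ yields a character of $D$) is circular if you try to define $\chi_\xi$ directly on the span of the $p_\alpha$'s, since that presupposes the well-definedness you are trying to prove; the clean fix is to obtain $\chi_\xi$ as $\mathrm{ev}_\xi\circ\Phi\vert_D$, using the forward map you have already built. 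Second, over a general field $K$ there is no Gelfand duality, so you must separately argue that characters separate points of $D$ (this does hold because $D$ is spanned by commuting idempotents: disjointify, and extend any nonzero idempotent to an ultrafilter) and that the image of the resulting transform is all of $C_K(\Omega(E,C))$, which uses that the sets $\Omega(E,C)_\omega$ generate the compact-opens. Third, the reduction of covariance to the generators $e\in E^1$ is not purely formal; it rests on $e\mapsto e$ extending to a \emph{semi-saturated} partial representation of $\mathbb{F}$ (the paper invokes \cite[Propositions 12.13 and 9.8]{Exel} for precisely this), so cite or prove that step rather than asserting it. With those repairs your argument is complete and gives a legitimate alternative to the paper's finite-dimensional approximation.
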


\begin{proof}
We may assume without loss of generality that $E_{\textup{iso}}^0 = \emptyset$. Denote by $1_\alpha$ and $1_v$ the indicator function on $\Omega(E,C)_\alpha=\{\xi \in \Omega(E,C) \mid \alpha^{-1} \in \xi\}$ (remember the slightly confusing inversion) and $\Omega(E,C)_v$, respectively, and write $u_\alpha:=1_\alpha \delta_\alpha$ for all $\alpha \in \mathbb{F}$. We then consider the elements $u_e$ and $p_v:=1_v$ in $C_K(\Omega(E,C)) \rtimes \mathbb{F}$ for $e \in E^1$ and $v \in E^0$, claiming that they form a tame $(E,C)$-family. (V) and (E) are both clear, while (SCK1) follows from the calculation
$$u_e^*u_f=u_{e^{-1}}u_f=\theta_{e^{-1}}^*(1_e1_f)u_{e^{-1}f}=\delta_{e,f} 1_{e^{-1}} = \delta_{e,f}p_{s(e)}$$
whenever $[e]=[f]$. Noting that $u_eu_e^*=u_eu_{e^{-1}}=1_e$ for $e \in E^1$, we also see that 
$$\sum_{e \in X} u_eu_e^*=\sum_{e \in X} 1_e=p_v$$
for any $v \in E^0$ and $X \in C_v$, so that (SCK2) is satisfied. Now let $\alpha$ denote any reduced product of edges and inverse edges $\alpha=\alpha_n \cdots \alpha_1$. We then introduce the notation $\underline{e}:=e$, $\underline{e^{-1}}:=e^*$ and $\underline{\alpha}:=\underline{\alpha_n} \cdots \underline{\alpha_1}$, and claim that 
$$u_{\underline{\alpha}}:=u_{\underline{\alpha_n}} \cdots u_{\underline{\alpha_1}} = u_\alpha.$$
Assuming the claim holds for products of length $n-1$ and writing $\beta=\alpha_{n-1} \cdots \alpha_1$, we see that
$$u_{\underline{\alpha}}=u_{\underline{\alpha_n}}u_{\underline{\beta}}=1_{\alpha_n} 1_{\alpha} u_\alpha=u_\alpha,$$
where we used right-convexity to conclude that $1_{\alpha_n} 1_{\alpha}=1_{\alpha}$. It follows that 
$$u_{\underline{\alpha}}u_{\underline{\alpha}}^*=u_\alpha u_\alpha^*=1_\alpha,$$
so in particular the set $\{u_e \mid e \in E^1\}$ is tame. From universality, we therefore obtain a $*$-homomorphism $\varphi \colon L_K^{\textup{ab}}(E,C) \to C_K(\Omega(E,C)) \rtimes \mathbb{F}$ given by $e \mapsto u_e$ and $v \mapsto p_v$.  \medskip \\
We now begin the construction of an inverse by first building a $*$-homomorphism 
$$\rho \colon C_K(\Omega(E,C)) \to L_K^{\textup{ab}}(E,C).$$
To this end, let $(F,D)$ denote any finite complete subgraph with $F_{\textup{iso}}^0=\emptyset$, let $n \ge 1$ and write 
$$1_B:=\prod_{\alpha \in B} 1_{\alpha^{-1}} \in C_K(\Omega(F,D))$$
for any $B \in \mathcal{B}_n(\Omega(F,D))$; $1_B$ is merely the indicator function on the subspace $\Omega(F,D)_B$. We then define finite-dimensional subalgebras
$$\mathfrak{B}_n^{(F,D)}:=\text{span}\{1_B \mid B \in \mathcal{B}_n(\Omega(F,D))\} \subset C_K(\Omega(F,D))$$
along with inclusions $\phi_n^{(F,D)} \colon \mathfrak{B}_n^{(F,D)} \to \mathfrak{B}_{n+1}^{(F,D)}$ given by
$$\phi_n^{(F,D)}(1_B)=\sum_{B \subset B' \in \mathcal{B}_{n+1}(\Omega(F,D))} 1_{B'},$$
and observe that $C_K(\Omega(F,D)) = \varinjlim_n \mathfrak{B}_n^{(F,D)}$. Now if $(F,D) \subset (G,L)$, then the inclusion of Remark~\ref{rem:CompleteSubgraphMap} restricts to an inclusion $\mathfrak{B}_n^{(F,D)} \hookrightarrow \mathfrak{B}_n^{(G,L)}$, which makes the diagram

\begin{center}
\begin{tikzpicture}[>=angle 90]
\matrix(a)[matrix of math nodes,
row sep=2.5em, column sep=3.5em,
text height=2.5ex, text depth=0.25ex]
{\mathfrak{B}_n^{(F,D)} & \mathfrak{B}_{n+1}^{(F,D)} \\
\mathfrak{B}_n^{(G,L)} & \mathfrak{B}_{n+1}^{(G,L)} \\};
\path[->](a-1-1) edge node[above]{$\phi_n^{(F,D)}$} (a-1-2);
\path[->](a-2-1) edge node[below]{$\phi_n^{(G,L)}$} (a-2-2);
\path[right hook->](a-1-1) edge node[left]{} (a-2-1);
\path[right hook->](a-1-2) edge node[right]{} (a-2-2);
\end{tikzpicture}
\end{center}
commute. In conclusion, defining a $*$-homomorphism out of $C_K(\Omega(E,C))$ is the same as defining a family of $*$-homomorphisms out of the algebras $\mathfrak{B}_n^{(F,D)}$ that respects both the horizontal and vertical maps above. Now consider the self-adjoint linear map 
$$\rho_n^{(F,D)} \colon \mathfrak{B}_n^{(F,D)} \to L_K^{\textup{ab}}(E,C) \quad \text{given by} \quad \rho_n^{(F,D)}(1_B)=\prod_{\alpha \in B} \underline{\alpha}^*\underline{\alpha},$$
which is well-defined since $E^1 \subset L_K^{\textup{ab}}(E,C)$ is tame (see for instance \cite[Proposition 12.8]{Exel}). Checking that $\rho_n^{(F,D)}$ is also multiplicative exactly amounts to showing 
$$\rho_n^{(F,D)}(1_{B_1})\rho_n^{(F,D)}(1_{B_2})=0$$
for all $B_1 \ne B_2$. Since
$$\rho_n^{(F,D)}(1_B) \le \rho_m^{(F,D)}\big(1_{B^m}\big),$$
where $B^m:=\{\alpha \in B \colon \vert \alpha \vert \le m\}$, for all $m \le n$, we may assume that $B_1^{n-1} = B_2^{n-1}$. Consequently there is some $\beta \in B_1,B_2$ of length $\vert \beta \vert = n-1$ and $X \in C_{r(\beta)}$ with distinct $x_1,x_2 \in X$ such that $\alpha_1 := x_1^{-1}\beta \in B_1$ and $\alpha_2:=x_2^{-1}\beta \in B_2$. We see that 
$$\underline{\alpha_1}^*\underline{\alpha_1}\hspace{1pt}\underline{\alpha_2}^*\underline{\alpha_2} = x_1\underline{\beta}^*\underline{\beta}x_1^*x_2\underline{\beta}^*\underline{\beta}x_2=0,$$
so $\rho_n^{(F,D)}(1_{B_1})\rho_n^{(F,D)}(1_{B_2})=0$ as well. In order to see that these $*$-homomorphisms respect both the vertical and horizontal inclusions above, simply note that both follow from an inductive application of the following observations. Given any finite $(E,C)$-animal $\omega$, the following hold:
\begin{enumerate}
\item If $1 \ne \beta \in \omega$ and $\omega_\beta \cap X^{-1} = \emptyset$ for some $X \in C_{r(\beta)}$, then
$$\prod_{\alpha \in \omega} \underline{\alpha}^*\underline{\alpha}=\sum_{x \in X}\prod_{\alpha \in \omega \cup \{x^{-1}\beta\}} \underline{\alpha}^*\underline{\alpha}.$$
\item If $1 \ne \beta \in \omega$ and $e \notin \omega_\beta$ for some $e \in s^{-1}(r(\beta))$, then
$$\prod_{\alpha \in \omega}\underline{\alpha}^*\underline{\alpha} = \prod_{\alpha \in \omega \cup \{e \beta\}}\underline{\alpha}^*\underline{\alpha}.$$
\end{enumerate}

We thereby obtain a unique $*$-homomorphism $\rho \colon C_K(\Omega(E,C)) \to L_K^{\textup{ab}}(E,C)$ characterised by $\rho(1_\alpha)=\underline{\alpha}\hspace{1pt}\underline{\alpha}^*$ for any $\alpha \in \mathbb{F}$. Now observe that 
$$\varphi \circ \rho(1_\alpha)=\varphi(\underline{\alpha}\hspace{1pt}\underline{\alpha}^*)=u_{\underline{\alpha}}u_{\underline{\alpha}}^*=1_\alpha$$
for any $\alpha$, so the composition $\varphi \circ \rho$ is nothing but the inclusion $C_K(\Omega(E,C)) \hookrightarrow C_K(\Omega(E,C)) \rtimes \mathbb{F}$. Since $E^1$ is tame in $L_K^{\textup{ab}}(E,C)$, by the implication (iii)$\Rightarrow$(i) of \cite[Proposition 12.13]{Exel} which holds in an arbitrary unital $*$-algebra, there is a semi-saturated partial representation $\sigma$ of $\mathbb{F}$ on the unitalisation of $L_K^{\textup{ab}}(E,C)$ given by $\sigma(\alpha):=\underline{\alpha}$ for all $\alpha \ne 1$, so that $\rho(1_\alpha)=p(\alpha):=\sigma(\alpha)\sigma(\alpha)^*$. We claim that the pair $(\rho,\sigma)$ is a covariant representation. It suffices to check that 
$$\sigma(\alpha)\rho(1_{\alpha^{-1}} 1_\beta)\sigma(\alpha)^*=\rho(\theta_\alpha^*(1_{\alpha^{-1}}1_\beta))$$
for all $\alpha,\beta \in \mathbb{F}$, and from \cite[Proposition 9.8(iii)]{Exel}, we have $p(\beta)\sigma(\alpha)^*=\sigma(\alpha)^*p(\alpha\beta)$. We now see that
\begin{align*}
\sigma(\alpha)\rho(1_{\alpha^{-1}}1_\beta)\sigma(\alpha)^* &= \sigma(\alpha)p(\alpha^{-1})p(\beta)\sigma(\alpha)^*= \sigma(\alpha)p(\beta)\sigma(\alpha)^* = \sigma(\alpha)\sigma(\alpha)^* p(\alpha \cdot\beta) \\
&=p(\alpha)p(\alpha \cdot \beta)=\rho(1_\alpha 1_{\alpha \cdot \beta})=\rho(\theta_\alpha(1_{\alpha^{-1}}1_{\beta}))
\end{align*}
as desired, so there is an induced $*$-homomorphism $\rho \times \sigma \colon C_K(\Omega(E,C)) \rtimes \mathbb{F} \to L_K^{\textup{ab}}(E,C)$. Since
$$(\rho \times \sigma) \circ \varphi(e)=\rho \times \sigma(1_e\delta_e)=ee^*e=e$$
for all $e \in E^1$, we have $(\rho \times \sigma) \circ \varphi=\id$. Moreover, the fact that $\varphi \circ \rho$ is simply the inclusion $C_K(\Omega(E,C)) \hookrightarrow C_K(\Omega(E,C)) \rtimes \mathbb{F}$ together with the observation
$$\varphi \circ (\rho \times \sigma)(u_\alpha)=\varphi(\underline{\alpha}\hspace{1pt}\underline{\alpha}^*\underline{\alpha})=\varphi(\underline{\alpha})=u_{\underline{\alpha}}=u_\alpha$$
for all $\alpha \in \mathbb{F}$ implies that $\varphi \circ (\rho \times \sigma) = \id$ as well. It follows that $L_K^{\textup{ab}}(E,C) \cong C_K(\Omega(E,C)) \rtimes \mathbb{F}$ as desired, and the $C^*$-case is completely similar.
\end{proof}

We now see that the $*$-homomorphisms coming from inclusions of complete subgraphs are simply those of Remark~\ref{rem:CompleteSubgraphMap}.

\begin{lemma}\label{lem:MorphDesc}
Let $(E,C)$ denote a finitely separated graph, and consider an embedding of a complete subgraph $(F,D) \stackrel{\iota}{\hookrightarrow} (E,C)$. Then the $*$-homomorphisms 
$$L_K^{\textup{ab}}(\iota) \colon L_K^{\textup{ab}}(F,D) \to L_K^{\textup{ab}}(E,C) \andspace \mathcal{O}(\iota) \colon \mathcal{O}(F,D) \to \mathcal{O}(E,C)$$
are exactly the ones of Remark~\ref{rem:CompleteSubgraphMap}. Consequently, there is a unique $*$-homomorphism 
$$\mathcal{O}^r(\iota) \colon \mathcal{O}^r(F,D) \to \mathcal{O}^r(E,C)$$
making the diagram
\begin{center}
\begin{tikzpicture}[>=angle 90]
\matrix(a)[matrix of math nodes,
row sep=2.5em, column sep=3.5em,
text height=2.5ex, text depth=0.25ex]
{\mathcal{O}(F,D) & \mathcal{O}(E,C) \\
\mathcal{O}^r(F,D) & \mathcal{O}^r(E,C) \\};
\path[->](a-1-1) edge node[above]{$\mathcal{O}(\iota)$} (a-1-2);
\path[->](a-2-1) edge node[below]{$\mathcal{O}^r(\iota)$} (a-2-2);
\path[->>](a-1-1) edge node[left]{} (a-2-1);
\path[->>](a-1-2) edge node[right]{} (a-2-2);
\end{tikzpicture}
\end{center}
commute.
\end{lemma}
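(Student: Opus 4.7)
My plan is to appeal to the crossed product isomorphisms of Theorem~\ref{thm:DynDescr} and then verify that the two maps in question agree on the generating set $F^0 \sqcup F^1$. Under the identifications
\[L_K^{\textup{ab}}(F,D) \cong C_K(\Omega(F,D)) \rtimes \mathbb{F}(F^1), \qquad L_K^{\textup{ab}}(E,C) \cong C_K(\Omega(E,C)) \rtimes \mathbb{F}(E^1),\]
and similarly for $\mathcal{O}$ with $C_K$ replaced by $C_0$, an edge $e \in F^1$ corresponds to $1_e^{(F,D)}\delta_e$ and a vertex $v \in F^0$ to $1_v^{(F,D)}$. The functorial map $L_K^{\textup{ab}}(\iota)$ sends these to $1_e^{(E,C)}\delta_e$ and $1_v^{(E,C)}$, respectively, while the composition from Remark~\ref{rem:CompleteSubgraphMap} sends them to $p^*(1_e^{(F,D)})\delta_e$ and $p^*(1_v^{(F,D)})$, where $p^*$ denotes pullback along $p$ followed by extension by zero outside $\mathrm{dom}(p) = \bigsqcup_{w \in F^0}\Omega(E,C)_w$. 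The entire claim therefore reduces to the two pointwise identities
\[p^*(1_e^{(F,D)}) = 1_e^{(E,C)} \andspace p^*(1_v^{(F,D)}) = 1_v^{(E,C)}\]
in $C_K(\Omega(E,C))$.

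The edge identity is the core calculation: as $r(e) \in F^0$, we have $\Omega(E,C)_e \subseteq \Omega(E,C)_{r(e)} \subseteq \mathrm{dom}(p)$, and for $\xi$ in this subspace $p(\xi) \in \Omega(F,D)_e$ is equivalent to $e^{-1} \in p(\xi) = \xi \cap \mathbb{F}(F^1)$, hence, since $e \in F^1$, to $e^{-1} \in \xi$, i.e.\ $\xi \in \Omega(E,C)_e$. The vertex identity follows the same template: for $v \notin F_{\textup{iso}}^0$, I would decompose $\Omega(E,C)_v = \bigsqcup_{e \in X}\Omega(E,C)_e$ and $\Omega(F,D)_v = \bigsqcup_{e' \in X'}\Omega(F,D)_{e'}$ for suitable $X \in C_v$ and $X' \in D_v$, where completeness of the subgraph is precisely what guarantees the matching of index sets before applying the edge identity entry-by-entry; while the case $v \in F_{\textup{iso}}^0$ is immediate, since $p^{-1}(\{v\}) = \Omega(E,C)_v$ by the very definition of $p$.

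For the reduced quotient, the reduced version of the construction in Remark~\ref{rem:CompleteSubgraphMap} supplies a $*$-homomorphism $\psi \colon \mathcal{O}^r(F,D) \to \mathcal{O}^r(E,C)$ built from the same equivariant pullback $p^*$ along with the group inclusion $\mathbb{F}(F^1) \hookrightarrow \mathbb{F}(E^1)$, and it tautologically intertwines the two regular representations because the Remark produces the full and reduced maps by identical recipes. Setting $\mathcal{O}^r(\iota) := \psi$ gives existence, and uniqueness is immediate from surjectivity of the regular representation $\mathcal{O}(F,D) \twoheadrightarrow \mathcal{O}^r(F,D)$. The only delicate point I anticipate is the bookkeeping in the vertex identity between $F_{\textup{iso}}^0$ and $E_{\textup{iso}}^0$, and between $C_v$ and $D_v$; everything else is a routine unwinding of Theorem~\ref{thm:DynDescr} and the definitions.
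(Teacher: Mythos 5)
Your proposal is correct and follows exactly the route the paper takes: the paper's entire proof is the one-line observation that $L_K^{\textup{ab}}(\iota)$ and $\mathcal{O}(\iota)$ agree with the maps of Remark~\ref{rem:CompleteSubgraphMap} on the generators, and your argument is precisely the detailed verification of that claim (the identities $p^*(1_e^{(F,D)})=1_e^{(E,C)}$ and $p^*(1_v^{(F,D)})=1_v^{(E,C)}$, using completeness for the vertex case), together with the standard deduction of the reduced square from surjectivity of the regular representation.
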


\begin{proof}
Simply observe that $L_K^{\textup{ab}}(\iota)$ and $\mathcal{O}(\iota)$ agree with the homomorphisms of Remark~\ref{rem:CompleteSubgraphMap} on the generators.
\end{proof}

We can also characterise $\theta^{(E,C)}$ by a very useful universal property, corresponding to the universal property of $\mathcal{O}(E,C)$. Recall that whenever $\{\theta_a \mid a \in A\}$ is a family of homeomorphisms of open subspaces of a space $\Omega$, there is a canonical partial action of $\mathbb{F}(A)$ on $\Omega$: If $\alpha=a_n^{\varepsilon_n} \cdots a_1^{\varepsilon_1}$ is a reduced word, then $\theta_\alpha:= \theta_{a_n}^{\varepsilon_n} \cdots \theta_{a_1}^{\varepsilon_1}$, where $\cdot$ denotes the maximal composition of two functions. It is verified in \cite[Proposition 4.7]{Exel} that this does indeed define a partial action.

\begin{definition}
Suppose that $\Omega$ is a locally compact Hausdorff space and $(E,C)$ is a finitely separated graph. An $(E,C)$\textit{-action} on $\Omega$ is the canonical action of $\mathbb{F}=\mathbb{F}(E^1)$ induced by a family $\{\theta_e \colon \Omega_{e^{-1} } \to \Omega_e \mid e \in E^1\}$ of partial homeomorphisms of compact open subspaces with the following properties:
\begin{enumerate}
\item There is a decomposition $\Omega=\bigsqcup_{v \in E^0} \Omega_v$ for compact open subspaces $\Omega_v \subset \Omega$.
\item If $e \in E^1$, then $\Omega_{s(e)}=\Omega_{e^{-1}}$.
\item If $v \in E^0$ and $X \in C_v$, then $\Omega_v = \bigsqcup_{e \in X} \Omega_e$.
\end{enumerate}
An $(E,C)$-action $\theta \colon \mathbb{F} \act \Omega$ is called \textit{universal} if, given any other $(E,C)$-action $\theta' \colon \mathbb{F} \act \Omega'$, there exists a unique $\mathbb{F}$-equivariant continuous map $f \colon \Omega' \to \Omega$ such that $f(\Omega_v') \subset f(\Omega_v)$ for any isolated vertex $v$ (this will automatically hold for any other vertex due to equivariance). Observe that a universal $(E,C)$-action is unique up to canonical conjugacy. Finally, if $C=\mathcal{T}$, we will simply suppress the separation, referring instead to an $E$-action. \exend
\end{definition}

The following can be obtained from Theorem~\ref{thm:DynDescr} by applying duality, but we choose to a give a concrete proof for clarity.

\begin{proposition}\label{prop:UnivPropTop}
The partial action $\theta^{(E,C)}$ is the universal $(E,C)$-action for any finitely separated graph $(E,C)$. If $\theta \colon \mathbb{F} \act \Omega$ is any other $(E,C)$-action, then the unique equivariant map $f \colon \Omega \to \Omega(E,C)$ satisfying $p(\Omega_v) \subset \Omega(E,C)_v$ for $v \in E_{\textup{iso}}^0$ is given by
$$p(x)=\left\{ \begin{array}{cl}
\mathbb{F}^x := \{\alpha \in \mathbb{F} \mid x \in \Omega_{\alpha^{-1}}\} & \If x \in \bigsqcup_{v \in E^0 \setminus E_{\textup{iso}}^0} \Omega_v \\
v & \If x \in \Omega_v \textup{ for } v \in E_{\textup{iso}}^0
\end{array}\right. . $$
\end{proposition}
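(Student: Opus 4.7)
The plan is to split the proof into three parts: verifying that $\theta^{(E,C)}$ is itself an $(E,C)$-action, then constructing the candidate map $p$ and checking it has all the required properties, and finally establishing uniqueness.

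First, I would check that $\theta^{(E,C)}$ satisfies (1)--(3) of the definition of an $(E,C)$-action. Property (1) is built into the definition since $\Omega(E,C) = \bigsqcup_v \Omega(E,C)_v$ by condition (c) at $\alpha = 1$, and each $\Omega(E,C)_v$ is compact-open. Property (2) is just the notational choice $\Omega(E,C)_{s(e)} := \Omega(E,C)_{e^{-1}}$, and (3) is again just the defining disjointness in condition (c), combined with the definition $\Omega(E,C)_u := \bigsqcup_{e \in X} \Omega(E,C)_e$.

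Next, given an arbitrary $(E,C)$-action $\theta \colon \mathbb{F} \act \Omega$, I would define $p$ by the stated formula and verify in turn that (i) $p(x) \in \Omega(E,C)$, (ii) $p$ is continuous, and (iii) $p$ is equivariant. For (i), containment of $1$ is immediate. Right-convexity follows from the fact that for the partial action induced from generators via maximal composition, membership of $x$ in $\Omega_{(e_n^{\varepsilon_n}\cdots e_1^{\varepsilon_1})^{-1}}$ forces membership in each $\Omega_{(e_m^{\varepsilon_m}\cdots e_1^{\varepsilon_1})^{-1}}$ for $m \le n$. For condition (c) at $\alpha \in \mathbb{F}^x$, set $y := \theta_\alpha(x)$ and let $v$ be the unique vertex with $y \in \Omega_v$; then $\sigma \in (\mathbb{F}^x)_\alpha$ translates to $y \in \Omega_{\sigma^{-1}}$, and properties (2)--(3) of the target action immediately identify this set as $s^{-1}(v) \sqcup \{e_X^{-1} \mid X \in C_v\}$ for the unique $e_X \in X$ with $y \in \Omega_{e_X}$. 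Continuity reduces to noting that the subbasic set $\{\xi \mid \alpha \in \xi\}$ has preimage $\Omega_{\alpha^{-1}}$, which is open, together with the fact that each $\Omega_v$ for $v$ isolated is clopen. Equivariance follows directly from the standard partial-action identity $x \in \Omega_{(\beta\alpha)^{-1}} \Leftrightarrow \theta_\alpha(x) \in \Omega_{\beta^{-1}}$ whenever $x \in \Omega_{\alpha^{-1}}$.

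For uniqueness, I would show that any equivariant $f \colon \Omega \to \Omega(E,C)$ satisfying the vertex condition must coincide with $p$. The first observation is that $f(\Omega_v) \subseteq \Omega(E,C)_v$ even for non-isolated $v$: this holds because non-isolated $v$ has either an outgoing edge $e$ (giving $\Omega_v = \Omega_{e^{-1}}$ and hence $f(\Omega_v) \subseteq \Omega(E,C)_{e^{-1}} = \Omega(E,C)_v$ by equivariance) or an incoming edge $e \in X \in C_v$ (giving $\Omega_v = \bigsqcup_{f \in X}\Omega_f$ with $f(\Omega_f) \subseteq \Omega(E,C)_f \subseteq \Omega(E,C)_v$). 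Combined with the disjointness of both vertex decompositions, this yields $f^{-1}(\Omega(E,C)_v) = \Omega_v$. Then I would prove $\alpha \in f(x) \Leftrightarrow x \in \Omega_{\alpha^{-1}}$ by induction on the reduced length of $\alpha$: the reverse direction is immediate from equivariance, while the forward direction uses right-convexity to reduce to the case $\alpha = e^{\pm 1}\alpha'$ with $x \in \Omega_{(\alpha')^{-1}}$ already known; applying equivariance with $\alpha'$ and then the vertex-level preimage identity $f^{-1}(\Omega(E,C)_v) = \Omega_v$ at the single-edge level shows that $\theta_{\alpha'}(x)$ lies in $\Omega_{e^{-\varepsilon}}$, and hence $x \in \Omega_{\alpha^{-1}}$.

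The main obstacle is the forward direction of the inductive step in the uniqueness argument, since it requires upgrading ``$f$ respects the vertex decomposition'' from isolated vertices to all vertices; once that upgrade is in hand, the induction proceeds smoothly using right-convexity of configurations together with the local configuration condition (c) to pin down $f(x)$ letter by letter.
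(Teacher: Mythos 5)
Your proposal is correct. The well-definedness half (checking that $\theta^{(E,C)}$ is an $(E,C)$-action and that $\mathbb{F}^x$ contains $1$, is right-convex, and has local configurations of the form $s^{-1}(v) \sqcup \{e_X^{-1} \mid X \in C_v\}$ via the translation $\sigma \in (\mathbb{F}^x)_\alpha \Leftrightarrow \theta_\alpha(x) \in \Omega_{\sigma^{-1}}$) is essentially identical to the paper's argument. Where you diverge is uniqueness: the paper disposes of it in two lines by noting that any equivariant $\varphi$ satisfies $\varphi(x) = \mathbb{F}^{\varphi(x)} \supset \mathbb{F}^x$ and that two elements of $\Omega(E,C)$ related by inclusion must coincide (a rigidity of configurations that is itself a small consequence of right-convexity and condition (c), left implicit). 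You instead first upgrade $f(\Omega_v) \subset \Omega(E,C)_v$ from isolated to all vertices and then prove $f(x) = \mathbb{F}^x$ by induction on word length. Both work; the paper's route is shorter, while yours makes explicit exactly where the disjointness $\Omega_v = \bigsqcup_{e \in X}\Omega_e$ and condition (c) enter. One spot in your sketch is stated too loosely: in the inductive step with $\alpha = e^{-1}\alpha'$, the vertex-level identity $f^{-1}(\Omega(E,C)_{r(e)}) = \Omega_{r(e)}$ only places $y = \theta_{\alpha'}(x)$ in $\bigsqcup_{g \in [e]}\Omega_g$; to conclude $y \in \Omega_e$ you must additionally observe that $y \in \Omega_g$ forces $g^{-1} \in f(y)$ by equivariance, whence $g = e$ by the uniqueness of the distinguished element of $[e]^{-1}$ in the local configuration of $f(y)$ at $1$. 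Your closing remark about condition (c) indicates you intend exactly this, so the argument is complete once that half-sentence is written out.
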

\begin{proof}
It is clear that $\theta^{(E,C)}$ is itself an $(E,C)$-action, and that the restriction of $p$ to $\bigsqcup_{v \in E_{\textup{iso}}^0} \Omega_v$ is the unique map satisfying $p(\Omega_v) \subset \Omega(E,C)_v$. Considering any $x \in \bigsqcup_{v \in E^0 \setminus E_{\textup{iso}}^0} \Omega_v$, it is also clear that $\mathbb{F}^x$ is a right-convex set containing $1$. We have $x \in \Omega_v$ for a unique $v \in E^0$, so $x \in \Omega_{e^{-1}}$ if and only if $e \in s^{-1}(v)$, and $x \in \Omega_{e_X}$ for a unique $e_X \in X$ for all $X \in C_v$. Hence the local configuration of $\mathbb{F}^x$ at $1$ is given by
$$s^{-1}(v) \sqcup \{e_X^{-1} \mid X \in C_v\}$$
as in Definition~\ref{def:DynamicalPicture}(c). Now if $\alpha \in \mathbb{F}^x$, then we may apply this observation to $\theta_\alpha(x)$ to see that $\mathbb{F}^x_\alpha=\mathbb{F}^{\theta_\alpha(x)}_1$ is of the same type, thus $\mathbb{F}^x \in \Omega(E,C)$. Equivariance and continuity of $x \mapsto \mathbb{F}^x$ is obvious, and if $\varphi \colon \Omega \to \Omega(E,C)$ is any equivariant map, then necessarily $\varphi(x)=\mathbb{F}^{\varphi(x)} \supset \mathbb{F}^x$. Now since $\mathbb{F}^x,\mathbb{F}^{\varphi(x)} \in \Omega(E,C)$, we must have $\mathbb{F}^{\varphi(x)}=\mathbb{F}^x$, and so $\varphi(x)=\mathbb{F}^x=p(x)$.
\end{proof}

\begin{remark}
Note that if $(F,D)$ is a complete subgraph of $(E,C)$, and $\theta$ is the restriction of $\theta^{(E,C)}$ to $\bigsqcup_{v \in F^0} \Omega(E,C)_v$, then the map $p$ from Remark~\ref{rem:CompleteSubgraphMap} is exactly the map $p$ of Proposition~\ref{prop:UnivPropTop}. \exend
\end{remark}

Now that we have a dynamical system associated to every finitely separated graph, we will shortly consider the relationship between the dynamics of $(E,C)$ and its bipartite sibling $\mathbf{B}(E,C)$ as defined in \cite[Definition 7.4]{AE2}. First though, we have to introduce a bit of terminology.

\begin{definition}\label{def:Double}
For any topological $\Omega$, we will write $\overrightarrow{\Omega}=\Omega^1 \sqcup \Omega^0$, where each 
$$\Omega^i = \{\xi^i \mid \xi \in \Omega\}$$
is simply a copy of $\Omega$. Given a partial homeomorphism $\varphi$ of $\Omega$, we define  a partial homeomorphism $\overrightarrow{\varphi} \colon \text{dom}(\varphi)^1 \to \text{im}(\varphi)^0$ by $\overrightarrow{\varphi}(\xi^1)=\varphi(\xi)^0$. Now if $\theta \colon \mathbb{F}(A) \act \Omega$ is a partial action induced from a family of partial homeomorphisms $\{\theta_a\}_{a \in A}$, we define the \textit{double action} of $\theta$ to be the partial action $\overrightarrow{\theta} \colon \mathbb{F}(A) \ast \mathbb{Z} \act \overrightarrow{\Omega}$ induced by the family $\{\overrightarrow{\theta_a}\}_{a \in A}$ and $\sigma:=\overrightarrow{\id_\Omega}$.
\end{definition}

\begin{proposition}\label{prop:Double}
Consider a partial action $\theta$ as in Definition~\ref{def:Double}. For any $i=0,1$, there is a direct quasi-conjugacy $\theta \to \overrightarrow{\theta}\vert_{\Omega^i}$ and a direct dynamical equivalence $\overrightarrow{\theta}\vert_{\Omega_i} \to \theta$.
\end{proposition}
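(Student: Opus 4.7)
The plan is to exhibit explicit maps witnessing both equivalences, reducing everything to a single combinatorial statement about which reduced words in $\mathbb{F}(A) \ast \mathbb{Z}$ produce non-empty restricted domains on $\Omega^i$. Let $t$ denote the canonical generator of the $\mathbb{Z}$-factor, so that $\overrightarrow{\theta}_t = \sigma$. For each $i \in \{0,1\}$, I take the homeomorphism $\varphi_i \colon \Omega \to \Omega^i$, $\xi \mapsto \xi^i$, together with the group homomorphism
$$\Phi_0, \Phi_1 \colon \mathbb{F}(A) \to \mathbb{F}(A) \ast \mathbb{Z} \quad \text{given by} \quad \Phi_0(a) := at^{-1}, \quad \Phi_1(a) := t^{-1}a.$$
Each $\Phi_i$ is injective, since the projection $\pi \colon \mathbb{F}(A) \ast \mathbb{Z} \to \mathbb{F}(A)$ defined by $\pi(a) = a$ and $\pi(t) = 1$ is a left inverse. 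For the reverse direction, I will use the pair $(\varphi_i^{-1}, \pi)$.

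The core of the argument is the following claim: for every $h \in \mathbb{F}(A) \ast \mathbb{Z}$, the restricted domain $(\overrightarrow{\theta}\vert_{\Omega^i})_h$ is non-empty if and only if $h \in \Phi_i(\mathbb{F}(A))$, and when $h = \Phi_i(\alpha)$, the restricted action is given by $\overrightarrow{\theta}_{\Phi_i(\alpha)}(\xi^i) = \theta_\alpha(\xi)^i$ with $(\overrightarrow{\theta}\vert_{\Omega^i})_{\Phi_i(\alpha)} = \varphi_i(\Omega_\alpha)$. The sufficiency direction is a routine induction on the length of the reduced form of $\alpha$: the $t^{\pm 1}$'s inserted by $\Phi_i$ precisely compensate for the layer-swapping behaviour of $\overrightarrow{\theta_a}$ and $\sigma$.

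The main obstacle is necessity. The key observation is that every generator $x \in A \sqcup \{t\}$ induces a partial homeomorphism $\overrightarrow{\theta_x}$ sending part of $\Omega^1$ into $\Omega^0$, so for a reduced word $h = x_n^{\varepsilon_n} \cdots x_1^{\varepsilon_1}$ in $\mathbb{F}(A) \ast \mathbb{Z} = \mathbb{F}(A \sqcup \{t\})$ to produce a non-empty composite $\overrightarrow{\theta}_h$ restricted to $\Omega^i \to \Omega^i$, the signs $\varepsilon_j$ must strictly alternate, and both $\varepsilon_1$ and $\varepsilon_n$ are forced by $i$. A direct bookkeeping argument, grouping the letters of $h$ into consecutive pairs and identifying each admissible pair with one of $\Phi_i(a)$, $\Phi_i(a)^{-1}$, or $\Phi_i(ab^{-1})$ according as each slot is occupied by $t$ or by a letter from $A$, shows that every such $h$ lies in the image of $\Phi_i$.

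Granted the claim, the remaining verifications are immediate. For the direct quasi-conjugacy $(\varphi_i, \Phi_i) \colon \theta \to \overrightarrow{\theta}\vert_{\Omega^i}$, axiom (a) is clear, axiom (b) holds trivially by injectivity of $\Phi_i$, and axiom (c) is exactly the content of the claim. For the direct dynamical equivalence $(\varphi_i^{-1}, \pi) \colon \overrightarrow{\theta}\vert_{\Omega^i} \to \theta$, the identity $\pi \circ \Phi_i = \id_{\mathbb{F}(A)}$ forces $\Phi_i(\alpha)$ to be the unique element of $\pi^{-1}(\alpha)$ with non-empty restricted domain, so axioms (b) and (c) follow at once.
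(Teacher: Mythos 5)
Your proposal is correct and takes essentially the same route as the paper: the same maps $\varphi_i$, $\Phi_i$ and the retraction $\pi$ (the paper's $\Psi$), with everything reduced to showing that the reduced words whose restricted domain on $\Omega^i$ is non-empty are exactly those in $\text{im}(\Phi_i)$ -- the paper phrases this via forbidden subwords $sa$, $as$, $aa'$ and their inverses, you via sign alternation and pairing, which is the same computation. One trivial caveat: the \emph{if} direction of your iff can fail when some $\Omega_\alpha=\emptyset$, but you never use it; the identity $(\overrightarrow{\theta}\vert_{\Omega^i})_{\Phi_i(\alpha)}=\varphi_i(\Omega_\alpha)$ together with the \emph{only if} direction is all that axiom (c) requires, and those are right.
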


\begin{proof}
We only consider the case $i=1$; the other one is completely analogous. Denote the generator of $\mathbb{Z}$ by $s$ so that $\mathbb{F}(A) \ast \mathbb{Z}=\mathbb{F}(A \cup \{s\})$, and consider the injective homomorphism $\Phi \colon \mathbb{F}(A) \to \mathbb{F}(A \cup \{s\})$ given by $\Phi(a)=s^{-1}a$ for all $a \in A$ as well as the embedding $\varphi \colon \Omega \to \overrightarrow{\Omega}$ onto $\Omega^1$. Then
$${\big(\overrightarrow{\theta}\big)}_{\Phi(a)}={\big(\overrightarrow{\theta}\big)}_{s^{-1}a} = \sigma^{-1} \circ \overrightarrow{\theta_a}= \varphi \circ \theta_a$$
for all $a \in A$, so $(\varphi,\Phi)$ is a conjugacy of $\theta$ and the restricted partial action $\text{im}(\Phi) \act \Omega^1$. It simply remains to check that $\Omega^1_\beta = \emptyset$ for all $\beta \in \mathbb{F}(A \cup \{s\}) \setminus \text{im}(\Phi)$. Observe that such $\beta$, as a reduced word, must contain a subword either of one of the forms $sa$, $as$, $aa'$ for $a,a' \in A$ or an inverse of one of these. In every case, we see that $\Omega^1_\beta = \emptyset$, as desired. Since $\Phi^{-1}$ can be extended to a group homomorphism $\Psi \colon \mathbb{F}(A \cup \{s\}) \to \mathbb{F}(A)$, namely $\Psi(a)=a$ for $a \in A$ and $\Psi(s)=1$, we see that $(\varphi^{-1},\Psi)$ defines a direct dynamical equivalence $\overrightarrow{\theta}\vert_{\Omega^1} \to \theta$.
\end{proof}

We now relate the partial actions of $(E,C)$ and $\mathbf{B}(E,C)$. 

\begin{proposition}\label{prop:BipDynamics}
Let $(E,C)$ denote a finitely separated graph and write $(\tilde{E},\tilde{C}):=\mathbf{B}(E,C)$ as well as $\theta:=\theta^{(E,C)}$. Then there is a direct dynamical equivalence $\theta^{(\tilde{E},\tilde{C})} \to \overrightarrow{\theta}$. In particular, $\theta^{(\tilde{E},\tilde{C})}$ and $\theta$ are Kakutani equivalent.
\end{proposition}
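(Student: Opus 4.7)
The plan is to apply the universal property from Proposition~\ref{prop:UnivPropTop} and then verify the conditions of direct dynamical equivalence by hand. Recall that the bipartite replacement $(\tilde{E},\tilde{C})$ splits every vertex $v \in E^0$ into a \emph{range copy} $v^0$ and a \emph{source copy} $v^1$, replaces every edge $e \in E^1$ by an edge $\tilde{e} \colon s(e)^1 \to r(e)^0$, and adds a new \emph{shift edge} $\alpha_v \colon v^1 \to v^0$ for every $v \in E^0$; the separation is $\tilde{C}_{v^0} = \{\tilde{X} \mid X \in C_v\} \cup \{\{\alpha_v\}\}$, and every $v^1$ is a source.

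The key observation is that $\overrightarrow{\Omega(E,C)}$ inherits a natural $(\tilde{E},\tilde{C})$-action from $\overrightarrow{\theta}$: set $\overrightarrow{\Omega(E,C)}_{v^i} := \Omega(E,C)_v^i$, let the partial homeomorphism associated to $\tilde{e}$ be $\overrightarrow{\theta_e} \colon \Omega(E,C)_{s(e)}^1 \to \Omega(E,C)_{r(e)}^0$, and let the partial homeomorphism associated to $\alpha_v$ be the restriction $\sigma|_{\Omega(E,C)_v^1}$. The axioms (1)--(3) preceding Proposition~\ref{prop:UnivPropTop} are straightforward consequences of the corresponding properties of $\theta$. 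Universality then yields a unique $\mathbb{F}(\tilde{E}^1)$-equivariant continuous map $f \colon \overrightarrow{\Omega(E,C)} \to \Omega(\tilde{E},\tilde{C})$. The main technical task is to show that $f$ is a homeomorphism, so that $\varphi := f^{-1}$ can serve as the spatial part of the direct dynamical equivalence; I would do this through an explicit combinatorial description, observing that a configuration $\xi \in \Omega(\tilde{E},\tilde{C})$ is based at a unique vertex $v^i$, and that the bipartite structure of $\tilde{E}$ forces any reduced word in $\xi$ to alternate between the two types of generators, so after peeling off an initial $\alpha_v$-move when $i = 0$, the remaining data is captured by a unique configuration in $\Omega(E,C)_v$, with the layer index $i$ recording the $\{0,1\}$-coordinate in $\overrightarrow{\Omega(E,C)}$.

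Defining $\Phi \colon \mathbb{F}(\tilde{E}^1) \to \mathbb{F}(E^1) \ast \mathbb{Z}$ on generators by $\Phi(\tilde{e}) = e$ and $\Phi(\alpha_v) = s$, the $\Phi$-equivariance of $\varphi$ is just a restatement of the $\mathbb{F}(\tilde{E}^1)$-equivariance of $f$ under the chosen action structure. Condition (b) of direct dynamical equivalence reduces to the observation that for distinct generators $\alpha_v, \alpha_w$ with $\Phi(\alpha_v) = s = \Phi(\alpha_w)$ the domains $\Omega(\tilde{E},\tilde{C})_{\alpha_v} = \Omega(\tilde{E},\tilde{C})_{v^0}$ and $\Omega(\tilde{E},\tilde{C})_{\alpha_w} = \Omega(\tilde{E},\tilde{C})_{w^0}$ are disjoint; an inductive argument based at the initial vertex extends this to reduced words of arbitrary length. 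Condition (c) then follows since any $h \in \mathbb{F}(E^1) \ast \mathbb{Z}$ with non-empty $\overrightarrow{\Omega(E,C)}_h$ admits a covering by generator-level lifts, as exemplified by the identity $\overrightarrow{\Omega(E,C)}_s = \Omega(E,C)^0 = \bigsqcup_v \varphi(\Omega(\tilde{E},\tilde{C})_{\alpha_v})$. For the Kakutani equivalence of $\theta^{(\tilde{E},\tilde{C})}$ and $\theta$, I would combine the direct dynamical equivalence just established with Proposition~\ref{prop:Double}, which gives a direct dynamical equivalence between $\overrightarrow{\theta}|_{\Omega(E,C)^1}$ and $\theta$ over the clopen full subspace $\Omega(E,C)^1 \subset \overrightarrow{\Omega(E,C)}$.

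The main obstacle is the middle step of showing that $f$ is a homeomorphism: it amounts to matching right-convex configurations in $\Omega(\tilde{E},\tilde{C})$ with pairs consisting of a configuration in $\Omega(E,C)$ and a $\{0,1\}$-layer indicator, and while the bookkeeping is essentially routine, one must carefully track the forced alternation imposed by the bipartite structure and verify that the inverse assignment respects right-convexity and is continuous in the product topology.
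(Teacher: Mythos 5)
Your overall architecture matches the paper's: you equip $\overrightarrow{\Omega(E,C)}$ with an $(\tilde{E},\tilde{C})$-action (the paper's $\gamma$, with your $\alpha_v$ playing the role of its $h_v$), invoke Proposition~\ref{prop:UnivPropTop} to obtain the comparison map into $\Omega(\tilde{E},\tilde{C})$, and deduce Kakutani equivalence from Proposition~\ref{prop:Double} exactly as the paper does. The genuine divergence is in how you establish that the universal map is a homeomorphism. The paper never touches the combinatorics of configurations: it builds the inverse $\psi=\psi^1\sqcup\psi^0$ by a second, symmetric application of universal properties --- the homomorphisms $\Psi^1(e)=h_{r(e)}^{-1}\tilde{e}$ and $\Psi^0(e)=\tilde{e}h_{s(e)}^{-1}$ turn each layer $\Omega^i(\tilde{E},\tilde{C})$ into an $(E,C)$-action, universality of $\theta^{(E,C)}$ produces $\psi^i$, and \emph{uniqueness} of equivariant maps into universal actions forces both composites to be identities (including the commutativity of the square involving the $h_v$-translations). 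You instead propose a direct combinatorial identification of configurations in $\Omega(\tilde{E},\tilde{C})$ with pairs (configuration in $\Omega(E,C)$, layer index), exploiting the forced alternation in the bipartite double. That route is viable and arguably more concrete, but it is precisely where all the work sits, and you leave it as a sketch: one must verify that the alternation really does collapse to a right-convex subset of $\mathbb{F}(E^1)$ satisfying condition (c) of Definition~\ref{def:DynamicalPicture} (the singleton block $\{h_v\}\in\tilde{C}_{v_0}$ makes the $h_v^{-1}$-direction mandatory, which is what makes the peeling well defined), that the inverse assignment is continuous, and that it is equivariant. The paper's uniqueness trick buys you all of this for free, so if you pursue your version you should either carry out the bookkeeping in full or switch to the two-sided universal-property argument. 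Your verification of conditions (b) and (c) of direct dynamical equivalence and the final Kakutani step are fine.
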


\begin{proof}
Recall that $(\tilde{E},\tilde{C})$ is the finitely separated bipartite graph given by
\begin{itemize}
\item $\tilde{E}^{0,i}:=\{v_i \mid v \in E^0\}$ for $i=0,1$,
\item $\tilde{E}^1:=\{\tilde{e} \mid e \in E^1\} \cup \{h_v \mid v \in E^0\}$,
\item $\tilde{r}(\tilde{e}):=r(e)_0$ and $\tilde{s}(\tilde{e}):=s(e)_1$ for all $e \in E^1$,
\item $\tilde{r}(h_v):=v_0$ and $\tilde{s}(h_v):=v_1$ for all $v \in E^0$,
\item $\tilde{C}_{v_0}:=\{\{h_v\}, \tilde{X} \mid X \in C_v\}$ where $\tilde{X}:=\{\tilde{e} \mid e \in X\}$ for all $v \in E^0$.
\end{itemize}
As above, we denote the generator of the factor $\mathbb{Z}$ by $s$, and we will write $\Omega:=\Omega(E,C)$. We first define an $(\tilde{E},\tilde{C})$-action $\gamma$ on $\overrightarrow{\Omega}$ by
\begin{itemize}
\item ${\big(\overrightarrow{\Omega}\big)}_{v_i}:=\Omega_v^i$ for all $v \in E^0$ and $i=0,1$,
\item ${\big(\overrightarrow{\Omega}\big)}_{\tilde{e}^{-1}}:=\Omega_{e^{-1}}^1$, ${\big(\overrightarrow{\Omega}\big)}_{\tilde{e}}:=\Omega_e^0$ and $\gamma_{\tilde{e}}:=\overrightarrow{\theta_e}$ for all $e \in E^1$,
\item ${\big(\overrightarrow{\Omega}\big)}_{h_v^{-1}}:=\Omega_v^1$, ${\big(\overrightarrow{\Omega}\big)}_{h_v}:=\Omega_v^0$ and $\gamma_{h_v}:=\overrightarrow{\id_{\Omega_v}}=\sigma\vert_{\Omega_v^1}$ for all $v \in E^0$.
\end{itemize}
Observe that there is a direct dynamical equivalence $(\id,\Phi)\colon \gamma \to \overrightarrow{\theta}$, where $\Phi(e)=e$ and $\Phi(h_v)=s$ for all $e \in E^1$ and $v \in E^0$. Now, by the universal property of $\theta^{(\tilde{E},\tilde{C})}$, there is a unique $\mathbb{F}(\tilde{E}^1)$-equivariant continuous map $\varphi \colon \overrightarrow{\Omega} \to \Omega(\tilde{E},\tilde{C})$, and we claim that this is in fact a conjugacy. To see this, we first define injective group homomorphisms $\Psi^1,\Psi^0 \colon \mathbb{F}(E^1) \to \mathbb{F}(\tilde{E}^1)$ by 
$$\Psi^1(e)=h_{r(e)}^{-1}\tilde{e} \andspace \Psi^0(e)=\tilde{e}h_{s(e)}^{-1},$$
and observe (just as in Proposition~\ref{prop:Double}) that the identification $\Omega \cong \Omega^i$ together with $\Psi^i$ defines a direct quasi-conjugacy $\theta \to \gamma\vert_{\Omega^i}$. Next, define an $(E,C)$-action $\gamma^i$ on 
$$\Omega^i(\tilde{E},\tilde{C}):=\bigsqcup_{v \in E^0} \Omega(\tilde{E},\tilde{C})_{v_i}$$
for $i=0,1$ by
\begin{itemize}
\item $\Omega^i(\tilde{E},\tilde{C})_v:=\Omega(\tilde{E},\tilde{C})_{v_i}$ for all $v \in E^0$,
\item $\Omega^i(\tilde{E},\tilde{C})_{e^{\pm 1}}:=\Omega(\tilde{E},\tilde{C})_{\Psi^i(e^{\pm 1})}$ and $\gamma^i_e:=\theta^{(\tilde{E},\tilde{C})}_{\Psi^i(e)}$ for all $e \in E^1$.
\end{itemize}
From the universal property of $(E,C)$ and the observations just above, there is a unique $\mathbb{F}(E^1)$-equivariant continuous map $\psi^i \colon \Omega^i(\tilde{E},\tilde{C}) \to \Omega^i$, and $\psi^i \circ \varphi\vert_{\Omega^i} = \id_{\Omega^i}$ by uniqueness. Setting $\psi: = \psi^1 \sqcup \psi^0 \colon \Omega(\tilde{E},\tilde{C}) \to \Omega$ so that $\psi \circ \varphi = \id_{\overrightarrow{\Omega}}$, we claim that $\psi$ is in fact $\mathbb{F}(\tilde{E}^1)$-equivariant. By construction, it is equivariant under both $\text{im}(\Psi^1)$ and $\text{im}(\Psi^0)$, so we simply have to check that it is also equivariant under the action of every $h_v$, i.e.~that the diagram

\begin{center}
\begin{tikzpicture}[>=angle 90]
\matrix(a)[matrix of math nodes,
row sep=4em, column sep=6em,
text height=1.5ex, text depth=0.25ex]
{\Omega^1(\tilde{E},\tilde{C}) & \Omega^0(\tilde{E},\tilde{C}) \\ 
\Omega^1 & \Omega^0 \\};
\path[->](a-1-1) edge node[above]{$\bigsqcup_{v \in E^0} \theta^{(\tilde{E},\tilde{C})}_{h_v}$} (a-1-2);
\path[->](a-2-1) edge node[below]{$\sigma$} (a-2-2);
\path[->](a-1-1) edge node[left]{$\psi_1$} (a-2-1);
\path[->](a-1-2) edge node[right]{$\psi_0$} (a-2-2);
\end{tikzpicture}
\end{center}
commutes. Note that all four entries carry partial actions of $\mathbb{F}(E^1)$, and that the maps are all equivariant with respect to these actions. Since the action of $\mathbb{F}(E^1)$ on $\Omega^0$ is the universal $(E,C)$-action, uniqueness of $\mathbb{F}(E^1)$-equivariant maps $\Omega^1(\tilde{E},\tilde{C}) \to \Omega^0$ guarantees that the diagram actually commutes. We conclude that $\varphi \circ \psi$ is $\mathbb{F}(\tilde{E}^1)$-equivariant, hence $\varphi \circ \psi = \id_{\Omega(\tilde{E},\tilde{C})}$ as desired.
\end{proof}

We finally observe that hereditary and $C$-saturated subsets, just as for finite bipartite separated graphs, give rise to ideals in the tame algebras. When $\xi \in \Omega(E,C)_v$ is a configuration, we regard $1 \in \xi$ as the trivial path $v$ and so $r(1):=v$ by convention.

\begin{definition}
Given a hereditary and $C$-saturated subset $H \subset E^0$, we define
$$\Omega(E,C)^H:=\{\xi \in \Omega(E,C) \mid r(\alpha) \in H \text{ for some } \alpha \in \xi\}.$$
\end{definition}

\begin{theorem}\label{thm:QuoByHS}
Let $(E,C)$ denote a finitely separated graph, and consider a hereditary and $C$-saturated set of vertices $H \subset E^0$. Then $\Omega(E,C)^H$ is an open and invariant subspace, and there is a direct quasi-conjugacy $\theta^{(E/H,C/H)} \to \theta^{(E,C)}\vert_Z$ where $Z:=\Omega(E,C) \setminus \Omega(E,C)^H$. Letting $I(H)$ denote the induced ideal in the various algebras, which is exactly the ideal generated by $H$, we therefore have isomorphisms
$$L_K^{\textup{ab}}(E,C)/I(H) \cong L_K^{\textup{ab}}(E/H,C/H) \andspace \mathcal{O}^{(r)}(E,C)/I(H) \cong \mathcal{O}^{(r)}(E/H,C/H).$$
\end{theorem}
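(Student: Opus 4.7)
First, I verify that $\Omega(E,C)^H$ is open and invariant. Openness follows by writing
$$\Omega(E,C)^H = \big(H \cap E_{\textup{iso}}^0\big) \cup \bigcup_{\alpha \in \mathcal{P}(E,C),\, r(\alpha) \in H} \Omega(E,C)_{\alpha^{-1}},$$
each summand being a basic open set. For invariance, observe that whenever $\alpha \in \xi$ with $r(\alpha) \in H$ and $g \in \xi$, the element $\alpha \cdot g^{-1} \in \theta_g(\xi) = \xi \cdot g^{-1}$ remains an admissible path whose terminal vertex, read from the basepoint of $\theta_g(\xi)$, is still $r(\alpha)$. Hence $Z := \Omega(E,C) \setminus \Omega(E,C)^H$ is closed and invariant.

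Next, I construct the direct quasi-conjugacy $(\varphi, \Phi)$. Let $\Phi \colon \mathbb{F}((E/H)^1) \hookrightarrow \mathbb{F}(E^1)$ be the natural inclusion, and let $\varphi$ send each configuration in the quotient to the same set, now viewed in $\mathbb{F}(E^1)$ (acting as the identity on isolated vertices). Hereditarity ($s(e) \notin H \Rightarrow r(e) \notin H$) guarantees that $\varphi$ lands inside $Z$, since every edge appearing in a quotient configuration lies entirely within $E^0 \setminus H$. The $C$-saturation condition ensures that the local-configuration axiom (c) transfers correctly: for each $v \notin H$ and $X \in C_v$, the restricted partition class $X \cap (E/H)^1$ is non-empty, so distinguished edges may be chosen simultaneously in $(E,C)$ and $(E/H,C/H)$; moreover, $(E/H)_{\textup{iso}}^0 = E_{\textup{iso}}^0 \setminus H$. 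Bijectivity of $\varphi$ onto $Z$ follows from the observation that any $\xi \in Z$ consists of admissible paths staying entirely outside $H$, hence lies in $\mathbb{F}((E/H)^1)$. Injectivity of $\Phi$ renders the disjointness axiom of direct quasi-conjugacy vacuous, while the domain axiom reduces to $\Omega(E,C)_h \cap Z = \varphi(\Omega(E/H,C/H)_h)$ for $h \in \mathbb{F}((E/H)^1)$ (empty otherwise); the ``empty otherwise'' case is immediate because any $h \in \mathbb{F}(E^1) \setminus \mathbb{F}((E/H)^1)$ involves an edge with source in $H$, forcing every configuration containing $h^{-1}$ to visit $H$.

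The algebra statements then follow by combining Theorem~\ref{thm:DynDescr}, the short exact sequence of crossed products associated to the open invariant subspace $\Omega(E,C)^H$ (exact in the full and reduced $C^*$-settings since $\mathbb{F}$ is an exact group), and the fact that direct quasi-conjugacy induces isomorphisms of the associated transformation groupoids, hence of both crossed products. It remains to identify the induced ideal $I(H)$ with the ideal $\langle H \rangle$ generated by $H$: the containment $\langle H \rangle \subset I(H)$ is immediate because $\Omega(E,C)_v \subset \Omega(E,C)^H$ for $v \in H$, while every generator $1_{\alpha^{-1}}$ of $C_K(\Omega(E,C)^H)$ (respectively $C_0(\Omega(E,C)^H)$) with $r(\alpha) \in H$ is represented in the algebra by $\underline{\alpha}^*\,\underline{\alpha} = \underline{\alpha}^*\cdot r(\alpha)\cdot \underline{\alpha} \in \langle H \rangle$ (using $r(\alpha)\underline{\alpha} = \underline{\alpha}$); together with the indicators $p_v$ for $v \in H \cap E_{\textup{iso}}^0$, these exhaust a generating set.

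The main obstacle I anticipate is the bookkeeping in the second paragraph: ensuring that both the local-configuration condition (c) and the classification of isolated vertices in the quotient match up correctly under the natural set-theoretic bijection, which crucially uses both the hereditary and $C$-saturation conditions.
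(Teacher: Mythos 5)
Your argument is correct and is precisely the route the paper intends: the paper's own proof of this theorem consists only of the remark that the second part of the proof of \cite[Theorem 5.5]{AL} generalises, and what you have written is exactly that generalisation — openness and invariance of $\Omega(E,C)^H$, the set-theoretic identification of $Z$ with $\Omega(E/H,C/H)$ using heredity (a configuration in $Z$ never visits $H$, so it lives in $\mathbb{F}((E/H)^1)$) and $C$-saturation (each class $X/H$ is non-empty, so condition (c) and the classification of isolated vertices transfer), followed by the crossed-product exact sequences and invariance of all three crossed products under direct quasi-conjugacy. Two cosmetic points are worth polishing: your open cover of $\Omega(E,C)^H$ should explicitly include $\Omega(E,C)_v$ for \emph{every} $v \in H$, not only the isolated ones, since for a non-isolated source $v \in H$ the only witness $\alpha \in \xi$ with $r(\alpha) \in H$ may be the trivial path (this is harmless, as each $\Omega(E,C)_v$ is clopen); and the paper's displayed definition $(E/H)^1 := r^{-1}(H)$ is a typo for the set of edges with source (equivalently, by heredity, with source and range) outside $H$, which is the reading your proof correctly uses.
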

\begin{proof}
Simply observe that the proof of \cite[Theorem 5.5]{AL} (or rather the second part of it) generalises with minimal effort.
\end{proof}

\section{Degeneracy of $L_K(E,C)$ and $C^*(E,C)$}\label{sect:GraphAlgIso}

In this section, we give a sufficient condition for $L_K(E,C)$ and $C^*(E,C)$ to be isomorphic to a graph algebra of a non-separated graph; we regard this as a degenerating situation since these algebras are well studied. This isomorphism is always implemented by reversing certain edges of the separated graph, a technique also used by Duncan in \cite{Duncan2}. The concepts and theorems of this section will be used heavily in subsequent sections on simplicity and the exchange property.  \medskip \\
First we will need to introduce the following essential definitions.
\begin{definition}[{\cite[Definition 9.5]{AL}}]
Let $(E,C)$ denote a finitely separated graph. An admissible path $\alpha$ is called a \textit{choice path} if there is an admissible composition $X^{-1}\alpha$ for some $X \in C$ with $\vert X \vert \ge 2$.
\end{definition}

\begin{definition}
Let $(E,C)$ denote a finitely separated graph. We shall say that $e \in E^1$ \textit{admits a choice} if there is a choice path $\alpha$ satisfying $\ini_{\textup{d}}(\alpha)=e$, while an inverse edge $e^{-1}$ admits a choice if $\vert [e] \vert \ge 2$, or if there is a choice path $\alpha$ with $\ini_{\textup{d}}(\alpha)=e^{-1}$. A set $X \in C$ then admits a choice if $e^{-1}$ admits a choice for some $e \in X$, and finally a vertex $v \in E^0$ is said to admit exactly
$$\vert \{e \in s^{-1}(v) \mid e \text{ admits a choice}\} \vert + \vert \{X \in C_v \mid X \text{ admits a choice}\}\vert$$
choices. \exend
\end{definition}

The important distinction -- as we will see -- is between those vertices that admit no, those that admit exactly one, and those that admit at least two choices. The following easy lemma guarantees that the equivalence relation of being on the same cycle respects this distinction.

\begin{lemma}\label{lem:NumberOfChoices}
Let $(E,C)$ denote a finitely separated graph. If $u$ and $v$ are on the same cycle, then
\begin{enumerate}
\item[\textup{(1)}] $u$ admits no choices if and only $v$ admits no choices,
\item[\textup{(2)}] $u$ admits exactly one choice if and only $v$ admits exactly one choice,
\item[\textup{(3)}] $u$ admits at least two choices if and only if $v$ admits at least two choices.
\end{enumerate}
\end{lemma}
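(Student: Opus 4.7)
Let $\gamma$ be a cycle through both $u$ and $v$. By re-choosing the base of $\gamma$, I can write $\gamma = \beta\alpha$ with $\alpha \colon u \to v$ and $\beta \colon v \to u$ admissible, so that admissibility of $\gamma\gamma$ guarantees that $\gamma_v := \alpha\beta$ is a cycle based at $v$ and that arbitrary powers of each are admissible. Since the set-up is symmetric under interchanging $u \leftrightarrow v$ and $\alpha \leftrightarrow \beta$, it suffices in each of (1), (2), (3) to prove one direction.

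My plan for (1) is to transfer a choice at $v$ to a choice at $u$ by prepending the admissible path $\alpha$ (possibly modified using the cycle). Recall that a choice at $v$ is witnessed by either a separation set $X \in C_v$ with $|X| \geq 2$, or by a choice path $\sigma$ starting at $v$ whose initial symbol is an outgoing edge or an inverse edge of $v$. In the first case, the admissible path $\alpha$---which ends at $v$---should extend on the left to $X^{-1}\alpha$, making $\alpha$ itself into a choice path and so $\ini_{\textup{d}}(\alpha)$ witnesses a choice at $u$; the potential obstruction is $\ter_{\textup{d}}(\alpha) \in X$, handled by using a path of the form $g'^{-1}\gamma_v^n\alpha$ for some $g' \in X$ distinct from $\ter_{\textup{d}}(\alpha)$ and checking admissibility via the cycle. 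In the second case, I would form $\sigma\alpha$: when the junction $\ini_{\textup{d}}(\sigma) \cdot \ter_{\textup{d}}(\alpha)$ is admissible, $\sigma\alpha$ is a choice path at $u$ with initial symbol $\ini_{\textup{d}}(\alpha)$; when it fails (a specific cancellation), I would replace $\sigma$ with $\sigma\gamma_v^n$ or similar so that the effective junction with $\alpha$ reroutes through the cycle.

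For (3), the same construction applied to two distinct witnessing choices at $v$ produces two candidate choice paths at $u$, and I would check by a case analysis on the initial symbols that distinctness survives: the admissibility of $\gamma\gamma$ rigidly couples $\ini_{\textup{d}}(\alpha)$ to $\ter_{\textup{d}}(\beta)$, so at most one of the two choices at $v$ can be "consumed" by the cycle structure in translating to $u$. Finally, (2) follows immediately from (1) and (3), because the three counts "none", "exactly one", and "at least two" partition the possibilities.

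The main obstacle is the junction-cancellation issue: both in the separation-set case when $\ter_{\textup{d}}(\alpha) \in X$ and in the choice-path case when $\ini_{\textup{d}}(\sigma)$ cancels with $\ter_{\textup{d}}(\alpha)$. Each failure is a precise structural coincidence that the cycle iterations $\gamma_v^n$ can sidestep, but the bookkeeping---particularly ensuring that two distinct choices at $v$ do not collapse into a single choice at $u$---is the delicate combinatorial core of the proof.
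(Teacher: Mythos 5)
Your overall strategy is the paper's own, up to interchanging the roles of $u$ and $v$: transport a witnessing choice along the cycle and deal with the one possible junction clash. You also correctly isolate that clash as the crux. But the device you propose for resolving it does not work, and this is exactly the delicate step. Inserting powers of $\gamma_v=\alpha\beta$ changes nothing at the relevant junction: the terminal symbol of $\gamma_v^{\,n}\alpha$ is still $\ter_{\textup{d}}(\gamma_v)=\ter_{\textup{d}}(\alpha)$, so if $\ini_{\textup{d}}(\sigma)$ clashes with $\ter_{\textup{d}}(\alpha)$ then $\sigma\gamma_v^{\,n}\alpha$ is inadmissible for every $n$. In the separation-set case you moreover conflate the two admissibility conditions: when $\ter_{\textup{d}}(\alpha)=g\in X$, the junction in $g'^{-1}\gamma_v^{\,n}\alpha$ has the form $g'^{-1}g$ with $g,g'\in E^1$, which is governed by the requirement $[g']\ne[g]$, not $g'\ne g$; since $[g']=X=[g]$ for every $g'\in X$, picking $g'$ distinct from $\ter_{\textup{d}}(\alpha)$ buys nothing.

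The correct resolution, and the one the paper uses, is not to iterate the cycle but to traverse it the other way. The cycle supplies two admissible paths from $u$ to $v$, namely $\alpha$ and $\beta^{-1}$, whose terminal symbols are $\ter_{\textup{d}}(\alpha)$ and $\ini_{\textup{d}}(\beta)^{-1}$. Admissibility of the internal junction $\ini_{\textup{d}}(\beta)\ter_{\textup{d}}(\alpha)$ of $\beta\alpha$ forces that no single symbol $\ini_{\textup{d}}(\sigma)$, and no single set $X\in C_v$, can clash with both of these, so at least one of $\sigma\alpha$, $\sigma\beta^{-1}$ (resp.\ $X^{-1}\alpha$, $X^{-1}\beta^{-1}$) is admissible, yielding a choice at $u$ with initial symbol $\ini_{\textup{d}}(\alpha)$ or $\ter_{\textup{d}}(\beta)^{-1}$. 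The same dichotomy drives (3): two distinct choices at $v$ cannot both clash with the same one of $\alpha$, $\beta^{-1}$, and the two resulting initial symbols $\ini_{\textup{d}}(\alpha)$ and $\ter_{\textup{d}}(\beta)^{-1}$ are themselves distinct choices at $u$, this time by admissibility of $\gamma\gamma$ at the wrap-around junction $\ini_{\textup{d}}(\alpha)\ter_{\textup{d}}(\beta)$ --- which is precisely the distinctness bookkeeping you leave open. Until the $\gamma_v^{\,n}$ device is replaced by this two-sided traversal of the cycle, the argument has a genuine gap.
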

\begin{proof}
Say that $\beta\alpha$ is a cycle with $s(\alpha)=u$ and $r(\alpha)=v$. Observe that if $\sigma \in r^{-1}(u)^{-1} \cup s^{-1}(u)$ admits a choice at $u$, then so does either $\ini_{\textup{d}}(\beta)$ or $\ter_{\textup{d}}(\alpha)^{-1}$, as either $\sigma\beta$ or $\sigma \alpha^{-1}$ is admissible. Now assume that $\sigma,\tau \in r^{-1}(u)^{-1} \cup s^{-1}(u)$ give rise to two different choices at $u$. If $\sigma\beta$ is not admissible, then both $\tau\beta$ and $\sigma\alpha^{-1}$ must be admissible, hence $v$ admits at least two choices as well. Likewise we may assume, without loss of generality, that $\tau\beta$ is admissible, thereby verifying (3). (2) now follows automatically.
\end{proof}

\begin{definition}
If $\alpha$ is a cycle passing through $v$, then we will say that $\alpha$ admits
\begin{enumerate}
\item no choices, if $v$ admits no choices,
\item exactly one choice, if $v$ admits exactly one choice,
\item at least two choices, if $v$ admits at least two choices.
\end{enumerate}
By Lemma~\ref{lem:NumberOfChoices}, this is independent of the choice of $v$ on $\alpha$.
\end{definition}

\begin{definition}
Let $(E,C)$ denote a finitely separated graph. We will say that $(E,C)$ satisfies Condition (C) if every $v \in E^0$ admits at most one choice. \exend
\end{definition}

Recall that a non-separated graph $E$ is said to satisfy Condition (L) if for every cycle $\alpha=e_n \cdots e_1$, there is some $1 \le k \le n$ and $f \ne e_k$ with $r(e_k)=r(f)$. The edge $f$ is usually referred to as an \textit{entry} of $\alpha$. It is well known that $C^*(E)$ is simple if and only if $E$ satisfies Condition (L) and has only trivial hereditary and saturated subsets. Ara and Exel defined Condition (L) for finite bipartite separated graphs in \cite{AE}, and as it will play an important role in the next few sections, we now redefine it in the language of this paper for arbitrary finitely separated graphs.

\begin{definition}[{\cite[Definition 10.2]{AE}}]\label{def:CondL}
A finitely separated graph $(E,C)$ is said to satisfy Condition (L) if any simple cycle admits a choice.
\end{definition} 

\begin{theorem}[{\cite[Theorem 10.5]{AE}}]\label{thm:CondL}
Let $(E,C)$ denote a finitely separated graph. Then $\theta^{(E,C)}$ is topologically free if and only if $(E,C)$ satisfies Condition \textup{(}L\textup{)}.
\end{theorem}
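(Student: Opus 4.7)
The plan is to prove both directions by analysing the local configurations appearing in elements of $\Omega(E,C)$. Throughout, recall that any fixed point $\xi$ of $\theta_g$ must contain $g$ and be closed under right multiplication by $g^{\pm 1}$; since elements of configurations are reduced admissible paths, $g$ must then be a cycle, and $g^n \in \xi$ for every $n \in \mathbb{Z}$.

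For the implication failure of (L) $\Rightarrow$ failure of topological freeness, I pick a simple cycle $\alpha$ admitting no choice and set $v_0 = s(\alpha)$. By Lemma~\ref{lem:NumberOfChoices}, every vertex on $\alpha$ admits no choice, and I will show that $\Omega(E,C)_{v_0}$ reduces to a single configuration, hence is a clopen singleton of $\Omega(E,C)$ contained in $\Omega^\alpha$. Indeed, for any $\xi \in \Omega(E,C)_{v_0}$ and any $\beta \in \xi$, the local configuration $\xi_\beta$ at $u = r(\beta)$ is completely forced: the fact that $v_0$ admits no choice means no admissible path starting at $v_0$ is a choice path, so no $X \in C_u$ may have $|X| \ge 2$ unless we have just arrived at $u$ via a forward edge $h \in X$, and in that case the representative $e_X$ is forced to equal $h$ itself, since any other choice would produce a non-admissible element $e_X^{-1}\beta$ by condition~(1) of admissibility. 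This pins down $\xi$ uniquely, and $\theta_\alpha$ fixes the resulting singleton because it preserves $\Omega(E,C)_{v_0}$.

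For the converse, assume $(E,C)$ satisfies (L) and fix $1 \ne g \in \mathbb{F}$; assuming $g$ is a cycle (else $\Omega^g$ is empty), I suppose for contradiction that $\Omega(E,C)_\omega \subset \Omega^g$ for some finite animal $\omega$, and split into two cases. If $|\Omega(E,C)_\omega| \ge 2$, two distinct fixed configurations $\xi_1, \xi_2$ extending $\omega$ must disagree at some point, and by $g$-invariance of each the disagreement recurs along the $g$-orbit of that point; since $\omega$ is finite I may pick a point of disagreement $\beta^{**}$ far enough out that no right-extension of $\beta^{**}$ lies in $\omega$, and grafting the subtree of $\xi_2$ above $\beta^{**}$ onto $\xi_1$ then yields a valid configuration $\xi \in \Omega(E,C)_\omega$ with $\xi_{\beta^{**}} \ne \xi_{\beta^{**} g}$, hence $\xi \notin \Omega^g$, a contradiction. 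If instead $|\Omega(E,C)_\omega| = 1$, the unique extension of $\omega$ must have forced local configurations everywhere, which by the same analysis as in direction~1 forces every vertex on $g$ to admit no choice; I then extract a simple cycle whose vertices all lie on $g$ by taking a shortest admissible cycle among these vertices, since standard shortening at any interior vertex repetition produces a strictly shorter closed path and, in the no-choice regime where all relevant partition sets are singletons, the only potential obstruction to admissibility of its square at the new junction reduces to a literal cancellation of adjacent letters, which minimality-plus-inner-conjugate-iteration rules out. The resulting simple cycle admits no choice, contradicting (L).

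The main technical obstacle I expect is the grafting construction in the case $|\Omega(E,C)_\omega| \ge 2$, where one must verify that the hybrid configuration really is valid and really does extend $\omega$: validity at every position holds because the glued pieces are taken from pre-existing valid configurations, and the inclusion of $\omega$ persists because the graft is placed beyond the reach of $\omega$. A secondary, purely graph-theoretic obstacle is the clean extraction of a simple cycle in the case $|\Omega(E,C)_\omega| = 1$, handled by the shortening-and-iteration argument just sketched.
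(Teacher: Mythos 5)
The paper offers no written-out argument here (it simply asserts that the strategy of \cite[Theorem 10.5]{AE} generalises), so your proposal has to be judged on its own. Its architecture is reasonable, and two of its three pieces are essentially sound: in the first direction, the observation that a no-choice base vertex forces $\ter_{\textup{d}}(\beta)\in X$ for every $\beta\in\xi$ and every $X\in C_{r(\beta)}$ with $\vert X\vert\ge 2$ does pin down a unique, hence isolated, fixed configuration; and in the case $\vert\Omega(E,C)_\omega\vert\ge 2$ of the converse, locating the first disagreement of $\xi_1,\xi_2$ at some $\gamma\in\xi_1\cap\xi_2$ where the local configurations differ, pushing it out along the $g$-orbit past $\omega$, and grafting does produce a non-fixed configuration in $\Omega(E,C)_\omega$.

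The genuine gap is in the case $\vert\Omega(E,C)_\omega\vert=1$, at the step ``forced local configurations everywhere \ldots\ by the same analysis as in direction 1 forces every vertex on $g$ to admit no choice.'' Direction 1 proves the \emph{converse} implication (no choices at the base $\Rightarrow$ every admissible path from the base lies in $\xi$ and all local configurations are forced). What you need here is: if some vertex $w$ on the cyclically reduced core $k$ of $g$ admits a choice, witnessed by an admissible $\delta$ with $s(\delta)=w$ and an admissible composition $X^{-1}\delta$, $\vert X\vert\ge 2$, then some local configuration of $\xi$ beyond $\omega$ is \emph{not} forced. The natural move is to transport $\delta$ to a far-out occurrence $\beta$ of $w$ on the axis and inspect $\delta\cdot\beta$ --- but $\delta\cdot\beta$ need not lie in $\xi$: the initial segment of $\delta$ may cancel against $\beta$ and leave an inadmissible junction, and in general admissible paths emanating from $w$ are \emph{not} all contained in $\xi$ (that containment was exactly what the no-choice hypothesis bought you in direction 1, and assuming it here is circular). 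Closing the gap requires, for instance, taking $\delta$ of minimal length among all choice paths based at vertices of $k$, showing that any failure of $\delta\cdot\beta$ to land in $\xi$ yields a strictly shorter choice path based on the axis, and --- in the base case where $\delta$ is trivial, i.e.\ some $X\in C_w$ with $\vert X\vert\ge 2$ and $w$ on $k$ --- invoking the fact that the doubly infinite axis enters $w$ from two sides, so that forcedness at both occurrences would make the junction of $k$ at $w$ violate admissibility. None of this is supplied, and it is the crux of this implication; tellingly, you flag the grafting and the simple-cycle extraction as the main obstacles but not this step.

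A secondary error: the parenthetical ``else $\Omega^g$ is empty'' is false. A fixed point only forces $g$ to be a closed path whose cyclically reduced core $k$ is a cycle (all $g^n=h^{-1}k^nh$ must be admissible elements of $\xi$), and $\Omega^g$ is typically nonempty for such non-cycle $g$. This is repairable, but the periodicity and ``vertices on $g$'' bookkeeping in your last case should then be run along the axis $\{k^nh\}$ rather than $\{g^n\}$, and the simple cycle you extract at the end must be taken from $k$.
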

\begin{proof}
The strategy from \cite[Theorem 10.5]{AE} easily generalises to arbitrary finitely separated graphs.
\end{proof}

\begin{definition}\label{def:NonSepOr}
A \textit{non-separated orientation} of a finitely separated graph $(E,C)$ is a decomposition $E^1= E_-^1 \sqcup E_+^1$ such that $[e]=\{e\}$ for every $e \in E_+^1$ and one of the following holds for any $v \in E^0$:
\begin{enumerate}
\item $E_-^1 \cap r^{-1}(v) \in C_v$ and $E_+^1 \cap s^{-1}(v)=\emptyset$.
\item $E_-^1 \cap r^{-1}(v) = \emptyset$ and $\vert E_+^1 \cap s^{-1}(v) \vert \le 1$.
\end{enumerate}
This is a special case of an \textit{orientation}, which is defined in \cite[Definition 3.11]{Lolk2}. We shall often regard the partition $E=E_-^1 \sqcup E_+^1$ as a map $\mathfrak{o} \colon E^1 \to \{-1,1\}$ with
$$\mathfrak{o}(e):=\left\{ \begin{array}{cl}
1 & \If e \in E_+^1 \\
-1 & \If e \in E_-^1
\end{array}\right. .$$
An admissible path of the form
$$e_n^{\mathfrak{o}(e_n)} e_{n-1}^{\mathfrak{o}(e_{n-1})} \cdots e_2^{\mathfrak{o}(e_2)}e_1^{\mathfrak{o}(e_{1})}$$
will then be called \textit{positively oriented}, while a path of the form
$$e_n^{-\mathfrak{o}(e_n)} e_{n-1}^{-\mathfrak{o}(e_{n-1})} \cdots e_2^{-\mathfrak{o}(e_2)}e_1^{-\mathfrak{o}(e_{1})}$$
will be called \textit{negatively oriented}. By \cite[Lemma 3.12]{Lolk2}, every admissible path $\alpha$ decomposes as $\alpha=\alpha_-\alpha_+$, where $\alpha_+$ and $\alpha_-$ are positively and negatively oriented, respectively. \exend
\end{definition}

The point of a non-separated orientation is that it allows us to turn a separated graph into a non-separated one.

\begin{definition}
Assume that $(E,C)$ is a finitely separated graph. If $(E,C)$ admits a non-separated orientation $E^1=E_-^1 \sqcup E_+^1$, then we can define a corresponding column-finite directed graph $\overline{E}=(\overline{E}^0,\overline{E}^1,\overline{r},\overline{s})$ by $\overline{E}^0=\{ \overline{v} \mid v \in E^0$\}, $\overline{E}^1=\{\overline{e} \mid e \in E^1\}$,
$$\overline{r}(\overline{e})=\left\{ \begin{array}{cl}
r(e) & \text{if } e \in E_-^1 \\
s(e) & \text{if } e \in E_+^1
\end{array}\right. \andspace \overline{s}(\overline{e})=\left\{ \begin{array}{cl}
s(e) & \text{if } e \in E_-^1 \\
r(e) & \text{if } e \in E_+^1
\end{array}\right. .$$
\end{definition}\exend \medskip

Before considering the relationship between the dynamics and algebras of $(E,C)$ and $\overline{E}$, we first record a graph-theoretical lemma for later use.

\begin{lemma}\label{lem:Paths}
Assume that $\mathfrak{o}$ is a non-separated orientation of $(E,C)$, and let $\overline{E}$ denote the resulting non-separated graph. The map
$$\overline{e_n} \hspace{2pt} \overline{e_{n-1}} \cdots \overline{e_2}\hspace{2pt}\overline{e_1} \mapsto e_n^{-\mathfrak{o}(e_n)}e_{n-1}^{-\mathfrak{o}(e_{n-1})} \cdots e_2^{-\mathfrak{o}(e_2)} e_1^{-\mathfrak{o}(e_1)}$$
is a bijection between the paths of $\overline{E}$ and the negatively oriented paths of $(E,C)$.
\end{lemma}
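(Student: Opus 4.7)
The plan is to establish the bijection by exhibiting the obvious inverse map and then verifying on each side that the correspondence really does take paths to paths. Injectivity is immediate: from the sequence $e_n^{-\mathfrak{o}(e_n)} \cdots e_1^{-\mathfrak{o}(e_1)}$ one reads off each $e_i$, hence $\overline{e_i}$, so the only substantial content lies in well-definedness and surjectivity, both of which reduce to the same local computation about consecutive pairs.

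First, I would record how a single edge transforms: for $e \in E_+^1$ one has $\overline{s}(\overline{e})=r(e)$ and $\overline{r}(\overline{e})=s(e)$, matching the source and range in $\widehat{E}$ of $e^{-\mathfrak{o}(e)}=e^{-1}$; for $e \in E_-^1$ the map acts as the identity on source/range data, matching $e^{-\mathfrak{o}(e)}=e$. Consequently $\overline{r}(\overline{e_i})=\overline{s}(\overline{e_{i+1}})$ if and only if $e_{i+1}^{-\mathfrak{o}(e_{i+1})}e_i^{-\mathfrak{o}(e_i)}$ is a genuine length-two walk in $\widehat{E}$. This already shows composability in $\overline{E}$ and composability in $\widehat{E}$ are equivalent; the remaining task is to verify the two admissibility conditions from the definition of $\mathcal{P}(E,C)$.

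Next I would split into the four cases determined by the orientations $\mathfrak{o}(e_i),\mathfrak{o}(e_{i+1})$. If both edges lie in $E_+^1$, the subpath is $e_{i+1}^{-1}e_i^{-1}$, which contains no factor of either forbidden form, so both conditions hold vacuously; the same is true when both lie in $E_-^1$, giving the factor $e_{i+1}e_i$. In the mixed case $e_i\in E_+^1,\ e_{i+1}\in E_-^1$ the factor is $e_{i+1}e_i^{-1}$, and condition (2) requires $e_{i+1}\ne e_i$, which is automatic because $E_-^1$ and $E_+^1$ are disjoint. The crucial case is $e_i\in E_-^1,\ e_{i+1}\in E_+^1$, giving the factor $e_{i+1}^{-1}e_i$, and here condition (1) demands $[e_{i+1}]\ne[e_i]$; this is precisely where the defining property $[e]=\{e\}$ for $e\in E_+^1$ comes into play, forcing $[e_{i+1}]=\{e_{i+1}\}$ while $e_i\ne e_{i+1}$ by disjointness, so $[e_i]\ne[e_{i+1}]$.

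The same case analysis, read in reverse, handles surjectivity: given any negatively oriented admissible path $e_n^{-\mathfrak{o}(e_n)}\cdots e_1^{-\mathfrak{o}(e_1)}$ in $\mathcal{P}(E,C)$, the equivalence of composability in $\widehat{E}$ and in $\overline{E}$ produced above immediately shows that $\overline{e_n}\cdots\overline{e_1}$ is a path in $\overline{E}$, and it maps back to the given sequence. The main, essentially only, obstacle is bookkeeping in the mixed orientation case and making sure that the hypothesis on $E_+^1$ in Definition~\ref{def:NonSepOr} is invoked to exclude the sole admissibility constraint that is not automatic; no analytic input is required.
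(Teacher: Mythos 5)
Your proof is correct and follows essentially the same route as the paper's: first match ranges and sources so that composability in $\overline{E}$ corresponds to composability in $\widehat{E}$, then observe that the only admissibility condition that is not automatic is the mixed case $e_i \in E_-^1$, $e_{i+1} \in E_+^1$. The paper settles that case by noting that each $X \in C$ lies entirely in $E_-^1$ or entirely in $E_+^1$, whereas you invoke $[e_{i+1}]=\{e_{i+1}\}$ together with disjointness of $E_-^1$ and $E_+^1$; these are equivalent one-line observations.
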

\begin{proof}
First observe that $\overline{r}(\overline{e})=r(e^{-\mathfrak{o}(e)})$ and $\overline{s}(\overline{e})=s(e^{-\mathfrak{o}(e)})$ for all $e \in E^1$, so the above correspondence takes inverse paths of $\overline{E}$ to paths in the double $\hat{E}$, and vice versa. We simply have to check that the paths in the double are also admissible, i.e.~that if $e_i \in E_-^1$ and $e_{i+1} \in E_+^1$, then $[e_i] \ne [e_{i+1}]$. But this is clear since either $X \subset E_-^1$ or $X \subset E_+^1$ for all $X \in C$.
\end{proof}

\begin{proposition}\label{prop:NonSepGraphIso}
Assume that $(E,C)$ is a finitely separated graph with a non-separated orientation, and let $\overline{E}$ denote the resulting non-separated graph. Then $\theta^{(E,C)}$ is conjugate to $\theta^{\overline{E}} \colon \mathbb{F} \act \Omega(\overline{E})$,
$$L_K(E,C) = L_K^{\textup{ab}}(E,C) \cong L(\overline{E}) \andspace C^*(E,C) = \mathcal{O}(E,C) \cong C^*(\overline{E}).$$
Moreover, a subset $H \subset E^0$ is hereditary and $C$-saturated if and only if $\overline{H}=\{\overline{v} \mid v\in H\}$ is hereditary and saturated in $\overline{E}$.
\end{proposition}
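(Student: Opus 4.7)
The plan is to produce a conjugacy $(\varphi,\Phi)$ between $\theta^{(E,C)}$ and $\theta^{\overline{E}}$ directly at the level of configurations and then to use Theorem~\ref{thm:DynDescr} together with a short algebraic construction to collapse $L_K(E,C)$ onto $L_K^{\textup{ab}}(E,C)$ (and likewise in the $C^*$-setting). Let $\Phi \colon \mathbb{F}(E^1) \to \mathbb{F}(\overline{E}^1)$ denote the group isomorphism determined on generators by $\Phi(e) := \overline{e}^{-\mathfrak{o}(e)}$, so $\Phi(e) = \overline{e}$ for $e \in E_-^1$ and $\Phi(e) = \overline{e}^{-1}$ for $e \in E_+^1$, and let $\varphi \colon \Omega(E,C) \to \Omega(\overline{E})$ be given by $\varphi(\xi) := \Phi(\xi)$. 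Since $\Phi$ is a length- and reducedness-preserving bijection of generators, and inspection of the last letter of $\alpha$ shows it takes the vertex associated to $\alpha$ to the corresponding vertex of $\overline{E}$, right-convexity, the condition $1 \in \xi$, and the basepoint all transfer at once, so only the local configuration axiom (c) of Definition~\ref{def:DynamicalPicture} requires work.

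To verify (c) at $\alpha \in \xi$ with associated vertex $v \in E^0$, split on whether $v$ satisfies condition (1) or (2) of Definition~\ref{def:NonSepOr}. In case (1), $X_0 := E_-^1 \cap r^{-1}(v) \in C_v$, the remaining classes of $C_v$ are singletons $\{e\}$ with $e \in E_+^1 \cap r^{-1}(v)$, and $E_+^1 \cap s^{-1}(v) = \emptyset$; one checks by direct inspection that $\Phi\bigl(s^{-1}(v)\bigr)$ together with $\Phi\bigl(\{e^{-1} : e \in E_+^1 \cap r^{-1}(v)\}\bigr)$ exhausts $s^{-1}(\overline{v})^{\overline{E}}$, while $\Phi(e_{X_0}^{-1}) = \overline{e_{X_0}}^{-1}$ is the distinguished inverse edge corresponding to the sole class $r^{-1}(\overline{v})^{\overline{E}} = \{\overline{e} : e \in X_0\}$ of the trivial separation on $\overline{E}$. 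Case (2) is analogous and easier. The scenario that would otherwise produce two distinct $C$-classes at $\overline{v}$ in $\overline{E}$ -- namely $v$ simultaneously carrying a negative incoming and a positive outgoing edge -- is precisely what Definition~\ref{def:NonSepOr} prohibits, and this structural fact is the miracle that makes the bookkeeping close. Running the same analysis on $\Phi^{-1}$ shows $\varphi$ is bijective, and continuity and equivariance are trivial, so $(\varphi,\Phi)$ is a conjugacy.

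By Theorem~\ref{thm:DynDescr}, together with the equality $L_K^{\textup{ab}}(\overline{E}) = L_K(\overline{E})$ in the non-separated case, the conjugacy produces $L_K^{\textup{ab}}(E,C) \cong L_K(\overline{E})$ and $\mathcal{O}(E,C) \cong C^*(\overline{E})$. To upgrade these to $L_K(E,C) \cong L_K(\overline{E})$ and $C^*(E,C) \cong C^*(\overline{E})$, I build a $*$-homomorphism $\psi^{-1} \colon L_K(\overline{E}) \to L_K(E,C)$ by $\overline{v} \mapsto v$, $\overline{e} \mapsto e$ for $e \in E_-^1$, and $\overline{e} \mapsto e^*$ for $e \in E_+^1$. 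Relations (V) and (E) are immediate; for (CK1), the a priori problematic case $r(\overline{e}) = r(\overline{f})$ with $e \in E_-^1$, $f \in E_+^1$ would again force $v := r(e) = s(f)$ to satisfy both (1) and (2) of Definition~\ref{def:NonSepOr}, so it cannot occur, and the remaining sub-cases reduce to (SCK1) in $(E,C)$ (for $e,f \in E_-^1$) or to $ee^* = r(e)$ supplied by (SCK2) on a singleton class (for $e = f \in E_+^1$); for (CK2), the sum $\sum_{\overline{e} \in r^{-1}(\overline{v})^{\overline{E}}} \overline{e}\overline{e}^*$ matches (SCK2) applied either to $X_0 \in C_v$ or to the singleton $\{f\} \in C_v$ arising from the unique positive outgoing edge, depending on the type of $v$. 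Composing $\psi^{-1}$ with the quotient $L_K(E,C) \twoheadrightarrow L_K^{\textup{ab}}(E,C)$ and the conjugacy-induced isomorphism produces a left inverse of the quotient, forcing the quotient to be an isomorphism; the $C^*$-statement is identical. The hereditary/$C$-saturated correspondence is parallel bookkeeping: hereditary-ness of $\overline{H}$ on $\overline{e}$ for $e \in E_-^1$ is the usual hereditary condition on $e$, hereditary-ness on $\overline{e}$ for $e \in E_+^1$ is $C$-saturation of $H$ for $\{e\}$, and saturation of $\overline{H}$ at $\overline{v}$ along $r^{-1}(\overline{v})^{\overline{E}}$ recovers $C$-saturation of $H$ at $v$ along $X_0$ or along $\{f\}$ in the two respective cases. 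The main obstacle is simply keeping the type-1/type-2 case analysis straight for axiom (c) and (CK1); no single step is deep.
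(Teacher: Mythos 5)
Your proof is correct, and its overall architecture is the same as the paper's: establish a conjugacy $\Omega(E,C)\cong\Omega(\overline{E})$, deduce $L_K^{\textup{ab}}(E,C)\cong L_K(\overline{E})$ and $\mathcal{O}(E,C)\cong C^*(\overline{E})$ from Theorem~\ref{thm:DynDescr}, and then exhibit an $\overline{E}$-family inside $L_K(E,C)$ (your $\psi^{-1}$, sending $\overline{e}$ to $e$ or $e^*$ according to the orientation) so that the quotient onto the tame algebra acquires a left inverse and must be an isomorphism; the hereditary/saturated correspondence is the same bookkeeping in both. The one step you carry out by a genuinely different method is the conjugacy itself: the paper defines an $(E,C)$-action on $\Omega(\overline{E})$ and an $\overline{E}$-action on $\Omega(E,C)$ and lets the universal property (Proposition~\ref{prop:UnivPropTop}) produce the two equivariant maps, with uniqueness forcing them to be mutually inverse, whereas you write down the homeomorphism explicitly as $\xi\mapsto\Phi(\xi)$ with $\Phi(e)=\overline{e}^{-\mathfrak{o}(e)}$ and verify axiom (c) of Definition~\ref{def:DynamicalPicture} by the case (1)/(2) analysis of Definition~\ref{def:NonSepOr}. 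The paper's route is shorter because the universal property absorbs exactly the local-configuration check you do by hand (it reappears there as the verification that the two transported families of partial homeomorphisms are an $(E,C)$-action and an $\overline{E}$-action, respectively); your route has the advantage of making the homeomorphism completely explicit, which is what one actually uses when tracking configurations later. Two harmless slips in your write-up: for the unique positive outgoing edge $f$ at a type-(2) vertex $v$, the identity $\psi^{-1}(\overline{f}\,\overline{f}^{\,*})=f^*f=s(f)=v$ comes from (SCK1) applied to the singleton class $[f]=\{f\}\in C_{r(f)}$, not from (SCK2) applied to a class in $C_v$; and similarly the sub-case $e=f\in E_+^1$ of (CK1) uses (SCK2) on $\{e\}\in C_{r(e)}$ to get $ee^*=r(e)=\overline{s}(\overline{e})$. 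Neither affects the argument.
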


\begin{proof}
First, observe that we can define an $(E,C)$-action $\gamma$ on $\Omega(\overline{E})$ by
\begin{itemize}
\item $\Omega(\overline{E})_v := \Omega(\overline{E})_{\overline{v}}$ for $v \in E^0$,
\item $\Omega(\overline{E})_{e^{\pm 1}} := \Omega(\overline{E})_{\overline{e}^{\mp 1}}$ and $\gamma_e := \theta_{\overline{e}^{-1}}^{\overline{E}}$ for $e \in E_+^1$,
\item $\Omega(\overline{E})_{e^{\pm 1}} :=\Omega(\overline{E})_{\overline{e}^{\pm 1}}$ and $\gamma_e:=\theta_{\overline{e}}^{\overline{E}}$ for $e \in E_-^1$,
\end{itemize}

as well as an $\overline{E}$-action $\sigma$ on $\Omega(E,C)$ by
\begin{itemize}
\item $\Omega(E,C)_{\overline{v}}:=\Omega(E,C)_v$ for $v \in E^0$,
\item $\Omega(E,C)_{\overline{e}^{\pm 1}} := \Omega(E,C)_{e^{\mp 1}}$ and $\sigma_{\overline{e}} := \theta^{(E,C)}_{e^{-1}}$ for $e \in E_+^1$,
\item $\Omega(E,C)_{\overline{e}^{\pm 1}} := \Omega(E,C)_{e^{\pm 1}}$ and $\sigma_{\overline{e}} := \theta^{(E,C)}_e$ for $e \in E_-^1$.
\end{itemize}

We then obtain equivariant maps $\Omega(\overline{E}) \to \Omega(E,C)$ and $\Omega(E,C) \to \Omega(\overline{E})$ from the universal properties, and by uniqueness, these must be mutual inverses. Consequently,
$$\mathcal{O}(E,C) \cong C_0(\Omega(E,C)) \rtimes \mathbb{F} \cong C_0(\Omega(\overline{E})) \rtimes \mathbb{F} \cong C^*(\overline{E}).$$
Likewise, one can check that $p_{\overline{v}}=v$ for $v \in E^0$ and
$$t_{\overline{e}}=\left\{\begin{array}{cl}
e & \text{if } e \in E_-^1 \\
e^* & \text{if } e \in E_+^1
\end{array}\right. $$
for $e \in E^1$ defines an $\overline{E}$-family inside $C^*(E,C)$, so that the isomorphism $C^*(\overline{E}) \to \mathcal{O}(E,C)$ factors through $C^*(E,C)$ as a surjection. It follows that $C^*(\overline{E}) \cong C^*(E,C)$ as well. The same argument applies to the Leavitt path algebras. \medskip \\
For the last part of the proposition, suppose that $H \subset E^0$ is hereditary and $C$-saturated with respect to $(E,C)$. In order to check that $\overline{H}$ is hereditary in $\overline{E}$, we assume that $\overline{r}(\overline{e}) \in \overline{H}$. If $e \in E_-^1$, then $\overline{r(e)}=\overline{r}(\overline{e}) \in \overline{H}$ and so $r(e) \in H$. It follows that $\overline{s}(\overline{e})=\overline{s(e)} \in \overline{H}$, so let us instead assume that $e \in E_+^1$. Then $\overline{s(e)}=\overline{r}(\overline{e}) \in \overline{H}$, so $s(e) \in H$. $C$-saturation and $[e]=\{e\}$ imply $r(e) \in H$, hence $\overline{s}(\overline{e})=\overline{r(e)} \in \overline{H}$ as well. To see that $\overline{H}$ is saturated, suppose that $\overline{s}(\overline{r}^{-1}(\overline{v})) \subset \overline{H}$ for some $v \in E^0$ with $\overline{r}^{-1}(\overline{v}) \ne \emptyset$. First assume that $v$ satisfies Definition~\ref{def:NonSepOr}(1). Then $\overline{r}^{-1}(\overline{v})=\{\overline{e} \mid e \in E_-^1 \cap r^{-1}(v)\}$ and 
$$\overline{s}(\overline{r}^{-1}(\overline{v}))=\{\overline{s(e)} \mid e \in E_-^1 \cap r^{-1}(v)\} \subset \overline{H},$$
so $s(E_-^1 \cap r^{-1}(v)) \subset H$. $C$-saturation now implies $v \in H$ as well. Assuming instead that $v$ satisfies (2) of Definition~\ref{def:NonSepOr}, we must have $E_+^1 \cap s^{-1}(v) = \{e\}$ and $\overline{r}^{-1}(\overline{v})=\{\overline{e}\}$ for some $e$, hence $\overline{s}(\overline{r}^{-1}(v)) = \{\overline{r(e)}\} \subset \overline{H}$, i.e.~$r(e) \in H$. We deduce that $v=s(e) \in H$ from $H$ being hereditary, so $v \in \overline{H}$ as desired. The other implication can easily be proven by analogous arguments, and we therefore leave it to the reader.
\end{proof}

While the concept of a non-separated orientation is quite handy for technical purposes, it is certainly not a very natural one. Instead we shall give a sufficient graph-theoretic condition for the existence of such an orientation below in Proposition~\ref{prop:GraphCondForOr}. First though, we need a number of minor technical results to introduce and apply the notion of a \textit{simple closed path}.

\begin{lemma}\label{lem:ClosedPathNoChoice}
If $\alpha$ is a closed path based at a vertex $v$, which admits no choices, then neither does any vertex on $\alpha$.
\end{lemma}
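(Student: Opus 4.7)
I plan to prove this by induction on the length of $\alpha$. The crucial preliminary observation is that if $v$ admits no choices, then no choice path has source $v$: the initial symbol of any admissible path starting at $v$ is either an outgoing edge $f \in s^{-1}(v)$ that does not admit a choice, or an inverse edge $e^{-1}$ with $[e] = \{e\} \in C_v$ such that $e^{-1}$ does not admit a choice, so the defining property of a choice path cannot be satisfied. This impossibility will be the source of all contradictions in the argument.

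The base case $|\alpha|=1$ is vacuous, as then $v$ is the only vertex on $\alpha$. For the inductive step, I would first check whether $\alpha$ is a cycle, which amounts to admissibility of the junction $\ini_{\textup{d}}(\alpha)\ter_{\textup{d}}(\alpha)$. If so, every vertex on $\alpha$ is on the cycle $\alpha$ together with $v$, and Lemma~\ref{lem:NumberOfChoices} gives the conclusion directly. If not, the inadmissible junction, combined with the absence of sets of size $\geq 2$ in $C_v$ (which would otherwise be a choice at $v$), forces $\alpha$ to decompose as $\alpha = g\alpha''g^{-1}$ or $\alpha = g^{-1}\alpha''g$ for a single edge $g \in E^1$ and a strictly shorter closed path $\alpha''$ based at the other endpoint $w$ of $g$.

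It then suffices to show that $w$ admits no choices, since applying the inductive hypothesis to $\alpha''$ yields the result for every vertex on $\alpha''$, and hence on $\alpha$. Supposing for contradiction that $w$ admits a choice, I split into the three standard subcases: a witness in the form of an outgoing edge $f \in s^{-1}(w)$ together with a choice path $\eta$, a set $X \in C_w$ with $|X|\geq 2$, or a singleton $X = \{e\} \in C_w$ together with $e^{-1}$ and a choice path $\eta$. The strategy in each subcase is to build a choice path starting at $v$ by prepending or appending $g$ or $g^{-1}$ to the witness and checking admissibility of the new junction. If the junction fails because the obstructing edge is $g$ itself, the witness $\eta$ must continue past its initial symbol, and the tail of $\eta$ is then itself a choice path starting at $v$, again contradicting the preliminary observation.

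The main obstacle is the subcase $\alpha = g^{-1}\alpha''g$ together with $X = [g] \in C_w$ of size $\geq 2$: here the naive single-edge candidate $g$ is inadmissible at the junction $[g]^{-1}g$, and prepending any other element of $[g]^{-1}$ fails for the same reason. The resolution is to use the initial subpath $\alpha''g \le \alpha$, which is admissible by construction, and to observe that $[g]^{-1}\alpha''g$ is an admissible composition: for any $g' \in [g]$, admissibility of the junction $(g')^{-1}\ter_{\textup{d}}(\alpha'')$ reduces to that of $g^{-1}\ter_{\textup{d}}(\alpha'')$, which already appears in $\alpha$. This produces a choice path beginning with $g \in s^{-1}(v)$, yielding the final contradiction.
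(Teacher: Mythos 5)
Your argument is correct, but be aware that the paper's own ``proof'' of this lemma is literally the single sentence ``This is obvious,'' so there is no written argument to measure yours against. Everything in your outline checks out: the preliminary observation that no choice path can have source $v$; the dichotomy according to admissibility of the junction $\ini_{\textup{d}}(\alpha)\ter_{\textup{d}}(\alpha)$; the forced decomposition $\alpha=g\alpha''g^{-1}$ or $\alpha=g^{-1}\alpha''g$ in the non-cycle case (which indeed needs that $C_v$ consists of singletons to turn $[e]=[f]$ into $e=f$); the non-circular appeal to Lemma~\ref{lem:NumberOfChoices} in the cycle case; and the resolution of the delicate subcase $X=[g]$ via the initial subpath $\alpha''g\le\alpha$, whose junction $[g]^{-1}\ter_{\textup{d}}(\alpha'')$ is controlled by the pair $g^{-1}\ter_{\textup{d}}(\alpha'')$ already sitting inside $\alpha$. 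The only step I would ask you to make explicit is why ``the witness $\eta$ must continue past its initial symbol'' when the obstructing edge is $g$ itself: if $\eta$ were that single symbol, its range would be $v$ and the defining set $X'$ with $\vert X'\vert\ge 2$ would lie in $C_v$, which is impossible. For comparison, the argument the author presumably regards as obvious is shorter and needs no induction: given $w$ on $\alpha$, write $\alpha=\alpha_2\alpha_1$ with $r(\alpha_1)=w=s(\alpha_2)$ and extend any witness of a choice at $w$ either along $\alpha_1$ or along $\alpha_2^{-1}$ back to $v$; the two new junctions cannot both fail, since that would force the junction $\ini_{\textup{d}}(\alpha_2)\ter_{\textup{d}}(\alpha_1)$ of $\alpha$ at $w$ to be inadmissible. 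This is exactly the extension trick used in the proof of Lemma~\ref{lem:NumberOfChoices}, and, unlike the rotation argument there, it works for closed paths because the decisive junction sits at $w$ rather than at the base point. Your inductive route is longer but equally valid; what it buys is that all admissibility checks are made against the single edge $g$ rather than against the terminal symbols of the two halves of $\alpha$.
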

\begin{proof}
This is obvious.
\end{proof}

\begin{lemma}\label{lem:ReducedProduct}
Assume that $v$ admits no choices and that $\alpha, \beta$ are admissible paths with $r(\alpha)=s(\beta)=v$. Then the reduced product $\beta \cdot \alpha$ is an admissible path.
\end{lemma}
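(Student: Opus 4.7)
The plan is a direct analysis of the cancellations at the interface. Since admissibility of $\alpha$ and $\beta$ rules out subpaths of the form $ee^{-1}$ or $e^{-1}e$, both are reduced as words in $\mathbb{F}$. Writing $\alpha = \alpha_m \cdots \alpha_1$ and $\beta = \beta_n \cdots \beta_1$ in $\hat{E}$, let $k \ge 0$ be maximal with $\beta_i = \alpha_{m-i+1}^{-1}$ for all $i \le k$; then $\beta \cdot \alpha$ is obtained from the concatenation by deleting the last $k$ letters of $\alpha$ and the first $k$ letters of $\beta$. If $k \ge m$ or $k \ge n$, what remains is a subpath of $\beta$ or $\alpha$ (possibly trivial), hence admissible. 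Otherwise $k < m, n$, and $\beta \cdot \alpha = \beta_n \cdots \beta_{k+1} \alpha_{m-k} \cdots \alpha_1$; since both halves are admissible, only the new interface pair $(\beta_{k+1}, \alpha_{m-k})$ needs to be checked against admissibility conditions (1) and (2).

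A violation of (2) would force $\beta_{k+1} = \alpha_{m-k}^{-1}$, contradicting maximality of $k$, so the only remaining possibility is a violation of (1): $\alpha_{m-k} = a \in E^1$ and $\beta_{k+1} = b^{-1}$ with $b \in E^1$ and $[a] = [b] =: X$. Maximality again forces $a \ne b$, so $|X| \ge 2$, and $X \in C_{v_k}$ where $v_k := r(a) = r(b)$ is the new interface vertex. If $k = 0$ then $v_k = v$, so $X \in C_v$ with $|X| \ge 2$ already witnesses a choice at $v$ (the trivial path at $v$ becomes a choice path via the composition $X^{-1}$), contradicting the hypothesis.

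For $k \ge 1$, set $\gamma := \alpha_m \cdots \alpha_{m-k+1}$; then $\gamma^{-1} = \alpha_{m-k+1}^{-1} \cdots \alpha_m^{-1}$ is admissible (admissibility is preserved under inversion) and runs from $v$ to $v_k$. The key claim is that $\gamma^{-1}$ is itself a choice path witnessed by $X$: for any $x \in X$, the only new interface pair in $x^{-1}\gamma^{-1}$ is $(x^{-1}, \alpha_{m-k+1}^{-1})$, which is automatically admissible when $\alpha_{m-k+1} \in E^1$, while if $\alpha_{m-k+1} = h^{-1}$ with $h \in E^1$ it requires $X \ne [h]$, and this follows from admissibility of the subpath $h^{-1}a$ inside $\alpha$. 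Hence $\ini_{\textup{d}}(\gamma^{-1}) = \alpha_m^{-1}$ admits a choice. If $\alpha_m \in E^1$, then $r(\alpha_m) = v$ forces $[\alpha_m] \in C_v$ to admit a choice; if $\alpha_m = d^{-1}$ with $d \in E^1$, then $d = \alpha_m^{-1}$ has source $v$ and itself admits a choice. Either way $v$ admits a choice, contradicting the hypothesis, and the lemma follows. The main obstacle is the careful bookkeeping of Raeburn conventions needed to lift the interface obstruction at $v_k$ through $\gamma^{-1}$ back to an actual choice at $v$.
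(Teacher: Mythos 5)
Your proof is correct and follows essentially the same route as the paper: cancel the common segment $\gamma=\alpha^{-1}\wedge\beta$, observe that maximality rules out a violation of condition (2) at the new junction, and show that a violation of condition (1) there would make the cancelled segment a choice path based at $v$, contradicting the hypothesis. The only difference is that you verify the admissible composition $X^{-1}\gamma$ directly from the admissibility of $\alpha$, whereas the paper (very tersely) gets it from $\gamma$ being an initial subpath of $\beta$; your write-up supplies the details the paper dismisses as ``evident.''
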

\begin{proof}
Set $\gamma:=\alpha^{-1} \wedge \beta$ and write $\alpha=\gamma^{-1} \alpha'$, $\beta=\beta' \gamma$. We then need to verify that the reduced product $\beta \cdot \alpha = \beta' \alpha'$ is admissible. By construction, we must have $r(\alpha')=s(\beta')$ and $\ter_{\textup{d}}(\alpha')^{-1} \ne \ini_{\textup{d}}(\beta')$, so assuming that $\ter_{\textup{d}}(\alpha') \in E^1$ and $\ini_{\textup{d}}(\beta') \in (E^1)^{-1}$, we must simply check that $[\ini_{\textup{d}}(\beta')^{-1}] \ne [\ter_{\textup{d}}(\alpha')]$. But this is evident as $v$ would otherwise admit a choice.
\end{proof}

\begin{proposition}\label{prop:ClosedPathStandardForm}
Assume that a vertex $v \in E^0$ admits no choices. Then every closed path $\alpha$ based at $v$ decomposes uniquely as $\alpha = \gamma^{-1} \beta \gamma$ for a cycle $\beta$, and
$$\mathbb{F}_v:= \{\text{closed paths based at } v\} \cup \{1\}$$
forms a free subgroup $\mathbb{F}_v \le \mathbb{F}$.
\end{proposition}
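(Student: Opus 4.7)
My plan is to obtain the decomposition by cyclically reducing $\alpha$ inside the free group $\mathbb{F}$, and then to use the hypothesis that $v$ admits no choices to promote the resulting cyclically reduced word to an honest cycle in the separated-graph sense. The freeness of $\mathbb{F}_v$ will then follow from Lemma~\ref{lem:ReducedProduct} together with the Nielsen--Schreier theorem.

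\textbf{Existence.} I proceed by induction on the length $|\alpha|$. If $\ter_{\textup{d}}(\alpha) \ne \ini_{\textup{d}}(\alpha)^{-1}$, so that $\alpha$ is already cyclically reduced in $\mathbb{F}$, I claim $\alpha$ is already a cycle and take $\gamma$ to be the trivial path at $v$. To verify admissibility of $\alpha\alpha$ one only has to inspect the wrap-around pair $\ter_{\textup{d}}(\alpha)$ followed by $\ini_{\textup{d}}(\alpha)$; a short case analysis shows that the only potentially forbidden configuration not already excluded by cyclic reducedness is $\ter_{\textup{d}}(\alpha) = e \in E^1$ with $\ini_{\textup{d}}(\alpha) = f^{-1}$, $e \ne f$ but $[e] = [f]$. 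In that case $[e] \in C_v$ would have at least two elements and hence admit a choice, contradicting the hypothesis on $v$. If instead $\alpha$ is not cyclically reduced, write $\sigma := \ini_{\textup{d}}(\alpha)$, so that $\ter_{\textup{d}}(\alpha) = \sigma^{-1}$, and peel to obtain $\alpha = \sigma^{-1}\alpha'\sigma$ where $\alpha'$ is a shorter closed path based at $w := r(\sigma)$. By Lemma~\ref{lem:ClosedPathNoChoice}, $w$ admits no choices, so the inductive hypothesis produces $\alpha' = \gamma'^{-1}\beta\gamma'$ for a cycle $\beta$ and an admissible path $\gamma'$. Setting $\gamma := \gamma'\sigma$, admissibility of the new concatenation is inherited from admissibility of $\alpha$ itself, since the junction pair $\ini_{\textup{d}}(\gamma')\sigma$ already appears as consecutive symbols of $\alpha$.

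\textbf{Uniqueness.} Given two decompositions $\alpha = \gamma_1^{-1}\beta_1\gamma_1 = \gamma_2^{-1}\beta_2\gamma_2$, I may assume without loss of generality that $|\gamma_1| \le |\gamma_2|$. Since each $\gamma_i$ agrees with the initial (source-side) subpath of $\alpha$ of length $|\gamma_i|$, one is an initial subpath of the other, so $\gamma_2 = \delta\gamma_1$ for some admissible $\delta$. Cancelling in the two expressions for $\alpha$ yields $\beta_1 = \delta^{-1}\beta_2\delta$ as a concatenation of admissible paths. If $\delta$ were non-trivial, then the square $\beta_1\beta_1$ would contain the subpath $\delta\delta^{-1}$ at its interior junction, which is never admissible, contradicting the fact that $\beta_1$ is a cycle. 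Hence $\delta$ is trivial and the decomposition is unique.

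\textbf{Freeness of $\mathbb{F}_v$.} Closure under inverses is immediate, and closure under the free-group product is precisely Lemma~\ref{lem:ReducedProduct} applied at $v$ (noting that the reduced product of two closed paths based at $v$ is either trivial or again a closed path based at $v$), so $\mathbb{F}_v \le \mathbb{F}$ is a subgroup. The Nielsen--Schreier theorem then gives that $\mathbb{F}_v$ is free. The step I expect to be the main obstacle is the existence argument, specifically teasing apart which failure modes of admissibility of $\beta\beta$ are ruled out by cyclic reducedness in $\mathbb{F}$ versus by the no-choice hypothesis, and verifying that the concatenation $\gamma'\sigma$ produced by the inductive step is actually an admissible path.
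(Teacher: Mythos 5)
Your proof is correct and follows essentially the same route as the paper's: the paper obtains the cyclic reduction in one step by setting $\gamma := \alpha \wedge \alpha^{-1}$ rather than peeling symbols inductively, rules out the same single residual failure mode (namely $\ini_{\textup{d}}(\beta) \in (E^1)^{-1}$ and $\ter_{\textup{d}}(\beta) \in E^1$ with $[\ter_{\textup{d}}(\beta)] = [\ini_{\textup{d}}(\beta)^{-1}]$) via the no-choice hypothesis, and likewise derives freeness of $\mathbb{F}_v$ from Lemma~\ref{lem:ReducedProduct} together with the Nielsen--Schreier theorem. Your explicit uniqueness argument is a minor addition, as the paper asserts uniqueness but leaves that verification implicit.
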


\begin{proof}
For the first part, set $\gamma:= \alpha \wedge \alpha^{-1}$ and write $\alpha = \gamma^{-1} \beta \gamma$ --  we then claim that $\beta$ is a cycle. If it were not, then we would have $\ini_{\textup{d}}(\beta) \in (E^1)^{-1}$, $\ter_{\textup{d}}(\beta) \in E^1$, $[\ter_{\textup{d}}(\beta)] = [\ini_{\textup{d}}(\beta)^{-1}]$ and $\ter_{\textup{d}}(\beta) \ne \ini_{\textup{d}}(\beta)^{-1}$, hence $r(\beta)$ would admit a choice, contradicting Lemma~\ref{lem:ClosedPathNoChoice}. The second part of the claim is immediate from Lemma~\ref{lem:ReducedProduct} and the Nielsen-Schreier Theorem.
\end{proof}

\begin{definition}
Assume that $v \in E^0$ admits no choices. Then a non-trivial closed path $\alpha$ based at $v$ is called a \textit{simple closed path} if $\alpha=\gamma^{-1} \beta \gamma$ for a simple admissible path $\gamma$ and a simple cycle $\beta$. Obviously, if $\alpha$ is a simple closed path, then so is $\alpha^{-1}$, and so we say that $v$ admits
$$\frac{1}{2} \cdot \big\vert \big\{\text{simple closed paths } \alpha \text{ based at } v \big\} \big\vert=\Big\vert \frac{\{ \text{simple closed paths $\alpha$ based at $v$}\}}{\alpha \sim \alpha^{-1}} \Big\vert$$
simple closed paths \textit{up to inversion}.
\end{definition}

\begin{proposition}\label{prop:GeneOfClosedPaths}
Assume that $v \in E^0$ admits no choices. Then every closed path based at $v$ is a reduced product of simple closed paths based at $v$.
\end{proposition}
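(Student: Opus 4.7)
The plan is strong induction on the length $n=|\alpha|$, writing $\alpha=\sigma_n\cdots\sigma_1$ with vertex sequence $v=v_0,v_1,\ldots,v_n=v$. The base case is when $v_0,\ldots,v_{n-1}$ are pairwise distinct: Proposition~\ref{prop:ClosedPathStandardForm} gives $\alpha=\gamma^{-1}\beta\gamma$ uniquely, and $\gamma$ is forced to be trivial (otherwise the vertex $r(\gamma)=s(\beta)$ would be revisited at the interior step $|\gamma|+|\beta|<n$), so $\alpha$ itself is a simple cycle at $v$, which qualifies as a simple closed path with trivial conjugating factor.

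For the inductive step, I will let $j$ be the smallest index with $v_j$ appearing earlier in the vertex sequence, say $v_j=v_i$ for a unique $i<j$, and set $\sigma:=\sigma_j\cdots\sigma_{i+1}$ and $\gamma_{v_i}:=\sigma_i\cdots\sigma_1$. By the minimality of $j$, the initial block $\gamma_{v_i}$ is a simple admissible path from $v$ to $v_i$, and the middle block $\sigma$ is a closed path at $v_i$ with no interior vertex repetition; applying the trivial-conjugator argument above to $\sigma$ (using Lemma~\ref{lem:ClosedPathNoChoice} to inherit the no-choice hypothesis at $v_i$) promotes this to $\sigma$ being a simple cycle at $v_i$. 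I will then define
\[
\tau:=\gamma_{v_i}^{-1}\sigma\gamma_{v_i},
\]
which structurally has the form required of a simple closed path at $v$, and I will derive the identity $\alpha=(\alpha_2\cdot\gamma_{v_i})\cdot\tau$ in $\mathbb{F}$ by direct cancellation, where $\alpha_2:=\sigma_n\cdots\sigma_{j+1}$. By Lemma~\ref{lem:ReducedProduct} the factor $\alpha_2\cdot\gamma_{v_i}$ is an admissible closed path at $v$ of length at most $n-|\sigma|<n$, so the induction hypothesis expresses it as a product of simple closed paths, and concatenation with $\tau$ completes the decomposition.

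The step I expect to require the most care is the admissibility of $\tau$, since only then does $\tau$ actually qualify as a simple closed path. All of its junctions except the new subword $\sigma_i^{-1}\sigma_j$ appear verbatim in $\alpha$, so only this one must be examined. Here is where the hypothesis enters in an essential way: Lemma~\ref{lem:ClosedPathNoChoice} forces $v_i$ to admit no choices, hence every $X\in C_{v_i}$ is a singleton, and a short case analysis on the signs of $\sigma_i,\sigma_j\in\hat{E}^1$ reduces admissibility of $\sigma_i^{-1}\sigma_j$ to the single inequality $\sigma_i\ne\sigma_j$. This inequality holds because $\sigma_i=\sigma_j$ would force $v_{i-1}=v_{j-1}$, contradicting the minimality of $j$.
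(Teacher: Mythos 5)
Your proposal is correct and follows essentially the same route as the paper: both identify the first vertex repetition $v_j=v_i$ in the sequence, peel off the simple closed path $\gamma_{v_i}^{-1}\sigma\gamma_{v_i}$ (the paper's $\gamma^{-1}\beta\gamma$), use Lemma~\ref{lem:ClosedPathNoChoice} and Proposition~\ref{prop:ClosedPathStandardForm} to see that the base-simple middle block is a simple cycle, and use Lemma~\ref{lem:ReducedProduct} to keep the shorter remainder admissible for the induction. The only cosmetic difference is that you verify admissibility of the new junction $\sigma_i^{-1}\sigma_j$ by a direct case analysis, where the paper simply cites Lemma~\ref{lem:ReducedProduct} together with minimality of $j$.
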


\begin{proof}
We claim that any closed path $\alpha$ based at $v$ admits a decomposition
$$\alpha=\alpha' \cdot \gamma^{-1} \beta \gamma,$$
where $\gamma$ is a simple admissible path, $\beta$ is a simple cycle, and $\alpha'$ is a closed path based at $v$ with $\vert \alpha' \vert < \vert \alpha \vert$. An inductive application of this claim surely proves the lemma. \medskip \\
To prove the claim, take $\beta' \le \alpha$ to be minimal with the property that there exists some $\gamma < \beta'$ with $r(\gamma)=r(\beta')$, and write $\beta'=\beta\gamma$. Then $\gamma$ is a simple admissible path by construction, and $\beta$ is a closed path such that the base vertex admits no choices, as seen from Lemma~\ref{lem:ClosedPathNoChoice}. Since the only vertex repetition on $\beta$ happens at the endpoints, it follows from the first part of Proposition~\ref{prop:ClosedPathStandardForm} that $\beta$ is a cycle, hence a simple cycle. It follows immediately from minimality that $\gamma^{-1} \cdot \beta=\gamma^{-1}\beta$, and the concatenation is admissible due to Lemma~\ref{lem:ReducedProduct}. Now write $\alpha=\sigma \beta \gamma$ and define $\alpha' := \sigma \cdot \gamma$. Lemma~\ref{lem:ReducedProduct} guarantees that $\alpha'$ is admissible, hence a closed path, and we clearly have $\alpha=\alpha' \cdot \gamma^{-1}\beta \gamma$. Finally observing that 
$$\vert \alpha' \vert \le \vert \sigma \vert + \vert \gamma \vert < \vert \alpha \vert,$$
the proof is complete.
\end{proof}

\begin{remark}
If $v$ admits no choices, and if $\Lambda$ is a set of representatives for the set of simple closed paths based at $v$ modulo inversion, then $\mathbb{F}_v$ is generated by $\Lambda$ due to Proposition~\ref{prop:GeneOfClosedPaths}. However, $\mathbb{F}_v$ need not be freely generated by $\Lambda$. For instance, both vertices in the graph
\begin{center}
\begin{tikzpicture}[scale=0.75]
 \SetUpEdge[lw = 1.5pt]
  \tikzset{VertexStyle/.style = {draw,shape = circle,fill = white, inner sep=0pt,minimum size=10pt,outer sep=3pt}}
  \SetVertexNoLabel
  \Vertex[x=0,y=0]{u1}
  \Vertex[x=3,y=0]{u2}

  \tikzset{EdgeStyle/.style = {->,bend left=40, color={\nicered}}}  
  \Edge[](u1)(u2)

  \tikzset{EdgeStyle/.style = {->,color={\niceblue}}}  
  \Edge[](u1)(u2)
  
  \tikzset{EdgeStyle/.style = {->,bend right=40,color={\nicegreen}}}
  \Edge[](u1)(u2)
\end{tikzpicture}
\end{center}
admit three simple closed paths up to inversion, yet $\mathbb{F}_v \cong \mathbb{F}_2$ for either vertex $v$. But this is not a problem since we only need to distinguish between the three cases
\begin{itemize}
\item $\vert \Lambda \vert=0$ in which $\mathbb{F}_v = \{1\}$,
\item $\vert \Lambda \vert = 1$ in which $\mathbb{F}_v \cong \mathbb{Z}$,
\item $\vert \Lambda \vert \ge 2$ in which $\mathbb{F}_v \cong \mathbb{F}_n$ for some $2 \le n \le \infty$.
\end{itemize}
\end{remark}\exend\medskip

We will also need the following somewhat odd corollary.
\begin{corollary}\label{cor:SimpleCycle}
Assume that $v \in E^0$ admits no choices and at most one simple closed path up to inversion. If $v$ admits a cycle $\alpha$, then it admits a unique simple cycle, $\beta$, up to inversion, and $\alpha=\beta^n$ for some $n \in \mathbb{Z}$.  
\end{corollary}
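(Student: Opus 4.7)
The plan is to leverage the free group structure of $\mathbb{F}_v$ established in Proposition~\ref{prop:ClosedPathStandardForm} together with the preceding remark. Since $v$ admits no choices, $\mathbb{F}_v$ is a free subgroup of $\mathbb{F}$ generated by a set $\Lambda$ of representatives for the simple closed paths based at $v$ modulo inversion. By hypothesis $|\Lambda| \le 1$, and because $\alpha$ is a non-trivial closed path we must have $|\Lambda| = 1$; so $\mathbb{F}_v = \langle \delta \rangle \cong \mathbb{Z}$ for some simple closed path $\delta$ at $v$, and $\alpha = \delta^n$ for some $n \in \mathbb{Z} \setminus \{0\}$.

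Next I would decompose $\delta = \gamma^{-1}\beta\gamma$ with $\gamma$ a simple admissible path and $\beta$ a simple cycle based at $r(\gamma)$, and verify that $\delta^n$ reduces in $\mathbb{F}$ to $\gamma^{-1}\beta^n\gamma$. The small observation here is that since $\beta\beta$ is admissible, the axioms force $\ter_{\textup{d}}(\beta) \ne \ini_{\textup{d}}(\beta)^{-1}$, so no cancellation takes place in $\beta \cdot \beta$ (and inductively none in $\beta^n$); together with the reducedness of $\gamma^{-1}\beta\gamma$ at its internal junctions, this yields the claimed reduced form. Since admissible paths are precisely the reduced words in $\mathbb{F}$ (the forbidden patterns $ee^{-1}$ and $e^{-1}e$ are exactly what admissibility rules out), the equality $\alpha = \gamma^{-1}\beta^n\gamma$ then holds as admissible paths.

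The main obstacle is to show that $\gamma$ is trivial. I plan to use that $\alpha\alpha$ is admissible (because $\alpha$ is a cycle) and inspect the length-two subpath at the junction of the two copies of $\alpha$ in $(\gamma^{-1}\beta^n\gamma)(\gamma^{-1}\beta^n\gamma)$. Writing $a := \ini_{\textup{d}}(\gamma)$, the rightmost symbol of the left copy is $a$ and the leftmost symbol of the right copy is $a^{-1}$, so that subpath has the form $a \cdot a^{-1}$: if $a \in E^1$ this violates condition~(2) with $e = f = a$, while if $a \in (E^1)^{-1}$, say $a = e^{-1}$, it violates condition~(1) with $[e] = [f]$. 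Either possibility contradicts admissibility, so $\gamma$ must be trivial.

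With $\gamma$ trivial we obtain $\delta = \beta$, so $\delta$ is itself a simple cycle at $v$ and $\alpha = \beta^n$ as required. Finally, since any simple cycle based at $v$ is in particular a simple closed path based at $v$, the uniqueness of $\beta$ up to inversion is just the hypothesis applied to simple cycles.
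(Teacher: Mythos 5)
Your proof is correct and takes essentially the same route as the paper's, which invokes Proposition~\ref{prop:GeneOfClosedPaths} to write $\alpha=\beta^n$ for the unique simple closed path $\beta$ and then simply asserts that $\beta$ must itself be a cycle; you merely make explicit the reduced-form computation and the $a\,a^{-1}$ contradiction at the junction of $\alpha\alpha$ that force the conjugating path $\gamma$ to be trivial. (One small imprecision: admissible paths are reduced words but not conversely, since reducedness does not exclude a subpath $e^{-1}f$ with $[e]=[f]$ and $e\ne f$; only the forward implication together with uniqueness of reduced forms is actually used, so nothing in your argument breaks.)
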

\begin{proof}
If $\alpha$ is a cycle based at $v$, then $\alpha=\beta^n$ for a simple closed path $\beta$ due to Proposition~\ref{prop:GeneOfClosedPaths}. But then $\beta$ must be a cycle as well, hence a simple cycle.
\end{proof}
The proof of Proposition~\ref{prop:GraphCondForOr} is fairly technical and requires the treatment of four different types of edges. We recommend having the following example in mind when reading through the proof of the proposition.

\begin{example}
In the following graph, all edges have been labelled with both the type and a choice of non-separated orientation as defined in the proof of Proposition~\ref{prop:GraphCondForOr}:
\begin{center}
\begin{tikzpicture}[scale=0.75]
 \SetUpEdge[lw = 1.5pt,labelcolor=none]
  \tikzset{VertexStyle/.style = {draw,shape = circle,fill = white, inner sep=0pt,minimum size=10pt,outer sep=3pt}}
  \SetVertexNoLabel
  \Vertex[x=0,y=0]{u1}
  \Vertex[x=3,y=0]{u2}
  \Vertex[x=6,y=0]{u3}
  \Vertex[x=9,y=0]{u4}
  \Vertex[x=6,y=3]{u5}
  \Vertex[x=3,y=3]{u6}
  \Vertex[x=9,y=3]{u7}
  \Vertex[x=0,y=3]{u10}
  \Vertex[x=-3,y=3]{u11}
  \Vertex[x=12,y=0]{u8}
  \Vertex[x=-3,y=0]{u9}  

  \tikzset{EdgeStyle/.style = {->,color={\nicered}}}  
  \Edge[label={$(1,-)$}, labelstyle=below](u3)(u2)
  \Edge[label={$(3,+)$}, labelstyle=below](u4)(u3)
  \Edge[label={$(3,+)$}, labelstyle=below](u8)(u4)
  \Edge[label={$(1,-)$}, labelstyle=above left](u6)(u2)
  \Edge[label={$(3,-)$}, labelstyle=right](u4)(u7)
  \Edge[label={$(1,+)$}, labelstyle=below](u9)(u1)  
  \tikzset{EdgeStyle/.style = {->,bend left,color={\nicered}}}  
  \Edge[label={$(1,-)$}, labelstyle=above](u1)(u2)
  \Edge[label={$(2,-)$}, labelstyle=left](u3)(u5)  
  \tikzset{EdgeStyle/.style = {->,bend right,color={\niceblue}}}  
  \Edge[label={$(1,+)$}, labelstyle=below](u1)(u2)
  \Edge[label={$(2,+)$}, labelstyle=right](u3)(u5)  
  \tikzset{EdgeStyle/.style = {->,color={\niceblue}}}    
  \Edge[label={$(4,+)$}, labelstyle=above](u6)(u10)
  
  \tikzset{EdgeStyle/.style = {->,color={\nicered}}}
  \Edge[label={$(4,-)$}, labelstyle=above](u11)(u10)
\end{tikzpicture}
\end{center}
The resulting non-separated graph is:
\begin{center}
\begin{tikzpicture}[scale=0.75]
 \SetUpEdge[lw = 1.5pt,labelcolor=none]
  \tikzset{VertexStyle/.style = {draw,shape = circle,fill = white, inner sep=0pt,minimum size=10pt,outer sep=3pt}}
  \SetVertexNoLabel
  \Vertex[x=0,y=0]{u1}
  \Vertex[x=3,y=0]{u2}
  \Vertex[x=6,y=0]{u3}
  \Vertex[x=9,y=0]{u4}
  \Vertex[x=6,y=3]{u5}
  \Vertex[x=3,y=3]{u6}
  \Vertex[x=0,y=3]{u10}
  \Vertex[x=-3,y=3]{u11}
  \Vertex[x=9,y=3]{u7}
  \Vertex[x=12,y=0]{u8}
  \Vertex[x=-3,y=0]{u9}  

  \tikzset{EdgeStyle/.style = {->,color={\nicered}}}  
  \Edge[](u3)(u2)
  \Edge[](u3)(u4)
  \Edge[](u4)(u8)
  \Edge[](u6)(u2)
  \Edge[](u4)(u7)
  \Edge[](u1)(u9)  
  \Edge[](u10)(u6)
  \Edge[](u11)(u10)
  \tikzset{EdgeStyle/.style = {->,bend left,color={\nicered}}}  
  \Edge[](u1)(u2)
  \Edge[](u3)(u5)  
  \tikzset{EdgeStyle/.style = {->,bend left,color={\nicered}}}  
  \Edge[](u2)(u1)
  \Edge[](u5)(u3)  
\end{tikzpicture}
\end{center}\vspace{-0.3cm}
\end{example}\exend\medskip

In the following, we will say that an edge $e$ is \textit{on} a path $\alpha$, if either of the letters $e$ or $e^{-1}$ are present in the symbol expansion of $\alpha$.
\begin{proposition}\label{prop:GraphCondForOr}
Let $(E,C)$ denote a finitely separated Condition \textup{(}C\textup{)} graph, and assume that every vertex admitting no choices admits at most one simple closed path up to inversion. Then $(E,C)$ can be equipped with a non-separated orientation.
\end{proposition}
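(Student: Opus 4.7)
The strategy is to construct the orientation $\mathfrak{o}: E^1 \to \{-1,+1\}$ by a direct case analysis on vertex types. Under Condition (C), each vertex admits at most one choice, so $E^0$ partitions as $V_{\textup{mul}} \sqcup V_{\textup{out}} \sqcup V_{\textup{no}}$, where $V_{\textup{mul}}$ consists of vertices carrying a (necessarily unique) multi-element $X \in C_v$, $V_{\textup{out}}$ consists of vertices whose single choice comes from some outgoing edge, and $V_{\textup{no}}$ consists of no-choice vertices. Each $v \in V_{\textup{mul}}$ is forced to be a type (1) vertex of Definition~\ref{def:NonSepOr} with $X_v$ equal to its multi-set, since every $X \subseteq E_+^1$ must be a singleton.

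Definition~\ref{def:NonSepOr} then forces the set of type (1) vertices to be forward-closed in $E$: from a type (1) vertex, all outgoing edges lie in $E_-^1$, which forces every target to also be type (1) with that edge in its distinguished $X_v$. I therefore take $V_1$ to be the forward closure of $V_{\textup{mul}}$, declare $V_2 := E^0 \setminus V_1$ tentatively type (2), and must verify that each $v \in V_1$ admits a compatible choice of $X_v$. The core consistency check, at vertices reached from $V_{\textup{mul}}$ by multiple propagation paths, is the heart of the argument: if two chains arrive at $w$ with incoming edges $e_1, e_2$ in distinct partition sets of $C_w$, I construct an admissible path $\alpha = g_1^{-1}\cdots g_m^{-1} e_2^{-1} e_1$ by tracing backwards along the second chain from $w$; since its terminal vertex $u_0' \in V_{\textup{mul}}$ supplies the witness $X \in C_{u_0'}$ with $|X| \ge 2$, $\alpha$ becomes a choice path making $e_1$ admit a choice at its source $v_1$, and a case analysis on $v_1$'s type furnishes a contradiction with Condition (C) (with the case $v_1 \in V_{\textup{out}}$ requiring one to trace the argument a step further using the chain out of $v_1$).

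For $V_2$, I need Definition~\ref{def:NonSepOr}(2), whose most delicate clause is that at most one outgoing edge lies in $E_+^1$. For $v \in V_{\textup{out}} \cap V_2$, the unique outgoing choice edge is the natural $E_+^1$-candidate; for $v \in V_{\textup{no}} \cap V_2$, the simple-closed-path hypothesis together with Corollary~\ref{cor:SimpleCycle} implies at most one simple cycle up to inversion at each such vertex, allowing me to orient the no-choice subgraph by selecting a direction around each simple cycle (when present) and orienting tree-like edges inward toward the cycle or a sink. The main obstacle is precisely this last step: without the simple-closed-path hypothesis, a no-choice vertex could support two independent simple cycles whose orientations would each demand an outgoing $E_+^1$-edge, making the at-most-one-$E_+^1$-outgoing condition impossible to satisfy; the hypothesis is therefore exactly the obstruction needed to guarantee a valid orientation exists.
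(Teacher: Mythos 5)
There is a genuine gap in your treatment of $V_2 := E^0 \setminus V_1$. You declare every vertex of $V_2$ to be of type (2) in the sense of Definition~\ref{def:NonSepOr}, but this is impossible in general: if $v \in V_2$ and $e \in r^{-1}(v)$, then clause (2) at $v$ forces $E_-^1 \cap r^{-1}(v) = \emptyset$, hence $e \in E_+^1$; so if some $u \in V_2$ has two outgoing edges whose ranges both lie in $V_2$, both edges are forced into $E_+^1$, violating $\vert E_+^1 \cap s^{-1}(u)\vert \le 1$ at $u$. This already happens for the trivially separated graph with one source $u$ and two edges into two distinct sinks: no vertex admits a choice, so $V_{\textup{mul}} = \emptyset$, $V_1 = \emptyset$ and $V_2 = E^0$, and your scheme has no consistent output --- even though this graph plainly admits a non-separated orientation (put both edges in $E_-^1$, making each sink type (1) with its singleton class and $u$ type (2) with no positive outgoing edges). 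The underlying error is the assumption that the type-(1) vertices can be taken to be exactly the forward closure of $V_{\textup{mul}}$: in fact most no-choice vertices must also be made type (1), with $E_-^1 \cap r^{-1}(v)$ a \emph{singleton} class, and deciding which singleton class --- equivalently, how to orient the edges in the no-choice region --- is the actual content of the proof. Your one-sentence prescription for that region (``orient tree-like edges inward toward the cycle or a sink'') is both too vague to check against Definition~\ref{def:NonSepOr} and inconsistent with your own framework, since orienting an edge toward a sink makes that sink a type-(1) vertex inside $V_2$. A smaller defect: the claimed partition $V_{\textup{mul}} \sqcup V_{\textup{out}} \sqcup V_{\textup{no}}$ omits vertices whose unique choice is a singleton class $X=\{e\} \in C_v$ admitting a choice through a choice path beginning with $e^{-1}$; such vertices need not lie in the forward closure of $V_{\textup{mul}}$.

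By contrast, the paper classifies \emph{edges} rather than vertices, into four types: (1) edges $e$ such that $r(e)$ admits a choice, oriented according to which of $e$, $e^{-1}$ admits the choice; (2) remaining edges on a cycle, oriented along a chosen simple cycle in each equivalence class; (3) remaining edges on a closed path, oriented via the (unique up to inversion) simple closed path through them; and (4) all other edges, oriented toward a chosen representative vertex of each tree-like component via the unique admissible path. It then verifies Definition~\ref{def:NonSepOr} vertex by vertex, crucially allowing no-choice vertices to be of type (1) with a singleton distinguished class. Your consistency argument on $V_1$ (producing the choice path $g_1^{-1}\cdots g_m^{-1}e_2^{-1}e_1$ from two conflicting propagation chains and deriving two choices at $s(e_1)$) is sound and close in spirit to the paper's type-(1) analysis, and your closing remarks correctly locate where the simple-closed-path hypothesis is needed; but as written the construction does not yield a valid orientation outside the forward closure of $V_{\textup{mul}}$.
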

\begin{proof}
The construction of $\mathfrak{o}(e)$ for $e \in E^1$ will proceed in the following four steps:
\begin{enumerate}
\item Either $e$ or $e^{-1}$ admits a choice (equivalently, $r(e)$ admits a choice).
\item $e$ is not of type (1), but $e$ is on a (simple) cycle.
\item $e$ is not of type (1) or (2), but $e$ is on a (simple) closed path.
\item $e$ is not of type (1), (2), or (3), i.e.~neither $e$ nor $e^{-1}$ admits a choice, and $e$ is not on a closed path.
\end{enumerate}
\textbf{Type (1):} We simply set
$$\mathfrak{o}(e)=\left\{\begin{array}{cl}
1 & \text{if } e \text{ admits a choice} \\
-1 & \text{if } e^{-1} \text{ admits a choice}
\end{array}\right. .$$
Observe that if $v \in E^0$ admits a choice, then exactly one of $e$ and $e^{-1}$ admits a choice for every $e \in r^{-1}(v) \cup s^{-1}(v)$, so before defining $\mathfrak{o}$ on the remaining edges, we might as well check that it satisfies Definition~\ref{def:NonSepOr} at such $v$. If $\mathfrak{o}(e)=1$, i.e.~if $e$ admits a choice, then either some $f \in s^{-1}(r(e))$ with $f \ne e$ or some $X \in C_{r(e)}$ with $e \notin X$ admits a choice. And since every vertex admits at most one choice, we must have $[e]=\{e\}$. Likewise, Definition~\ref{def:NonSepOr}(1) and (2) hold simply because every vertex admits at most one choice.  \medskip \\ 
\textbf{Type (2):} Define an equivalence relation on the set of type (2) edges by
$$e \approx f \Leftrightarrow \text{$e$ and $f$ are on the same (simple) cycle}.$$
Observe that $\approx$ is transitive due to Corollary~\ref{cor:SimpleCycle}. Now choose a representative $e$ for each equivalence class modulo $\approx$ as well as a simple cycle
$$\alpha=e_n^{\varepsilon_n}e_{n-1}^{\varepsilon_{n-1}} \cdots e_2^{\varepsilon_2}e_1^{\varepsilon_1}$$ 
that $e$ is on. We then set $\mathfrak{o}(e_i):=\varepsilon_i$ for all $i=1,\ldots,n$; this is well-defined, because $\alpha$ is simple. \medskip \\
\textbf{Type (3):} If $e$ is a type (3) edge, then $e=\ini(\alpha)$ for a (up to inversion) unique simple closed path $\alpha$. This allows us to define $\mathfrak{o}(e)$ so that $e^{\mathfrak{o}(e)}=\ini_{\textup{d}}(\alpha)$; note that this does not depend on the choice of $\alpha$ over $\alpha^{-1}$ by Proposition~\ref{prop:ClosedPathStandardForm}. \medskip \\
Observe that if $u \in E^0$ admits a closed path but no choices, then we have defined $\mathfrak{o}$ on all edges $e \in r^{-1}(u) \cup s^{-1}(u)$. Before defining the orientation of a type (4) edge, we will therefore check that Definition~\ref{def:NonSepOr} is satisfied at such $u$. We should distinguish between two cases; when $u$ admits and does not admit a cycle. \medskip \\
First assume that $u$ admits a simple cycle $\alpha=e_n^{\varepsilon_n}e_{n-1}^{\varepsilon_{n-1}} \cdots e_2^{\varepsilon_2}e_1^{\varepsilon_1}$, and that the orientation is defined as above. Note that no $e \in r^{-1}(u)$ admits a choice for then $u$ would it self admit a choice, but if $e \in s^{-1}(u)$, then $e^{-1}$ admits a choice if and only if $\vert [e] \vert \ge 2$. Now by construction
$$\mathfrak{o}(e)=\left\{\begin{array}{cl}
-1 & \text{if } e \in s^{-1}(u) \text{ and } \vert [e] \vert \ge 2 \\
\varepsilon_1 & \text{if } e=e_1 \\
\varepsilon_n & \text{if } e=e_n \\
-1 & \text{if } e \in s^{-1}(u) \text{ is not on a cycle} \\
1 & \text{if } e \in r^{-1}(u) \text{ is not on a cycle}
\end{array} \right. .$$
Simply observing that
$$\mathfrak{o}^{-1}(-1) \cap r^{-1}(u) = \left\{\begin{array}{cl}
\emptyset & \text{if } \varepsilon_1=1 \\
\{e_1\} & \text{if } \varepsilon_1=-1
\end{array} \right. $$
and
$$\mathfrak{o}^{-1}(1) \cap s^{-1}(u) = \left\{\begin{array}{cl}
\{e_1\} & \text{if } \varepsilon_1=1 \\
\emptyset & \text{if } \varepsilon_1=-1
\end{array}\right. ,$$
we then see that Definition~\ref{def:NonSepOr} is satisfied at $u$. \medskip \\
Next, assume that $u$ is not on a cycle, but that $\alpha=\gamma^{-1}\beta\gamma$ is a closed path based at $u$. Observe again that all $e \in r^{-1}(u)$ are of type (3), but that $e \in s^{-1}(u)$ is of type (1) if $\vert [e] \vert \ge 2$, and otherwise it is of type (3). If $\ini_{\textup{d}}(\alpha) \in (E^1)^{-1}$, then $e^{-1} \alpha e$ defines a closed path for all $e \in r^{-1}(u)$, $e \ne \ini(\alpha)$, hence $\mathfrak{o}(\ini(\alpha))=-1$ and $\mathfrak{o}(e)=1$. Moreover, if $e \in s^{-1}(u)$, then either $\vert [e] \vert \ge 2$ or $e$ is of type (3) and $e \alpha e^{-1}$ defines a closed path, hence $\mathfrak{o}(e)=-1$ either way. We conclude that Definition~\ref{def:NonSepOr}(1) is satisfied in this case, and similarly one can check that Definition~\ref{def:NonSepOr}(2) is satisfied when $\ini_{\textup{d}}(\alpha) \in E^1$. \medskip \\
\textbf{Type (4):} Finally, let $U \subset E^0$ denote the set of $u \in E^0$ admitting no choices and no closed paths, and define an equivalence relation on $U$ by
$$u \sim v \Leftrightarrow \text{ there is an admissible path of type (4) edges } u \to v.$$
For every equivalence class, we then pick a unique representative. If $e$ is any edge of type (4), and $u$ is the representative of the equivalence class of $r(e)$, then there is a unique admissible path $\alpha$ with $r(\alpha)=u$ and $\ini(\alpha)=e$, and we define $\mathfrak{o}(e)$ so that $e^{\mathfrak{o}(e)} = \ini_{\textup{d}}(\alpha)$. Verifying that $\mathfrak{o}$ satisfies Definition~\ref{def:NonSepOr} is completely analogous to what we did just above.
\end{proof}

\begin{corollary}\label{cor:CondLGraph}
Let $(E,C)$ denote a finitely separated graph satisfying both Condition \textup{(}C\textup{)} and Condition \textup{(}L\textup{)}. Then $(E,C)$ has a non-separated orientation for which the resulting graph $\overline{E}$ satisfies Condition \textup{(}L\textup{)}. 
\end{corollary}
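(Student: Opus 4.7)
The plan is to combine Proposition~\ref{prop:GraphCondForOr} with the dynamical characterisation of Condition (L) in Theorem~\ref{thm:CondL}.

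First, I would verify that the hypotheses of Proposition~\ref{prop:GraphCondForOr} are met. Condition (C) is assumed, so it remains to show that every $v \in E^0$ admitting no choices admits at most one simple closed path up to inversion. In fact, I claim that Condition (L) excludes \emph{any} such simple closed path. Indeed, suppose $\alpha = \gamma^{-1}\beta\gamma$ is a simple closed path at $v$, with $\beta$ a simple cycle. By Lemma~\ref{lem:ClosedPathNoChoice}, the ``no choice'' property of $v$ propagates to every vertex on $\alpha$, and in particular to every vertex on the simple cycle $\beta$. Thus $\beta$ itself admits no choice, contradicting Condition (L). Hence Proposition~\ref{prop:GraphCondForOr} applies and produces a non-separated orientation of $(E,C)$; denote the resulting non-separated graph by $\overline{E}$.

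It remains to check that $\overline{E}$ satisfies Condition (L). By Proposition~\ref{prop:NonSepGraphIso}, the partial action $\theta^{(E,C)}$ is conjugate to $\theta^{\overline{E}}$, and topological freeness of a partial action is obviously a conjugacy invariant. Viewing $\overline{E}$ as a finitely separated graph with the trivial separation $\mathcal{T}$ (it is column-finite by the very definition of a non-separated orientation) and applying Theorem~\ref{thm:CondL} both to $(E,C)$ and to $(\overline{E},\mathcal{T})$, we obtain that Condition (L) holds for $(E,C)$ if and only if it holds for $\overline{E}$. Since $(E,C)$ satisfies Condition (L) by hypothesis, so does $\overline{E}$.

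The only step requiring genuine work is the preliminary graph-theoretic observation that Condition (L) precludes simple closed paths at choiceless vertices; once this is established, the entire result reduces to a chain of invocations of Proposition~\ref{prop:GraphCondForOr}, Proposition~\ref{prop:NonSepGraphIso}, and Theorem~\ref{thm:CondL}. An alternative, more hands-on route would be to take a simple cycle $\overline{\alpha}$ in $\overline{E}$, pull it back via Lemma~\ref{lem:Paths} to a negatively oriented base-simple closed path in $(E,C)$, and directly extract an entry from a choice provided by Condition (L); however, that approach requires separate case analysis depending on the orientations of the initial and terminal symbols, and the dynamical route avoids this altogether.
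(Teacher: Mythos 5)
Your proof is correct, and the key step coincides with the paper's own argument: the paper also verifies the hypothesis of Proposition~\ref{prop:GraphCondForOr} by showing that, under Condition (L), a choiceless vertex cannot support a closed path $\gamma^{-1}\beta\gamma$, because Condition (L) forces the cycle $\beta$ to admit a choice and Lemma~\ref{lem:ClosedPathNoChoice} propagates this back to the base vertex. Your restriction to \emph{simple} closed paths, where $\beta$ is a simple cycle so that Condition (L) applies verbatim, is if anything slightly cleaner than the paper's phrasing, which tacitly uses that an arbitrary closed path at a choiceless vertex passes through a simple cycle. Where you go beyond the paper is the second assertion: the paper's proof ends with ``the claim then follows by invoking Proposition~\ref{prop:GraphCondForOr}'' and never explains why $\overline{E}$ satisfies Condition (L). Your dynamical route via Proposition~\ref{prop:NonSepGraphIso} and Theorem~\ref{thm:CondL} (topological freeness being a conjugacy invariant) fills this gap cleanly; the one point worth acknowledging is that it yields Condition (L) for $(\overline{E},\mathcal{T})$ in the sense of Definition~\ref{def:CondL}, and one must identify this with the classical Condition (L) for the non-separated graph $\overline{E}$ --- an identification the paper takes for granted throughout (e.g.\ in Corollary~\ref{cor:CondKFinBipGr}), and which does hold since for a simple cycle in a trivially separated graph ``admitting a choice'' amounts to reaching, by a backward path from the cycle, a vertex of in-degree at least two, which for a simple cycle forces an entry. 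The combinatorial alternative you sketch via Lemma~\ref{lem:Paths} is presumably what the paper intends, but your version is the more complete one.
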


\begin{proof}
Assume that $v \in E^0$ does not admit a choice, and assume in order to reach a contradiction that $v$ admits a closed path $\alpha=\gamma^{-1}\beta\gamma$ with $\beta$ a cycle. Then $s(\beta)$ admits a choice by assumption, hence so does $v$ by Lemma~\ref{lem:ClosedPathNoChoice}. The claim then follows by invoking Proposition~\ref{prop:GraphCondForOr}.
\end{proof}

\section{A characterisation of simplicity}\label{sect:Simplicity}
In this section, we compute all graph algebras of finitely separated graphs giving rise to minimal partial actions, and as a result, we are able to characterise simplicity of these $C^*$-algebras. A similar result is obtained in \cite[Theorem 8.1]{AL} for finite bipartite graphs, but the two proofs are quite different. Indeed, the one in \cite{AL} proceeds via a graph-theoretic investigation of the separated Bratteli diagram $(F_\infty,D^\infty)$, while the below proof combines the contents of Section~\ref{sect:GraphAlgIso} with a simple dynamical observation (Lemma~\ref{lem:CondC}).

\begin{definition}
Let $(E,C)$ denote a finitely separated graph. If $X,Y \in C$ satisfy $\vert X \vert, \vert Y \vert \ge 2$, then a \textit{choice connector between $X$ and $Y$} is an admissible path $\alpha$ for which $Y^{-1}\alpha X$ an admissible composition. If $(E,C)$ does not satisfy Condition (C), we define the \textit{maximal choice distance} to be
$$m_{\textup{CD}}(E,C) := \sup\{ n \mid \text{there exists a choice connector of length } n\}.$$
\end{definition}\exend\medskip

We have the following trivial, but quite handy, observation.

\begin{lemma}\label{lem:inequality}
Given any set $A \subset E^1 \sqcup (E^1)^{-1}$, the function $s_A \colon \Omega(E,C) \to \mathbb{Z}_+$ given by
$$s_A(\xi):= \vert \{ \ini_d(\alpha) \colon \alpha \in \xi, \ter_d(\alpha) \in A\} \vert$$
is lower semi-continuous, i.e.~$\liminf_{\eta \to \xi} s_A(\eta) \ge s_A(\xi)$ for any $\xi \in \Omega(E,C)$.
\end{lemma}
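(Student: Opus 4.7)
The plan is to unfold definitions and exhibit, at any point $\xi_0 \in \Omega(E,C)$ where $s_A(\xi_0) \ge n$, a basic open neighbourhood of $\xi_0$ on which $s_A \ge n$ as well. Since $s_A$ takes values in $\mathbb{Z}_+$, lower semi-continuity at $\xi_0$ amounts exactly to showing that for every $n \le s_A(\xi_0)$ there is a neighbourhood $U \ni \xi_0$ with $s_A(\eta) \ge n$ for all $\eta \in U$, and then the general inequality $\liminf_{\eta \to \xi_0} s_A(\eta) \ge s_A(\xi_0)$ (including the case $s_A(\xi_0) = \infty$) follows.

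First I would dispose of the trivial case $\xi_0 \in E_{\textup{iso}}^0$: there are no admissible paths $\alpha \in \xi_0$, so $s_A(\xi_0) = 0$, and the inequality $s_A \ge 0$ holds on all of $\Omega(E,C)$. Moreover, $E_{\textup{iso}}^0$ is a clopen subspace of $\Omega(E,C)$, so we may work entirely inside $\Omega(E,C) \setminus E_{\textup{iso}}^0$ when $\xi_0$ is a genuine configuration.

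Now suppose $\xi_0 \in \Omega(E,C) \setminus E_{\textup{iso}}^0$ and $s_A(\xi_0) \ge n$. By the definition of $s_A$, there exist admissible paths $\alpha_1,\ldots,\alpha_n \in \xi_0$ with the initial symbols $\ini_{\textup{d}}(\alpha_1),\ldots,\ini_{\textup{d}}(\alpha_n)$ pairwise distinct and with each $\ter_{\textup{d}}(\alpha_i) \in A$. Recall that $\Omega(E,C)$ carries the subspace topology from $\{0,1\}^{\mathbb{F}} \sqcup E_{\textup{iso}}^0$, so the cylinder
$$U := \{\eta \in \Omega(E,C) \mid \alpha_1,\ldots,\alpha_n \in \eta\}$$
is open (it is the intersection of the $n$ subbasic clopen sets $\{\eta : \alpha_i \in \eta\}$ with $\Omega(E,C)$), and clearly $\xi_0 \in U$. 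For any $\eta \in U$, the same paths $\alpha_1,\ldots,\alpha_n$ witness $s_A(\eta) \ge n$, since the conditions ``$\alpha_i \in \eta$'', ``the $\ini_{\textup{d}}(\alpha_i)$ are distinct'' and ``$\ter_{\textup{d}}(\alpha_i) \in A$'' depend only on the paths themselves, not on the ambient configuration.

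There is no real obstacle here; the only mildly subtle point is being careful with the case $s_A(\xi_0) = \infty$, which is handled by applying the finite-$n$ argument to arbitrarily large $n$, and with the isolated vertices, which are dealt with by the clopen-ness of $E_{\textup{iso}}^0$.
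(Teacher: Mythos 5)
Your proof is correct and is exactly the routine unfolding that the paper omits (its proof of this lemma is simply ``This is obvious''): witnesses $\alpha_1,\dots,\alpha_n\in\xi_0$ with distinct initial symbols and terminal symbols in $A$ persist on the basic clopen cylinder $\{\eta\mid \alpha_1,\dots,\alpha_n\in\eta\}$, which gives $s_A\ge n$ near $\xi_0$ for every $n\le s_A(\xi_0)$. The handling of the isolated vertices and of the case $s_A(\xi_0)=\infty$ is also fine.
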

\begin{proof}
This is obvious.
\end{proof}

\begin{lemma}\label{lem:CondC}
Let $(E,C)$ denote a finitely separated graph. If $\theta^{(E,C)}$ is minimal, then $(E,C)$ satisfies Condition \textup{(}C\textup{)}.
\end{lemma}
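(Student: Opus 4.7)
The plan is to prove the contrapositive, namely that if some $v \in E^0$ admits at least two choices, then $\theta^{(E,C)}$ is not minimal. The strategy is to produce a finite $A \subset E^1 \sqcup (E^1)^{-1}$, a configuration $\eta \in \Omega(E,C)$ with $s_A(\eta) \ge 2$, and a second configuration $\xi \in \Omega(E,C)$ whose entire orbit under $\theta^{(E,C)}$ lies inside the closed set $\{s_A \le 1\}$. If $\theta^{(E,C)}$ were minimal, the orbit of $\xi$ would be dense; picking $\alpha_n$ with $\theta_{\alpha_n}(\xi) \to \eta$, Lemma~\ref{lem:inequality} would then give $\liminf_n s_A(\theta_{\alpha_n}(\xi)) \ge s_A(\eta) \ge 2$, contradicting the uniform bound.

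For the construction I would first handle the representative subcase $X_1, X_2 \in C_v$ with $|X_i| \ge 2$. Picking distinct $e_i, f_i \in X_i$ and taking $A = \{e_1^{-1}\}$, the configuration $\eta \in \Omega(E,C)_v$ that distinguishes $e_1 \in X_1$ and $e_2 \in X_2$ at its root contributes $e_1^{-1}$ with $\ini_{\textup{d}} = e_1^{-1}$ to $s_A(\eta)$. The independent $X_2$-choice is used to produce a second contributing path $e_1^{-1}\beta \in \eta$ with $\ini_{\textup{d}}(\beta) \ne e_1^{-1}$: here $\beta$ is a short admissible detour returning to $v$ (a loop, or a longer admissible closed walk whose existence is granted by the two independent choices at $v$) at whose endpoint $\eta$ again distinguishes $e_1$. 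The configuration $\xi$ is then constructed with an anti-coupled distinguishing pattern between $X_1$ and $X_2$ that systematically avoids any such second contribution; the alternative $f_1 \in X_1$ guaranteed by $|X_1| \ge 2$ supplies the needed flexibility, and since $\theta^{(E,C)}$ preserves the intrinsic distinguishing pattern at each point of a configuration, the bound $s_A \le 1$ propagates along the entire orbit.

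For the remaining subcases---two forward edges in $s^{-1}(v)$ admitting choices, or a colour class at $v$ that is a singleton but feeds into a non-trivial colour class downstream via a choice path---the same template applies after transplanting the $s_A$-witness via the two choice paths supplied by the definition of ``$v$ admits two choices'' to the downstream vertex hosting a $Y \in C$ with $|Y| \ge 2$. The hard part will be the first construction: producing $\eta$ with $s_A(\eta) \ge 2$ requires arranging an admissible detour whose endpoint distinguishes $e_1$ in $X_1$ and whose initial letter differs from $e_1^{-1}$---this is the combinatorial heart of the argument, and is precisely where the independence of the two choices at $v$ becomes essential. Dually, confirming that the orbit of $\xi$ never escapes $\{s_A \le 1\}$ requires checking that no wandering basepoint in a translate $\theta_\alpha(\xi)$ exposes a position whose forced distinguishing choices inflate $s_A$, which will follow from the rigidity of the anti-coupled construction.
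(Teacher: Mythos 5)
Your overall strategy coincides with the paper's: argue the contrapositive and use the lower semicontinuity of $s_A$ from Lemma~\ref{lem:inequality} to separate a configuration with $s_A\ge 2$ from the orbit closure of a configuration whose whole orbit satisfies $s_A\le 1$. The gap is in your witness construction. You take $A=\{e_1^{-1}\}$ to be a singleton, so both contributions to $s_A(\eta)\ge 2$ must come from paths in $\eta$ terminating in the same letter $e_1^{-1}$; to manufacture the second one, $e_1^{-1}\beta$, you need a non-trivial admissible closed path $\beta$ based at $v$, and you assert that its existence is ``granted by the two independent choices at $v$''. It is not. Consider $E^0=\{v,u_1,u_2,u_3,u_4\}$ with edges $e_1,f_1,e_2,f_2$ ranging at $v$ and sourced at $u_1,\dots,u_4$ respectively, separated by $C_v=\{\{e_1,f_1\},\{e_2,f_2\}\}$: here $v$ admits two choices and Condition (C) fails, but the graph contains no closed paths at all, and one checks that $s_{\{e_1^{-1}\}}(\xi)\le 1$ for \emph{every} $\xi\in\Omega(E,C)$. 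So no choice of $\eta$ can work with your $A$, and the lemma's conclusion must nevertheless hold for this graph.

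The repair is exactly what the paper does: failure of Condition (C) gives an admissible composition $Y^{-1}\alpha X$ with $\vert X\vert,\vert Y\vert\ge 2$ (with $\alpha$ possibly trivial), and one takes the \emph{two-element} set $A=\{x^{-1},y^{-1}\}$ for fixed $x\in X$, $y\in Y$. A configuration containing $\{x^{-1},y^{-1}\alpha\}$ then has $s_A\ge 2$ because the two witnessing paths terminate in \emph{different} letters of $A$ and have distinct initial symbols --- no closed path is required. The complementary configuration is built by never selecting $x$ (resp.~$y$) as the distinguished element of $X$ (resp.~$Y$) upon reaching $r(x)$ (resp.~$r(y)$), which is possible precisely because $\vert X\vert,\vert Y\vert\ge 2$; then for any $\delta$ in that configuration, every $\gamma$ in the translate by $\delta$ with $\ter_{\textup{d}}(\gamma)\in A$ arises from a fully cancelled element and is an initial subpath of $\delta^{-1}$, so all such $\gamma$ share the initial symbol $\ini_{\textup{d}}(\delta^{-1})$ and $s_A\le 1$ along the entire orbit. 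Your ``anti-coupled pattern'' is gesturing at this avoidance construction, but with a singleton $A$ the first half of the argument cannot be completed.
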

\begin{proof}
We argue by contraposition, assuming that $(E,C)$ does not satisfy Condition (C). Consider any admissible composition $Y^{-1}\alpha X$ with $\vert X \vert, \vert Y \vert \ge 2$, fix some $x \in X$, $y \in Y$ and set $A:=\{x^{-1},y^{-1}\}$. Also, pick any configuration $\xi$ with $\{x^{-1},y^{-1}\alpha\} \subset \xi$, so that $s_A(\xi) \ge 2$. We may then construct a configuration $\eta$ with the property $\ter_d(\beta) \ne x^{-1},y^{-1}$ for all $\beta \in \eta$. Indeed, starting from any vertex and constructing $\eta$ inductively, one may simply refrain from choosing $x^{-1}$ when reaching $r(x)$, and similarly for $y$ (see Example~\ref{ex:LemmaEx1} for what such a configuration might look like). It follows that $s_A(\theta_\alpha(\eta)) \le 1$ for any $\alpha \in \eta$, so in particular $\xi \notin \overline{\theta_\mathbb{F}(\eta)}$ by Lemma~\ref{lem:inequality}.
\end{proof}

\begin{example}\label{ex:LemmaEx1}
Consider the separated graph
\begin{center}
\begin{tikzpicture}[scale=0.75]
 \SetUpEdge[lw = 1.5pt]
  \tikzset{VertexStyle/.style = {draw,shape = circle,fill = white, inner sep=0pt,minimum size=10pt,outer sep=3pt}}
  \SetVertexNoLabel
  \Vertex[x=-3,y=0]{u1}
  \Vertex[x=0,y=0]{u2}
  \Vertex[x=3,y=0]{u3}

  \tikzset{EdgeStyle/.style = {->,bend left=20,color={\niceblue}}}  
  \Edge[label=$x$, labelstyle=above](u1)(u2)
  \tikzset{EdgeStyle/.style = {->,bend right=20,color={\niceblue}}}    
  \Edge[label=$x'$, labelstyle=below](u1)(u2)  

  \tikzset{EdgeStyle/.style = {->,bend left=20,color={\nicered}}}  
  \Edge[label=$y'$, labelstyle=below](u3)(u2)
  \tikzset{EdgeStyle/.style = {->,bend right=20,color={\nicered}}}    
  \Edge[label=$y$, labelstyle=above](u3)(u2)  
\end{tikzpicture}
\end{center}
which does not satisfy Condition (C). In this case, there is only the following choice of a configuration $\eta$ as in the proof of Lemma~\ref{lem:CondC}:
\begin{center}
\begin{figure}[ht]
\begin{tikzpicture}[font=\scriptsize,scale=1]
  \tikzset{VertexStyle/.style = {draw,shape = rectangle,minimum size=14pt,inner sep=1pt}}

  \Vertex[x=0,y=0,L=$1$]{0}     
  
  \tikzset{VertexStyle/.style = {}}  
  \Vertex[x=-5.5,y=0,L=$\cdots$]{-5h}  
  \Vertex[x=5.5,y=0,L=$\cdots$]{5h}     
  
  \tikzset{VertexStyle/.style = {draw,shape = circle,minimum size=1pt,inner sep=1pt}}
  \SetVertexNoLabel  

  \Vertex[x=-1,y=0]{-1}  
  \Vertex[x=-2,y=0]{-2}  
  \Vertex[x=-3,y=0]{-3}    
  \Vertex[x=-4,y=0]{-4}    
  \Vertex[x=1,y=0]{1}  
  \Vertex[x=2,y=0]{2}  
  \Vertex[x=3,y=0]{3}    
  \Vertex[x=4,y=0]{4}  
      
  \tikzset{EdgeStyle/.style = {->,color=\niceblue}}

  \Edge[labelcolor=none, labelstyle=above, label=$x'$](-1)(0)
  \Edge[labelcolor=none, labelstyle=above, label=$x$](-1)(-2)
  \Edge[labelcolor=none, labelstyle=above, label=$x'$](-5h)(-4)

  \Edge[labelcolor=none, labelstyle=above, label=$x'$](3)(2)
  \Edge[labelcolor=none, labelstyle=above, label=$x$](3)(4)
  
  \tikzset{EdgeStyle/.style = {->,color=\nicered}}
  \Edge[labelcolor=none, labelstyle=above, label=$y'$](1)(0)
  \Edge[labelcolor=none, labelstyle=above, label=$y'$](5h)(4)
  \Edge[labelcolor=none, labelstyle=above, label=$y'$](-3)(-2)
  \Edge[labelcolor=none, labelstyle=above, label=$y$](1)(2)
  \Edge[labelcolor=none, labelstyle=above, label=$y$](-3)(-4)  
  ;
\end{tikzpicture}
\caption*{A configuration $\eta$ as in the proof of Lemma~\ref{lem:CondC}.} 
\end{figure}\vspace{-1cm}
\end{center}
\end{example}\exend\medskip

The rest of this section essentially just exploits Condition (C) in order to apply the results of Section~\ref{sect:GraphAlgIso}. However, before we can give the first application of this property, we will need to introduce yet another graph-theoretic notion that will come in handy in the proof of Proposition~\ref{prop:QuoIso}.

\begin{definition}
An admissible path $\alpha$ is called \textit{forced} if $[e]=1$ for all edges $e$, such that $e^{-1}$ (or $e^*$ when regarding $\alpha$ as an element of a graph algebra) is in the symbol expansion of $\alpha$. Observe that $\alpha^* \alpha=s(\alpha)$ whenever $\alpha$ is forced.
\end{definition}

\begin{proposition}\label{prop:QuoIso}
If $(E,C)$ satisfies Condition \textup{(}C\textup{)}, then 
$$L_K(E,C) = L_K^{\textup{ab}}(E,C) \andspace C^*(E,C) = \mathcal{O}(E,C).$$
\end{proposition}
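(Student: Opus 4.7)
The goal is to show the canonical quotient maps $L_K(E,C) \twoheadrightarrow L_K^{\textup{ab}}(E,C)$ and $C^*(E,C) \twoheadrightarrow \mathcal{O}(E,C)$ are isomorphisms. By the universal properties of the tame algebras, this reduces to showing that $E^1$ is already a tame set of partial isometries in $L_K(E,C)$ (equivalently, in $C^*(E,C)$); once this is established, the identity on the generators $E^0 \sqcup E^1$ extends to a two-sided inverse of each quotient map.

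I would first extract the key structural consequence of Condition (C): since any class $X \in C_v$ with $|X| \geq 2$ automatically admits a choice, Condition (C) permits at most one such ``big'' class at each vertex $v$; every other class is a singleton $\{f\}$ with $ff^* = v$ by (SCK2). This immediately yields the basic commutativity lemma: for any two edges $e, f \in r^{-1}(v)$, the projections $ee^*$ and $ff^*$ commute in $L_K(E,C)$. Indeed, if $[e] = [f]$ then (SCK1) forces $ee^* ff^* = 0$ when $e \neq f$, while if $[e] \neq [f]$ then at most one of $|[e]|, |[f]|$ exceeds $1$, so the other projection equals $v$ and commutativity is automatic.

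The main technical step is then to verify $aa^*a = a$ for every product $a = s_n \cdots s_1$ with $s_i \in E^1 \sqcup (E^1)^*$. I would put $a$ in the standard form $a = \alpha_n \cdots \alpha_1$ of Remark~\ref{rem:StandardForm} and proceed by induction on $n$ and, within the case $n = 1$, on the length of the admissible path. At each junction with joining edge $e_i = \ini_{\textup{d}}(\alpha_{i+1})$ satisfying $|[e_i]| \geq 2$, admissibility forces any edge of $\alpha_i$ or $\alpha_{i+1}$ that is incoming to $v_i = r(e_i)$ to lie in a class distinct from $[e_i]$ and hence, by Condition (C), in a singleton class; the associated $hh^*$-projection then equals $v_i$ and passes through freely. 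Combined with the basic commutativity lemma, this permits the middle of $aa^*a$ to be reorganized so that $a^*a$-style subfactors collapse to vertex projections, leaving $aa^*a = a$, using only the defining relations (V), (E), (SCK1), (SCK2). The $C^*$-case is handled identically since only algebraic manipulations are used.

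The main obstacle I foresee is the bookkeeping in the inductive step: the basic commutativity lemma addresses only elementary projections $ff^*$, whereas the argument requires its extension to the ``diagonal'' projections $\alpha_i\alpha_i^*$ coming from sub-paths and their interaction with the junction projections $e_ie_i^*$. Condition (C) together with admissibility is precisely what keeps this interaction controllable and allows the induction to close.
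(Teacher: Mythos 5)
Your reduction to showing that $E^1$ is tame in $L_K(E,C)$, and your use of the standard form of Remark~\ref{rem:StandardForm}, both match the paper's strategy, and the local observation you extract from Condition (C) --- at most one class $X \in C_v$ with $\vert X \vert \ge 2$ at each vertex, whence $ee^*$ and $ff^*$ commute for $e,f \in r^{-1}(v)$ --- is correct. But the core verification that $aa^*a=a$ is not carried out, and the mechanism you propose does not close. The real obstruction is not an elementary projection $ff^*$ at a common vertex, nor an edge adjacent to a junction vertex $v_i$: it is a starred symbol $g^*$ with $\vert [g] \vert \ge 2$ sitting in the \emph{interior} of some $\alpha_i$, which upon expanding $aa^*a$ contributes a proper subprojection $gg^* \lneq r(g)$ wrapped inside path conjugates of the form $\gamma g g^* \gamma^*$. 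Such configurations are exactly what obstructs tameness in general separated graphs, so any correct proof must rule them out; your purely local reading of Condition (C) cannot, because ``one big class per vertex'' is strictly weaker than Condition (C), and you yourself flag (without resolving) precisely this interaction between the diagonal projections $\alpha_i\alpha_i^*$ and the junctions.

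What is missing is the \emph{global} consequence of Condition (C) that the paper uses: there is no choice connector, i.e.\ no admissible subword $g^*\gamma h$ with $\vert [g] \vert, \vert [h] \vert \ge 2$, since the vertex $r(h)=s(\gamma)$ would then admit two choices. This enters in two ways. First, it forces $n \le 2$ in the standard form (if $n \ge 3$, the interior of $\alpha_2$ is a choice connector between $[\ini_{\textup{d}}(\alpha_2)]$ and $[\ini_{\textup{d}}(\alpha_3)]$), so no induction on $n$ is needed. Second, it shows that every big-class direct symbol of an admissible piece must lie to the left of every big-class starred symbol, which is exactly the statement that the piece decomposes as $\sigma_2\sigma_1^*$ with $\sigma_1,\sigma_2$ \emph{forced}; then $\sigma_i^*\sigma_i=s(\sigma_i)$ by (SCK1) and (SCK2), and $aa^*a=a$ follows in two lines. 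Without this choice-connector/forced-path structure, the ``reorganization'' step of your sketch has nothing to collapse the interior bad projections onto vertex projections, so as written the argument has a genuine gap.
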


\begin{proof}
We simply have to verify that $\alpha=\alpha\alpha^*\alpha$ in $L_K(E,C)$ for all products of elements from the set $E^1 \cup (E^1)^* \subset L_K(E,C)$. Recall from Remark~\ref{rem:StandardForm} that any such non-zero $\alpha$ is of the form $\alpha=\alpha_n \cdots \alpha_1$, where each $\alpha_i$ is a non-trivial admissible path satisfying
\begin{itemize}
\item $\ini_{\textup{d}}(\alpha_{i+1}) \in E^1$ and $\ter_{\textup{d}}(\alpha_i)=\ini_{\textup{d}}(\alpha_{i+1})^*$,
\item $\vert [\ini_{\textup{d}}(\alpha_{i+1})]\vert \ge 2$
\end{itemize}
for all $i=1,\ldots,n-1$. We first claim that Condition (C) implies $n \le 2$. Indeed if $n \ge 3$, then $\ini_{\textup{d}}(\alpha_2) \in E^1$ and $\ter_{\textup{d}}(\alpha_2) \in (E^1)^*$, so $\vert \alpha_2 \vert \ge 2$ and we can consider the admissible path $\alpha_2'$ obtained from removing the initial and terminal symbol (if $\vert \alpha_2 \vert=2$ so that $\alpha_2=e^*f$ for $e,f \in E^1$, we set $\alpha_2':=r(f)$). Then $\alpha_2'$ will be a choice connector, contradicting Condition (C).  If $n=1$, then $\alpha=\alpha_1$ must be of the form $\alpha=\sigma_2\sigma_1^*$, where both $\sigma_1$ and $\sigma_2$ are forced, and consequently
$$\alpha\alpha^*\alpha=\sigma_2\sigma_1^*\sigma_1\sigma_2^*\sigma_2\sigma_1^*=\sigma_2\sigma_1^*=\alpha.$$
Assuming $n=2$ instead, both $\alpha_1^*$ and $\alpha_2$ must be forced, and so the situation is the same as in the case $n=1$.
\end{proof}

We include a proof of following observation, which is also used in various forms in \cite{AL}, for clarity.

\begin{proposition}\label{prop:NoChoiceIso}
Let $(E,C)$ denote a finitely separated graph, assume that $v \in E^0$ admits no choices. Then there are identifications
$$vL_K^{\textup{ab}}(E,C)v \cong K[\mathbb{F}_v] \andspace v\mathcal{O}^{(r)}(E,C)v \cong C^*_{(r)}(\mathbb{F}_v),$$
where $K[\mathbb{F}_v]$ is the group ring of $\mathbb{F}_v$ with coefficient in $K$.
\end{proposition}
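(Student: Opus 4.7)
The plan is to realise the corner $vL_K^{\textup{ab}}(E,C)v$ via the dynamical description of Theorem~\ref{thm:DynDescr}, under which the projection $v$ corresponds to the indicator $1_v := 1_{\Omega(E,C)_v}$, so that $vL_K^{\textup{ab}}(E,C)v \cong 1_v(C_K(\Omega(E,C)) \rtimes \mathbb{F})1_v$ and analogously for the $C^*$-algebras. The main geometric input will be that, when $v$ admits no choices, $\Omega(E,C)_v$ collapses to a single configuration $\xi_v$ and the subgroup $\mathbb{F}_v \le \mathbb{F}$ fixes it.

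For the single-point claim I would induct on $n \ge 0$ to show that any $\xi \in \Omega(E,C)_v$ is uniquely determined on words of length at most $n$, together with the local configurations $\xi_\alpha$ at such $\alpha$. In the base case $\alpha = 1$, no $X \in C_v$ can have $|X| \ge 2$, since otherwise $X$ would itself admit a choice through its elements' inverses, so the distinguished $e_X \in X$ is forced. For the inductive step at $\alpha \in \xi$ of length $n$, put $w := r(\alpha)$: when $\ter_{\textup{d}}(\alpha) \in E^1$ the distinguished element of $[\ter_{\textup{d}}(\alpha)]$ is forced to be $\ter_{\textup{d}}(\alpha)$ by admissibility, and any remaining $X \in C_w$ with $|X| \ge 2$ would make $X^{-1}\alpha$ an admissible composition, i.e.\ $\alpha$ a choice path. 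Then $\ini_{\textup{d}}(\alpha)$ admits a choice, and a split on whether it is a forward or an inverse edge---yielding a choice-admitting edge in $s^{-1}(v)$ or a choice-admitting class in $C_v$, respectively---forces $v$ itself to admit a choice, contradicting the hypothesis. Hence $\xi_\alpha$ is uniquely determined, and the induction goes through.

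Having $\Omega(E,C)_v = \{\xi_v\}$, I would identify the support of the corner: $1_v u_g 1_v \ne 0$ requires both $\xi_v \in \Omega(E,C)_g$ and $\theta_{g^{-1}}(\xi_v) \in \Omega(E,C)_v$, which unwinds to $g$ being a reduced admissible closed path at $v$, i.e.\ $g \in \mathbb{F}_v$. Conversely, for $g \in \mathbb{F}_v$, Lemma~\ref{lem:ClosedPathNoChoice} propagates the no-choices condition along every vertex of $g$, and a short further induction as above yields $g \in \xi_v$ and $\theta_g(\xi_v) = \xi_v$. The corner therefore has the $K$-basis $\{1_v u_g\}_{g \in \mathbb{F}_v}$ with multiplication $(1_v u_g)(1_v u_h) = 1_v u_{gh}$, which is precisely $K[\mathbb{F}_v]$. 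The same argument with $C_0$ replacing $C_K$, noting that $C_0(\{\xi_v\}) \rtimes_{(r)} \mathbb{F}_v = C^*_{(r)}(\mathbb{F}_v)$ under the trivial action on a point, yields $v\mathcal{O}^{(r)}(E,C)v \cong C^*_{(r)}(\mathbb{F}_v)$. The chief technical hurdle is the inductive uniqueness of $\xi_v$, particularly tracking how the hypothesis ``$v$ admits no choices'' propagates to an arbitrary $\alpha \in \xi_v$ through both its terminal and initial letters.
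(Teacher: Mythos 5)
Your route is the same as the paper's: Theorem~\ref{thm:DynDescr} turns the corner into $1_v(C_K(\Omega(E,C))\rtimes\mathbb{F})1_v$, and the whole content is the observation that $\Omega(E,C)_v$ is a single point on which $\mathbb{F}_v$ acts trivially (and is precisely the stabiliser). The paper states this observation without proof and then cites a corner lemma from \cite{AL}; your induction showing that the distinguished elements $e_X$ are forced at every $\alpha\in\xi$ -- via ``$X^{-1}\alpha$ admissible with $|X|\ge 2$ makes $\alpha$ a choice path, hence $\ini_{\textup{d}}(\alpha)$ and then $v$ admit a choice'' -- is exactly the right justification, and your identification of the support of the corner with $\mathbb{F}_v$ is also correct.

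The one place where your argument is thinner than it needs to be is the passage from the dense $*$-subalgebra $\mathrm{span}_K\{1_vu_g\}_{g\in\mathbb{F}_v}\cong K[\mathbb{F}_v]$ to the $C^*$-isomorphisms. For $L_K^{\textup{ab}}$ the basis argument finishes the proof, and for $\mathcal{O}^r$ one can match the reduced norms using the canonical conditional expectation. But for the \emph{full} crossed product $\mathcal{O}(E,C)=C_0(\Omega(E,C))\rtimes\mathbb{F}$ you must still check that the universal norm on the corner restricts to the universal norm of $C^*(\mathbb{F}_v)$, i.e.\ that every unitary representation of $\mathbb{F}_v$ extends to a covariant representation of the ambient partial system (equivalently, that the corner of the full crossed product is the full crossed product of the restricted one-point system, whose representations realise all unitary representations of the stabiliser). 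This is true but is not a formal consequence of exhibiting a basis of a dense subalgebra; it is precisely what the paper delegates to \cite[Lemma 7.13]{AL}, and you should either cite such a result or supply the extension argument.
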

\begin{proof}
Observing that $\Omega(E,C)_v$ is a one-point space and that the partial action of $\mathbb{F}$ restricts to the trivial global action $\mathbb{F}_v \act \Omega(E,C)_v$, we deduce that
$$vL_K^{\textup{ab}}(E,C)v \cong C_K(\Omega(E,C)_v) \rtimes_{\textup{alg}} \mathbb{F}_v \cong K \rtimes_{\textup{alg}} \mathbb{F} \cong K[\mathbb{F}_v]$$
and
$$v\mathcal{O}^{(r)}(E,C)v \cong  C(\Omega(E,C)_v) \rtimes_{(r)} \mathbb{F}_v \cong \mathbb{C} \rtimes_{(r)} \mathbb{F}_v \cong C_{(r)}^*(\mathbb{F}_v),$$
by invoking \cite[Lemma 7.13]{AL}.
\end{proof}

Having made all the preparations, we are now able to describe all algebras associated with finitely separated graphs for which the partial action is minimal.
\begin{theorem}\label{thm:SimplicityMainThm}
Let $(E,C)$ is a finitely separated graph. If $(E,C)$ satisfies Condition \textup{(}C\textup{)} and $\mathcal{H}(E,C)=\{\emptyset, E^0\}$, then exactly one of the following holds:
\begin{enumerate}
\item[\textup{(1)}] Every cycle admits exactly one choice. In that case 
$$L_K(E,C) = L_K^{\textup{ab}}(E,C)$$
is isomorphic to a simple Leavitt path algebra $L(\overline{E})$, and 
$$C^*(E,C) = \mathcal{O}(E,C) \cong \mathcal{O}^r(E,C)$$
is isomorphic to a simple graph $C^*$-algebra $C^*(\overline{E})$.
\item[\textup{(2)}] There is a vertex, which admits no choices and exactly one simple closed path up to inversion. Then $L_K(E,C)=L_K^{\textup{ab}}(E,C)$ is isomorphic to a Leavitt path algebra $L_K(\overline{E})$ and Morita equivalent to the algebra of Laurent polynomials $K[\mathbb{Z}]=K[x,x^{-1}]$, while $C^*(E,C)=\mathcal{O}(E,C) \cong \mathcal{O}^r(E,C)$ is isomorphic to a graph $C^*$-algebra $C^*(\overline{E})$ and Morita equivalent to $C(\mathbb{T})$.
\item[\textup{(3)}] There is a vertex $v \in E^0$, which admits no choices and at least two simple closed paths up to inversion. In that case, there are Morita equivalences
$$L_K(E,C)=L_K^{\textup{ab}}(E,C) \sim K[\mathbb{F}_v], \quad C^*(E,C) = \mathcal{O}(E,C) \sim C^*(\mathbb{F}_v)$$
and $\mathcal{O}^r(E,C) \sim C^*_r(\mathbb{F}_v)$, where $\mathbb{F}_v$ denotes the free subgroup of rank at least two consisting of all the closed paths based at $v$ as well as the empty word.
\end{enumerate}
\end{theorem}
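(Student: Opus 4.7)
The strategy is to reduce to the tame algebras via Proposition~\ref{prop:QuoIso} — Condition~(C) forces $L_K(E,C) = L_K^{\textup{ab}}(E,C)$ and $C^*(E,C) = \mathcal{O}(E,C)$ — and then split according to whether every cycle admits a choice. By Theorem~\ref{thm:QuoByHS}, the assumption $\mathcal{H}(E,C) = \{\emptyset, E^0\}$ translates to minimality of $\theta^{(E,C)}$, which I would use freely throughout. To see at least one case applies: if every cycle admits at least (hence, by Condition~(C), exactly) one choice, we are in case~(1); otherwise some cycle $\beta$ admits no choice, so $s(\beta)$ admits no choice by Lemma~\ref{lem:ClosedPathNoChoice} and witnesses at least one simple closed path, placing us in case~(2) or~(3) according to whether it admits exactly one or at least two simple closed paths up to inversion.

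For case~(1), I would first verify that no vertex $v$ admitting no choices can be the base of a simple closed path $\gamma^{-1}\beta\gamma$: Lemma~\ref{lem:ClosedPathNoChoice} would force $s(\gamma)$ to admit no choices, making $\beta$ a cycle with no choice and contradicting the case hypothesis. Hence Proposition~\ref{prop:GraphCondForOr} applies, producing a non-separated orientation with associated non-separated graph $\overline{E}$, and Proposition~\ref{prop:NonSepGraphIso} yields $L_K(E,C) \cong L(\overline{E})$ and $\mathcal{O}(E,C) \cong C^*(\overline{E})$. Condition~(L) for $(E,C)$ is automatic, so Corollary~\ref{cor:CondLGraph} transfers it to $\overline{E}$, and the hereditary--saturated correspondence in Proposition~\ref{prop:NonSepGraphIso} gives $\mathcal{H}(\overline{E}) = \{\emptyset, \overline{E}^0\}$; classical graph-algebra theory then delivers the simplicity of $L(\overline{E})$ and $C^*(\overline{E})$. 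Finally, the conjugacy of partial actions in Proposition~\ref{prop:NonSepGraphIso} identifies $\mathcal{O}^r(E,C)$ with $C_0(\Omega(\overline{E})) \rtimes_r \mathbb{F}$, and amenability of the graph groupoid of the column-finite graph $\overline{E}$ gives $\mathcal{O}^r(E,C) \cong C^*(\overline{E}) \cong \mathcal{O}(E,C)$.

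For cases~(2) and~(3), I would use Proposition~\ref{prop:NoChoiceIso} to identify the corners $vL_K^{\textup{ab}}(E,C)v \cong K[\mathbb{F}_v]$ and $v\mathcal{O}^{(r)}(E,C)v \cong C^*_{(r)}(\mathbb{F}_v)$; by minimality the hereditary and $C$-saturated closure of $\{v\}$ is $E^0$, which makes $v$ a full idempotent (resp.\ projection) and upgrades the corner isomorphisms to Morita equivalences of the whole algebras with $K[\mathbb{F}_v]$ and $C^*_{(r)}(\mathbb{F}_v)$. This already yields all the claimed Morita equivalences in case~(3) (where $\mathbb{F}_v \cong \mathbb{F}_n$ for some $n\ge 2$) and in case~(2) (where $\mathbb{F}_v \cong \mathbb{Z}$). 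In case~(2) I still need the identification with a graph algebra, which I would get from Proposition~\ref{prop:GraphCondForOr} after ruling out any vertex $w$ admitting no choices and at least two simple closed paths: such a $w$ would give a second Morita equivalence $L_K^{\textup{ab}}(E,C) \sim K[\mathbb{F}_n]$ for some $n \ge 2$, whose center is $K \neq K[\mathbb{Z}] = Z(K[\mathbb{Z}])$, contradicting the Morita equivalence already established. The graph-algebra identification and $\mathcal{O}(E,C) \cong \mathcal{O}^r(E,C)$ then follow exactly as in case~(1). Mutual exclusivity is now straightforward: (1) is incompatible with (2) and (3) by the SCP observation, while (2) and (3) cannot coexist by this same center obstruction.

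The main obstacle is the fullness step: converting minimality into the fact that the ideal generated by $v$ is the whole algebra in each of $L_K^{\textup{ab}}(E,C)$, $\mathcal{O}(E,C)$, and $\mathcal{O}^r(E,C)$. This requires the classification of ideals in these algebras by hereditary and $C$-saturated subsets (extending Theorem~\ref{thm:QuoByHS}), together with the identification that the ideal generated by a single vertex corresponds to the hereditary and $C$-saturated closure of that vertex; once in hand, minimality delivers fullness immediately.
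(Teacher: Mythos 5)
Your proposal is correct and follows essentially the same route as the paper: Proposition~\ref{prop:QuoIso} to collapse the quotient maps, Corollary~\ref{cor:CondLGraph} together with Proposition~\ref{prop:NonSepGraphIso} for case (1), and Proposition~\ref{prop:NoChoiceIso} plus fullness of $v$ (which indeed only needs the elementary fact that the vertices lying in the ideal generated by $v$ form a hereditary $C$-saturated set, not a full ideal classification) for cases (2) and (3). The only cosmetic difference is that you separate (2) from (3) via the centers of $K[\mathbb{Z}]$ and $K[\mathbb{F}_n]$, where the paper contrasts $C(\mathbb{T})$ with $C^*(\mathbb{F}_n)$; both are valid Morita-invariant obstructions.
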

\begin{proof}
First, we recall that the quotient maps $L_K(E,C) \to L_K^{\textup{ab}}(E,C)$ and $C^*(E,C) \to \mathcal{O}(E,C)$ are isomorphisms in any case by Proposition~\ref{prop:QuoIso} (strictly speaking, we only need to invoke the result for case 3). Now, if every cycle admits exactly one choice, then we obtain (1) immediately by Corollary~\ref{cor:CondLGraph}. If this is not the case, then some $v \in E^0$ admits a closed path but no choices, so Proposition~\ref{prop:NoChoiceIso} applies. Moreover, as $v$ generates $E^0$ as a hereditary and $C$-saturated set, it defines a full projection in $L_K(E,C)$, $C^*(E,C)$ and the quotients, hence they are all Morita-equivalent to their respective corners obtained by cutting down with $v$. If $v$ admits at least two simple closed paths up to inversion, then neither can be a multiple of the other, hence $\mathbb{F}_v$ will be a free group of rank at least two. \medskip \\
Finally, we observe that if there is only one simple closed path based at $v$, then no $u \in E^0$ can admit at least two simple closed paths and no choices, for then $C(\mathbb{T})$ and $C^*(\mathbb{F}_n)$ would be Morita-equivalent for some $n \ge 2$. Now Proposition~\ref{prop:GraphCondForOr} applies to give (2).
\end{proof}

As a consequence, we can completely characterise the simple $C^*$-algebras associated with finitely separated graphs.

\begin{corollary}\label{cor:Simple}
Let $(E,C)$ denote a finitely separated graph. Then the algebras $L_K(E,C)$ and $L_K^{\textup{ab}}(E,C)$ as well as the $C^*$-algebras $C^*(E,C)$ and $\mathcal{O}(E,C)$ are simple if and only if the following holds:
\begin{enumerate}
\item[\textup{(1)}] $(E,C)$ satisfies Condition \textup{(}C\textup{)},
\item[\textup{(2)}] $\mathcal{H}(E,C)=\{\emptyset, E^0\}$,
\item[\textup{(3)}] every cycle admits exactly one choice.
\end{enumerate}
In that case, $L_K(E,C) = L_K^{\textup{ab}}(E,C)$ is isomorphic to the Leavitt path algebra and 
$$C^*(E,C) = \mathcal{O}(E,C) \cong \mathcal{O}^r(E,C)$$
is isomorphic to the graph $C^*$-algebra of a non-separated graph.
\end{corollary}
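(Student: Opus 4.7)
The plan is to reduce both directions to Theorem~\ref{thm:SimplicityMainThm}. The \emph{if}-direction would be immediate: under (1), (2) and (3) one lands squarely in case (1) of that theorem, which directly yields the identifications $L_K(E,C)=L_K^{\textup{ab}}(E,C)\cong L_K(\overline{E})$ and $C^*(E,C)=\mathcal{O}(E,C)\cong\mathcal{O}^r(E,C)\cong C^*(\overline{E})$, the simplicity of these non-separated graph algebras, and the final claim of the corollary.

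For the \emph{only-if} direction, the first step is to reduce to the tame algebras. By Theorem~\ref{thm:DynDescr} each $1_v$ is a non-zero idempotent in $L_K^{\textup{ab}}(E,C)$ and $\mathcal{O}(E,C)$, so the canonical quotients $L_K(E,C)\twoheadrightarrow L_K^{\textup{ab}}(E,C)$ and $C^*(E,C)\twoheadrightarrow \mathcal{O}(E,C)$ have non-zero image and are forced to be isomorphisms whenever the source is simple. Granting this, condition (2) is automatic: any proper non-empty $H\in\mathcal{H}(E,C)$ would yield via Theorem~\ref{thm:QuoByHS} a non-trivial ideal $I(H)$, non-zero because it contains any $v \in H$ and proper because the quotient still has non-zero vertices. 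Condition (1) follows from the ideal/invariant-subset correspondence recalled in Section~2: a non-trivial open invariant subset of $\Omega(E,C)$ would produce a non-trivial ideal in the tame crossed product, so simplicity forces minimality of $\theta^{(E,C)}$, and Lemma~\ref{lem:CondC} then supplies Condition (C).

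With (1) and (2) in hand, Theorem~\ref{thm:SimplicityMainThm} splits the situation into three mutually exclusive cases, and I would rule out cases (2) and (3). In each, the tame algebra is Morita equivalent, via the full corner at a vertex (with fullness guaranteed by (2)), to one of $K[x,x^{-1}]$, $C(\mathbb{T})$, $K[\mathbb{F}_n]$ with $n\geq 2$, or the full group $C^*$-algebra $C^*(\mathbb{F}_n)$ with $n\geq 2$. None of these is simple; the only mildly delicate case is the full group $C^*$-algebra, but non-amenability of $\mathbb{F}_n$ for $n\geq 2$ forces $C^*(\mathbb{F}_n)\twoheadrightarrow C^*_r(\mathbb{F}_n)$ to be strict. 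Since Morita equivalence via a full corner preserves (non-)simplicity, this excludes cases (2) and (3) and leaves only case (1), which delivers (3) and the stated graph-algebra descriptions. The main obstacle throughout is simply keeping the bookkeeping straight across four algebras and three cases; once Theorem~\ref{thm:SimplicityMainThm} is invoked, no genuinely new argument is required.
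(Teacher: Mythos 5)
Your proposal is correct and follows essentially the same route as the paper: derive minimality of $\theta^{(E,C)}$ from simplicity (after noting the tame quotients are non-zero, hence isomorphisms), deduce Condition (C) and triviality of $\mathcal{H}(E,C)$ via Lemma~\ref{lem:CondC} and Theorem~\ref{thm:QuoByHS}, and then rule out cases (2) and (3) of Theorem~\ref{thm:SimplicityMainThm} because (full) group algebras of free groups are never simple. The paper's proof is merely terser; your extra justifications (non-vanishing of the vertex projections via Theorem~\ref{thm:DynDescr}, non-simplicity of $C^*(\mathbb{F}_n)$ via non-amenability) are all sound.
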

\begin{proof}
If either algebra is simple, then $\theta^{(E,C)}$ is minimal. By Lemma~\ref{prop:NoChoiceIso} and Theorem~\ref{thm:QuoByHS}, this implies that $(E,C)$ satisfies Condition $(C)$ and contains only trivial hereditary and $C$-saturated subsets. Now the result is immediate from Theorem~\ref{thm:SimplicityMainThm}, since (full) group algebras of free groups are not simple.
\end{proof}
\begin{corollary}
Let $(E,C)$ denote a finitely separated graph. Then the $C^*$-algebra $\mathcal{O}^r(E,C)$ is simple if and only if
\begin{enumerate}
\item[\textup{(1)}] $(E,C)$ satisfies Condition \textup{(}C\textup{)},
\item[\textup{(2)}] $\mathcal{H}(E,C)=\{\emptyset, E^0\}$,
\end{enumerate}
and one of the following holds:
\begin{enumerate}
\item[\textup{(3a)}] Every cycle admits exactly one choice. In that case, $\mathcal{O}^r(E,C)$ is isomorphic to a classical graph $C^*$-algebra.
\item[\textup{(3b)}] There is a vertex $v \in E^0$, which admits no choices and at least two simple closed paths up to inversion. In that case, $\mathcal{O}^r(E,C)$ is Morita-equivalent to $C^*_r(\mathbb{F}_v)$, where $\mathbb{F}_v$ denotes the free subgroup of rank at least two consisting of all the closed paths based at $v$ as well as the empty word.
\end{enumerate}
\end{corollary}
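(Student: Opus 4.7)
The plan is to piggyback almost entirely on Theorem~\ref{thm:SimplicityMainThm}, so the proof will closely mirror that of Corollary~\ref{cor:Simple}, with the single modification being that case (3) of the main theorem now \emph{does} yield a simple reduced algebra (thanks to Powers' theorem).

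For the "only if" direction I would start by observing that simplicity of $\mathcal{O}^r(E,C) = C_0(\Omega(E,C)) \rtimes_r \mathbb{F}$ forces the partial action $\theta^{(E,C)}$ to be minimal: any proper open invariant subspace $U \subsetneq \Omega(E,C)$ produces a proper ideal $C_0(U) \rtimes_r \mathbb{F} \lhd \mathcal{O}^r(E,C)$ via the reduced exact sequence from the preliminaries (using that $\mathbb{F}$ is exact). Lemma~\ref{lem:CondC} then gives Condition (C), and Theorem~\ref{thm:QuoByHS} gives $\mathcal{H}(E,C)=\{\emptyset,E^0\}$, so the hypotheses of Theorem~\ref{thm:SimplicityMainThm} are met. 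That theorem splits the situation into three mutually exclusive cases, and it suffices to rule out case (2) and identify cases (1) and (3) with (3a) and (3b), respectively. Case (2) is incompatible with simplicity since it gives a Morita equivalence $\mathcal{O}^r(E,C)\sim C(\mathbb{T})$, and $C(\mathbb{T})$ is not simple; Morita equivalence preserves simplicity of $C^*$-algebras, so this case must be discarded.

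For the "if" direction, I assume (1), (2), and either (3a) or (3b) and invoke Theorem~\ref{thm:SimplicityMainThm} directly. Under (3a) the theorem identifies $\mathcal{O}^r(E,C)$ with $C^*(\overline{E})$ where $\overline{E}$ is the non-separated graph produced by Corollary~\ref{cor:CondLGraph}; the theorem asserts outright that this algebra is simple, so we are done. Under (3b) the theorem furnishes a Morita equivalence $\mathcal{O}^r(E,C) \sim C_r^*(\mathbb{F}_v)$ with $\mathbb{F}_v$ free of rank at least two, and Powers' theorem (simplicity of $C_r^*(\mathbb{F}_n)$ for $n\ge 2$) together with Morita invariance of simplicity yields that $\mathcal{O}^r(E,C)$ is simple.

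There is no genuine obstacle since Theorem~\ref{thm:SimplicityMainThm} has already done all of the dynamical and graph-theoretic work; the only subtlety worth flagging is that the appeal to Morita invariance of simplicity, used in both directions, is asymmetric from the parallel argument for the full crossed product (where $C^*(\mathbb{F}_n)$ fails to be simple and consequently case (3b) must be excluded in Corollary~\ref{cor:Simple}). This is precisely what accounts for the discrepancy between the two corollaries.
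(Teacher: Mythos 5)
Your proposal is correct and follows essentially the same route as the paper, which simply states that the proof is identical to that of Corollary~\ref{cor:Simple} except that $C_r^*(\mathbb{F}_n)$ is simple for $n\ge 2$ by Powers' theorem; your write-up just makes explicit the steps (minimality via the reduced exact sequence and exactness of $\mathbb{F}$, then Lemma~\ref{lem:CondC}, Theorem~\ref{thm:QuoByHS}, and the case analysis of Theorem~\ref{thm:SimplicityMainThm}) that the paper leaves implicit.
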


\begin{proof}
The proof is completely similar to that of Corollary~\ref{cor:Simple}, except that $C^*_r(\mathbb{F}_n)$ is in fact simple for every $2 \le n \le \infty$ \cite{Powers}.
\end{proof}

Finally, we can also characterise minimality of $\theta^{(E,C)}$:

\begin{corollary}
Let $(E,C)$ denote a finitely separated graph. Then $\theta^{(E,C)}$ is minimal if and only if $(E,C)$ satisfies Condition \textup{(}C\textup{)} and $\mathcal{H}(E,C)=\{\emptyset, E^0\}$.
\end{corollary}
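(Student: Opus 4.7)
The forward implication is straightforward from the machinery already assembled: Condition (C) follows at once from Lemma~\ref{lem:CondC}, and for $\mathcal{H}(E,C) = \{\emptyset, E^0\}$, any $H \in \mathcal{H}(E,C)$ with $\emptyset \ne H \ne E^0$ produces via Theorem~\ref{thm:QuoByHS} an open invariant subspace $\Omega(E,C)^H$ that is non-empty (it contains $\Omega(E,C)_v$ for every $v \in H$) and proper (its complement is identified with the non-empty space $\Omega(E/H,C/H)$ via the direct quasi-conjugacy of Theorem~\ref{thm:QuoByHS}), contradicting minimality.

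For the converse I will invoke Theorem~\ref{thm:SimplicityMainThm}, which partitions the situation into three mutually exclusive cases. In Case 1 the theorem identifies $\mathcal{O}(E,C)$ with a simple classical graph $C^*$-algebra; since every open invariant $U \subset \Omega(E,C)$ injects into the ideals of $\mathcal{O}(E,C) \cong C_0(\Omega(E,C)) \rtimes \mathbb{F}$ via $U \mapsto C_0(U) \rtimes \mathbb{F}$, simplicity immediately forces $U \in \{\emptyset, \Omega(E,C)\}$, which is exactly minimality.

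Cases 2 and 3 both feature a vertex $v$ admitting no choices. Such a $v$ generates $E^0$ as a hereditary and $C$-saturated set (the closure $\langle v \rangle$ is non-empty and lies in $\{\emptyset, E^0\}$), so Theorem~\ref{thm:QuoByHS} makes $v$ a full projection in $\mathcal{O}(E,C)$; and the observation used in the proof of Proposition~\ref{prop:NoChoiceIso} shows that $\Omega(E,C)_v$ reduces to a single point $\{\xi_v\}$, whence $v \, C_0(\Omega(E,C)) \, v = \mathbb{C}$. For any open invariant $U$, a direct computation in the partial crossed product shows that the corner $v I_U v$ of the ideal $I_U := C_0(U) \rtimes \mathbb{F}$ coincides with $v\mathcal{O}(E,C)v$ when $\xi_v \in U$ and vanishes when $\xi_v \notin U$; the Morita bijection between ideals of $\mathcal{O}(E,C)$ and $v\mathcal{O}(E,C)v$, available because $v$ is full, then forces $I_U \in \{0, \mathcal{O}(E,C)\}$ and hence $U \in \{\emptyset, \Omega(E,C)\}$. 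The principal obstacle lies exactly in Cases 2 and 3, where $\mathcal{O}(E,C)$ is itself very far from simple—in Case 3 it is Morita equivalent to the non-simple $C^*(\mathbb{F}_n)$ for some $n \ge 2$—so the corner argument is essential for cutting down to the trivially simple algebra $\mathbb{C}$.
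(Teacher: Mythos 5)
Your proposal is correct, and its skeleton --- Lemma~\ref{lem:CondC} together with Theorem~\ref{thm:QuoByHS} for the forward direction, and the trichotomy of Theorem~\ref{thm:SimplicityMainThm} for the converse --- is the same as the paper's; case (1) is also handled identically via simplicity. Where you genuinely diverge is in cases (2) and (3). The paper treats case (3) by appealing to simplicity of $\mathcal{O}^r(E,C) \sim C_r^*(\mathbb{F}_v)$ (hence, implicitly, Powers' theorem), and case (2) by the purely dynamical remark that $\Omega(E,C)$ is exactly the orbit of the one-point set $\Omega(E,C)_v$, since the hereditary $C$-saturated closure of $v$ is all of $E^0$. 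You instead give a single $C^*$-algebraic argument covering both cases: compress each induced ideal $I_U = C_0(U) \rtimes \mathbb{F}$ by the full projection $v$, note that $v I_U v$ is $0$ or all of $v\mathcal{O}(E,C)v$ according to whether $\xi_v \in U$, and invoke the ideal correspondence for full corners. Your computation of $vI_Uv$ is sound (any $f\delta_\alpha$ with $f$ supported in $U$ is killed by $1_{\{\xi_v\}}$ when $\xi_v \notin U$, while $v \in C_0(U) \subset I_U$ when $\xi_v \in U$), and the injectivity of $U \mapsto I_U$ on open invariant sets closes the loop. What your route buys is uniformity across cases (2) and (3) and independence from Powers' simplicity theorem; what it costs is detouring a purely dynamical statement through the full crossed product and the Morita machinery, where the paper's one-line orbit argument stays inside the dynamics. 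The one attribution to tidy up: fullness of the projection $v$ is not really a consequence of Theorem~\ref{thm:QuoByHS} but of the standard observation (used without comment in the proof of Theorem~\ref{thm:SimplicityMainThm}) that the ideal generated by a vertex contains every vertex in its hereditary $C$-saturated closure; this is not a gap, just a mislabelled citation.
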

\begin{proof}
One implication follows immediately from \ref{thm:QuoByHS} and Lemma~\ref{lem:CondC}. For the other one, note that Theorem~\ref{thm:SimplicityMainThm} applies, and that if (1) or (3) of the Theorem~\ref{thm:SimplicityMainThm} holds, then $\theta^{(E,C)}$ must be minimal due to simplicity of the graph algebras. Assuming (2) instead, there is a vertex $v$ which admits no choices. Since $v$ generates $E^0$ as a hereditary and $C$-saturated set, we see that $\Omega(E,C)$ is nothing but the orbit of the one-point set $\Omega(E,C)_v$, hence minimal.
\end{proof}

\section{Degeneracy of the tame algebras}

In Section~\ref{sect:GraphAlgIso}, we saw that the Leavitt path algebra and graph $C^*$-algebra degenerate under certain conditions, including Condition (C). On the other hand, even very simple separated graphs without Condition (C) can produce quite complicated algebras. For instance, if $(E,C)$ denotes the graph
\begin{center}
\begin{tikzpicture}[scale=0.75]
 \SetUpEdge[lw = 1.5pt]
  \tikzset{VertexStyle/.style = {draw,shape = circle,fill = white, inner sep=1.5pt,minimum size=10pt,outer sep=3pt}}
  \Vertex[x=4.5,y=3]{v}
  \SetVertexNoLabel
  \Vertex[x=0,y=0]{u1}
  \Vertex[x=3,y=0]{u2}
  \Vertex[x=6,y=0]{u3}
  \Vertex[x=9,y=0]{u4}
  
  \tikzset{EdgeStyle/.style = {->,bend left=30, color={\niceblue}}}  
  \Edge[](u1)(v)

  \tikzset{EdgeStyle/.style = {->,bend left=30,color={\nicered}}}  
  \Edge[](u2)(v)
  
  \tikzset{EdgeStyle/.style = {->,bend right=30, color={\nicered}}}  
  \Edge[](u3)(v)

  \tikzset{EdgeStyle/.style = {->,bend right=30,color={\niceblue}}}  
  \Edge[](u4)(v)
  
\end{tikzpicture}
\end{center}
of \cite[Example 9.4]{AE}, then $C^*(E,C)$ is Morita equivalent to the universal unital $C^*$-algebra generated by two projections , namely
$$vC^*(E,C)v \cong \mathbb{C}^2 \ast_\mathbb{C} \mathbb{C}^2 \cong \{f \in C([0,1],M_2(\mathbb{C})) \mid f(0),f(1) \text{ diagonal}\},$$
while $\mathcal{O}(E,C) \cong \bigoplus_{i=1}^4 M_3(\mathbb{C})$. Indeed, $(E_1,C^1)$ is the trivially separated graph
\begin{center}
\begin{tikzpicture}[scale=0.75]
 \SetUpEdge[lw = 1.5pt]
  \tikzset{VertexStyle/.style = {draw,shape = circle,fill = white, inner sep=1.5pt,minimum size=10pt,outer sep=3pt}}
  
  \SetVertexNoLabel
  \Vertex[x=0,y=0]{u1}
  \Vertex[x=3,y=0]{u2}
  \Vertex[x=6,y=0]{u3}
  \Vertex[x=9,y=0]{u4}
  \Vertex[x=0,y=3]{v1}
  \Vertex[x=3,y=3]{v2}
  \Vertex[x=6,y=3]{v3}
  \Vertex[x=9,y=3]{v4}  
  
  \tikzset{EdgeStyle/.style = {->, color={\niceblue}}}  
  \Edge[](u1)(v1)
  \Edge[label=${}$](u1)(v2)
  \Edge[](u2)(v1)
  \Edge[label=${}$](u2)(v3)   
  \Edge[](u3)(v2)
  \Edge[label=${}$](u3)(v4)
  \Edge[](u4)(v3)
  \Edge[](u4)(v4)        
  
\end{tikzpicture}
\end{center}
to which we can apply the standard formula for finite non-separated graphs without cycles. In this short section, we shall explore when the tame algebras degenerate to graph algebras of non-separated graphs by combining our work in Section~\ref{sect:GraphAlgIso} with the fact that $(E_n,C^n)$ and $(E,C)$ produce the same tame algebras. We briefly recall the definition of $(E_1,C^1)$.

\begin{definition}[{\cite[Construction 4.4]{AE}}]
Let $(E,C)$ denote a finite bipartite separated graph, and write
$$C_u=\{X_1^u,\ldots,X_{k_u}^u\}$$
for all $u \in E^{0,0}$. Then $(E_1,C^1)$ is the finite bipartite separated defined by
\begin{itemize}
\item $E_1^{0,0}:=E^{0,1}$ and $E_1^{0,1}:=\{v(x_1,\ldots,x_{k_u}) \mid u \in E^{0,0}, x_j \in X_j^u\}$,
\item $E^1:=\{\alpha^{x_i}(x_1,\ldots,\widehat{x_i},\ldots,x_{k_u}) \mid u \in E^{0,0}, i=1,\ldots,k_u, x_j \in X_j^u \}$,
\item $r_1(\alpha^{x_i}(x_1,\ldots,\widehat{x_i},\ldots,x_{k_u})):=s(x_i)$ and $s_1(\alpha^{x_i}(x_1,\ldots,\widehat{x_i},\ldots,x_{k_u})):=v(x_1,\ldots,x_{k_u})$,
\item $C^1_v:=\{X(x) \mid x \in s^{-1}(v)\}$, where
$$X(x_i):=\{\alpha^{x_i}(x_1,\ldots,\widehat{x_i},\ldots,x_{k_u}) \mid x_j \in X_j^u \text{ for } j \ne i\}.$$
\end{itemize}
We also define a map $\mathfrak{r} \colon E_1^0 \to E^0$ by $\mathfrak{r}(v):=v$ for $v \in E_1^{0,0}=E^{0,1}$ and 
$$\mathfrak{r}(v(x_1,\ldots,x_{k_u})):=u$$
for all $u \in E^{0,0}$ and $(x_1,\ldots,x_{k_u}) \in \prod_{i=1}^{k_u} X_i^u$. \exend
\end{definition}

The following technical lemma will prove most useful.

\begin{lemma}\label{lem:PathCorrespondence}
Assume that $(E,C)$ is a finite bipartite graph. The assignments $v \mapsto \mathfrak{r}(v)$ and $\alpha^e(\ast) \mapsto e^{-1}$ extend to a length-preserving surjective map $\Psi \colon \mathcal{P}(E_1,C^1) \to \mathcal{P}(E,C)$ with the following properties
\begin{enumerate}
\item[\textup{(1)}] If $\alpha, \beta \in \mathcal{P}(E_1,C^1)$ satisfy $r(\alpha)=s(\beta)$, then $\Psi(\beta)\Psi(\alpha)$ is admissible if and only if $\beta\alpha$ is admissible.
\item[\textup{(2)}] If $\alpha \in \mathcal{P}(E,C)$ with $r(\alpha),s(\alpha) \in E^{0,1}$, then $$r_1(\Psi^{-1}(\alpha))=\{r(\alpha)\} \andspace s_1(\Psi^{-1}(\alpha))=\{s(\alpha)\}.$$
\item[\textup{(3)}] If $\alpha \in \mathcal{P}(E,C)$ with $r(\alpha) \in E^{0,0}$ and $s(\alpha) \in E^{0,1}$, so that we may write $\alpha=x\beta$ for $x \in E^1$, then 
$$r_1(\Psi^{-1}(\alpha))=s_1(X(x)) \andspace s_1(\Psi^{-1}(\alpha))=\{s(\alpha)\}.$$
\item[\textup{(4)}] If $\alpha \in \mathcal{P}(E,C)$ is non-trivial with $r(\alpha),s(\alpha) \in E^{0,0}$, so that we may write $\alpha=x \beta y^{-1}$ for $x,y \in E^1$, then
$$
(r_1,s_1)(\Psi^{-1}(\alpha))=s_1(X(x)) \times s_1(X(y)).$$
\item[\textup{(5)}] Let $x\alpha \in \mathcal{P}(E,C)$ and consider a lift $\beta \in \Psi^{-1}(\alpha)$. Then $x\alpha$ is a choice path if and only if $\vert X(x) \vert \ge 2$ and $X(x)^{-1}\beta$ is an admissible composition in $(E_1,C^1)$. Consequently, any $v \in E^{0,1}=E_1^{0,0}$ admits the same number of choices in $(E,C)$ and $(E_1,C^1)$.
\item[\textup{(6)}] The restriction of $\Psi$ to the set of closed paths based at vertices admitting no choices is injective.
\end{enumerate}
\end{lemma}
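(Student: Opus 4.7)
My plan is to extend $\Psi$ to a length-preserving map on admissible paths by sending each letter individually---a positive edge $\alpha^e(\ldots) \in E_1^1$ to the inverse letter $e^{-1}$ and its inverse to $e$---and to establish property (1) first, since well-definedness of $\Psi$ on $\mathcal{P}(E_1,C^1)$ will follow immediately. Because $(E_1,C^1)$ is bipartite, the junction in any composable concatenation $\beta\alpha$ must take one of two shapes: a pattern $e_2 e_1^{-1}$ with common source $v(z_1,\ldots,z_{k_u}) \in E_1^{0,1}$, or the dual pattern $e_2^{-1} e_1$ with common range in $E_1^{0,0}$. Writing $e_i = \alpha^{y_i}(\ldots)$ in the first case, both $y_1, y_2$ are coordinates of the same tuple at $u = \mathfrak{r}(v(\ldots))$, so $e_1 \ne e_2$ is equivalent to $y_1, y_2$ occupying different positions---hence different partition classes $[y_1] \ne [y_2]$---which exactly matches the admissibility of the image $y_2^{-1} y_1$ in $(E,C)$; the other pattern is symmetric. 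Surjectivity then follows by lifting admissible paths letter-by-letter, with free choice in the unpinned coordinates at each intermediate $E^{0,0}$-vertex.

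For (2)--(4) I read off the endpoints of a lift directly: a positive-edge letter $x \in E^1$ lifts to some $\alpha^x(\ldots)^{-1} \in (E_1^1)^{-1}$ whose range lies in $s_1(X(x)) \subset E_1^{0,1}$ and whose source is $s(x) \in E_1^{0,0}$, and dually for an inverse-edge letter. The three cases depend only on whether $r(\alpha), s(\alpha)$ lie in $E^{0,0}$ or $E^{0,1}$: an endpoint in $E^{0,1}$ pins the corresponding end of the lift uniquely, while an endpoint in $E^{0,0}$ leaves it to range over the whole of $s_1(X(x))$ or $s_1(X(y))$. The product form in (4) reflects that the leftmost and rightmost letters of the lift touch distinct $E_1^{0,1}$-vertices, so their tuples are independent.

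For (5), the admissibility of $X(x)^{-1}\beta$ in $(E_1,C^1)$ follows automatically from the admissibility of $x\alpha$ in $(E,C)$: if $\alpha$ is trivial, attaching any $y^{-1}$ with $y \in X(x)$ to $\beta = s(x)$ gives a valid length-one path, and otherwise $\ter_d(\alpha) = f^{-1}$ for a unique $f \in E^1$ by bipartite alternation, so the leftmost letter of $\beta$ is $\alpha^f(\ldots) \in E_1^1$ and the condition $X(x) \ne X(f) \Leftrightarrow x \ne f$ is the admissibility at the $xf^{-1}$-junction of $x\alpha$. The remaining equivalence ``$x\alpha$ is a choice path $\Leftrightarrow \vert X(x) \vert \ge 2$'' then follows from $\ter_d(x\alpha) = x \in E^1$: any witness $X$ must lie in $C_{r(x)}$ with $X \ne [x]$, and such $X$ of size $\ge 2$ exists precisely when $\vert X(x) \vert = \prod_{j \ne i} \vert X_j^{r(x)} \vert \ge 2$. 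The consequence about choices at $v \in E^{0,1}$ comes from the bijection $x \leftrightarrow X(x)$ between $s^{-1}(v)$ and $C^1_v$: a witnessing choice path on one side can be transferred to the other via $\Psi$ and a short prepended edge, exploiting the fact that the shortest choice path ending with $x$ in $(E,C)$ (in the non-trivial case $\vert X(x) \vert = 1$) has the form $w y^{-1} x$ with $\vert X(w) \vert \ge 2$, whose corresponding short lift $\alpha^y(\ldots)\alpha^x(\ldots)^{-1}$ in $E_1$ is a choice path witnessed by $X(w) \in C^1_{s(y)}$.

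The main obstacle is (6). Let $\alpha$ be a closed path in $(E_1, C^1)$ based at a vertex $v$ admitting no choices. The only source of non-uniqueness in lifting $\Psi(\alpha)$ is the freedom in the unpinned coordinates of the tuples at intermediate $E^{0,0}$-vertices of $\Psi(\alpha)$, so it suffices to show that every such tuple is uniquely determined. Fix an intermediate vertex $u$ whose two edges of $\Psi(\alpha)$ incident at $u$ are $x \in X_i^u$ and $y \in X_j^u$, and suppose toward a contradiction that $\vert X_k^u \vert \ge 2$ for some $k \notin \{i,j\}$. Let $\beta \le \Psi(\alpha)$ be the initial subpath with $r(\beta) = u$; then $\ter_d(\beta) = x \in E^1$ and the alternative class $X_k^u$ (with $k \ne i$) exhibits $\beta$ as a choice path in $(E, C)$. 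When $v \in E_1^{0,0}$, we have $s(\Psi(\alpha)) = v$, so $\ini_d(\beta) \in s^{-1}(v)$ admits a choice in $(E,C)$, contradicting the hypothesis on $v$ via the consequence of (5). When $v \in E_1^{0,1}$, the initial symbol is $\ini_d(\beta) = y_{j_0}^{-1}$ for some coordinate $y_{j_0}$ of the base tuple of $v$, and the lift of $\beta$ in $(E_1, C^1)$ prepended by a suitable $\alpha^w(\ldots) \in E_1^1$ with $w \in X_k^u$ provides a choice path starting with $\alpha^{y_{j_0}}(\ldots_0) \in s_1^{-1}(v)$, again contradicting the hypothesis. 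Hence every intermediate tuple is forced and $\alpha$ is the unique lift of $\Psi(\alpha)$.
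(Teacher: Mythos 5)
Your construction of $\Psi$ and your arguments for (1)--(4) and for the first equivalence in (5) are correct and essentially identical to the paper's: the two junction patterns $e_2e_1^{-1}$ (common source in $E_1^{0,1}$) and $e_2^{-1}e_1$ (common range in $E_1^{0,0}$) translate exactly into the admissibility of $y_2^{-1}y_1$ and $y_2y_1^{-1}$, and the endpoint statements follow from which coordinates of the tuples are pinned. The two places where you pass between choice paths in $(E,C)$ and in $(E_1,C^1)$ both contain genuine gaps.

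In the ``consequently'' part of (5), your claim that when $\vert X(x)\vert=1$ the shortest choice path with initial symbol $x$ has the form $wy^{-1}x$ is false. Build a chain $x\colon v_0\to u_0$, $y_l\colon v_l\to u_{l-1}$, $w_l\colon v_l\to u_l$ in which every class at $u_0,\dots,u_{m-1}$ other than the one just traversed is a singleton and only $C_{u_m}$ contains a class of size $\ge 2$ distinct from $[w_m]$: then the shortest choice path ending in $x$ is $w_my_m^{-1}\cdots w_1y_1^{-1}x$, of length $2m+1$. Your transfer therefore only treats the length-$3$ case. The transfer itself is fine and is already encoded in the first part of (5): writing an arbitrary choice path as $\delta=w\delta'$ with $\ter_{\textup{d}}(\delta)=w\in E^1$, (5) yields $\vert X(w)\vert\ge 2$ and the admissibility of $X(w)^{-1}\tilde{\delta'}$ for any lift $\tilde{\delta'}$ of $\delta'$, so $\tilde{\delta'}$ is a choice path in $(E_1,C^1)$ whose initial symbol lies in $X(x)^{-1}$; conversely a choice path $\gamma$ in $(E_1,C^1)$ witnessed by $X(w)$ gives the choice path $w\Psi(\gamma)$ in $(E,C)$. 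You must run this for arbitrary length, not only for $wy^{-1}x$.

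In (6), case $v\in E_1^{0,1}$, the prepending construction fails: the path $\gamma=\alpha^w(\ldots)\tilde\beta$ has $\ter_{\textup{d}}(\gamma)=\alpha^w(\ldots)$ and $r(\gamma)=s(w)$, so a witness for $\gamma$ being a choice path would have to be some $X(w')\in C^1_{s(w)}$ with $X(w')\ne X(w)$ and $\vert X(w')\vert\ge 2$, and nothing in the hypothesis $\vert X_k^u\vert\ge 2$ supplies such a class (for instance $s^{-1}(s(w))$ may equal $\{w\}$). The correct move is the opposite one: write $\beta=x\beta''$, lift $\beta''$ so that $s_1(\tilde{\beta''})=v$, and note that $\vert X(x)\vert=\prod_{m\ne i}\vert X_m^u\vert\ge\vert X_k^u\vert\ge 2$ while $X(x)^{-1}\tilde{\beta''}$ is admissible; hence $\tilde{\beta''}$ is a choice path with initial symbol in $s_1^{-1}(v)$, which is the desired contradiction. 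It is worth noting that the paper's own proof of (6) sidesteps all of this: by Lemma~\ref{lem:ClosedPathNoChoice} applied in $(E_1,C^1)$, every vertex of $E_1^{0,0}$ on such a closed path admits no choices in $(E_1,C^1)$, so every class $X(x)$ based there is a singleton; hence each edge on the path is the unique edge with its superscript and $\Psi$ is injective on these edges outright, with no detour through choice paths of $(E,C)$.
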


\begin{proof}
We extend the assignment $\alpha^e(\ast) \mapsto e^{-1}$ to a group homomorphism $\Psi \colon \mathbb{F}(E_1^1) \to \mathbb{F}(E^1)$, and claim that for $e,f \in E_1^1$, the following hold:
\begin{enumerate}
\item[(a)] If $r(e)=r(f)$, then $e^{-1}f$ is admissible if and only if $\Psi(e^{-1})\Psi(f)$ is admissible,
\item[(b)] If $s(e)=s(f)$, then $ef^{-1}$ is admissible if and only if $\Psi(e)\Psi(f^{-1})$ is admissible.
\end{enumerate}
To this end, write $e=\alpha^{x_i}(x_1,\ldots,\hat{x_i},\ldots,x_k)$ and $f=\alpha^{y_j}(y_1,\ldots,\hat{y_j},\ldots,y_l)$. In situation (a), we have
$$s(x_i)=r \big( \alpha^{x_i}(x_1,\ldots,\hat{x_i},\ldots,x_k) \big) = r \big(\alpha^{y_j}(y_1,\ldots,\hat{y_j},\ldots,y_l) \big) = s(y_j),$$
hence $\Psi(e^{-1})\Psi(f)=x_iy_j^{-1}$ is admissible if and only if $x_i \ne y_j$. And since $r(e)=r(f)$, we note that $e^{-1}f$ is admissible if and only if $X(x_i)=[e] \ne [f]=X(y_j)$, which is certainly equivalent to $x_i \ne y_j$. Moving on to (b), we have
$$v(x_1,\ldots,x_k)=s\big(\alpha^{x_i}(x_1,\ldots,\hat{x_i},\ldots,x_k) \big)=s \big( \alpha^{y_j}(y_1,\ldots,\hat{y_j},\ldots,y_l) \big)=v(y_1,\ldots,y_l),$$
so $\Psi(e)\Psi(f^{-1})=x_i^{-1}x_j$ is admissible if and only if $i \ne j$, which is equivalent to $e \ne f$, or $ef^{-1}$ being admissible. It follows that the restriction of $\Psi$ to $\mathcal{P}(E_1,C^1)$ along with the assignment of the vertices defines a length-preserving map $\mathcal{P}(E_1,C^1) \to \mathcal{P}(E,C)$ satisfying (1). Observe, in view of (1), that it is enough to check (2) for admissible paths $\alpha=x_i^{-1}x_j$ of length two, and such $\alpha$ lifts to a path in $ef^{-1}$ with 
$$e:=\alpha^{x_i}(x_1,\ldots,\hat{x_i},\ldots,x_k) \andspace f:=\alpha^{x_j}(x_1,\ldots,\hat{x_j},\ldots,x_k),$$
where $x_l \in X_l^u$ is arbitrary for $l \ne i,j$. (3) and (4) then follow immediate by applying (2) to $\beta$ and invoking (1). In particular, $\Psi$ is surjective. Now consider claim (5) and assume that $X(x)^{-1}\beta$ is an admissible composition with $\vert X(x) \vert \ge 2$. Then $x\alpha$ is in the image of $\Psi$, hence admissible. Moreover, $\vert X(x) \vert \ge 2$ implies that there is some $[x] \ne X \in C_{r(x)}$ with $\vert X \vert \ge 2$, so $x\alpha$ is in fact a choice path. The reverse implication uses the exact same arguments. Finally, consider claim (6) and recall that if $\alpha$ is a closed path and $s(\alpha)$ does not admit any choices, then neither does any vertex on $\alpha$. Consequently, $\Psi$ is injective on the set of edges and vertices that such $\alpha$ may pass through. But then $\Psi$ is surely injective on the set of all such closed paths.
\end{proof}

\begin{corollary}\label{cor:DescMCD}
Let $(E,C)$ denote a finite bipartite separated graph. If $2 \le m_{\textup{CD}}(E,C)< \infty$, then 
$$m_{\textup{CD}}(E_1,C^1)=m_{\textup{CD}}(E,C)-2,$$
and if $m_{\textup{CD}}(E,C)=0$, then $(E_1,C^1)$ satisfies Condition \textup{(}C\textup{)}.
\end{corollary}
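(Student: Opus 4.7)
My plan is to establish a length-preserving two-way correspondence between choice connectors of length $n+2$ in $(E,C)$ and choice connectors of length $n$ in $(E_1,C^1)$, for each $n \geq 0$, from which both assertions follow upon taking suprema. Since $(E,C)$ is bipartite and $C$-classes live only at $E^{0,0}$-vertices, any choice connector $\alpha$ in $(E,C)$ has both endpoints in $E^{0,0}$, so $\vert \alpha \vert$ is even, and $\alpha$ decomposes as $\alpha = x\beta y^{-1}$ with $x, y \in E^1$ and $\beta \in \mathcal{P}(E,C)$ a (possibly trivial) admissible path between $s(y), s(x) \in E^{0,1}$. Likewise, $(E_1, C^1)$ is bipartite and every choice connector in it has endpoints in $E_1^{0,0} = E^{0,1}$, with class data necessarily of the form $X(z), X(w)$ for edges $z, w \in E^1$.

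For the shortening direction I let $Y^{-1}\alpha X$ witness $\alpha$ as a choice connector with $\vert X \vert, \vert Y \vert \geq 2$; the admissibility of the subpaths $y^{-1}x'$ and $(y')^{-1}x$ (for $x' \in X$, $y' \in Y$) forces $X \ne [y]$ and $Y \ne [x]$, hence $\vert X(y) \vert, \vert X(x) \vert \geq 2$. I then pick any lift $\tilde\beta \in \Psi^{-1}(\beta)$ via Lemma~\ref{lem:PathCorrespondence}(2) (or $\tilde\beta := s(y) = s(x)$ when $n=0$) and verify that $X(x)^{-1} \tilde\beta X(y)$ is an admissible composition in $(E_1,C^1)$: using the identity $[\tilde g] = X(\Psi(\tilde g^{-1}))$ for $\tilde g \in E_1^1$, the two new junction conditions translate into the admissibility of the subpaths $\ini_{\textup{d}}(\beta)y^{-1}$ and $x\ter_{\textup{d}}(\beta)$ inside $\alpha$, which already hold. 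The extension direction is dual: starting from a choice connector $\tilde\beta$ in $(E_1,C^1)$ with class data $X(z), X(w)$, the size conditions yield classes $X_z \in C_{r(z)} \setminus \{[z]\}$ and $X_w \in C_{r(w)} \setminus \{[w]\}$ of size $\geq 2$, and one checks that $X_w^{-1}(w\,\Psi(\tilde\beta)\,z^{-1})X_z$ is an admissible composition in $(E,C)$.

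The first assertion then reads off from the correspondence. For the second, I argue by contradiction: if $(E_1, C^1)$ fails Condition (C) at some vertex $\tilde v$, I produce a choice connector in $(E_1,C^1)$ of some even length $\ge 0$ --- either directly (a length-$0$ connector at a vertex $\tilde v \in E_1^{0,0}$ supporting two classes of size $\ge 2$) or by gluing two choice paths as $\tilde\sigma_{\tilde f}\tilde f\tilde e^{-1}\tilde\sigma_{\tilde e}^{-1}$ in the remaining cases --- and the extension direction then gives a choice connector in $(E,C)$ of length $\ge 2$, contradicting $m_{\textup{CD}}(E,C)=0$. The main obstacle is the bookkeeping of junction admissibility under $\Psi$: Lemma~\ref{lem:PathCorrespondence}(1) handles the interior of $\beta$ for free, but the two boundary junctions demand careful tracking of both the direction (forward vs.~inverse) and the class of the outermost symbols, and the degenerate case $n=0$, in which $\beta$ reduces to a vertex, must be treated separately.
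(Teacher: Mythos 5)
Your argument is correct and is essentially the paper's: the paper's proof is a one-line appeal to Lemma~\ref{lem:PathCorrespondence}(5), which encodes exactly the length-shifting correspondence $\tilde\beta \leftrightarrow x\,\Psi(\tilde\beta)\,y^{-1}$ between choice connectors that you establish by hand from parts (1) and (2) of that lemma. The only difference is one of explicitness --- you also spell out the (implicitly used) equivalence between failure of Condition (C) and the existence of a choice connector, and you verify the boundary junctions directly rather than quoting part (5) --- so there is nothing substantive to change.
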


\begin{proof}
Simply observe from Lemma~\ref{lem:PathCorrespondence}(5) that if $\beta \in \mathcal{P}(E_1,C^1)$ and $\alpha:=\Psi(\beta)$, then $\beta$ is a choice connector in $(E_1,C^1)$ between $X(x)$ and $X(y)$ if and only if $x\alpha y^{-1}$ is a choice connector in $(E,C)$.
\end{proof}

In the following lemma, finiteness of $E$ is crucial.

\begin{lemma}\label{lem:FinMCD}
Let $(E,C)$ denote a finite separated graph, and assume that every cycle admits at most one choice. Then $m_{\textup{CD}}(E,C) < \infty$.
\end{lemma}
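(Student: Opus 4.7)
The plan is to establish the explicit bound $m_{\textup{CD}}(E,C) \le 2|E^1|$. Given a choice connector $\alpha = \sigma_n\sigma_{n-1}\cdots\sigma_1$, viewed as a word in $\hat{E}^1$, with witnesses $X,Y \in C$ satisfying $|X|,|Y| \ge 2$ and $Y^{-1}\alpha X$ admissible, I would first prove a \emph{propagation claim}: every internal vertex of $\alpha$ admits at least two choices. Fix such an internal vertex $w = r(\sigma_k) = s(\sigma_{k+1})$ for some $1 \le k \le n-1$, and split $\alpha = \alpha_2\alpha_1$ with $\alpha_1 = \sigma_k\cdots\sigma_1$ and $\alpha_2 = \sigma_n\cdots\sigma_{k+1}$, both non-trivial. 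Since $Y^{-1}\alpha_2$ and $\alpha_1 X$ are subpaths of the admissible composition $Y^{-1}\alpha X$, both $\alpha_2$ and $\alpha_1^{-1}$ are choice paths, so $\ini_{\textup{d}}(\alpha_2) = \sigma_{k+1}$ and $\ini_{\textup{d}}(\alpha_1^{-1}) = \ter_{\textup{d}}(\alpha_1)^{-1} = \sigma_k^{-1}$ each admit a choice.

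The propagation claim will then follow from a case analysis on whether each of $\sigma_{k+1}, \sigma_k$ is an edge or an inverse edge, using that the pair $\sigma_{k+1}\sigma_k$ is admissible as a subpath of $\alpha$. In the pattern $\sigma_{k+1}\sigma_k = ef^{-1}$, admissibility forces $e \ne f$, producing two distinct edges out of $w$ that admit choices; in the pattern $e^{-1}f$, admissibility forces $[e] \ne [f]$, producing two distinct classes in $C_w$ that admit choices; and in each of the remaining patterns $ef$ and $e^{-1}f^{-1}$, exactly one of $\sigma_{k+1}, \sigma_k^{-1}$ is an edge out of $w$ while the other corresponds to an in-class in $C_w$, so that one edge and one class admit choices independently. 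In every case, $w$ admits at least two choices.

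Granting this, suppose for contradiction that $n > 2|E^1| = |\hat{E}^1|$. By pigeonhole, some letter is repeated, say $\sigma_k = \sigma_l$ with $1 \le k < l \le n$, and $\beta := \sigma_l\sigma_{l-1}\cdots\sigma_{k+1}$ is a closed admissible path based at $v := r(\sigma_k)$. The junction appearing in $\beta\beta$ is $\sigma_{k+1}\sigma_l = \sigma_{k+1}\sigma_k$, which is already an admissible subpath of $\alpha$, so $\beta\beta$ is admissible and $\beta$ is in fact a cycle. The propagation claim applied to the internal vertex $v$, together with Lemma~\ref{lem:NumberOfChoices}, then shows that $\beta$ is a cycle admitting at least two choices, contradicting the hypothesis and yielding $m_{\textup{CD}}(E,C) \le 2|E^1|$. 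The hard part will be the four-case propagation argument, where the bookkeeping between the admissibility patterns on $\sigma_{k+1}\sigma_k$ and the combinatorial definition of the number of choices at a vertex has to be tracked carefully to rule out that $\sigma_{k+1}$ and $\sigma_k^{-1}$ contribute to the same edge or class.
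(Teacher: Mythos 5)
Your proof is correct, and it follows the same overall strategy as the paper's while finishing with a different combinatorial device. Both arguments rest on the observation that every vertex lying on a choice connector admits at least two choices (your propagation claim, which the paper leaves implicit in the sentence ``since $v$ is on a choice connector, it cannot admit any cycles''), and both then derive a contradiction by forcing a long connector to carry a cycle through such a vertex. The difference is in how the cycle is produced. The paper pigeonholes on vertices: a connector of length at least $3\vert E^0\vert$ visits some vertex $v$ three times, giving closed paths $\beta,\gamma$ based at $v$ with $\gamma\beta$ admissible; since neither can be a cycle, one must then check (a small junction-symbol case analysis the paper omits) that $\gamma\beta$ is itself a cycle. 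You instead pigeonhole on letters of $\hat{E}^1$: a repeated symbol $\sigma_k=\sigma_l$ immediately yields the closed path $\beta=\sigma_l\cdots\sigma_{k+1}$, whose self-concatenation junction $\sigma_{k+1}\sigma_l=\sigma_{k+1}\sigma_k$ is already an admissible subpath of $\alpha$, so $\beta$ is a cycle with no further argument. This buys you the explicit bound $m_{\textup{CD}}(E,C)\le 2\vert E^1\vert$ and eliminates the ``product of two non-cycles is a cycle'' step, at the cost of having to write out the four-case propagation argument explicitly; your sketch of those cases is accurate, and the distinctness of the two contributions at $w$ (two edges, two classes, or one of each) is indeed forced by admissibility of $\sigma_{k+1}\sigma_k$ exactly as you describe.
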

\begin{proof}
Assume in order to reach a contradiction that there is a choice connector $\alpha$ of length $\vert \alpha \vert \ge 3 \cdot \vert E^0 \vert$. Then $\alpha$ must pass some vertex $v \in E^0$ three times, i.e.~there are closed paths $\beta$ and $\gamma$ based at $v$ such that $\gamma\beta$ is admissible. Since $v$ is on a choice connector, it cannot admit any cycles, hence neither $\beta$ nor $\gamma$ are cycles. But then $\gamma\beta$ must itself be a cycle, giving us our desired contradiction.
\end{proof}

\begin{corollary}\label{cor:CondN}
Let $(E,C)$ denote a finite bipartite separated graph. If every cycle admits at most one choice, then $(E_n,C^n)$ will satisfy Condition \textup{(}C\textup{)} for sufficiently large $n$.
\end{corollary}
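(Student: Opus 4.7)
My approach would be to iterate Corollary~\ref{cor:DescMCD} after first applying Lemma~\ref{lem:FinMCD} to bound $m_{\textup{CD}}(E,C)$. The one extra ingredient I would record explicitly is that \emph{Condition (C) is preserved} under $(E,C) \mapsto (E_1,C^1)$: the proof of Corollary~\ref{cor:DescMCD} exhibits, for every choice connector $\beta$ in $(E_1,C^1)$ between $X(x)$ and $X(y)$, a corresponding choice connector of the form $x\hspace{1pt}\Psi(\beta)\hspace{1pt}y^{-1}$ in $(E,C)$. Consequently, if $(E,C)$ has no choice connectors then neither does $(E_1,C^1)$.

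If $(E,C)$ itself already satisfies Condition (C), this preservation statement, applied inductively, immediately yields the conclusion for every $n \ge 0$.

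Otherwise $M:=m_{\textup{CD}}(E,C)$ is a well-defined non-negative integer, finite by Lemma~\ref{lem:FinMCD}. Because $(E,C)$ is bipartite, every admissible path from $E^{0,0}$ to $E^{0,0}$ has even length: starting at a vertex in $E^{0,0}$, the bipartite structure forces the first symbol to lie in $(E^1)^{-1}$, the next in $E^1$, and so on, each letter flipping which side of the partition the endpoint lives on. Hence $M$ is even, say $M=2m$. I would then iterate Corollary~\ref{cor:DescMCD}: unless Condition (C) is already attained at some intermediate stage (in which case preservation finishes the job), the formula yields $m_{\textup{CD}}(E_k,C^k)=M-2k$ as long as this value is at least $2$. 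After $m$ iterations we reach $m_{\textup{CD}}(E_m,C^m)=0$, and the second clause of Corollary~\ref{cor:DescMCD} provides Condition (C) for $(E_{m+1},C^{m+1})$; preservation then propagates it to all $n \ge m+1$.

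I do not foresee any significant obstacle. The only point that deserves care is isolating the preservation of Condition (C) along the tower, since it does not appear as a separate statement in the preceding lemmas but falls out immediately from the length-reducing correspondence already used to prove Corollary~\ref{cor:DescMCD}; everything else is a routine induction on $M/2$.
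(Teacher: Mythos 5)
Your proposal is correct and follows exactly the route the paper takes: its proof of this corollary is literally ``immediate from Lemma~\ref{lem:FinMCD} and Corollary~\ref{cor:DescMCD}''. The details you make explicit --- that Condition (C) is preserved under $(E,C)\mapsto(E_1,C^1)$ via the choice-connector correspondence, and that bipartiteness forces $m_{\textup{CD}}(E,C)$ to be even so the descent terminates at $0$ --- are precisely the points the paper leaves implicit.
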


\begin{proof}
This is immediate from Lemma~\ref{lem:FinMCD} and Corollary~\ref{cor:DescMCD}.
\end{proof}

We now make the final preparations before obtaining the main theorem of this section.

\begin{lemma}\label{lem:AtMostOneClosedPath}
Let $(E,C)$ denote a finite bipartite graph, and assume that every vertex without a choice admits at most one simple closed path up to inversion. Then $(E_1,C^1)$ satisfies the same property.
\end{lemma}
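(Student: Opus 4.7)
The plan is to transfer the property from $(E,C)$ to $(E_1,C^1)$ via the length-preserving map $\Psi$ of Lemma~\ref{lem:PathCorrespondence}, which extends to a group homomorphism $\mathbb{F}(E_1^1) \to \mathbb{F}(E^1)$; by part (6), this homomorphism is injective on the set of closed paths based at any $v \in E_1^0$ that admits no choices in $(E_1,C^1)$. Since Proposition~\ref{prop:ClosedPathStandardForm} guarantees that $\mathbb{F}_v^{(E_1,C^1)} := \{\textup{closed paths at } v\} \cup \{1\}$ is a free subgroup of $\mathbb{F}(E_1^1)$, showing that $v$ admits at most one simple closed path up to inversion is equivalent to embedding $\mathbb{F}_v^{(E_1,C^1)}$ into a subgroup of $\mathbb{F}(E^1)$ of rank at most $1$.

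I would then split by the bipartite location of $v$. For $v \in E_1^{0,0} = E^{0,1}$, part (5) of Lemma~\ref{lem:PathCorrespondence} gives that $v$ admits no choices in $(E,C)$ as well, so by hypothesis $\mathbb{F}_v^{(E,C)}$ has rank at most $1$; part (2) shows $\Psi(\mathbb{F}_v^{(E_1,C^1)}) \subset \mathbb{F}_v^{(E,C)}$, concluding this case. For $v = v(x_1,\ldots,x_{k_u}) \in E_1^{0,1}$, the bipartite structure of $(E_1,C^1)$ forces every closed path at $v$ to begin with a forward edge $e = \alpha^{x_j}(\cdots)$ and end with the inverse of another outgoing edge $f = \alpha^{x_l}(\cdots)$ of $v$, so $\Psi(\beta) = x_l \cdots x_j^{-1}$ is a closed path at $u := \mathfrak{r}(v) \in E^{0,0}$ in $(E,C)$. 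If $u$ admits no choices in $(E,C)$, the hypothesis applied to $u$ makes $\mathbb{F}_u^{(E,C)}$ of rank at most $1$, and the containment $\Psi(\mathbb{F}_v^{(E_1,C^1)}) \subset \mathbb{F}_u^{(E,C)}$ finishes the argument as before.

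The remaining subcase, where $v \in E_1^{0,1}$ admits no choices but $u$ admits a choice (some $|X_i^u| \ge 2$), I plan to handle by showing that $v$ admits no closed paths at all, so that the claim is vacuous. Using the class-size formula $|X(y)| = \prod_{l \ne l_0} |X_l^{u'}|$ for $(E_1,C^1)$, together with the admissibility constraints $[f] \ne [e]$ at forward-then-inverse junctions and $e \ne f$ at inverse-then-forward junctions, I would argue that any admissible extension of an outgoing edge of $v$ must either eventually reach a prependable class of size $\ge 2$ at its leftmost vertex (which would produce a choice path starting at that outgoing edge, contradicting $v$ admits no choices), or terminate at a dead end before returning to $v$. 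The main obstacle is precisely this last subcase: the careful book-keeping of class sizes at the intermediate $E_1^{0,0}$-vertices of a hypothetical closed path at $v$, and showing in full generality that the choice condition at $u$ propagates to block either the ``no-choice'' condition at $v$ or the closure of the path.
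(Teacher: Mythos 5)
Your argument for $v \in E_1^{0,0}=E^{0,1}$ is correct and coincides with the paper's: Lemma~\ref{lem:PathCorrespondence}(5) transfers the no-choice hypothesis down to $(E,C)$, and Lemma~\ref{lem:PathCorrespondence}(6) transfers the bound on simple closed paths back up. (Your reformulation in terms of the rank of $\mathbb{F}_v$ silently uses that a simple closed path is never a proper power --- true, and checkable from the uniqueness in Proposition~\ref{prop:ClosedPathStandardForm} --- whereas the paper counts simple closed paths directly via the injectivity in (6); this difference is cosmetic.)

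The genuine gap is your last subcase, where $v=v(x_1,\ldots,x_{k_u}) \in E_1^{0,1}$ admits no choices but $u=\mathfrak{r}(v)$ admits a choice in $(E,C)$. Your plan --- to show by book-keeping of the class sizes $\vert X(y)\vert$ that such a $v$ admits no closed paths at all --- is left incomplete, and it is in any case the wrong route: you never need to compare $v$ with $u$. The paper disposes of the entire layer $E_1^{0,1}$ in one sentence, reducing it to the layer $E_1^{0,0}$, and the justification is the conjugation machinery already available from Section 3. Namely, if $v\in E_1^{0,1}$ admits a non-trivial closed path, then by bipartiteness that path passes through some $w\in E_1^{0,0}$; by Lemma~\ref{lem:ClosedPathNoChoice} $w$ admits no choices either, and if $\gamma$ denotes the initial segment of the closed path from $v$ to $w$, then Lemma~\ref{lem:ReducedProduct} shows that $\alpha\mapsto \gamma\cdot\alpha\cdot\gamma^{-1}$ and $\alpha\mapsto\gamma^{-1}\cdot\alpha\cdot\gamma$ are mutually inverse isomorphisms between $\mathbb{F}_v$ and $\mathbb{F}_w$. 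Since at a vertex admitting no choices the condition ``at most one simple closed path up to inversion'' is equivalent to the group having rank at most one (one direction is Proposition~\ref{prop:GeneOfClosedPaths}, the other is again the not-a-proper-power observation), the bound at $w$ --- your first case --- yields the bound at $v$. If instead $v$ admits no closed path, the claim at $v$ is vacuous. Either way, the subcase you are stuck on evaporates, and no analysis of the choices at $u$ is required.
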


\begin{proof}
Assume that $v \in E_1^0$ does not admit a choice. Without loss of generality, we may assume that $v \in E_1^{0,0}=E^{0,1}$ since every non-trivial path must pass a vertex in this layer. By Lemma~\ref{lem:PathCorrespondence}(5), $v$ does not admit a choice in $(E,C)$ either, hence it admits at most one simple closed path up to inversion in $(E,C)$. It follows from Lemma~\ref{lem:PathCorrespondence}(6) that $v$ admits at most one simple closed path up to inversion in $(E_1,C^1)$ as well.
\end{proof}

\begin{theorem}\label{thm:ODege}
Let $(E,C)$ denote a finite bipartite separated graph. If every cycle admits at most one choice, and every vertex without a choice admits at most simple closed path up to inversion, then $(E_n,C^n)$ admits a non-separated orientation for sufficiently large $n$. Consequently, there exists a finite non-separated graph $F:= \overline{E_n}$ and a direct dynamical equivalence $\theta^F \to \theta^{(E,C)}$. In particular,
$$L_K^{\textup{ab}}(E,C) \cong L_K(F) \andspace \mathcal{O}(E,C) \cong \mathcal{O}^{r}(E,C) \cong C^*(F).$$
\end{theorem}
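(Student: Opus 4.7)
The plan is to apply the machinery of Section~\ref{sect:GraphAlgIso} to the replacement sequence $(E_n, C^n)$, exploiting the fact that every $(E_n, C^n)$ yields the same tame algebras as $(E, C)$.

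First I verify that Proposition~\ref{prop:GraphCondForOr} applies to $(E_n, C^n)$ for $n$ sufficiently large. Condition~(C) is supplied by Corollary~\ref{cor:CondN}, whose hypothesis matches the assumption on cycles. The second hypothesis -- that every vertex admitting no choices admits at most one simple closed path up to inversion -- is propagated from $(E, C)$ to each $(E_n, C^n)$ by iterating Lemma~\ref{lem:AtMostOneClosedPath}. Hence Proposition~\ref{prop:GraphCondForOr} produces a non-separated orientation on $(E_n, C^n)$, and we set $F := \overline{E_n}$.

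Next, Proposition~\ref{prop:NonSepGraphIso} yields a conjugacy $\theta^F \cong \theta^{(E_n, C^n)}$ together with isomorphisms $L_K(E_n, C^n) \cong L_K(F)$ and $C^*(E_n, C^n) \cong C^*(F)$. Because $(E_n, C^n)$ satisfies Condition~(C), Proposition~\ref{prop:QuoIso} collapses $L_K(E_n, C^n) = L_K^{\textup{ab}}(E_n, C^n)$ and $C^*(E_n, C^n) = \mathcal{O}(E_n, C^n)$, while the fact (recalled in Section~1) that all $(E_n, C^n)$ produce the same tame algebras identifies these further with $L_K^{\textup{ab}}(E, C)$ and $\mathcal{O}(E, C)$, respectively. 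This delivers the first two algebra isomorphisms.

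For the direct dynamical equivalence $\theta^F \to \theta^{(E, C)}$, after the conjugacy above it suffices to exhibit a direct dynamical equivalence $\theta^{(E_n, C^n)} \to \theta^{(E, C)}$. I would build this by iterating a one-step equivalence $\theta^{(E_1, C^1)} \to \theta^{(E, C)}$: the assignment $\alpha^e(\ast) \mapsto e^{-1}$ of Lemma~\ref{lem:PathCorrespondence} extends to a group homomorphism $\Phi \colon \mathbb{F}(E_1^1) \to \mathbb{F}(E^1)$, and at the level of configurations one sets $\varphi(\xi) := \Phi(\xi)$. Properties~(a)--(c) of direct dynamical equivalence are then verified via Lemma~\ref{lem:PathCorrespondence}~(1)--(5). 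For the reduced $C^*$-algebra, $F$ is a column-finite classical graph, so $\theta^F$ is the amenable boundary path space action and $C^*(F) \cong C^*_r(F)$; the direct dynamical equivalence then transports this coincidence back to give $\mathcal{O}(E, C) \cong \mathcal{O}^r(E, C) \cong C^*(F)$.

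The main obstacle will be verifying conditions~(a)--(c) in the one-step equivalence: we need the configuration-level map $\varphi$ to be a homeomorphism (not merely a continuous equivariant surjection), and we need the disjointness condition~(b) to hold for pairs of generators of $\mathbb{F}(E_1^1)$ that collapse under $\Phi$. Both checks ultimately rest on the combinatorial bookkeeping recorded in Lemma~\ref{lem:PathCorrespondence}, together with the description of local configurations in Definition~\ref{def:DynamicalPicture}.
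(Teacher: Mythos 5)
Your proposal reproduces the paper's proof almost exactly in structure: the paper likewise combines Corollary~\ref{cor:CondN} and Lemma~\ref{lem:AtMostOneClosedPath} to verify the hypotheses of Proposition~\ref{prop:GraphCondForOr} for $(E_n,C^n)$ with $n$ large, and then invokes Proposition~\ref{prop:NonSepGraphIso} (which already contains the collapse $L_K=L_K^{\textup{ab}}$ and $C^*=\mathcal{O}$, so your extra appeal to Proposition~\ref{prop:QuoIso} is harmless but redundant). The one genuine divergence is the direct dynamical equivalence $\theta^{(E_n,C^n)} \to \theta^{(E,C)}$: the paper simply cites \cite[Theorem 3.22]{AL} for this, whereas you attempt to build it by hand, iterating a one-step equivalence $\theta^{(E_1,C^1)} \to \theta^{(E,C)}$ induced by the homomorphism $\Phi\colon \mathbb{F}(E_1^1)\to\mathbb{F}(E^1)$, $\alpha^e(\ast)\mapsto e^{-1}$, applied to configurations. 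That is the right construction in spirit, and the disjointness condition (b) does follow readily (generators of $\mathbb{F}(E_1^1)$ identified under $\Phi$ have distinct sources in $E_1^{0,1}$, hence disjoint domains), but you explicitly leave open the verification that $\varphi$ is a homeomorphism and that condition (c) holds, which is precisely the content the citation supplies; as written, your argument therefore has a gap exactly where the paper leans on an external result. Everything else — the Condition~(C) propagation, the orientation, the identification of the tame algebras of $(E,C)$ and $(E_n,C^n)$, and the use of amenability of the graph groupoid of $F$ to get $\mathcal{O}\cong\mathcal{O}^r\cong C^*(F)$ — matches the intended argument.
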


\begin{proof}
By Corollary~\ref{cor:CondN} and Lemma~\ref{lem:AtMostOneClosedPath}, $(E_n,C^n)$ will satisfy the requirements of Proposition~\ref{prop:GraphCondForOr} for sufficiently large $n$, so the result follows by combining Proposition~\ref{prop:NonSepGraphIso} and \cite[Theorem 3.22]{AL}.
\end{proof}

\section{The exchange property, real rank zero and essentially free actions}

Recall that a non-separated graph $E$ is said to satisfy \textit{Condition} (\textit{K}) if every vertex on a cycle admits at least two simple cycles. The main point of Condition (K) is that it implies Condition (L) and is preserved when passing to any quotient graph $E/H$. It is well known that it is equivalent to $L_K(E)$ being an exchange ring \cite[Theorem 4.5]{APS}, $C^*(E)$ having real rank zero \cite[Theorem 3.5]{Jeong}, and the graph groupoid being essentially principal \cite[Proposition 8]{Paterson}. In this section, we introduce the appropriate generalisation of Condition (K) to finitely separated graphs and prove an analogous result: Condition (K) is equivalent to $L_K^{\textup{ab}}(E,C)$ being an exchange ring, real rank zero of both $\mathcal{O}(E,C)$ and $\mathcal{O}^r(E,C)$, and essential freeness of $\theta^{(E,C)}$. 
\medskip \\
We refer the reader to \cite[Theorem 1.2 and Definition 1.3]{Ara} and \cite[Theorem 2.6]{BP} for various equivalent definitions of exchange rings and real rank zero $C^*$-algebras, respectively. By \cite[Theorem 3.8]{Ara}, these two concepts agree for $C^*$-algebras. The class of exchange rings is closed under ideals, quotients, extensions where idempotents can be lifted modulo the ideal \cite[Theorem 2.3]{Ara}, corners \cite[Corollary 1.5]{ALM}, direct limits, and Morita equivalence between idempotent rings ($C^*$-algebras for instance) \cite[Theorem 2.3]{ALM}.

\begin{definition}\label{def:CondK}
A finitely separated graph $(E,C)$ is said to satisfy \textit{Condition} (\textit{K}) if every vertex $v \in E^0$ on a cycle satisfies the following:
\begin{enumerate}
\item $v$ admits exactly one choice.
\item $v$ admits at least two base-simple cycles up to inversion.
\end{enumerate}
It is apparent that any finite bipartite Condition (K) graph $(E,C)$ satisfies the assumptions of Theorem~\ref{thm:ODege}, so that $\mathcal{O}(E,C)$ will degenerate to a graph $C^*$-algebra $C^*(F)$ with $F:=\overline{E_n}$ for some $n$. However, in order to conclude that $F$ satisfies the usual Condition (K), we first have to check that it is preserved when passing from $(E,C)$ to $(E_n,C^n)$. \exend
\end{definition}
Dealing with base-simple cycles is somewhat complicated in the realm of separated graphs since cycles need not decompose into a product of base-simple cycles. However, when we add Definition~\ref{def:CondK}(1) to the equation, this problem disappears.

\begin{lemma}\label{lem:OneChoiceBaseSimpleDecomp}
Let $(E,C)$ denote a finitely separated graph. If $v \in E^0$ admits exactly one choice, then any cycle based at $v$ is a concatenated product of base-simple cycles
\end{lemma}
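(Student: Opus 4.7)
My approach is to take the natural decomposition $\alpha = \alpha_n \cdots \alpha_1$ where $\alpha_j$ is the base-simple closed subpath of $\alpha$ traversed between the $(j-1)$-st and the $j$-th visit of $\alpha$ to $v$. Writing $\sigma_j := \ini_{\textup{d}}(\alpha_j)$ and $\tau_j := \ter_{\textup{d}}(\alpha_j)$, each $\alpha_j$ is base-simple by construction, so the only content is to show that each diagonal junction $\sigma_j \tau_j$ is admissible --- equivalently that $\alpha_j \alpha_j$ is admissible, i.e.\ that $\alpha_j$ is a cycle.

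I would proceed by contradiction: suppose $\sigma_{j_0}\tau_{j_0}$ is inadmissible for some $j_0$. There are then exactly two forbidden patterns: either (I) $\sigma_{j_0} = e$ and $\tau_{j_0} = e^{-1}$ for some $e \in s^{-1}(v)$, or (II) $\sigma_{j_0} = e^{-1}$ and $\tau_{j_0} = f$ with $e,f \in r^{-1}(v)$ and $[e] = [f]$. In each case the goal is to exhibit two genuinely distinct choices at $v$.

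In Case (I) the subpath $x^{-1}\tau_{j_0} = x^{-1}e^{-1}$ is automatically admissible for any $x$ (both symbols are inverses), so $\alpha_{j_0}$ is itself a choice path whenever $C_v$ contains a class $X$ of cardinality at least two. Then $X$ admits a choice automatically, while $\alpha_{j_0}$ being a choice path with initial symbol $e$ forces $e \in s^{-1}(v)$ to admit a choice as well, giving the required two distinct choices. Case (II) is handled analogously: admissibility of $x^{-1}f$ requires $[x] \neq [e]$, so a class $X \neq [e]$ in $C_v$ of size at least two makes $\alpha_{j_0}$ a choice path starting with $e^{-1}$, forcing both $[e]$ and $X$ to admit a choice.

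The main obstacle is the residual subcase in which every class of $C_v$ has cardinality $1$ (and in Case (II) only $[e]$ is a candidate of size $\geq 2$, if any). Here no direct size-based choice is available in $C_v$, so one must propagate the one-choice property using Lemma~\ref{lem:NumberOfChoices}: both $v$ and the adjacent vertex $r(e)$ (respectively $s(e)$ in Case (II)) lie on the cycle $\alpha$ and therefore admit exactly one choice each. The admissibility of the internal junction of $\alpha_{j_0}$ just to the left of $\tau_{j_0}$ exhibits at the neighbouring vertex a class distinct from $[e]$; combining this with the structural consequences of $\alpha_{j_0}$ sitting inside $\alpha$, one extracts either a second class in $C_v$ or a second outgoing edge of $v$ that must also admit a choice (via a short truncation or extension of $\alpha_{j_0}$ made admissible by the neighbouring vertex's enforced structure), producing the required contradiction and completing the proof.
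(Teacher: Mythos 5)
Your reduction is sound: decomposing $\alpha$ at its successive visits to $v$ produces base-simple closed paths $\alpha_j$, and the lemma does amount to showing each junction $\sigma_j\tau_j$ is admissible; your identification of the two forbidden patterns is correct, and the arguments you give when $C_v$ contains a class $X$ with $\vert X\vert\ge 2$ (distinct from $[e]$ in Case (II)) genuinely produce two choices at $v$ and hence a contradiction. The gap is the residual case, and it is not a peripheral technicality: it is where the content of the lemma lives, because there the unique choice at $v$ is visible only through a choice path emanating from $v$, not through the local data of $C_v$. Your sketch for that case does not work as described. A class at the neighbouring vertex $r(e)$ distinct from $[e]$ (which the internal junction $e^{-1}\ter_{\textup{d}}(\delta)$ of $\alpha_{j_0}=e^{-1}\delta e$ only supplies when $\ter_{\textup{d}}(\delta)\in E^1$) lies in $C_{r(e)}$, not in $C_v$; at best it witnesses that $e$ itself admits a choice, which is still only one choice at $v$. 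The natural candidates for a second choice at $v$ are the symbols $\sigma_{j_0+1}$ and $\tau_{j_0-1}^{-1}$ contributed by the adjacent segments, but two symbols at $v$ only count as two choices when they are \emph{compatible} (neither equal nor inverse edges in a common class), and nothing in your setup rules out incompatibility; worse, in the subcase where $e$ does admit a choice, the parity forced by the rotated cycles makes both neighbouring candidates fail to admit choices, and no local contradiction is available at all. The phrase about a second choice being extracted \emph{via a short truncation or extension of $\alpha_{j_0}$} is exactly the statement that needs proof, and it is not supplied.

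This is precisely why the paper's proof is global rather than segment-by-segment: it first records that for any cycle $\gamma$ based at $v$ exactly one of $\ini_{\textup{d}}(\gamma)$ and $\ter_{\textup{d}}(\gamma)^{-1}$ admits a choice, then takes the minimal closed initial subpath $\beta\le\alpha$ together with the minimal initial subpath of $\alpha$ that is a cycle, written $\beta_n\cdots\beta_1$ with $\beta_1=\beta$, shows by minimality that both $\beta_n\beta_1$ and $\beta_n^{-1}\beta_1$ are cycles, and plays the parity observation on these two cycles against each other --- that is, it uses the terminal segment $\beta_n$ in \emph{both} orientations, information your local analysis of $\alpha_{j_0}$ alone never sees. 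To complete your argument you would need a genuine proof of the residual case, and the most plausible route is to import this minimality/parity mechanism rather than the local repair you allude to.
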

\begin{proof}
First observe that whenever $\gamma$ is a cycle based at $v$, exactly one of $\ini_{\textup{d}}(\gamma)$ and $\ter_{\textup{d}}(\gamma)^{-1}$ admits a choice. Now take any cycle $\alpha$ based at $v$ and let $\beta \le \alpha$ denote the minimal closed initial subpath: It suffices to check that $\beta$ must be a cycle. Assume in order to reach a contradiction that it is not, and take a minimal cycle $\beta_n \cdots \beta_1 \le \alpha$ written as a concatenated product of base-simple closed paths with $\beta_1=\beta$. Observe that, by minimality, both $\beta_n\beta_1$ and $\beta_n^{-1}\beta_1$ are cycles. Now if $\ini_{\textup{d}}(\beta)$ admits a choice, then so does $\ini_{\textup{d}}(\beta_n^{-1})=\ter_{\textup{d}}(\beta_n)^{-1}$ and vice versa, contradicting the above observation applied to $\gamma=\beta_n\beta_1$.
\end{proof}

\begin{remark}
It is easy to check that a finitely separated graph $(E,C)$ satisfies Condition (K) if and only if its bipartite sibling $\textbf{B}(E,C)$ satisfies Condition (K). We leave this to the reader.
\end{remark}

\begin{lemma}\label{lem:CondK}
Let $(E,C)$ denote a finite bipartite graph. If $(E,C)$ satisfies Condition \textup{(}K\textup{)}, then so does $(E_1,C^1)$.
\end{lemma}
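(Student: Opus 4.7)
The plan is to verify Condition~(K) at each vertex $v \in E_1^0$ lying on a cycle in $(E_1,C^1)$, splitting by the bipartite layer of $v$ and leaning on Lemma~\ref{lem:PathCorrespondence} throughout.

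First I would handle the layer $v \in E_1^{0,0} = E^{0,1}$. Projecting a cycle at $v$ via $\Psi$, parts~(1) and~(2) of Lemma~\ref{lem:PathCorrespondence} yield a cycle at $v$ in $(E,C)$, so Condition~(K) for $(E,C)$ furnishes exactly one choice together with at least two base-simple cycles $\beta, \beta'$ at $v$ up to inversion. Part~(5) transfers the one-choice property to $(E_1,C^1)$. Then, lifting each $\beta$ back via $\Psi$ (existence and endpoint control from parts~(1) and~(2)) gives a cycle $\tilde{\beta}$ at $v$; I would verify base-simplicity at $v$ directly from the intermediate-vertex structure, noting that intermediate $E_1^{0,1}$ vertices lie in the opposite layer from $v$, while intermediate $E_1^{0,0}$ vertices identify under $\mathfrak{r}$ with intermediate $E^{0,1}$ vertices of $\beta$, none of which equals $v$. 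Distinctness up to inversion is inherited because $\Psi$ intertwines inversion.

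Next I would handle $v \in E_1^{0,1}$. The one-choice property follows by picking any $w \in E_1^{0,0}$ on the same cycle as $v$ (such $w$ exists by bipartite alternation), applying the first case to obtain one choice at $w$, and invoking Lemma~\ref{lem:NumberOfChoices}. For base-simple cycles, write $u := \mathfrak{r}(v)$; Condition~(K) at $u$ yields two base-simple cycles $\beta_k = x_k \gamma_k y_k^{-1}$ ($k=1,2$) at $u$ up to inversion. When $v \in s_1(X(x_k)) \cap s_1(X(y_k))$, Lemma~\ref{lem:PathCorrespondence}(4) provides a lift of $\beta_k$ based at $v$; this lift is a cycle by part~(1) and is base-simple at $v$ because intermediate $E_1^{0,0}$ vertices lie in the opposite layer from $v$, while intermediate $E_1^{0,1}$ vertices map under $\mathfrak{r}$ to intermediate $E^{0,0}$ vertices of $\beta_k$, all distinct from $u$. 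When a given $\beta_k$ is incompatible with $v$, the obstruction must sit in the distinguished choice-admitting coordinate of $u$, and I would repair it by first lifting $\beta_k$ at a compatible representative $v' \in \mathfrak{r}^{-1}(u)$ and then splicing in short connector paths between $v$ and $v'$, whose admissibility at junctions will be ensured by $v \neq v'$ forcing adjacent edges into different classes of $C^1$.

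I expect the main obstacle to be the splicing step in the second case: concretely arranging a connector from $v$ to $v'$ and back whose junctions with the lifted $\beta_k$ satisfy the $[e] \neq [f]$ admissibility condition of $C^1$, and then confirming that the resulting long cycle remains base-simple at $v$ (the only new intermediate vertex introduced in $E_1^{0,1}$ is $v' \neq v$, while every other intermediate $E_1^{0,1}$ vertex projects under $\mathfrak{r}$ to a vertex different from $u$, and therefore differs from $v$).
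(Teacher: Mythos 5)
Your first case ($v \in E_1^{0,0}$) is essentially the paper's entire proof: project the cycle by $\Psi$, invoke Condition~(K) for $(E,C)$ at $v$, transfer the single choice via Lemma~\ref{lem:PathCorrespondence}(5), and lift the two base-simple cycles via Lemma~\ref{lem:PathCorrespondence}(2); your checks of base-simplicity and of distinctness of the lifts are fine. The divergence, and the gap, is in your second case. The paper never performs a construction at $v \in E_1^{0,1}$: it replaces $v$ by a vertex of $E_1^{0,0}$ lying on the same cycle and verifies Condition~(K) there, implicitly using that the condition propagates along a cycle (exactly one choice by Lemma~\ref{lem:NumberOfChoices}; for the two base-simple cycles one can argue via Lemma~\ref{lem:OneChoiceBaseSimpleDecomp} that a vertex with exactly one choice and only one base-simple cycle up to inversion has every cycle equal to a power of that one). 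You instead attempt a direct lift of base-simple cycles based at $u = \mathfrak{r}(v)$, and you correctly observe, via Lemma~\ref{lem:PathCorrespondence}(4), that a base-simple cycle $x\gamma y^{-1}$ at $u$ lifts to a cycle based at $v = v(x_1,\dots,x_{k_u})$ only when the relevant coordinates of $v$ equal $x$ and $y$, which can fail in the coordinate belonging to the choice-admitting set in $C_u$.

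The repair you propose for that failure --- splicing in connectors between $v$ and a compatible $v' \in \mathfrak{r}^{-1}(u)$ --- is exactly where the proof is incomplete, as you acknowledge. Any such connector projects under $\Psi$ to a nontrivial closed path at $u$, so splicing changes the $\Psi$-image of the cycle; you would then need to verify (i) admissibility at all new junctions on both sides of each inserted detour, (ii) that the result is genuinely a cycle, i.e.\ that $\alpha\alpha$ is admissible, which is a condition on the two extreme symbols at $v$ and is not addressed by your junction argument, (iii) base-simplicity at $v$ of the lengthened path, and (iv) that the two repaired cycles coming from $\beta_1$ and $\beta_2$ remain distinct up to inversion --- this last point no longer follows from comparing $\Psi$-images, since those images are now $\beta_1$ and $\beta_2$ decorated with detours and could a priori coincide. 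None of this is carried out. The economical fix is the paper's: every vertex of $E_1^{0,1}$ on a cycle shares that cycle with a vertex of $E_1^{0,0}$ by bipartite alternation, so the second case reduces to the first.
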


\begin{proof}
Suppose that $v \in E_1^0$ admits a cycle $\alpha$ in $(E_1,C^1)$; by otherwise replacing $v$ with another vertex on $\alpha$, we may assume that $v \in E_1^{0,0}$. Then $v$ admits the cycle $\Psi(\alpha)$ in $(E,C)$, hence it admits exactly one choice and at least two distinct base-simple cycles in $(E,C)$ by assumption. Using Lemma~\ref{lem:PathCorrespondence}(5), we conclude that it admits exactly one choice in $(E_1,C^1)$ as well, and lifting these cycles arbitrarily to $(E_1,C^1)$ using Lemma~\ref{lem:PathCorrespondence}(2), we obtain two distinct base-simple cycles based at $v$ in $(E_1,C^1)$ as desired.
\end{proof}

\begin{lemma}
Let $(E,C)$ denote a finitely separated graph satisfying Condition \textup{(}K\textup{)}. If $H$ is a hereditary and $C$-saturated set, then the quotient graph $(E/H,C/H)$ satisfies Condition \textup{(}K\textup{)} as well.
\end{lemma}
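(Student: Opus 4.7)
The plan is to fix $v \in (E/H)^0 = E^0 \setminus H$ on a cycle $\alpha$ in $(E/H,C/H)$ and verify both conditions of (K) at $v$ in the quotient. The starting observation is that the edges of $(E/H)^1$ inject into $E^1$ and admissibility of paths is a purely local condition unchanged by passing to the quotient; in particular $\alpha$ is also a cycle based at $v$ in $(E,C)$, and Condition (K) for $(E,C)$ applied at $v$ yields exactly one choice and at least two base-simple cycles (up to inversion) in $(E,C)$.

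The upper bound in (1) of (K) is straightforward: any choice at $v$ in $(E/H,C/H)$, whether via an out-edge $f \in s^{-1}(v)$ giving rise to a choice path or via a color class $X/H \in (C/H)_v$ (either because $|X/H| \ge 2$, or because $X/H = \{e\}$ and $e^{-1}$ starts a choice path in the quotient), determines a choice at $v$ in $(E,C)$, since quotient choice paths are choice paths in $(E,C)$ and $X/H \subseteq X$. Hence $v$ admits at most one choice in $(E/H,C/H)$. The remainder is to show that $v$ admits at least one choice and at least two base-simple cycles up to inversion in $(E/H,C/H)$.

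The first key ingredient is to decompose $\alpha$ via Lemma~\ref{lem:OneChoiceBaseSimpleDecomp}, applied in $(E,C)$ where $v$ admits exactly one choice: $\alpha = \alpha_k \cdots \alpha_1$ as a concatenated product of base-simple cycles at $v$. Each $\alpha_i$ is admissible in $(E/H,C/H)$ as a subpath of $\alpha$, so already provides at least one base-simple cycle at $v$ in the quotient and, through the admissibility of the squaring $\alpha_i \alpha_i$ at the base, witnesses the surviving choice. The second ingredient is the structural claim that any base-simple cycle $\gamma$ at $v \in E^0 \setminus H$ in $(E,C)$ must lie entirely in $(E/H,C/H)$. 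If $\gamma$ visited some $u \in H$, the hereditarity of $H$ forces the entry into $H$ to occur via a backward edge $e^{-1}$ (with $r(e) \notin H$, $s(e) \in H$) at some vertex on the cycle, while $C$-saturation forces the corresponding color class $[e]$ to have both $H$- and non-$H$-sources, hence size $\ge 2$. One then combines this with (K) for $(E,C)$ applied at every vertex on $\gamma$ (which itself becomes on a cycle by rotation) and with the uniqueness of the choice at $v$ to produce a conflicting extra choice. With this claim, the two base-simple cycles at $v$ in $(E,C)$ give two base-simple cycles in $(E/H,C/H)$, providing (2) and — as they realize the unique choice color class or out-edge — also yielding the at-least-one choice for (1).

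The main obstacle is the structural claim that base-simple cycles at $v \in E^0 \setminus H$ avoid $H$. The asymmetry in hereditarity (backward edges from outside $H$ into $H$ are a priori allowed, while forward edges are not) means ruling out such excursions requires a careful case analysis, simultaneously leveraging the uniqueness of the choice at $v$, the $C$-saturation of $H$, and the fact that Condition (K) holds at every vertex on the cycle — not merely at $v$. The bookkeeping is simplified by the remark preceding the statement, which reduces the argument to the bipartite case $\mathbf{B}(E,C)$, where the alternating structure makes the "entry into $H$" step more rigid and easier to contradict.
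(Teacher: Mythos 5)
Your overall plan --- choices cannot increase in the quotient, and both the witnessing cycles and the witnessing choice survive --- is sound, but the two steps that actually carry the proof are not established. The first gap is in the structural claim that a base-simple cycle $\gamma$ based at $v \in E^0\setminus H$ cannot meet $H$. Your sketch extracts from the entry point a single backward symbol $e^{-1}$ on $\gamma$ with $\vert [e]\vert \ge 2$ and then asserts a ``conflicting extra choice'', but one non-singleton class along a cycle is entirely consistent with Condition (K)(1) --- it may simply \emph{be} the unique choice --- so no contradiction has yet been produced. The missing half is the \emph{exit}: since $\gamma$ returns to $v \notin H$, it must also leave $H$, necessarily along a forward symbol $f$ with $s(f)\in H$ and $r(f)\notin H$, and $C$-saturation applied to $[f]$ forces $\vert [f]\vert \ge 2$ as well. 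The initial segments of $\gamma$ (up to $e^{-1}$) and of $\gamma^{-1}$ (up to $f^{-1}$) are then two choice paths based at $v$, and admissibility of $\gamma\gamma$ at the base forces the two resulting choices at $v$ to be distinct; this contradicts (K)(1). (The paper compresses all of this into the observation that under Condition (K) every cycle $\alpha$ has $\alpha$ or $\alpha^{-1}$ forced, and that a forced path starting outside $H$ never enters $H$.) The second gap is the survival of the choice: you assert that the surviving base-simple cycles ``realize the unique choice colour class or out-edge'', but nothing forces the unique choice at $v$ to be witnessed on a cycle --- the choice path may leave every cycle through $v$ and terminate at a far-away class, and a cycle $\alpha_i$ with $\alpha_i\alpha_i$ admissible witnesses no choice by itself. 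As written, your argument leaves open that $v$ admits zero choices in $(E/H,C/H)$, which also violates Condition (K). The repair is the same mechanism as above: a \emph{minimal} choice path $\beta$ with $s(\beta)=v$ is forced (a backward symbol with non-singleton class would give a shorter choice path), hence stays in $E^0\setminus H$, and its target class $X$ satisfies $X/H=X$ because all of $X$ has range $r(\beta)\notin H$; so the choice persists in the quotient.

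Two smaller points: Lemma~\ref{lem:OneChoiceBaseSimpleDecomp} is not needed here, since Condition (K) already hands you two base-simple cycles at $v$ in $(E,C)$ and the only question is whether they survive; and the proposed reduction to $\mathbf{B}(E,C)$ is not justified as stated --- you would first have to check that hereditary $C$-saturated sets and their quotient graphs are compatible with the bipartite replacement, which the cited remark does not provide.
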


\begin{proof}
Assume that $v \in (E/H)^0 = E^0 \setminus H$ admits a cycle in $(E/H,C/H)$. Then $v$ admits at least two distinct base-simple cycles in $(E,C)$, and noting that for every cycle $\alpha$, either $\alpha$ or $\alpha^{-1}$ is forced, we see that these cycles are contained in $(E/H,C/H)$ as well. Now if $\beta$ is a minimal choice path with $s(\beta)=v$, then $\beta$ too must be contained in $(E/H,C/H)$, so $v$ admits exactly one choice in $(E/H,C/H)$ as well.
\end{proof}

We will now apply the main result of Section 5.

\begin{corollary}\label{cor:CondKFinBipGr}
Let $(E,C)$ denote a finite bipartite separated graph satisfying Condition \textup{(}K\textup{)}. Then there exists a finite non-separated graph $F$ with Condition \textup{(}K\textup{)} and a direct dynamical equivalence $\theta^F \to \theta^{(E,C)}$. Consequently, $\theta^{(E,C)}$ is essentially free,
$$L_K^{\textup{ab}}(E,C) \cong L_K(F) \andspace \mathcal{O}(E,C) \cong \mathcal{O}^r(E,C) \cong C^*(F).$$
In particular, $L_K^{\textup{ab}}(E,C)$ is an exchange ring and $\mathcal{O}(E,C) \cong \mathcal{O}^r(E,C)$ has real rank zero.
\end{corollary}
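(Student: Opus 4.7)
The overall plan is to push the problem down to the non-separated setting via Theorem~\ref{thm:ODege} and then invoke the classical theory for graph algebras satisfying Condition~(K). First I would check the hypotheses of Theorem~\ref{thm:ODege}: part~(1) of Condition~(K) directly yields that every cycle admits exactly one choice, and if some $v$ admits no choice but carries a simple closed path $\gamma^{-1}\beta\gamma$ with $\beta$ a simple cycle, then Lemma~\ref{lem:ClosedPathNoChoice} forces no vertex on that closed path (in particular $s(\beta)$) to admit a choice, contradicting Condition~(K)(1) applied to the cycle $\beta$. Hence no vertex without a choice admits a simple closed path, so Theorem~\ref{thm:ODege} supplies a finite non-separated graph $F := \overline{E_n}$, the direct dynamical equivalence $\theta^F \to \theta^{(E,C)}$, and the isomorphisms $L_K^{\textup{ab}}(E,C) \cong L_K(F)$ and $\mathcal{O}(E,C) \cong \mathcal{O}^r(E,C) \cong C^*(F)$.

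Next, I would show that $F$ itself satisfies the classical Condition~(K). An induction on Lemma~\ref{lem:CondK} gives that $(E_n,C^n)$ inherits Condition~(K), and a crucial observation is that under Condition~(K) every edge lying on a cycle has its range on a cycle and hence is of type~(1) in the orientation construction from the proof of Proposition~\ref{prop:GraphCondForOr}; the orientation of a cycle-edge is therefore entirely determined by which of $e$ or $e^{-1}$ admits a choice. Given $\overline{v} \in F^0$ on a classical cycle in $F$, the bijection of Lemma~\ref{lem:Paths} (combined with the observation that the self-concatenation of a classical cycle is again a path in $F$) transports that cycle to an admissible cycle, in the separated sense, in $(E_n,C^n)$ based at $v$. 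Condition~(K)(2) then provides at least two base-simple cycles at $v$ up to inversion, and using Lemma~\ref{lem:NumberOfChoices} and Lemma~\ref{lem:OneChoiceBaseSimpleDecomp} together with the explicit form of the orientation, each of these is, up to inversion, negatively oriented; pushing them back through Lemma~\ref{lem:Paths} and extracting simple classical sub-cycles produces at least two distinct simple classical cycles in $F$ through $\overline{v}$.

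With $F$ in hand as a finite non-separated graph satisfying classical Condition~(K), the established theory yields essential freeness of $\theta^F$, the exchange property of $L_K(F)$ via \cite[Theorem~4.5]{APS}, and real rank zero of $C^*(F)$ via \cite[Theorem~3.5]{Jeong}. Essential freeness of $\theta^{(E,C)}$ then follows from Corollary~\ref{cor:KEEssFree}, since any direct dynamical equivalence is in particular a Kakutani equivalence, and the two isomorphisms from the opening step transport the exchange property and real rank zero over to $L_K^{\textup{ab}}(E,C)$ and $\mathcal{O}(E,C) \cong \mathcal{O}^r(E,C)$.

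The hard part will be the second step, specifically controlling the interaction between the non-separated orientation from Proposition~\ref{prop:GraphCondForOr} and the base-simple cycle structure guaranteed by Condition~(K)(2). The key technical lever is that Condition~(K) forces every cycle-edge to be of type~(1), so that the orientation of each such edge is dictated purely by local choice behaviour; but then the fact that every vertex on a cycle admits exactly one choice (Condition~(K)(1)) must be wielded carefully to ensure that the orientation signs along a base-simple cycle are consistent enough to realise it (or its inverse) as a negatively oriented path, and hence as a closed path in $F$ from which a simple classical cycle can be extracted.
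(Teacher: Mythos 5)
Your proposal is correct and follows the same route as the paper: verify the hypotheses of Theorem~\ref{thm:ODege} (the paper merely declares this ``apparent'' in Definition~\ref{def:CondK}; your argument via Lemma~\ref{lem:ClosedPathNoChoice} is precisely the missing justification, and in fact shows that a vertex admitting no choices admits \emph{no} simple closed path at all), set $F=\overline{E_n}$, transfer Condition (K) to $F$ through Lemma~\ref{lem:CondK} and Lemma~\ref{lem:Paths}, and invoke the classical theory. The one genuine divergence is the essential freeness step: the paper proves essential freeness of $\theta^{(E_n,C^n)}$ by using Condition (K) of $F$ to identify the closed invariant subsets of $\Omega(E_n,C^n)$ with hereditary and $C^n$-saturated sets (via Proposition~\ref{prop:NonSepGraphIso} and the ideal structure of $C^*(F)$) and then checking Condition (L) of each quotient graph with Theorem~\ref{thm:CondL}, whereas you import essential freeness of $\theta^F$ from the classical Condition (K) literature and transport it through the dynamical equivalence via Corollary~\ref{cor:KEEssFree}; both work, and yours is the shorter path given that the equivalence of Condition (K) with essential principality of the graph groupoid is already on record. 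Finally, the step you flag as hard is easier than you fear: for \emph{any} non-separated orientation, if $\beta$ is a cycle, then writing $\beta=\beta_-\beta_+$ as in Definition~\ref{def:NonSepOr} and testing admissibility of the length-two subpath of $\beta\beta$ at the junction against the two alternatives of Definition~\ref{def:NonSepOr} forces one of $\beta_{\pm}$ to be trivial, so every cycle is automatically positively or negatively oriented and no type-(1) bookkeeping is needed; distinctness of the two resulting base-simple cycles in $F$ then comes for free from the bijectivity of the correspondence in Lemma~\ref{lem:Paths}.
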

\begin{proof}
The first part follows immediately from Theorem~\ref{thm:ODege} with $F=\overline{E_n}$. Moreover, $E_n$ satisfies Condition (K) by Lemma~\ref{lem:CondK}, hence so does $F$ by Lemma~\ref{lem:Paths}. It follows from \cite[Theorem 4.5]{APS} and \cite[Theorem 3.5]{Jeong} that $L_K(F)$ is an exchange ring and $C^*(F)$ has real rank zero, respectively. Moreover, the ideals of $C^*(F)$ are exactly those generated by hereditary and saturated subsets of $F^0$, which correspond to the hereditary and $C^n$-saturated subsets of $E_n^0$ by Proposition~\ref{prop:NonSepGraphIso}. It follows that the closed and invariant subsets of $\Omega(E_n,C^n)$ exactly correspond to the hereditary and $C^n$-saturated subsets of $E_n^0$. Now since $(E_n/H,C^/H)$ satisfies Condition (K) for any such $H$ by Lemma~\ref{lem:CondC}, we see that $\theta^{(E_n,C^n)}$ is essentially free using Theorem~\ref{thm:CondL}. Finally, $\theta^{(E,C)}$ must then be essentially free since there is a direct dynamical equivalence $\theta^{(E_n,C^n)} \to \theta^{(E,C)}$ by \cite[Theorem 3.22]{AL}.
\end{proof}

\begin{corollary}\label{cor:CondKFinGr}
Let $(E,C)$ denote a finite separated graph satisfying Condition \textup{(}K\textup{)}. Then $\theta^{(E,C)}$ is essentially free,
$$L_K^{\textup{ab}}(E,C) \cong L_K(F) \andspace \mathcal{O}(E,C) \cong \mathcal{O}^r(E,C) \cong C^*(F)$$
for a finite graph $F$ with Condition \textup{(}K\textup{)}. In particular, $L_K^{\textup{ab}}(E,C)$ is an exchange ring and $\mathcal{O}(E,C) \cong \mathcal{O}^r(E,C)$ has real rank zero.
\end{corollary}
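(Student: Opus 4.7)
The plan is to bootstrap from Corollary~\ref{cor:CondKFinBipGr} via the bipartite replacement. Set $(\tilde E,\tilde C):=\mathbf{B}(E,C)$. The remark immediately preceding Lemma~\ref{lem:CondK} records that Condition (K) is preserved by the bipartite replacement, so $(\tilde E,\tilde C)$ is a finite bipartite separated graph satisfying Condition (K). Corollary~\ref{cor:CondKFinBipGr} therefore supplies a finite non-separated graph $F$ with Condition (K), essential freeness of $\theta^{(\tilde E,\tilde C)}$, and identifications
\[
L_K^{\textup{ab}}(\tilde E,\tilde C)\cong L_K(F),\qquad \mathcal{O}(\tilde E,\tilde C)\cong\mathcal{O}^r(\tilde E,\tilde C)\cong C^*(F).
\]

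Essential freeness of $\theta^{(E,C)}$ is then immediate from Proposition~\ref{prop:BipDynamics}, which gives a Kakutani equivalence between $\theta^{(E,C)}$ and $\theta^{(\tilde E,\tilde C)}$, together with Corollary~\ref{cor:KEEssFree}. For the ring-theoretic and $C^*$-algebraic conclusions, invoke the natural $M_2$-isomorphisms $\mathbf{M}_2\circ L_K^{\textup{ab}}\cong L_K^{\textup{ab}}\circ\mathbf{B}$ and $\mathbf{M}_2\circ\mathcal{O}\cong\mathcal{O}\circ\mathbf{B}$ recalled in the preliminaries, which combined with the previous display give
\[
M_2\bigl(L_K^{\textup{ab}}(E,C)\bigr)\cong L_K(F)\quad\text{and}\quad M_2\bigl(\mathcal{O}(E,C)\bigr)\cong C^*(F).
\]
Hence $L_K^{\textup{ab}}(E,C)$ sits as a (full) diagonal corner of the exchange ring $L_K(F)$ and $\mathcal{O}(E,C)$ as a (full) diagonal corner of the real-rank-zero $C^*$-algebra $C^*(F)$; the exchange property and real rank zero pass by the corner-stability properties recalled at the start of this section. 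A parallel matrix-amplification argument on the reduced level, using that the direct dynamical equivalence of Proposition~\ref{prop:BipDynamics} induces an isomorphism of transformation groupoids and hence of both full and reduced crossed products, together with the bipartite identity $\mathcal{O}(\tilde E,\tilde C)\cong\mathcal{O}^r(\tilde E,\tilde C)$, forces the canonical surjection $\mathcal{O}(E,C)\twoheadrightarrow\mathcal{O}^r(E,C)$ to be an isomorphism.

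The point I expect to be the hardest is producing an honest finite graph $F_0$ with Condition (K) realizing $L_K^{\textup{ab}}(E,C)\cong L_K(F_0)$ and $\mathcal{O}(E,C)\cong C^*(F_0)$ on the nose, rather than merely up to the Morita equivalence provided by the $M_2$-identifications above. I would extract $F_0$ dynamically: composing the direct dynamical equivalences $\theta^{F}\to\theta^{(\tilde E,\tilde C)}$ (from Corollary~\ref{cor:CondKFinBipGr}) and $\theta^{(\tilde E,\tilde C)}\to\overrightarrow{\theta^{(E,C)}}$ (from Proposition~\ref{prop:BipDynamics}) with the direct dynamical equivalence $\overrightarrow{\theta^{(E,C)}}\vert_{\Omega(E,C)^{1}}\to\theta^{(E,C)}$ supplied by Proposition~\ref{prop:Double}, the clopen subspace $\Omega(E,C)^{1}\subset\overrightarrow{\Omega(E,C)}$ pulls back, thanks to the explicit form of $\mathbf{B}(E,C)$, to a clopen union of vertex cylinders in $\Omega(F)$. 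The restriction of $\theta^{F}$ to this set is directly dynamically equivalent to $\theta^{(E,C)}$, and recognising it as $\theta^{F_0}$ for the subgraph $F_0\subset F$ spanned by those vertices (which inherits Condition (K), since passing to a subgraph introduces no new cycles) then gives the required isomorphisms via Theorem~\ref{thm:DynDescr}.
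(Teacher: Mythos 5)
Your overall route coincides with the paper's: apply Corollary~\ref{cor:CondKFinBipGr} to $\mathbf{B}(E,C)$ (using the remark that Condition (K) passes to the bipartite replacement), transfer essential freeness through the Kakutani equivalence of Proposition~\ref{prop:BipDynamics} via Corollary~\ref{cor:KEEssFree}, and use the natural $M_2$-identifications together with corner-stability of the exchange property and of real rank zero. Up to that point the argument is sound, and you correctly isolate the genuinely delicate step: upgrading $M_2(L_K^{\textup{ab}}(E,C))\cong L_K(F)$ and $M_2(\mathcal{O}(E,C))\cong C^*(F)$ to on-the-nose isomorphisms with a graph algebra.

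Your resolution of that last step has a genuine gap. The clopen set $U\subset\Omega(F)$ obtained by pulling back $\Omega(E,C)^1\subset\overrightarrow{\Omega(E,C)}$ is indeed a full union of vertex cylinders, but the restriction $\theta^F\vert_U$ is \emph{not} $\theta^{F_0}$ for the subgraph $F_0$ spanned by the corresponding vertices. Writing $(\widetilde E,\widetilde C)=\mathbf{B}(E,C)$ and $F=\overline{\widetilde E_n}$, every edge of $F$ joins the two layers of the bipartite graph $(\widetilde E_n,\widetilde C^n)$, while the vertex set underlying $U$ is the preimage of $\widetilde E^{0,1}$ under the vertex maps $\mathfrak{r}$ and hence sits entirely inside one layer; the induced subgraph therefore has no edges at all, and $\theta^{F_0}$ would be the trivial action on a finite discrete set, which is certainly not dynamically equivalent to $\theta^{(E,C)}$ in general. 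What $\theta^F\vert_U$ actually retains are the partial homeomorphisms $\theta_\alpha$ for paths $\alpha$ in $F$ running between $U$-vertices through the complementary layer, so the graph realizing this restriction must have such composite paths as its edges; this is a contraction move, not a passage to a subgraph, and one would still have to check that the result is again a boundary-path action and that Condition (K) survives. The paper sidesteps all of this by citing an external structural theorem ([AGR, Theorem~6.1]) to the effect that an algebra $A$ with $M_2(A)$ isomorphic to a graph algebra in this fashion (equivalently, a full corner of a graph algebra at a sum of vertex projections) is itself a graph algebra. You need either to invoke such a result or to carry out the contraction construction honestly; as written, your final paragraph produces the wrong graph.
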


\begin{proof}
Applying Corollary~\ref{cor:CondKFinBipGr} to $\mathbf{B}(E,C)$, it follows from Corollary~\ref{cor:KEEssFree} and Proposition~\ref{prop:BipDynamics} that $\theta^{(E,C)}$ is essentially free as well. Moreover, there are isomorphisms
$$M_2(L_K^{\textup{ab}}(E,C)) \cong L_K^{\textup{ab}}(\mathbf{B}(E,C)) \cong L_K(F) \andspace  M_2(\mathcal{O}(E,C)) \cong \mathcal{O}(\mathbf{B}(E,C)) \cong C^*(F),$$
where $F$ is a graph satisfying Condition (K), by \cite[Proposition 9.1]{AE}. We may then apply \cite[Theorem 6.1]{AGR} (along with the final comment in the introduction of \cite{AGR}) to obtain a graph $G$ for which $L_K^{\textup{ab}}(E,C) \cong L_K(G)$ and $\mathcal{O}(E,C) \cong C^*(G)$.
\end{proof}

In order to extend Corollary~\ref{cor:CondKFinGr} to arbitrary finitely separated graphs, we need to be able to approximate any such Condition (K) graph by its finite complete Condition (K) subgraphs.

\begin{lemma}\label{lem:CondKLimit}
Every finitely separated Condition \textup{(}K\textup{)} graph is a direct limit of its finite complete Condition \textup{(}K\textup{)} subgraphs.
\end{lemma}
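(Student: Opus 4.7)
The plan is to prove the equivalent statement that the finite complete Condition (K) subgraphs of $(E,C)$ are cofinal among all finite complete subgraphs of $(E,C)$; since $(E,C)$ is the direct limit of all its finite complete subgraphs, this immediately gives the direct-limit description in terms of the Condition (K) ones.

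Let $(F_0,D^0)$ be an arbitrary finite complete subgraph of $(E,C)$, and fix once and for all, for every $v\in \mathrm{Cyc}(E)$, a ``witness package'' $\mathcal{W}_v=(\alpha_v,\beta_{v,1},\beta_{v,2})$ consisting of a choice path $\alpha_v$ witnessing the unique choice admitted by $v$ in $(E,C)$, together with two base-simple cycles $\beta_{v,1},\beta_{v,2}$ based at $v$ that are distinct up to inversion. These exist in $(E,C)$ by Condition (K). Define $F_{n+1}$ inductively to be the smallest complete subgraph of $(E,C)$ containing $F_n$ together with $\mathcal{W}_v$ for every $v\in \mathrm{Cyc}(F_n)$. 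Closing under partition classes adjoins only finite data, thanks to finite separation, so each $F_{n+1}$ remains finite. Since $F_n\subseteq E$, we have $\mathrm{Cyc}(F_n)\subseteq \mathrm{Cyc}(E)$, and the presence of $\mathcal{W}_v$ in $F_{n+1}$ ensures that Condition (K) holds at every $v\in \mathrm{Cyc}(F_n)$ within $F_{n+1}$: the path $\alpha_v$ certifies at least one choice at $v$, and since $v$ admits at most one choice in $(E,C)$ (hence at most one in any subgraph), exactly one choice, while $\beta_{v,1},\beta_{v,2}$ supply the two required base-simple cycles.

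The main obstacle is termination of this iteration, i.e., showing that $F_{N+1}=F_N$ for some $N$. This is equivalent to showing that the \emph{witness closure} of the finite set $\mathrm{Cyc}(F_0)$---the smallest $W\subseteq \mathrm{Cyc}(E)$ containing $\mathrm{Cyc}(F_0)$ and all vertices appearing in $\mathcal{W}_v$ for each $v\in W$---is finite for any finite input. The expected way to secure finiteness is to choose the packages $\mathcal{W}_v$ coherently, so that for any $u$ lying on one of the cycles $\beta_{v,i}$, the package $\mathcal{W}_u$ can be taken to be (a rotation of) $\beta_{v,1}$ and $\beta_{v,2}$ whenever $u$ lies on both, and otherwise a minimal second cycle at $u$ witnessing Condition (K) at $u$ in $(E,C)$; one then argues, via Lemma~\ref{lem:OneChoiceBaseSimpleDecomp} and finite separation, that the cycle structure around $\mathrm{Cyc}(F_0)$ only spreads through finitely many additional vertices before stabilising---otherwise one could extract an infinite sequence of cycle vertices each of whose witness packages escapes to new territory, contradicting the rigidity imposed by the exactly-one-choice condition together with the hypothesis that every $X\in C$ is finite. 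Once finiteness of the witness closure is established, the iteration stabilises at a finite complete $(G,L):=(F_N,D^N)\supseteq (F_0,D^0)$ satisfying Condition (K), completing the proof.
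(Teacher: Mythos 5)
Your overall strategy---showing that the finite complete Condition (K) subgraphs are cofinal among all finite complete subgraphs---is the same as the paper's, but the proof has a genuine gap exactly where you flag ``the main obstacle'': termination of the closure procedure is asserted, not proven. As you set things up, every vertex of $\mathrm{Cyc}(F_n)$ forces the adjunction of a full witness package $\mathcal{W}_v$, including packages for vertices that only entered at stage $n$; a priori the witness closure of $\mathrm{Cyc}(F_0)$ can be infinite (an infinite ``ladder'' of cycles, each witness cycle reaching new vertices whose own witnesses reach further, is perfectly consistent with Condition (K) and finite separation). The ``coherent choice'' you sketch does not close this off: a vertex $u$ lying on only one of $\beta_{v,1},\beta_{v,2}$ still needs a second cycle from somewhere, and nothing in the exactly-one-choice condition or in Lemma~\ref{lem:OneChoiceBaseSimpleDecomp} prevents that second cycle from escaping to new territory indefinitely. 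The claimed contradiction from ``an infinite sequence of cycle vertices whose witness packages escape'' is never derived.

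The paper's proof terminates for two reasons that your argument is missing. First, no choice paths need to be adjoined at all: if $v$ lies on a cycle $\alpha$ in a complete subgraph, then the minimal choice path $\beta$ issuing from $v$ in $(E,C)$ satisfies $x^{-1}\beta \le \alpha$ or $x^{-1}\beta \le \alpha^{-1}$ for some $x$ in the choice set $X$, so completeness already forces $X$ into the subgraph and $v$ automatically admits its (unique) choice there. Second, and more importantly, when one adjoins a second base-simple cycle $\beta$ at a deficient vertex $v$ (together with the classes $[e_i]$ of its edges and their endpoints), every newly added vertex either lies on no cycle or already admits at least two base-simple cycles up to inversion in the enlarged subgraph---one being the rotation of $\beta$ through that vertex, the other obtained by splicing in the old cycle at $v$. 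Hence the procedure creates no new deficient vertices, and it stops after repairing the finitely many deficient vertices of the original subgraph. Without an observation of this kind, your induction has no reason to stabilise.
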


\begin{proof}
Let $(E,C)$ denote a finitely separated Condition (K) graph with a finite complete subgraph $(F,D)$. We then claim that there is an intermediate finite complete subgraph $(F,D) \subset (G,L) \subset (E,C)$ satisfying Condition (K), and we first observe that if $v \in F^0$ admits a cycle $\alpha$ in $(F,D)$, then it automatically admits a choice in $(F,D)$ as well: By assumption, it admits exactly one choice in $(E,C)$, so if $\beta$ is a minimal path with $s(\beta)=v$ leading to a choice $X$, then either $x^{-1}\beta \le \alpha$ or $x^{-1}\beta \le \alpha^{-1}$ for some $x \in X$. From $(F,D)$ being a complete subgraph, it follows that $X \in D$, so $v$ admits exactly one choice in $(F,D)$ as well. \medskip \\
Now assume that $v$ admits only one base-simple cycle in $(F,D)$ up to inversion, and consider some other base-simple cycle $e_n^{\varepsilon_n} \cdots e_1^{\varepsilon_1}$ based at $v$ in $(E,C)$. We then extend the subgraph by the set of edges $\bigcup_{i=1}^n [e_i]$ as well as the ranges and sources of these edges to a finite complete subgraph. Observe that all the added vertices either admit no or at least two base-simple cycles up to inversion, so applying this procedure sufficiently many times leaves us with a finite complete subgraph $(G,L)$ satisfying Condition (K).
\end{proof}

\begin{corollary}\label{cor:CondKInfGr}
Let $(E,C)$ denote a finitely separated graph satisfying Condition \textup{(}K\textup{)}. Then the partial action $\theta^{(E,C)}$ is essentially free, and there are finite non-separated Condition \textup{(}K\textup{)} graphs $(F_n)_{n \ge 1}$ such that 
$$L_K^{\textup{ab}}(E,C) \cong \varinjlim_n L_K(F_n) \andspace \mathcal{O}(E,C) \cong \mathcal{O}^r(E,C) \cong \varinjlim_n C^*(F_n)$$
for appropriate connecting homomorphisms. In particular, $L_K^{\textup{ab}}(E,C)$ is an exchange ring and $\mathcal{O}(E,C) \cong \mathcal{O}^r(E,C)$ has real rank zero.
\end{corollary}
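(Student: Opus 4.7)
The plan is to reduce to the finite case treated in Corollary~\ref{cor:CondKFinGr} via the approximation of Lemma~\ref{lem:CondKLimit}. Writing $(E,C)=\varinjlim_n (G_n, L_n)$ for an increasing sequence of finite complete Condition (K) subgraphs supplied by Lemma~\ref{lem:CondKLimit}, Corollary~\ref{cor:CondKFinGr} produces, for each $n$, a finite non-separated Condition (K) graph $F_n$ together with isomorphisms $L_K^{\textup{ab}}(G_n, L_n) \cong L_K(F_n)$ and $\mathcal{O}(G_n, L_n) \cong \mathcal{O}^r(G_n, L_n) \cong C^*(F_n)$, as well as essential freeness of each partial action $\theta^{(G_n, L_n)}$.

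Using continuity of the functors $L_K^{\textup{ab}}$ and $\mathcal{O}$ along direct limits of finitely separated graphs (as recorded in Section~1) together with Lemma~\ref{lem:MorphDesc} for compatibility of the connecting $*$-homomorphisms, one passes to the limit to obtain $L_K^{\textup{ab}}(E,C) \cong \varinjlim_n L_K(F_n)$ and $\mathcal{O}(E,C) \cong \varinjlim_n C^*(F_n)$. The identification $\mathcal{O}(E,C) \cong \mathcal{O}^r(E,C)$ is inherited by taking the limit of the commuting square of Lemma~\ref{lem:MorphDesc}. The ``in particular'' statement then follows from closure of the exchange property under direct limits (recorded at the start of Section~6) and the analogous closure of real rank zero for $C^*$-algebras.

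The remaining and most delicate step is essential freeness of $\theta^{(E,C)}$. My approach is to show that every closed invariant subset $Z \subset \Omega(E,C)$ has the form $\Omega(E,C) \setminus \Omega(E,C)^H$ for some $H \in \mathcal{H}(E,C)$. Granted this, Theorem~\ref{thm:QuoByHS} identifies $\theta^{(E,C)}\vert_Z$ with $\theta^{(E/H,C/H)}$; since Condition (K) is preserved under quotients by hereditary $C$-saturated sets (established just before Corollary~\ref{cor:CondKFinBipGr}) and clearly implies Condition (L), Theorem~\ref{thm:CondL} yields topological freeness of the restriction, hence essential freeness of $\theta^{(E,C)}$. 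The classification of closed invariant subsets is the main obstacle: for finite bipartite $(E,C)$ it follows from Corollary~\ref{cor:CondKFinBipGr} and Proposition~\ref{prop:NonSepGraphIso}, since the ideals of $C^*(F)$ for a Condition (K) graph $F$ are parametrized by hereditary saturated vertex sets. In general I would use the $\mathbb{F}(G_n^1)$-equivariant surjections of Remark~\ref{rem:CompleteSubgraphMap} to push $Z$ down to a closed invariant subset of each $\Omega(G_n, L_n)$, invoke the finite case to obtain coherent hereditary $L_n$-saturated sets $H_n \subset G_n^0$, and then set $H := \bigcup_n H_n \in \mathcal{H}(E,C)$ and check $Z = \Omega(E,C) \setminus \Omega(E,C)^H$.
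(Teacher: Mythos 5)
The first half of your argument (the direct limit description, the identification $\mathcal{O}(E,C)\cong\mathcal{O}^r(E,C)$, and the ``in particular'' clause) is exactly the paper's proof: Lemma~\ref{lem:CondKLimit} plus Corollary~\ref{cor:CondKFinGr} plus continuity of the functors. The problem lies in your treatment of essential freeness. Your key claim --- that under Condition (K) every closed invariant subset $Z\subset\Omega(E,C)$ is of the form $\Omega(E,C)\setminus\Omega(E,C)^H$ for some $H\in\mathcal{H}(E,C)$ --- is false, already for finite bipartite graphs. Consider the graph of \cite[Example 9.4]{AE} displayed at the start of Section 5: it has no cycles, so it satisfies Condition (K) vacuously, and $\mathcal{O}(E,C)\cong\bigoplus_{i=1}^4 M_3(\mathbb{C})$ has $16$ ideals, all induced from open invariant subsets since the action is essentially free and $\mathbb{F}$ is exact; but $\mathcal{H}(E,C)$ has only $10$ elements (a subset of the four sources is $C$-saturated precisely when it misses one edge from each of the two colour classes). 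So six closed invariant subsets are not of your claimed form, and your argument gives no control over the restriction of the action to those. The source of the error is that Corollary~\ref{cor:CondKFinBipGr} and Proposition~\ref{prop:NonSepGraphIso} parametrise the closed invariant subsets of $\Omega(E,C)$ by the hereditary $C^n$-saturated subsets of $E_n^0$ (for $n$ large enough that $\overline{E_n}$ exists), and the inclusion $\mathcal{H}(E,C)\hookrightarrow\mathcal{H}(E_n,C^n)$ is in general far from surjective. Consequently the ``coherent sets $H_n$'' you propose to extract in the infinite case do not exist at the first step, and Theorem~\ref{thm:QuoByHS} cannot be applied.

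The paper circumvents any global classification by a local argument, which is how you should repair the proof. Given a closed invariant $\Omega\subset\Omega(E,C)$, a point $\xi\in\Omega$ fixed by $\alpha\neq 1$, and a basic neighbourhood $\Omega\cap\Omega(E,C)_\omega$ with $\omega$ a finite animal containing $\alpha$, use Lemma~\ref{lem:CondKLimit} to choose a finite complete Condition (K) subgraph $(F,D)$ with $\omega\subset\mathbb{F}(F^1)$, and push forward along the $\mathbb{F}(F^1)$-equivariant surjection $p$ of Remark~\ref{rem:CompleteSubgraphMap}. Then $\Omega':=p(\Omega\cap\Omega(E,C)_{F^0})$ is a closed invariant subset of $\Omega(F,D)$ containing the fixed point $p(\xi)$, essential freeness of $\theta^{(F,D)}$ (the finite case, where the classification \emph{via} $\mathcal{H}(E_n,C^n)$ does work) produces $\zeta\in\Omega(F,D)_\omega\cap\Omega'$ with $\theta_\alpha^{(F,D)}(\zeta)\neq\zeta$, and any lift of $\zeta$ to $\Omega$ witnesses that $\xi$ is not interior in the fixed-point set of the restricted action.
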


\begin{proof}
By Lemma~\ref{lem:CondKLimit}, we can find an increasing union of finite complete Condition (K) subgraphs $(G_n,L^n)$ of $(E,C)$ such that $(E,C)=\varinjlim_n (G_n,L^n)$. Then $L_K^{\textup{ab}}(G_n,L^n) \cong L_K(F_n)$ and $\mathcal{O}^{(r)}(G_n,L^n) \cong C^*(F_n)$ for some non-separated graph $F_n$ satisfying Condition (K) by Corollary~\ref{cor:CondKFinGr}. Recalling from \cite[Proposition 7.2]{AE2} that $\mathcal{O}$ is a continuous functor and that the same proof applies to $L_K^{\textup{ab}}$, we see that
$$L_K^{\textup{ab}}(E,C) \cong \varinjlim_n L_K(F_n) \andspace \mathcal{O}(E,C) \cong \mathcal{O}^r(E,C) \cong \varinjlim_n C^*(F_n),$$
and as the exchange property passes to limits, it follows from Lemma~\ref{lem:CondKLimit} and Corollary~\ref{cor:CondKFinGr} that $L_K^{\textup{ab}}(E,C)$ is an exchange ring, and $\mathcal{O}(E,C) \cong \mathcal{O}^r(E,C)$ has real rank zero. \medskip \\
We move on to checking essential freeness. Assume that $\Omega \subset \Omega(E,C)$ is a closed invariant subspace and $\xi \in \Omega$ is fixed by $1 \ne \alpha \in \mathbb{F}$. Taking any finite animal $\omega \subset \xi$ with $\alpha \in \omega$, we must verify that $\theta_\alpha^{(E,C)}(\eta) \ne \eta$ for some $\eta \in \Omega \cap \Omega(E,C)_\omega$. By Lemma~\ref{lem:CondKLimit}, there is a finite complete Condition (K) subgraph $(F,D)$ of $(E,C)$ such that $\omega \subset \mathbb{F}(F^1)$, and we consider the canonical surjective $\mathbb{F}(F^1)$-equivariant continuous map 
$$p \colon \Omega(E,C)_{F^0}= \bigsqcup_{v \in F^0} \Omega(E,C)_v \to \Omega(F,D)$$
given by $p(\eta)= \eta \cap \mathbb{F}(F^1)$. Then $\Omega':=p(\Omega(E,C)_{F^0} \cap \Omega)$ is a closed invariant subspace of $\Omega(F,D)$, and $p(\xi) \in \Omega'$ is fixed by $\alpha$. Moreover, $\Omega(F,D)_\omega \cap \Omega'$ is an open neighbourhood of $p(\xi)$ in $\Omega'$, so by essential freeness there is some $\zeta \in \Omega(F,D)_\omega \cap \Omega'$ with $\theta^{(F,D)}_{\alpha}(\zeta) \ne \zeta$. Now any lift $\eta \in \Omega$ of $\zeta$ will do the job.
\end{proof}

Having proved the positive part of the main result of this section, we now begin an investigation of finitely separated graphs not satisfying Condition (K). The lemma just below takes care of the situation in which a cycle admits at least two choices.

\begin{lemma}\label{lem:TwoChoicesConf}
Let $(E,C)$ denote a finitely separated graph. If some cycle admits at least two choices, then there is a configuration $\xi \in \Omega(E,C)$ with stabiliser $\textup{Stab}(\xi) \cong \mathbb{Z}$, such that $\xi$ is isolated in $\overline{\theta_\mathbb{F}(\xi)}$.
\end{lemma}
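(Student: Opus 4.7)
Plan. The strategy is to construct an explicit $\alpha$-invariant configuration $\xi$ with enough rigidity. Let $\alpha = e_n^{\varepsilon_n} \cdots e_1^{\varepsilon_1}$ be a cycle admitting $\ge 2$ choices, based at $v$; by Lemma~\ref{lem:NumberOfChoices}(3) every vertex $v_i$ visited by $\alpha$ also admits $\ge 2$ choices. Replacing $\alpha$ by its primitive root in $\mathbb{F}$ (which is still an admissible cycle visiting the same vertex set and, by the same lemma, still admits $\ge 2$ choices), I may assume $\alpha$ is not a proper power in $\mathbb{F}$.

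The backbone of $\xi$ is the \emph{axis} $A \subset \mathbb{F}$ consisting of all prefixes of the admissible paths $\alpha^k$ for $k \in \mathbb{Z}$ (read right-to-left in time). This set is right-convex, contains $1$, and satisfies $A \cdot \alpha = A$. At each axis element, with endpoint some $v_i$, two entries of the required local configuration $s^{-1}(v_i) \sqcup \{e_X^{-1} : X \in C_{v_i}\}$ are prescribed by the axis, namely the forward step $e_{i+1}^{\varepsilon_{i+1}}$ and the reverse step $e_i^{-\varepsilon_i}$; any remaining entries are free side branches I design. Using $\ge 2$ choices at $v$, I can arrange a \emph{discriminating animal} $\omega \subset \xi$ near $1$: if a non-axis choice at $v$ exists, I follow a choice path witnessing it to a vertex $w$ carrying a non-singleton choice set $X \in C_w$ and select a specific representative $x_0 \in X$; and in the degenerate case where $\alpha$ exhausts all the choice-admitting entries at $v$, some $[e_m]$ on the cycle must have $|[e_m]| \ge 2$, and the axis-forced representative of $[e_m]$ itself furnishes the mark. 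Duplicating $\omega$ at every $\alpha^k$ by $\alpha$-invariance, and completing the remaining side branches by a fixed heterogeneous rule, yields $\xi \in \Omega(E,C)$ with $\xi \cdot \alpha = \xi$.

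For $\textup{Stab}(\xi) \cong \mathbb{Z}$: any $\gamma \in \textup{Stab}(\xi)$ lies in $\xi$ and has endpoint $v$, and $\xi \cdot \gamma^{-1} = \xi$ forces $\omega$ to sit at $\gamma$ in $\xi$; by design $\omega$ occurs only at positions in $\langle \alpha \rangle$, so $\gamma \in \langle \alpha \rangle$, and primitivity of $\alpha$ identifies $\textup{Stab}(\xi)$ with $\langle \alpha \rangle \cong \mathbb{Z}$. For isolation, I take the finite window $F := \omega \cup \{\alpha, \alpha^{-1}\}$: if $\theta_\gamma(\xi)$ agrees with $\xi$ on $F$ then $\gamma \cdot F \subseteq \xi$, and the same discrimination argument pins $\gamma \in \langle \alpha \rangle = \textup{Stab}(\xi)$, so $\theta_\gamma(\xi) = \xi$, showing that $\xi$ is isolated in $\overline{\theta_\mathbb{F}(\xi)}$.

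The hardest part of the argument will be constructing the discriminating animal $\omega$ in the degenerate case where the axis occupies every entry of the local configuration at $v$; there one must walk further along $\alpha$ to an $[e_m]$ with $|[e_m]| \ge 2$ and argue that the axis-forced representative at that position genuinely distinguishes $\xi$ from translates that would have to reselect within $[e_m]$. A secondary subtlety is ensuring that the default side branches at the other axis positions are made heterogeneous enough (for example, encoded by the position index $i$) so that $\omega$ really only recurs at $\langle \alpha \rangle$-positions; without this, $\textup{Stab}(\xi)$ could be accidentally enlarged beyond $\langle \alpha \rangle$.
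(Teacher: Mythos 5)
Your high-level plan (an $\alpha$-periodic configuration built on the axis of the cycle, plus a local feature that recurs only at period positions, plus a finite-window argument for isolation) is the same shape as the paper's argument, but the proposal never actually produces the discriminating feature, and the mechanism you sketch for it does not work in general. The paper splits into two cases: either (1) some vertex of the cycle admits a choice path $\beta$ branching \emph{off} the axis with $\ter_{\textup{d}}(\beta)=x^{-1}$, $\vert[x]\vert\ge 2$, and both $\beta\alpha$, $\beta\alpha^{-1}$ admissible; or (2) both choices are realised \emph{along} the cycle, via $x^{-1}\le\alpha$ and $y^{-1}\beta\le\alpha^{-1}$ with $\vert[x]\vert,\vert[y]\vert\ge 2$. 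In case (1) the discrimination comes from counting three distinct initial symbols ($\ini_{\textup{d}}(\beta)$, $\ini_{\textup{d}}(\alpha)$, $\ter_{\textup{d}}(\alpha)^{-1}$) of paths in $\xi$ terminating in $x^{-1}$, and in case (2) from counting two (one in each direction around the cycle); the lower semi-continuous function $s_A$ of Lemma~\ref{lem:inequality} then gives both $\textup{Stab}(\xi)=\langle\alpha\rangle$ and isolation in one stroke. Crucially, in case (1) the configuration must be built so that $\ter_{\textup{d}}(\gamma)=x^{-1}$ for \emph{no} $\gamma\in\xi$ outside the periodic copies of $\beta$ — a global avoidance condition (as in the proof of Lemma~\ref{lem:CondC}) that your construction omits; without it the marked feature can recur and your window argument collapses.

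Your treatment of the degenerate case is where the real gap lies. First, a single ``axis-forced representative of $[e_m]$'' does not discriminate: every translate of $\xi$ by an axis element still contains the translated axis carrying the same single mark, so the mark cannot by itself pin $\gamma$ into $\langle\alpha\rangle$; the paper needs choice events in \emph{both} directions along the cycle (the set $A=\{x^{-1},y^{-1}\}$) to get the count $2$ at $\xi$ versus $\le 1$ at proper translates. Second, your fallback of making the side branches ``heterogeneous enough, encoded by the position index'' is not available in general: condition (c) of Definition~\ref{def:DynamicalPicture} leaves freedom only in the choice of representatives $e_X\in X$ for $\vert X\vert\ge 2$, and precisely in the degenerate case the paper notes that the axis animal extends \emph{uniquely} to a configuration — there is nothing left to encode a position into. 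So the secondary subtlety you flag is not a subtlety but an obstruction to your approach, and the step you defer as ``the hardest part'' is exactly the content of the lemma.
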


\begin{proof}
Observe that one of the following holds:
\begin{enumerate}
\item[\textup{(1)}] There is a cycle $\alpha$ and an admissible path $\beta$ with the following properties:
\begin{enumerate}
\item[\textup{(a)}] Both compositions $\beta\alpha$ and $\beta\alpha^{-1}$ are admissible.
\item[\textup{(b)}] $\ter_d(\beta) = x^{-1}$ for some $x \in E^1$ with $\vert [x] \vert \ge 2$. 
\end{enumerate}
\item[\textup{(2)}] There is a cycle $\alpha$ with subpaths $x^{-1} \le \alpha$ and $y^{-1}\beta \le \alpha^{-1}$ such that $\vert [x] \vert, \vert[y] \vert \ge 2$.
\end{enumerate}

In case of (1), we consider the animal $\omega:=\langle \beta\alpha^n \mid n \in \mathbb{Z} \rangle$. Being $\alpha$-periodic, we may extend it to an $\alpha$-periodic configuration $\xi$ such that $\ter_d(\gamma)=x^{-1}$ entails $\gamma \in \omega$ (see also Example~\ref{ex:LemmaEx2} for what such $\xi$ might look like for a particular graph). For the sake of completeness, let us carry out the actual construction of such $\xi$: First consider the finite animal $\langle \ter_d(\alpha)^{-1},\alpha, \beta \rangle$ and extend it arbitrarily to a configuration $\eta$ such that $\ter_d(\gamma)=x^{-1}$ for $\gamma \in \eta$ entails $\gamma \in \langle \ter_d(\alpha)^{-1},\alpha , \beta\rangle$: This can be done as in the proof of Lemma~\ref{lem:CondC} by never choosing to go down $x^{-1}$ when extending. Then consider the animal
$$\chi:=\{\gamma \in \eta \mid \gamma \not\ge \ter_d(\alpha)^{-1},\alpha\}$$
and define $\xi:=\bigsqcup_{n \in \mathbb{Z}} \chi \cdot \alpha^n$. It should be clear that $\xi \in \Omega(E,C)$, and by construction it is fixed by $\alpha$. Let $\gamma \in \xi$ and assume that $\ter_d(\gamma)=x^{-1}$; we may then write $\gamma=\gamma' \cdot \alpha^n$ uniquely with $\gamma' \in \chi$ and $n \in \mathbb{Z}$. Now if $\ter_d(\gamma') = \ter_d(\gamma)=x^{-1}$, we have $\gamma' < \alpha$ or $\gamma' \le \beta$ by construction of $\eta$, and if $\ter_d(\gamma') \ne \ter_d(\gamma)$, then $\gamma'$ must be cancelled out completely by $\alpha^n$, hence $\gamma' < \alpha$. In either case, we see that $\gamma \in \omega$ as required. It follows that $s_{\{x^{-1}\}}(\theta_\gamma(\xi)) \le 2$ whenever $\gamma \in \xi$ is not a power of $\alpha$, while $s_{\{x^{-1}\}}(\xi) = 3$. We conclude from Lemma~\ref{lem:inequality} that $\xi$ is isolated in the closure of its own orbit and that $\text{Stab}(\xi) \cong \mathbb{Z}$. \medskip \\
Now consider (2), and assume without loss of generality that (1) does not hold. In this case, the animal $\omega= \langle \alpha^n \mid n \in \mathbb{Z} \rangle$ can be extended uniquely to a configuration $\xi$, which is necessarily $\alpha$-periodic. Setting $A:=\{x^{-1},y^{-1}\}$, we see that $s_A(\theta_\gamma(\xi)) = 1$ for all $\gamma \notin \omega$ while $s_A(\xi) = 2$, so $\xi$ is once again isolated in $\overline{\theta_\mathbb{F}(\xi)}$ and has stabiliser $\text{Stab}(\xi) \cong \mathbb{Z}$.
\end{proof}

\begin{example}\label{ex:LemmaEx2}
We will now consider the  separated graph
\begin{center}
\begin{tikzpicture}[scale=0.75]
 \SetUpEdge[lw = 1.5pt]
  \tikzset{VertexStyle/.style = {draw,shape = circle,fill = white, inner sep=0pt,minimum size=10pt,outer sep=3pt}}
  \SetVertexNoLabel
  \Vertex[x=-3,y=0]{u1}
  \Vertex[x=0,y=0]{u2}
  \Vertex[x=3,y=0]{u3}
  \Vertex[x=-3,y=-3]{u4}    
  \Vertex[x=3,y=-3]{u5}      

  \tikzset{EdgeStyle/.style = {->,color={\niceblue}}}  
  \Edge[label=$g$, labelstyle=above left](u4)(u2)
  \Edge[label=$h$, labelstyle=above right](u5)(u2)  

  \tikzset{EdgeStyle/.style = {->, bend right=20,color={\niceblue}}}  
  \Edge[label=$e$, labelstyle=below](u4)(u5)  
  \Edge[label=$f$, labelstyle=above](u5)(u4)    

  \tikzset{EdgeStyle/.style = {->, color={\nicered}}}  
  \Edge[label=$x$, labelstyle=above](u1)(u2)
  \Edge[label=$x'$, labelstyle=above](u3)(u2)  

\end{tikzpicture}
\end{center}
 satisfying (1) in the proof of Lemma~\ref{lem:TwoChoicesConf}, and see what the configuration $\xi$ might look like. We take $\alpha$ to be the cycle $\alpha=fe$ and $\beta$ to be the path $\beta=x^{-1}g$, so the animal $\omega$ and the configuration $\xi$ may be pictured as below:
\begin{center}
\begin{figure}[ht]
\begin{subfigure}{.5\textwidth}
  \centering
\begin{tikzpicture}[font=\scriptsize,scale=1]
  \tikzset{VertexStyle/.style = {draw,shape = rectangle,minimum size=14pt,inner sep=1pt}}

  \Vertex[x=0,y=0,L=$1$]{0;0}
  \Vertex[x=2,y=0, L=$\alpha$]{2;0}  
  \Vertex[x=-2,y=0, L=$\alpha^{-1}$]{-2;0} 
  
  \Vertex[x=-2,y=2, L=$\beta\alpha^{-1}$]{-2;2}    
  \Vertex[x=0,y=2, L=$\beta$]{0;2}     
  \Vertex[x=2,y=2, L=$\beta\alpha$]{2;2}     
  
  \tikzset{VertexStyle/.style = {}}  
  \Vertex[x=-3.5,y=0,L=$\cdots$]{-3h;0}  
  \Vertex[x=3.5,y=0,L=$\cdots$]{3h;0}     
  
  \tikzset{VertexStyle/.style = {draw,shape = circle,minimum size=1pt,inner sep=1pt}}
  \SetVertexNoLabel  

  \Vertex[x=-1,y=0]{-1;0}  
  \Vertex[x=1,y=0]{1;0}  
  
  \Vertex[x=-2,y=1]{-2;1}    
  \Vertex[x=0,y=1]{0;1}      
  \Vertex[x=2,y=1]{2;1}
  
  \tikzset{EdgeStyle/.style = {->,color=\niceblue}}

  \Edge[labelcolor=none, labelstyle=below, label=$e$](-2;0)(-1;0)
  \Edge[labelcolor=none, labelstyle=below, label=$e$](0;0)(1;0)
  \Edge[labelcolor=none, labelstyle=below, label=$e$](2;0)(3h;0)

  \Edge[labelcolor=none, labelstyle=below, label=$f$](-3h;0)(-2;0)
  \Edge[labelcolor=none, labelstyle=below, label=$f$](-1;0)(0;0)
  \Edge[labelcolor=none, labelstyle=below, label=$f$](1;0)(2;0)

  \Edge[labelcolor=none, labelstyle=left, label=$g$](-2;0)(-2;1)
  \Edge[labelcolor=none, labelstyle=left, label=$g$](0;0)(0;1)
  \Edge[labelcolor=none, labelstyle=left, label=$g$](2;0)(2;1)    
  
  \tikzset{EdgeStyle/.style = {->,color=\nicered}}
  \Edge[labelcolor=none, labelstyle=left, label=$x$](-2;2)(-2;1)
  \Edge[labelcolor=none, labelstyle=left, label=$x$](0;2)(0;1)
  \Edge[labelcolor=none, labelstyle=left, label=$x$](2;2)(2;1)
  ;
\end{tikzpicture}
  \caption*{An animal $\omega$ as in the proof of Lemma~\ref{lem:TwoChoicesConf}.}
\end{subfigure}
\begin{subfigure}{.5\textwidth}
\begin{tikzpicture}[font=\scriptsize,scale=1]
  \tikzset{VertexStyle/.style = {draw,shape = rectangle,minimum size=14pt,inner sep=1pt}}

  \Vertex[x=0,y=0,L=$1$]{0;0}
  \Vertex[x=2,y=0, L=$\alpha$]{2;0}  
  \Vertex[x=-2,y=0, L=$\alpha^{-1}$]{-2;0} 
  
  \Vertex[x=-2,y=2, L=$\beta\alpha^{-1}$]{-2;2}    
  \Vertex[x=0,y=2, L=$\beta$]{0;2}     
  \Vertex[x=2,y=2, L=$\beta\alpha$]{2;2}     
  
  \tikzset{VertexStyle/.style = {}}  
  \Vertex[x=-3.5,y=0,L=$\cdots$]{-3h;0}  
  \Vertex[x=3.5,y=0,L=$\cdots$]{3h;0}     
  
  \tikzset{VertexStyle/.style = {draw,shape = circle,minimum size=1pt,inner sep=1pt}}
  \SetVertexNoLabel  

  \Vertex[x=-1,y=0]{-1;0}  
  \Vertex[x=1,y=0]{1;0}  
  
  \Vertex[x=-2,y=1]{-2;1}
  \Vertex[x=-1,y=1]{-1;1}      
  \Vertex[x=0,y=1]{0;1}      
  \Vertex[x=1,y=1]{1;1}  
  \Vertex[x=2,y=1]{2;1}
  
  \Vertex[x=-1,y=2]{-1;2}
  \Vertex[x=1,y=2]{1;2}  
    
  \tikzset{EdgeStyle/.style = {->,color=\niceblue}}

  \Edge[labelcolor=none, labelstyle=below, label=$e$](-2;0)(-1;0)
  \Edge[labelcolor=none, labelstyle=below, label=$e$](0;0)(1;0)
  \Edge[labelcolor=none, labelstyle=below, label=$e$](2;0)(3h;0)

  \Edge[labelcolor=none, labelstyle=below, label=$f$](-3h;0)(-2;0)
  \Edge[labelcolor=none, labelstyle=below, label=$f$](-1;0)(0;0)
  \Edge[labelcolor=none, labelstyle=below, label=$f$](1;0)(2;0)

  \Edge[labelcolor=none, labelstyle=left, label=$g$](-2;0)(-2;1)
  \Edge[labelcolor=none, labelstyle=left, label=$g$](0;0)(0;1)
  \Edge[labelcolor=none, labelstyle=left, label=$g$](2;0)(2;1)   
  
  \Edge[labelcolor=none, labelstyle=left, label=$h$](-1;0)(-1;1)
  \Edge[labelcolor=none, labelstyle=left, label=$h$](1;0)(1;1)  
  
  \tikzset{EdgeStyle/.style = {->,color=\nicered}}
  \Edge[labelcolor=none, labelstyle=left, label=$x$](-2;2)(-2;1)
  \Edge[labelcolor=none, labelstyle=left, label=$x$](0;2)(0;1)
  \Edge[labelcolor=none, labelstyle=left, label=$x$](2;2)(2;1)
  
  \Edge[labelcolor=none, labelstyle=left, label=$x'$](-1;2)(-1;1)
  \Edge[labelcolor=none, labelstyle=left, label=$x'$](1;2)(1;1)  
  ;
\end{tikzpicture}
  \caption*{A configuration $\xi$ as in the proof of Lemma~\ref{lem:TwoChoicesConf}.}
\end{subfigure}
\end{figure} 
\vspace{-0.8cm}
\end{center}
\end{example}\exend\medskip

Next, we consider the situation in which a vertex admits exactly one choice and one base-simple cycle up to inversion.
\begin{lemma}\label{lem:OneChoiceOneCycle}
Let $(E,C)$ denote a finitely separated graph, and assume that $v \in E^0$ admits exactly one choice and exactly one base-simple cycle up to inversion. Then there is $H \in \mathcal{H}(E,C)$ with $v \notin H$, such that $v$ admits a cycle but no choices in $(E/H,C/H)$.
\end{lemma}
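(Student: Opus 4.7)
Let $\beta$ denote the unique base-simple cycle based at $v$ (up to inversion), and let $V(\beta)$ denote the set of vertices it visits; by Lemma~\ref{lem:NumberOfChoices} every vertex on $\beta$ admits exactly one choice. The plan is to produce $H \in \mathcal{H}(E,C)$ so that in the quotient $(E/H, C/H)$ the cycle $\beta$ persists at $v$, while every $C$-class at a vertex of $K := E^0 \setminus H$ reduces to a singleton. The second condition rules out both immediate choices and choice paths at $v$ (indeed at every vertex of $K$), while the first supplies the cycle at $v$.

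The construction of $K$ would proceed iteratively from the seed $K := V(\beta)$, alternately closing under (i) forward reachability, adjoining $r(e)$ whenever $s(e) \in K$, and (ii) a minimal-choice version of co-$C$-saturation, adjoining $s(x)$ for exactly one $x \in X$ whenever $w \in K$, $X \in C_w$ and $s(X) \cap K = \emptyset$. The choice in (ii) is made consistent with $\beta$ at vertices on the cycle and arbitrarily at others. Setting $H := E^0 \setminus K$, routine manipulations yield that $H$ is hereditary and $C$-saturated, and $v \in V(\beta) \subset K$ so $v \notin H$; admissibility of $\beta$ in the quotient is inherited from $(E,C)$ since the $C$-classes only get restricted by the passage to $(E/H, C/H)$.

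The key to verifying no choice at $v$ in the quotient is the following structural consequence of the uniqueness of $\beta$: for each $w \in V(\beta)$ and each $X \in C_w$, the cycle $\beta$ meets $X$ via at most one element of $X$. This would be proven by contradiction; supposing $\beta$ uses distinct $x, x' \in X$, one decomposes $\beta = \pi_2 \mu \pi_1$ around the two occurrences at $w$ and splices to construct an admissible base-simple cycle $\alpha' \neq \beta^{\pm 1}$ at $v$ --- either $\pi_2 \pi_1$, obtained by excising the intermediate segment $\mu$, or a cycle from $\pi_2 \mu^n \pi_1$ in case $\mu$ is itself admissible as a cycle at $w$ (yielding in fact an infinite family, still contradicting uniqueness). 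The admissibility checks at the new joint follow from the fact that $\beta$ is itself a cycle, together with the class equality $[x] = [x'] = X$. Granted the claim, the greedy rule in step (ii) maintains $|X/H| = 1$ at every $X \in C_w$ with $w \in V(\beta)$, so no vertex on the surviving cycle has an immediate choice in $(E/H, C/H)$, and the same invariant propagated to the downstream vertices added in step (ii) kills any lingering choice path based at $v$.

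The hardest step will be the case analysis behind the structural claim --- especially the mixed sub-case where one of $x, x'$ is used by $\beta$ as an entering arrow and the other as an outgoing inverse, where the splicing requires a more delicate rerouting --- together with the bookkeeping to ensure that forward closure in step (i) does not accidentally introduce a second source of some $X \in C_w$ at a vertex on $\beta$ through additions far downstream; any such unintended second source would itself signal a hidden base-simple cycle, which is again ruled out by the uniqueness hypothesis.
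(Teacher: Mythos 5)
Your overall target is the right one --- produce $H\in\mathcal{H}(E,C)$ so that in $(E/H,C/H)$ every surviving $C$-class is a singleton while the cycle at $v$ persists --- and your verification that the complement of your closure $K$ is hereditary and $C$-saturated is fine. But the construction of $K$ is genuinely different from the paper's, and the difference is exactly where your argument has a gap. The paper takes $K$ to be the set of vertices reachable from $v$ by \emph{forced} paths (admissible paths whose inverse edges all lie in singleton classes). The point of that choice is the step you defer to ``bookkeeping'': if $x'\in X$ with $|X|\ge 2$ and $s(x')\in K$, then the witnessing forced path $\beta\colon v\to s(x')$ composes with $x'$ to give an admissible closed path $x'\beta$ which is \emph{automatically a cycle} --- the admissibility of $(x'\beta)(x'\beta)$ at the base point holds because $\ini_{\textup{d}}(\beta)$, if an inverse edge, lies in a singleton class and hence cannot share its class with $x'$ --- and which is visibly not a power of $\alpha$; combined with Lemma~\ref{lem:OneChoiceBaseSimpleDecomp} (every cycle at a one-choice vertex is a concatenation of base-simple cycles) this contradicts uniqueness. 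Your $K$ is built instead by forward closure plus a greedy rule that traverses $y^{-1}$ for a \emph{chosen} $y$ in an arbitrary class $Y$, possibly with $|Y|\ge 2$. For such a $K$, the path $\gamma\colon v\to s(x')$ witnessing an ``unintended second source'' may begin or end with inverse edges from non-singleton classes, and then $x'\gamma$ need only be a \emph{closed path}, not a cycle (the square can fail to be admissible precisely at the base point). Closed paths that are not cycles do not contradict the hypothesis, which is only about base-simple \emph{cycles}; so ``any such unintended second source would itself signal a hidden base-simple cycle'' is not established and is the actual content of the lemma, not bookkeeping.

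Two further points. First, your structural claim (the cycle $\beta$ meets each class at most once) only controls the seed $V(\beta)$ of your closure; the invariant that must be maintained concerns all of $K$, including vertices far from $\beta$, and there the uniqueness hypothesis says nothing directly --- the paper instead plays those classes off against the hypothesis that $v$ admits \emph{exactly one choice} (a non-singleton class at a forced-reachable vertex $u\ne v$ would, via the forced path, produce a second choice at $v$). Your proposal never invokes the one-choice hypothesis at all beyond Lemma~\ref{lem:NumberOfChoices}, which is a sign something is missing: the lemma is false without it. Second, even granting a cycle, you still must rule out that it is a power of $\beta$, which again requires the one-choice hypothesis through the decomposition into base-simple cycles. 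I would recommend replacing the iterative construction by the paper's one-line definition $H=\{u\in E^0\mid\text{there is no forced path } v\to u\}$, after which heredity, $C$-saturation, and the singleton property of every surviving class all follow from short direct arguments.
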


\begin{proof}
Let $\alpha$ denote the unique base-simple cycle based at $v$ such that $\alpha^{-1}$ is forced. By possibly translating the cycle, we may assume that there is $X \in C_v$ with $\vert X \vert \ge 2$ such that $\ter_{\textup{d}}(\alpha)=x^{-1}$ for some $x \in X$. Now define
$$H:=\{u \in E^0 \mid \text{there is no forced path } v \to u\}$$
and observe that $H$ is hereditary: If $e \in E^1$ and $s(e) \notin H$, i.e.~if there is a forced path $\beta \colon v \to s(e)$, then $e \cdot \beta$ is a forced path $v \to r(e)$ as well, hence $r(e) \notin H$. In order to check that $H$ is also $C$-saturated, we assume that $u \notin H$ with $\beta \colon v \to u$ forced, and take any $Y \in C_u$. If $Y=\{y\}$ is a singleton or $y:=\ter_d(\beta)^{-1} \in Y$, then $y^{-1} \cdot \beta$ is forced as well, so $s(y) \notin H$. If $\vert Y \vert \ge 2$, then we must have $\beta=v$ and $X=Y$ so that $s(x) \notin H$. We conclude that $H$ is indeed a hereditary and $C$-saturated subset. We proceed to check that $v$ does not admit any choices in the quotient graph $(E/H,C/H)$, and it suffices to verify that $X/H=\{x\}$ as the same argument may be applied to any other vertex on $\alpha$. If there were some other $x' \in X/H$, then, by definition of $H$, there would exist a forced path $\beta \colon v \to s(x')$. But then $x'\beta$ would a cycle, which is clearly not a power of $\alpha$, so we have reached a contradiction.
\end{proof}

Finally, we are ready to patch everything together and obtain our main theorem.
\begin{theorem}\label{thm:CondK}
Let $(E,C)$ denote a finitely separated graph. The following are equivalent:
\begin{enumerate}
\item[\textup{(1)}] $(E,C)$ satisfies Condition \textup{(}K\textup{)}.
\item[\textup{(2)}] $\theta^{(E,C)}$ is essentially free.
\item[\textup{(3)}] $\mathcal{O}(E,C)$ has real rank zero.
\item[\textup{(4)}] $\mathcal{O}^r(E,C)$ has real rank zero.
\item[\textup{(5)}] $L_K^{\textup{ab}}(E,C)$ is an exchange ring.
\item[\textup{(6)}] $\mathcal{O}(E,C)$ is the direct limit of real rank zero graph $C^*$-algebras of finite non-separated graphs.
\item[\textup{(7)}] $L_K^{\textup{ab}}(E,C)$ is the direct limit of Leavitt path algebras of finite non-separated graphs with the exchange property.
\end{enumerate}
If $(E,C)$ is finite, then we may replace \textup{(6)} and \textup{(7)} with the conditions
\begin{enumerate}
\item[\textup{(6')}] $\mathcal{O}(E,C)$ is isomorphic to a real rank zero graph $C^*$-algebra of a finite non-separated graph.
\item[\textup{(7')}] $L_K^{\textup{ab}}(E,C)$ is isomorphic to a Leavitt path algebra of a finite non-separated graph with the exchange property.
\end{enumerate}
\end{theorem}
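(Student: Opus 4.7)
The easy direction is immediate from the work of this section. Corollary~\ref{cor:CondKInfGr} gives (1)$\Rightarrow$(2), (6), (7), and since both the exchange property and real rank zero are preserved under direct limits, (1) also implies (3), (4) and (5). In the finite case, Corollary~\ref{cor:CondKFinGr} upgrades (6), (7) to (6'), (7'). It therefore suffices to establish the contrapositive of each converse: assuming Condition~(K) fails, I show that each of (2)-(5) fails. I would split this into two cases based on how Condition~(K) can break.

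In \textbf{Case~I}, some cycle of $(E,C)$ admits at least two choices. Lemma~\ref{lem:TwoChoicesConf} produces $\xi \in \Omega(E,C)$ isolated in $Z := \overline{\theta_\mathbb{F}(\xi)}$ with $\textup{Stab}(\xi) \cong \mathbb{Z}$. Then $\{\xi\}$ is an open subset of $Z$ fixed pointwise by a generator of $\textup{Stab}(\xi)$, so the restricted action on the closed invariant subspace $Z$ is not topologically free, disproving (2). The indicator $1_{\{\xi\}}$ is a projection in $C_K(Z) \rtimes \mathbb{F}$ and in $C_0(Z) \rtimes_{(r)} \mathbb{F}$, and since the partial action restricted to the singleton $\{\xi\}$ is simply the trivial global action of $\textup{Stab}(\xi)$ (with amenability of $\mathbb{Z}$ handling the reduced case), cutting down yields corners $K[\mathbb{Z}]$ and $C^*_{(r)}(\mathbb{Z}) \cong C(\mathbb{T})$. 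Since $K[\mathbb{Z}]$ is not an exchange ring and $C(\mathbb{T})$ does not have real rank zero, and since both properties pass from $L_K^{\textup{ab}}(E,C)$ and $\mathcal{O}^{(r)}(E,C)$ to corners of quotients, (3), (4), (5) all fail.

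In \textbf{Case~II}, every cycle admits at most one choice, and some vertex $v$ on a cycle witnesses the failure of Condition~(K): either $v$ admits zero choices, or $v$ admits exactly one choice together with only one base-simple cycle up to inversion. In the former sub-case take $H = \emptyset$; in the latter, Lemma~\ref{lem:OneChoiceOneCycle} supplies $H \in \mathcal{H}(E,C)$ with $v \notin H$ such that $v$ admits a cycle but no choices in $(E/H, C/H)$. Either way, in the quotient $v$ admits no choices yet remains on a cycle, so $\Omega(E/H,C/H)_v = \{\xi_v\}$ is a singleton fixed by the non-trivial free subgroup $\mathbb{F}_v \le \mathbb{F}$ of closed paths at $v$, and Proposition~\ref{prop:NoChoiceIso} identifies the corners at $v$ with $K[\mathbb{F}_v]$ and $C^*_{(r)}(\mathbb{F}_v)$. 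These fail the desired properties: $K[\mathbb{F}_v]$ surjects onto the non-exchange algebra $K[\mathbb{Z}]$ via abelianisation and a further projection, $C^*(\mathbb{F}_v)$ surjects onto $C(\mathbb{T})$ (which is not real rank zero), and $C^*_r(\mathbb{F}_v)$ is either $C(\mathbb{T})$ or simple and projectionless \cite{Powers} — in either situation not real rank zero. Transferring back through the corner-quotient chain disproves (3), (4), (5). For (2), taking $Z' := \overline{\theta_\mathbb{F}(\xi_v)} \subset \Omega(E/H,C/H)$, the singleton $\{\xi_v\}$ is an open subset of $Z'$ contained in $(Z')^\alpha$ for every $\alpha \in \mathbb{F}_v \setminus \{1\}$, and pulling $Z'$ back through the closed invariant embedding of Theorem~\ref{thm:QuoByHS} yields a closed invariant subspace of $\Omega(E,C)$ on which the action is not topologically free.

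The main obstacle is the bookkeeping in Case~II, specifically verifying that after passing to the quotient $(E/H, C/H)$ the vertex $v$ remains on a cycle and Proposition~\ref{prop:NoChoiceIso} produces the claimed corner with $\mathbb{F}_v \neq \{1\}$. A secondary subtlety is that for the reduced $C^*$-algebra we cannot simply quotient $C^*_r(\mathbb{F}_n)$ down to $C^*_r(\mathbb{Z})$, so the $n \ge 2$ case demands the additional classical fact that $C^*_r(\mathbb{F}_n)$ is simple and has only trivial projections; this prevents real rank zero just as effectively as the quotient-onto-$C(\mathbb{T})$ trick used for the full and algebraic versions.
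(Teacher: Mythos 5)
Your proof is correct and follows essentially the same route as the paper: the forward direction via Corollaries~\ref{cor:CondKFinGr} and~\ref{cor:CondKInfGr}, and the converse via the same trichotomy (no choices / one choice and one base-simple cycle / two choices) handled by Proposition~\ref{prop:NoChoiceIso}, Lemma~\ref{lem:OneChoiceOneCycle} and Lemma~\ref{lem:TwoChoicesConf}, merely repackaged by folding the no-choice case into the quotient case with $H=\emptyset$. Your added justification that $C^*_r(\mathbb{F}_n)$ fails real rank zero because it is simple and projectionless is a point the paper glosses over, and is worth making explicit (though the projectionlessness is due to Pimsner--Voiculescu rather than Powers).
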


\begin{proof}
In any case, (1) implies (2)-(7) due to Corollary~\ref{cor:CondKInfGr}, and if $(E,C)$ is finite, then (6') and (7') follow from Corollary~\ref{cor:CondKFinGr}. Now suppose that $(E,C)$ does not satisfy Condition (K). Then there is a vertex $v$ on a cycle such that one of the following holds:
\begin{enumerate}
\item[(i)] $v$ admits no choices,
\item[(ii)] $v$ admits exactly one choice and one base-simple cycle up to inversion,
\item[(iii)] $v$ admits at least two choices.
\end{enumerate}
In the case of (i), the compact-open subspace $\Omega(E,C)_v$ is nothing but an isolated point with stabiliser $\mathbb{F}_v$, so the partial action is not even topologically free. Moreover, as we observed in Proposition~\ref{prop:NoChoiceIso},
$$vL_K^{\textup{ab}}(E,C)v \cong K[\mathbb{F}_v], \quad v\mathcal{O}(E,C)v \cong C^*(\mathbb{F}_v) \andspace v\mathcal{O}^r(E,C)v \cong C^*_r(\mathbb{F}_v),$$
so neither is an exchange ring. If (ii) holds, then there is hereditary and $C$-saturated subset $H \subset E^0$ as in Lemma~\ref{lem:OneChoiceOneCycle}, giving rise to an invariant closed subspace on which the restricted action is directly quasi-conjugate to the partial action $\theta^{(E/H,C/H)}$ by Theorem~\ref{thm:QuoByHS}. The quotient graph $(E/H,C/H)$ satisfies (i), so the first case applies. Finally, Lemma~\ref{lem:TwoChoicesConf} applies to (iii) to give a point $\xi$ with non-trivial stabiliser, such that $\xi$ is isolated in $\Omega:=\overline{\theta_\mathbb{F}(\xi)}$. We immediately see that the restricted partial action is not topologically free,
$$1_\xi \big(C_K(\Omega) \rtimes \mathbb{F} \big) 1_\xi \cong K[\mathbb{Z}] \andspace 1_\xi \big( C(\Omega) \rtimes_{(r)} \mathbb{F} \big) 1_\xi \cong C(\mathbb{T}).$$
It follows that none of the above crossed product are exchange rings, so neither are $L_K^{\textup{ab}}(E,C)$, $\mathcal{O}(E,C)$ and $\mathcal{O}^r(E,C)$.
\end{proof}

\begin{corollary}
Let $(E,C)$ denote a finitely separated graph. If either of the algebras $L_K^{\textup{ab}}(E,C)$, $\mathcal{O}(E,C)$ and $\mathcal{O}^r(E,C)$ is an exchange ring, then it is also separative.
\end{corollary}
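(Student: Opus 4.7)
The strategy is to reduce to the classical (non-separated) setting, where separativity of exchange graph algebras is already known, and then pass to direct limits. Assume that one of $L_K^{\textup{ab}}(E,C)$, $\mathcal{O}(E,C)$ or $\mathcal{O}^r(E,C)$ is an exchange ring. By Theorem~\ref{thm:CondK}, the graph $(E,C)$ then satisfies Condition (K), and moreover all three algebras are exchange rings, with
$$L_K^{\textup{ab}}(E,C) \cong \varinjlim_n L_K(F_n) \andspace \mathcal{O}(E,C) \cong \mathcal{O}^r(E,C) \cong \varinjlim_n C^*(F_n)$$
for a sequence of finite non-separated graphs $(F_n)_{n \ge 1}$ satisfying the classical Condition (K).

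Next, I would invoke the corresponding separativity results in the classical graph-algebra setting. Each $L_K(F_n)$ is a Leavitt path algebra of a finite graph satisfying Condition (K), hence an exchange ring, and by the separativity theorem for exchange Leavitt path algebras of row-finite graphs (Ara-Moreno-Pardo, following the results of \cite{AGMP}), every $L_K(F_n)$ is separative. Similarly, each $C^*(F_n)$ has real rank zero by \cite[Theorem 3.5]{Jeong}, and graph $C^*$-algebras of real rank zero are known to be separative as well (again by \cite{AGMP}). Thus each algebra in the inductive system is separative.

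Finally, the key observation is that separativity is a condition on the non-stable $K$-theory monoid $\mathcal{V}(-)$, and $\mathcal{V}$ is a continuous functor from rings (respectively $C^*$-algebras) to abelian monoids. Since the cancellation property $2a = a+b = 2b \Rightarrow a=b$ is preserved under direct limits of monoids, the fact that each $\mathcal{V}(L_K(F_n))$ (respectively $\mathcal{V}(C^*(F_n))$) is separative implies that the limit monoid $\mathcal{V}(L_K^{\textup{ab}}(E,C))$ (respectively $\mathcal{V}(\mathcal{O}(E,C))$ and $\mathcal{V}(\mathcal{O}^r(E,C))$) is separative. This yields the conclusion.

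The only possible obstacle is making sure that the ``separativity of exchange classical graph algebras'' input is genuinely available in the literature in the precise form required, but this is well-established (it is the classical separative resolution of exchange graph algebras due to Ara, Moreno, and Pardo). Everything else is a routine passage to the inductive limit and an application of Theorem~\ref{thm:CondK}.
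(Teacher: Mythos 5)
Your proposal is correct and follows essentially the same route as the paper, which deduces the corollary directly from Theorem~\ref{thm:CondK} together with the Ara--Moreno--Pardo results on non-stable $K$-theory of row-finite graph algebras; you merely spell out the (standard) facts that $\mathcal{V}$ is continuous and that separativity passes to direct limits of monoids. The only cosmetic remark is that the cited separativity result holds for \emph{all} row-finite graph algebras, not just the exchange ones, so the exchange hypothesis on the $F_n$ is not actually needed at that step.
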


\begin{proof}
This is immediate from Theorem~\ref{thm:CondK} and \cite[Theorem 3.5, Proposition 4.4 and Theorem 7.1]{AMP}.
\end{proof}

The above corollary shows that the tame algebras of finitely separated graphs do not provide a solution to the Fundamental Separativity Problem for exchange rings. However, as was noted in \cite{AL}, the crossed product $C(\mathcal{X}) \rtimes_\sigma \mathbb{Z}$ of any two-sided subshift is Morita equivalent to a quotient of a separated graph $C^*$-algebra, corresponding to the restriction of the partial action to a closed invariant subspace. In particular, interesting real rank zero $C^*$-algebras, which are not graph $C^*$-algebras, may arise from separated Bratteli diagrams (see \cite[Definition 2.8]{AL}). The question therefore remains if one can find a finite bipartite separated graph $(E,C)$ and a hereditary $D^{\infty}$-saturated subset of the associated separated Bratteli diagram, corresponding to a closed invariant subspace $\Omega \subset \Omega(E,C)$ of infinite type (in the sense of \cite[Section 3]{AL}), such that the monoid 
$$\mathcal{V}(L_K^{\textup{ab}}(E,C)) \cong M(F_\infty/H,D^\infty/H) \cong \varinjlim_n M(E_n/H^{(n)},C^n/H^{(n)}),$$
where $H^{(n)}:=H \cap E_n^0$, is non-separative and the limit algebras 
$$\varinjlim_n L^{\textup{ab}}(E_n/H^{(n)},C^n/H^{(n)}) \cong C_K(\Omega) \rtimes \mathbb{F} \andspace \varinjlim_n \mathcal{O}^r(E_n/H^{(n)},C^n/H^{(n)}) \cong C(\Omega) \rtimes_r \mathbb{F}$$
are exchange rings. One strategy would be to start out with a suitable graph $(E,C)$, for which the monoid $M(E,C)$ is non-separative, and then try to remove more and more obstructions to the exchange property the bigger $n$ gets, while still maintaining injectivity of the composition
$$M(E,C) \to M(E_n,C^n) \to M(E_n,/H^{(n)},C^n/H^{(n)}).$$
One could then hope to remove \textit{all} obstructions to the exchange property in the limit algebras.

\section*{Acknowledgements}
The author would like to thank Pere Ara for many fruitful discussions related to this work, part of which was carried out during a stay at Universitat Aut\`onoma de Barcelona.

\bibliographystyle{plain}
\bibliography{ref}

\end{document}